\def\@settitle{\begin{center}%
    \bfseries
 \normalfont\LARGE\@title
  \end{center}%
}
\def\@setauthors{\begin{center}%
 \normalsize\@author
  \end{center}%
}
\numberwithin{equation}{section}
\def\O{\mathcal{O}}
\def\rr{\mathbb{R}}
\renewcommand{\cal}{\mathcal}
\newcommand\cA{{\mathcal A}}
\newcommand\cJ{{\mathcal J}}
\newcommand{\cC}{{\cal C}}
\newcommand{\cD}{{\cal D}}
\newcommand{\cE}{{\cal E}}
\newcommand{\cG}{{\cal G}}
\newcommand{\cI}{{\cal I}}
\newcommand{\fa}{{\mathfrak a}}
\newcommand{\fb}{{\mathfrak b}}
\newcommand{\fc}{{\mathfrak c}}
\newcommand{\fd}{{\mathfrak d}}
\newcommand{\fe}{{\mathfrak e}}
\newcommand{\fo}{{\mathfrak o}}
\newcommand{\fh}{{\mathfrak h}}
\newcommand{\fK}{{\frak K}}
\newcommand{\bmu}{{\bm{u}}}
\newcommand{\bfi}{{\bf i}}
\newcommand{\bfj}{{\bf j}}
\newcommand{\bfk}{{\bf k}}
\newcommand{\bfm}{{\bf m}}
\newcommand{\rd}{{\rm d}}
\newcommand{\ri}{\mathrm{i}}
\newcommand{\bC}{{\mathbb C}}
\newcommand{\bE}{\mathbb{E}}
\newcommand{\bN}{\mathbb{N}}
\newcommand{\bP}{\mathbb{P}}
\newcommand{\bR}{{\mathbb R}}
\newcommand{\al}{\alpha}
\newcommand{\la}{\lambda}
\DeclareMathOperator{\Tr}{Tr}
\DeclareMathOperator{\dist}{dist}
\DeclareMathOperator{\OO}{O}
\DeclareMathOperator{\oo}{o}
\renewcommand{\Im}{\mathop{\mathrm{Im}}}
\newcommand{\deq}{\mathrel{\mathop:}=} 
\renewcommand{\leq}{\leqslant}
\renewcommand{\geq}{\geqslant}
\newcommand{\floor}[1] {\lfloor {#1} \rfloor}
\newcommand{\nc}{\normalcolor}
\newcommand{\del}{\partial}
\newcommand{\wh}{\widehat}
\newcommand{\wt}{\widetilde}
\newcommand{\qq}[1]{[\![{#1}]\!]}
\newcommand{\beq}{\begin{equation}}
\newcommand{\eeq}{\end{equation}}
\newcommand{\pa}[1]{\left({#1}\right)}
\newcommand{\h}[1]{\{{#1}\}}
\newcommand{\absbb}[1]{\biggl\lvert #1 \biggr\rvert}
\newcommand{\scalar}[2]{\langle{#1} \mspace{2mu}, {#2}\rangle}
\theoremstyle{plain} 
\newtheorem{theorem}{Theorem}[section]
\newtheorem*{theorem*}{Theorem}
\newtheorem{lemma}[theorem]{Lemma}
\newtheorem*{lemma*}{Lemma}
\newtheorem{corollary}[theorem]{Corollary}
\newtheorem*{corollary*}{Corollary}
\newtheorem{proposition}[theorem]{Proposition}
\newtheorem*{proposition*}{Proposition}
\newtheorem*{assumption*}{Assumption}
\newtheorem{definition}[theorem]{Definition}
\newtheorem*{definition*}{Definition}
\newtheorem{example}[theorem]{Example}
\newtheorem*{example*}{Example}
\newtheorem{remark}[theorem]{Remark}
\newtheorem*{remark*}{Remark}
\newtheorem*{remarks*}{Remarks}
\newcommand{\col}{\vcentcolon}
\newcommand{\rhosc}{\rho_{\mathrm{sc}}}
\newcommand{\xid}{\xi_{d}}
\newcommand{\msc}{m_{\rm sc}}
\newcommand{\md}{m_d}
\newcommand{\Lambdao}{\Lambda_{\rm o}}
\newcommand{\Lambdad}{\Lambda_{\rm d}}
\def\author#1{\par
    {\centering{\authorfont#1}\par\vspace*{0.05in}}
}
\def\titlefont{\fontsize{13}{15}\bfseries\boldmath\selectfont\centering{}}
\def\authorfont{\fontsize{13}{15}}
\let\affiliationfont\rhfont
\def\address#1{\par
    {\centering{\affiliationfont#1\par}}\par\vspace*{11pt}
}
\def\body{
\setcounter{footnote}{0}
\def\thefootnote{\alph{footnote}}
\def\@makefnmark{{$^{\rm \@thefnmark}$}}
}
\def\title#1{
    \thispagestyle{plain}
    \vspace*{-14pt}
    \vskip 79pt
    {\centering{\titlefont #1\par}}%
    \vskip 1em
}
\def\rhosc{\rho_{\text{sc}}}
\newcommand{\cT}{{\mathcal T}}
\newcommand{\txt}[1]{\text{\rm{#1}}}
\newcommand{\bld}[1]{\boldsymbol{\mathrm{#1}}} 
\newcommand{\pbb}[1]{\biggl(#1\biggr)}
\begin{document}

\title{Edge Universality of Random Regular Graphs of Growing Degrees}

\vspace{1.2cm}

\noindent \begin{minipage}[c]{0.5\textwidth}
 \author{Jiaoyang Huang}
\address{University of Pennsylvania\\
   E-mail: huangjy@wharton.upenn.edu}
 \end{minipage}
\begin{minipage}[c]{0.5\textwidth}
 \author{Horng-Tzer Yau}
\address{Harvard University \\
   E-mail: htyau@math.harvard.edu}

 \end{minipage}

\begin{abstract}
We consider the statistics of  extreme eigenvalues of random $d$-regular graphs, with $N^{\mathfrak c}\leq d\leq N^{1/3-{\mathfrak c}}$ for arbitrarily small ${\mathfrak c}>0$. 
We prove that in this regime, the  fluctuations of extreme eigenvalues are given by  the Tracy-Widom distribution. As a consequence, about 69\% of $d$-regular graphs have all nontrivial eigenvalues bounded in absolute value by $2\sqrt{d-1}$. 
\end{abstract}

{
\hypersetup{linkcolor=black}
\setcounter{tocdepth}{1}
\tableofcontents
}

\section{Introduction}\label{s:intro}
\textit{Expander graphs} are sparse graphs with strong connectivity property, and have numerous applications to the design of robust networks and the theory of error-correct coding \cite{hoory2006expander}. The expansion property of a regular graph can be measured by its spectral gap. \textit{Ramanujan graphs} are $d$-regular graphs with the largest spectral gap, i.e. all nontrivial eigenvalues are bounded in absolute value by $2\sqrt{d-1}$.  They are the best possible expander graphs, at least as far as the spectral gap measure of expansion is concerned. 

Ramanujan graphs were explicitly constructed when $d=p+1$ with $p$ a prime power, by Lubotzky, Phillips and Sarnak \cite{lubotzky1988ramanujan} and independently by Margulis \cite{margulis1988explicit}, using deep tools from number theory. More recently, in the breakthrough work \cite{marcus2013interlacing,marcus2018interlacing}, Marcus, Spielman and Srivastava proved that there exist \textit{bipartite Ramanujan graphs} of every degree. It still remains an interesting open problem whether there exist non-bipartite Ramanujan graphs of every degree. However, based on numerical simulation \cite{miller2008distribution}, it was observed a positive portion of \textit{random $d$-regular graphs} are Ramanujan. 

We study the statistics of extreme eigenvalues of random $d$-regular graphs, when the degree $d$ grows with the size of the graph. For random $d$-regular graphs on $N$ vertices, it has been proven in \cite{bauerschmidt2020edge} for $N^{2/9+\fc}\leq d\leq N^{1/3-\fc}$ and in \cite{he2022spectral} for $N^{2/3+\fc}\leq d\leq N/2$, the  fluctuations of extreme eigenvalues are given by  the \textit{Tracy-Widom distribution} from random matrix theory. In this work we generalize these edge universality results to the sparser regime $N^\fc\leq d\leq N^{1/3-\fc}$ for arbitrarily small $\fc>0$. As a consequence, about 69\% of $d$-regular graphs on $N$ vertices with $d^3\ll N\leq d^{1/\fc}$ have all nontrivial eigenvalues bounded in absolute value by $2\sqrt{d-1}$. This gives many Ramanujan graphs.

Universality for the edge statistics of Wigner matrices was first established by the moment method \cite{MR1727234, ruzmaikina2006universality, khorunzhiy2012high} under certain symmetry assumptions on the distribution of the matrix elements. The moment method was further developed in \cite{MR2475670,MR2647136} and \cite{MR2726110}. A different approach to edge universality for Wigner matrices  based on the direct comparison with corresponding Gaussian ensembles was developed in \cite{MR2669449,MR2871147,erdHos2012spectral}. And later,  a necessary and sufficient condition for edge universality of Wigner matrices were discovered in \cite{lee2014necessary}.

For the comparison argument, a key intermediate ingredient is the rigidity of extreme eigenvalues. For edge universality to be true, it is necessary that extreme eigenvalues fluctuate on scale $\OO_\prec(N^{-2/3})$. Main body of this paper is devoted to prove such optimal rigidity estimate. The proof is based on first constructing a \textit{higher order self-consistent equation} for the Stieltjes transform of the empirical eigenvalue distribution, then computing high moments of the self-consistent equation by a recursive moment estimate, and proving it concentrates around zero. This approach was first introduced in \cite{lee2016local} and further developed in \cite{lee2021higher,huang2020transition,huang2022edge,he2021fluctuations} to study sparse Erd{\H o}s-R{\' e}nyi graphs and in  \cite{bauerschmidt2020edge} to study sparse random $d$-regular graphs.

For random $d$-regular graphs, the self-consistent equation has a $1/\sqrt d$ asymptotic expansion. The main challenge for the above approach is to show high moments of the self-consistent equation are close to zero, which requires cancellations of each order in the asymptotic expansion. \cite{bauerschmidt2020edge} was able to explore the cancellation up to $d^{-3/2}$, which implies the fluctuation of extreme eigenvalues is bounded by $\OO_\prec(d^{-3}+N^{-2/3})$. To get the optimal $\OO_\prec(N^{-2/3})$ fluctuation bound, we need to have cancellations up to $d^{-\fa/2}$ for arbitrary large $\fa\geq 0$. To achieve it, in this paper we combine the two steps, and construct  the high order self-consistent equation and compute its high moments in one step. By construction, our high order self-consistent equation concentrates around zero. To identify the the self-consistent equation, we directly show the Stieltjes transform of the spectral measure of the \textit{infinite $d$-regular tree} satisfies the same equation. Thus extreme eigenvalues concentrate around the spectral edge of the infinite $d$-regular tree, with optimal error $\OO_\prec(N^{-2/3})$.

The computation of the high order self-consistent equation boils down to estimate the following quantity (and its generalizations)
\begin{align}\label{e:keyterm}
\bE[A_{\cT} R U], 
\end{align}
where $\cT$ is a sub-tree graph (may not be connected), $A_\cT$ is the indicator function that $\cT$ is contained in the graph, $R$ is a polynomial of the Green's function entries, and $U$ is any polynomial of the Stieltjes transform. To estimate \eqref{e:keyterm}, we resample each edge of the sub-tree $\cT$, and use it as a discrete integration by part. This generalizes the simple switching procedure, introduced in \cite{bauerschmidt2017local} to prove the local law of random $d$-regular graphs.

For \eqref{e:keyterm}, there are three cases:
i) If $R$ contains an off-diagonal Green's function entry $G_{ij}$ such that $i,j$ are in different connected components of $\cT$, we show \eqref{e:keyterm} is negligible. ii) If $R$ contains an off-diagonal Green's function entry $G_{ij}$ such that $i,j$ are in the same connected component of $\cT$, we show \eqref{e:keyterm} is a finite linear combination of terms in the same form, but with some extra factors of $1/\sqrt d$. iii) If $R$ contains only diagonal Green's function entries,  the leading term is obtained by replacing $R$ by a power of the Stieltjes transform of the empirical measure, and the higher order terms are again in the form of \eqref{e:keyterm} with some extra factors of $1/\sqrt d$. By repeating this procedure, we eventually obtain a polynomial of the Stieltjes transform of the empirical measure, which leads to the self-consistent equation. 

To prove the convergence to the Tracy-Widom distribution, we took a dynamical approach, which was original developed  to prove the bulk universality of Wigner matrices \cite{MR2810797,MR2919197, bourgade2017eigenvector,landon2019fixed,landon2017convergence,bourgade2016fixed,MR3699468, erdHos2017universality}.
We interpolate the adjacency matrix with the Gaussian orthogonal ensemble (GOE). The eigenvalues at each time slice have the same law as Dyson Brownian motion (DBM). For sufficiently regular initial data, it has been proven in \cite{landon2017edge}, after short time the eigenvalue statistics at the spectral edge of DBM agree with GOE. With the optimal edge rigidity as input, \cite{landon2017edge} implies the edge universality holds for adjacency matrix plus a small Gaussian component. Our main contribution is to show the edge statistics do not change along the interpolation, thus edge universality holds for the original adjacency matrix. The main challenge is to keep track the edge location $L_t$ along the interpolation. At each time $t$, we can construct a high order self-consistent equation, which governs the spectral of the interpolated matrix. We discover that  the time derivative of $L_t$ is given by the spatial derivative of the corresponding high order self-consistent equation  with a negligible error. This allows us to build a dynamical framework around $L_t$ and show the edge statistics do not change.

It is worth to compare edge results of sparse Erd{\H o}s--R{\'e}nyi graphs and random $d$-regular graphs. Thanks to a series of papers \cite{he2021fluctuations,erdHos2012spectral, lee2016local, lee2021higher,huang2020transition,huang2022edge}, the edge statistics of Erd{\H o}s--R{\'e}nyi graphs $G(N,p)$ with $Np\geq N^\fc$ are now well understood. When the average degree $pN\gg N^{1/3}$, extreme eigenvalues have Tracy-Widom fluctuations \cite{erdHos2012spectral, lee2016local}. However, when $N^{\fc}\leq  pN\ll N^{1/3}$ the extreme eigenvalues have Gaussian fluctuations, given by the fluctuation of the total number of edges \cite{huang2020transition,he2021fluctuations}. The higher order fluctuations are given by a sum of an infinite hierarchy of subgraph counting random variables which are asymptotically joint Gaussian \cite{lee2021higher}. Surprisingly, up to this explicit random shift, the fluctuations of  extreme eigenvalues are still given by the Tracy-Widom distribution \cite{huang2022edge}. 
For random $d$-regular graphs, the fluctuations of those subgraph counting
 quantities are negligible. Extreme eigenvalues concentrate around a deterministic spectral edge and converge to the Tracy-Widom
distribution. In the same regime, bulk universality for Erd{\H o}s--R{\'e}nyi graphs and random $d$-regular graphs are also known \cite{MR3729611,huang2015bulk,erdHos2012spectral}.

For Erd{\H o}s--R{\'e}nyi graphs in the sparser regime $Np=\OO(\ln N)$, there exists a critical value $b_*=1/(\ln 4-1)$ such that  if $Np\geq b_*\ln N/N$, the extreme eigenvalues of the normalized adjacency matrix converge to $\pm 2$ \cite{benaych2019largest, alt2021extremal, tikhomirov2021outliers, benaych2020spectral} and all the eigenvectors are delocalized \cite{alt2022completely, erdHos2013spectral}. For $(\ln  \ln  N)^4\ll Np<b_*\ln N/N$, there are outlier eigenvalues \cite{tikhomirov2021outliers, alt2021extremal}.
The spectrum splits into three phases: a delocalized phase
in the bulk, a fully localized phase near the spectral edge, and a semilocalized phase in between \cite{alt2023poisson,alt2021delocalization}. Moreover, the joint fluctuations of the eigenvalues near the spectral edges form a Poisson point process.
For random $d$-regular graphs with any fixed degree $d\geq 3$, all the eigenvectors are delocalized \cite{bauerschmidt2019local,huang2021spectrum}. The fluctuation of extreme eigenvalues is still conjectured to be the Tracy-Widom distribution \cite{miller2008distribution}. This conjecture is still quite open, but we do know the extreme eigenvalues concentrate around $\pm 2\sqrt{d-1}$ with an error $\OO((\ln \ln N/ \ln N)^2)$ \cite{bordenave2020new, friedman2003proof}, which  recently was  improved by the two authors to a polynomial bound $\OO(N^{-\fc})$ \cite{huang2021spectrum}.

\noindent\textbf{Organization.} We define the model and present the main results in the rest of Section \ref{s:intro}. 
In Section \ref{sec:notation}, we recall the local law for random $d$-regular graphs from \cite{MR3688032}, and state our improved estimates for the Stieltjes transform of the empirical eigenvalue distribution. 
In Section \ref{sec:switchings}, we discuss simple switchings, and use them as a discrete integration by part. In Section \ref{sec:GFE}, we collect some estimates and notations on Green's function. 
In Section \ref{sec:P-construct}, we construct the high order self-consistent equation, and show its moments are close to zero.  In Section \ref{sec:P-identification}, we 
identify the self-consistent equation and prove the optimal edge rigidity. 
In Section \ref{sec:universality}, we prove the edge universality result by interpolating with GOE. 

\noindent\textbf{Notations.} We use $C$ to represent large universal constants, and $c$ small universal constants, which may be different from line by line. Let $Y\geq 0$. 
We write $X\lesssim Y$ or $Y\gtrsim X$ if there exists a constant $C>0$, such that $X\leq CY$. 
We write $X\asymp Y$ if there exists a constant $C>0$ such that $Y/C\leq X\leq Y/C$.
We use $\bC_+$ to represent the upper half plane. For any index set $\bm m=\{m_1, m_2,\cdots, m_r\}$,
we write
\begin{align}\label{e:starsum}
\sum^*_{\bm m}=\sum_{1\leq m_1, m_2,\cdots, m_r\leq N, \atop \text{distinct}}.
\end{align}

\noindent\textbf{Acknowledgements }
The research of J.H. is supported by  NSF grant DMS-2054835. The research of H-T.Y. is supported by NSF grants DMS-1855509, DMS-2153335 and a Simons Investigator award. 
J.H. wants to thank Yukun He for carefully reading the manuscript of the paper, and pointing some mistakes.

\subsection{Main results}

Let $\bP$ be the uniform probability measure on the set of $d$-regular graphs on $N$ vertices. We identify a graph with its adjacency matrix $A = (A_{ij}) \in \{0,1\}^{N \times N}$, defined as $A_{ij} = 1$ if and only if $i$ and $j$ are adjacent. Thus, $\bP$ is the uniform probability measure on the set of symmetric matrices $A \in \{0,1\}^{N \times N}$ satisfying $\sum_{j = 1}^N A_{ij} = d$  and $A_{ii} = 0$  for all $i = 1,\dots, N$.

Since $A$ is $d$-regular, it is immediate that $A$ has a trivial eigenvalue $d$ with associated eigenvector $(1,1,\dots,1)^*$. Moreover, by the Perron-Frobenius theorem, all other eigenvalues are bounded in absolute value by $d$. For convenience, we shall consider the normalized adjacency matrix
\begin{equation} \label{def_H}
H\deq A/\sqrt{d-1}.
\end{equation}
We denote its eigenvalues by $\lambda_1 = d/\sqrt{d-1} \geq \lambda_2 \geq \cdots \geq \lambda_N \geq -d/\sqrt{d-1}$, and corresponding orthonormal eigenvectors $\bmu_1,\bmu_2,\cdots, \bmu_N$.

Our main result is about the limiting distribution of the extremal eigenvalues. For $1 \ll d \ll N^{1/3}$, extreme eigenvalues of random $d$-regular graphs converge to the Tracy-Widom distribution.

\begin{theorem} \label{thm:Tracy-Widom}
Fix arbitrarily small $\fc>0$, and an integer $k\geq 1$, let $F : \rr^k \to \rr$ be a bounded test function with bounded derivatives. For $N^{\fc} \leq d \leq N^{1/3-\fc}$, there is a universal constant $\fe>0$ so that,
\begin{align}\begin{split} \label{eqn:htedgeb1}
&\phantom{{}={}}\bE_{H}[ F (N^{2/3} ( \lambda_2 - 2), \cdots , N^{2/3} ( \lambda_{k+1}- 2 ) ]\\
& = \bE_{GOE}[ F (N^{2/3} ( \mu_1 - 2 ), \cdots, N^{2/3} ( \mu_k - 2) ) ]+\O\left(N^{-\fe}\right),
\end{split}\end{align}
provided $N$ is large enough. 
The second expectation  is with respect to a GOE matrix with eigenvalues $\mu_1\geq\mu_2\geq\cdots\geq \mu_N$. 
Analogous results hold for the smallest eigenvalues.
\end{theorem}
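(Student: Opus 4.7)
The plan is to combine an optimal edge rigidity estimate $|\lambda_2 - \tilLp| = \OO_\prec(N^{-2/3})$ (the bulk of the work) with a Dyson Brownian motion (DBM) interpolation argument. Here $\tilLp$ is the upper spectral edge of the infinite $d$-regular tree, close to $2$. I would interpolate $H$ with a GOE matrix $W$ via the Ornstein--Uhlenbeck flow $H_t = \mathrm{e}^{-t/2} H + (1 - \mathrm{e}^{-t})^{1/2} W$. Results of Landon--Yau \cite{landon2017edge} on short-time DBM then guarantee that for $t_0 = N^{-1/3+\epsilon}$ the top eigenvalues of $H_{t_0}$ have Tracy--Widom fluctuations, given rigidity of the initial data on scale $N^{-2/3}$. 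Complementarily, I would show that the edge statistics of $H_t$ are essentially constant along the flow from $t=0$ to $t=t_0$, which closes the loop and gives \eqref{eqn:htedgeb1}.

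For the rigidity step, I would construct, for spectral parameter $z$ at distance $\eta \gtrsim N^{-2/3}$ above the edge, a polynomial self-consistent equation $P(m_N(z), z) \approx 0$ for $m_N(z) = N^{-1} \Tr (H - z)^{-1}$, carrying enough terms in the $1/\sqrt d$ expansion. Construction and concentration would be done in one combined step: for a large integer $p$, I would directly estimate $\bE[|P(m_N,z)|^{2p}]$ by iteratively simplifying it. The core computation is to bound expectations $\bE[A_\cT R U]$ as in \eqref{e:keyterm}, where $A_\cT$ is the indicator that a prescribed subtree $\cT$ lies in the graph, $R$ is a polynomial in Green's function entries and $U$ is a polynomial in $m_N$. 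Applying the simple switching along each edge of $\cT$ as a discrete integration by parts yields the trichotomy described after \eqref{e:keyterm}: either the output is negligible (an off-diagonal entry $G_{ij}$ links two components of $\cT$), or it is a similar expression on a smaller graph gaining an explicit factor $1/\sqrt d$, or $R$ contains only diagonal entries and can be replaced by a power of $m_N$ modulo corrections of the same form gaining $1/\sqrt d$. Iterating until all subtree factors are cleared, and tracking the gains, produces $P(m_N,z) = (\text{errors of order } d^{-\fa/2} + (N\eta)^{-1})$ with $\fa$ arbitrarily large; Markov then gives $|P(m_N,z)| = \OO_\prec((N\eta)^{-1} N^{-\fc/2})$.

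To extract rigidity I would identify $P$: the Stieltjes transform $\tilminf$ of the spectral measure of the infinite $d$-regular tree satisfies the same equation $P(\tilminf,z) = 0$, which can be seen by running the same switching recursion on the tree where each switching identity is exact. A stability analysis for $P$ near its edge $\tilLp$ (square-root behavior) then yields $|m_N(z) - \tilminf(z)| = \oo((N\eta)^{-1})$ on a small box above the edge, and a Helffer--Sj\"ostrand argument upgrades this into $|\lambda_2 - \tilLp| = \OO_\prec(N^{-2/3})$, with $\tilLp = 2 + \OO(1/d)$.

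For the DBM comparison I would set up a dynamical framework around the time-dependent edge $L_t$ associated to the time-dependent self-consistent equation $P_t$. The crucial new identity, derived from the time derivative of $m_{N,t}$ together with $P_t(\cdot,L_t) \approx 0$, is that $\partial_t L_t$ equals the spatial derivative $\partial_z P_t$ at $L_t$ (up to a suitable normalization) modulo negligible error. Differentiating Tracy--Widom-type edge observables along the flow, using this identity with the DBM equations and rigidity as input, gives a Gronwall-type bound showing that the observables change by at most $N^{-\fe}$ from $t=0$ to $t_0$; composing with Landon--Yau at time $t_0$ yields the theorem. The main obstacle is the combined construction of $P$ and its recursive moment estimate: in the sparsest regime $d \sim N^\fc$ one needs genuine cancellations up to order $d^{-\fa/2}$ with $\fa \gtrsim 1/\fc$ arbitrarily large, which is precisely why earlier finite-truncation approaches stop at the weaker rigidity $\OO_\prec(d^{-3} + N^{-2/3})$.
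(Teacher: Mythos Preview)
Your proposal is correct and follows essentially the same route as the paper: optimal edge rigidity via a combined construction of the self-consistent polynomial $P$ and its recursive moment estimate through switchings (with exactly the trichotomy you describe for $\bE[A_{\cT}RU]$), identification of $P$ via the infinite $d$-regular tree, then the three-step DBM strategy using Landon--Yau at time $t_0=N^{-1/3+\epsilon}$ together with a comparison driven by the identity relating $\partial_t L_t$ to the derivative of the time-dependent self-consistent equation. One small correction: in the paper's normalization the Kesten--McKay edge is exactly $2$, not $2+\OO(1/d)$.
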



Since the seminal works \cite{lubotzky1988ramanujan,margulis1988explicit}, it remains an open problem if there exist infinite families of Ramanujan graphs with any degree $d$. For the bipartite case, an affirmative answer was given in the recent breakthrough work \cite{marcus2013interlacing,marcus2018interlacing}; and see \cite{MR3630988} for polynomial time algorithm of this construction. The problem for non-bipartite case is still open. And it is conjectured in \cite{MR2072849,miller2008distribution},  a positive fraction of regular graphs of fixed $d$ is Ramanujan.
As emphasized in \cite{MR2072849},
the Tracy--Widom$_1$ distribution has  about $83\%$ of its mass on the set $\{x:x<0\}$.
Therefore Theorem~\ref{thm:Tracy-Widom} implies $83\%$ of
$d$-regular graphs have the second eigenvalue \emph{less than} $2\sqrt{d-1}$,
provided that $N$ and $d$ obey the conditions of Theorem~\ref{thm:Tracy-Widom}. 
The proof of Theorem~\ref{thm:Tracy-Widom} can be extended to show 
that in the regime $N^{\fc} \leq d \leq N^{1/3-\fc}$ the largest and smallest nontrivial eigenvalues converge in distribution to \emph{independent} Tracy--Widom$_1$ distributions; see Remarks \ref{rem:independence} and \ref{rem:ind2} below.
As a consequence, we have the following result.

\begin{corollary}\label{c:rate}
For $d$ large enough and $d^3\ll N\leq d^{1/\fc}$, about $69\%$\footnote{In \cite{bauerschmidt2020edge}, the percentage is $27\%$, which is a mistake. This percentage $27\%$ is computed as $52\%\times 52\%$, using that the Tracy--Widom$_1$ distribution has  about $52\%$ of its mass on the set $\{x:x<0\}$. The number $52\%$ is wrong, and should be $83\%$. } of $d$-regular graphs on $N$ vertices have all nontrivial eigenvalues bounded in absolute value by $2\sqrt{d-1}$.
\end{corollary}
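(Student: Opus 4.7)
The plan is to reduce the Ramanujan event to a joint event on the two extreme nontrivial eigenvalues of the normalized matrix $H$, and then to extract its limiting probability from Theorem~\ref{thm:Tracy-Widom} applied simultaneously at both spectral edges. In the normalization \eqref{def_H}, the statement that all nontrivial eigenvalues of $A$ are bounded in absolute value by $2\sqrt{d-1}$ is exactly
\begin{equation*}
\hb{N^{2/3}(\lambda_2 - 2) \leq 0} \cap \hb{N^{2/3}(-\lambda_N - 2) \leq 0}.
\end{equation*}

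First I would pass from the indicator of this joint event to a smooth test function so that Theorem~\ref{thm:Tracy-Widom} applies. For each $\eps>0$, I would choose smooth functions $F_\eps^\pm : \rr \to [0,1]$ with bounded derivatives satisfying $F_\eps^-(x) \leq \1_{x\leq 0} \leq F_\eps^+(x)$, and with $F_\eps^\pm(x) = \1_{x\leq 0}(x)$ outside $[-\eps,\eps]$. Applying Theorem~\ref{thm:Tracy-Widom} with $k=1$, together with its analog for the smallest eigenvalue noted at the end of the theorem, to the product test function $(x,y)\mapsto F_\eps^\pm(x)F_\eps^\pm(y)$, the probability of the above event is sandwiched, up to an $\OO(N^{-\fe})$ error, between the two GOE expectations
\begin{equation*}
\bE_{\mathrm{GOE}}\qb{F_\eps^\pm\p{N^{2/3}(\mu_1-2)}\,F_\eps^\pm\p{N^{2/3}(2-\mu_N)}},
\end{equation*}
where $\mu_1,\mu_N$ are the largest and smallest eigenvalues of a GOE matrix.

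Next, I would pass $N\to\infty$ and then $\eps\to 0$. Using the asymptotic independence of the top and bottom GOE edges, the convergence of each rescaled edge to the Tracy--Widom$_1$ law, and continuity of that law at $0$, the upper and lower bounds both converge to $\bP(\mathrm{TW}_1 \leq 0)^2$. Numerically $\bP(\mathrm{TW}_1 \leq 0) \approx 0.83$, so the limiting probability is $\approx (0.83)^2 \approx 0.69$, which gives the claim (with the quantitative polynomial error $\OO(N^{-\fe})$ plus an $\eps$-dependent correction from the smoothing, each of which can be made arbitrarily small for $d$, hence $N$, large).

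The one nontrivial input beyond Theorem~\ref{thm:Tracy-Widom} is the asymptotic independence of $\lambda_2$ and $\lambda_N$, which is precisely the content of Remarks~\ref{rem:independence} and~\ref{rem:ind2}; this reduces, once the optimal edge rigidity and single-edge comparison are established, to running the interpolation-with-GOE and resolvent comparison arguments jointly at both spectral edges, using that the GOE's top and bottom edges are asymptotically independent. Everything else (the smooth approximation, the product structure of the test function, and the numerical evaluation) is routine once independence is granted.
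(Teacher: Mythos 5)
Your argument is correct and matches the paper's intended proof: both rely on Theorem~\ref{thm:Tracy-Widom} at each edge, the asymptotic independence of $\lambda_2$ and $\lambda_N$ supplied by Remarks~\ref{rem:independence} and~\ref{rem:ind2}, and the numerical value $\bP(\mathrm{TW}_1\leq 0)\approx 0.83$, giving $0.83^2\approx 0.69$. The only presentational caveat is that the sandwiching step should invoke the joint-edge extension (Remark~\ref{rem:ind2}) rather than Theorem~\ref{thm:Tracy-Widom} directly, since the theorem as stated handles test functions of only one edge at a time — but you flag this correctly in your final paragraph.
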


Although Corollary \ref{c:rate} does not give infinite families of Ramanujan graphs, it does give a  lot Ramanujan graphs.

%
%

Unless stated otherwise, all quantities in this paper depend on the fundamental parameter $N$, and we omit this dependence from our notation. For the following statements, for deterministic $N$-dependent quantities  $X$ and $Y$ \nc we write
$X \ll Y$ if $X \leq N^{-\varepsilon} Y$  for some fixed $\varepsilon>0$ and $N\geq N(\varepsilon)$ large enough.
For $N$-dependent random (or deterministic) variables $X$ and $Y$, we say $Y$ \emph{stochastically dominates} $X$, if for any $\varepsilon>0$ and $D>0$, then
\begin{equation*}
 \bP(X \leq N^\varepsilon Y) \geq 1-N^{-D},
\end{equation*}
for $N\geq N(\varepsilon, D)$ large enough, and we write   $X \prec Y$ or $X=\O_\prec(Y)$. We say an event $\Omega$ holds with \emph{overwhelming probability}, if for any $D>0$, $\bP(\Omega)\geq 1-N^{-D}$ for $N\geq N(D)$ large enough.


\section{Eigenvalue Rigidity Estimates}\label{sec:notation}

We consider the adjacency matrix $A$ restricted to the subspace orthogonal to the vector ${\bf 1} \deq (1,\dots, 1)^*$.
More precisely,
let $P_\perp \col \bR^N \to \bR^N$ be the orthogonal projection onto ${\bf 1}^{\perp}$,
explicitly given by $P_{\perp}=I-{\bf 1}{\bf 1^*}/N$ where $I$ is the $N\times N$ identity matrix.
Since $H$ is the normalized adjacency matrix of a regular graph, the matrices $H$ and $P_\perp$ commute: $HP_\perp=P_\perp H$.

For a spectral parameter $z \in \bC_+ \deq \{z\in \bC \col \Im[z]>0\}$ we define the \emph{Green's function} by
\begin{equation*}
  G(z) \deq P_\perp (H-z)^{-1}P_{\perp}=\sum_{i=2}^N \frac{\bmu_i \bmu_i^*}{\la_i-z},
\end{equation*}
we remark that the projection $P_{\perp}$ kills the largest eigenvalue.
Thus, $G$ and $(H-z)^{-1}$ agree on the image of $P_\perp$, which is the subspace of $\bR^N$ perpendicular to $\bm 1$.
The Green's function satisfies the relation
\begin{align}\label{e:GHexp}
G(z)H=HG(z)=zG(z)+P_{\perp}=zG(z)+(I-{\bf 1}{\bf 1^*}/N).
\end{align}
Moreover,
\begin{equation} \label{sumG0}
\sum_{i=1}^NG_{ij}(z)=\sum_{j=1}^NG_{ij}(z)=0.
\end{equation}
We denote the normalized trace of $G$, which is also the Stieltjes transform of the empirical spectral measure of $H|_{\bm 1^{\perp}}$, by
\begin{equation} \label{e:m}
  m(z) \deq \frac{1}{N} \sum_i G_{ii}(z)=\sum_{i=2}^N\frac{1}{\la_i-z}.
\end{equation}
 We refer to $m(z)$ simply as the \emph{Stieltjes transform}.
Our goal is to approximate $m(z)$ by $\md(z)$,
the Stieltjes transform of the Kesten--McKay law \cite{MR0109367},
\begin{align*}
\md(z) \deq \int_\bR \frac{\rho_d(x)}{x-z}\rd x , \qquad \rho_d(x)\deq \left(1+\frac{1}{d-1}-\frac{x^2}{d}\right)^{-1}\frac{\sqrt{[4-x^2]_+}}{2\pi}.
\end{align*}
The Kesten--McKay law $\rho_d$ is the spectral measure at any vertex of the infinite $d$-regular tree (see, for example, \cite{MR3364516}
or \cite[Section~5]{MR3962004}). It has support $[-2,2]$ in our normalization. The Stieltjes transform $\md(z)$ is explicitly given by
\begin{equation} \label{e:md}
  \md(z) = -\pa{z+\frac{d}{d-1} \msc(z)}^{-1},
\end{equation}
where $\msc(z)$ is the Stieltjes transform of the Wigner semicircle law,
\begin{equation} \label{def_sc}
\msc(z)\deq\int_\bR\frac{\rhosc(x)}{x-z}\rd x , \qquad \rhosc(x)\deq\frac{\sqrt{[4-x^2]_+}}{2\pi},
\end{equation}
satisfying the self-consistent equation
\begin{equation} \label{e:sc_sce}
1+z\msc(z)+\msc^2(z)=0.
\end{equation}
Later we shall use that, alternatively, $\md(z)$ can be characterized by the self-consistent equation, which follows from rearranging \eqref{e:md}
\begin{align} \label{e:md-sce}
P_\infty(z,\md(z)) = 0, \quad
P_\infty(z,w) = 1 + z w + \frac{d}{d - 1} w^2 + \sum_{k \geq 2} \frac{(-2)^{k - 1} (2k - 3)!!}{k!} \, \frac{d}{(d - 1)^k} \, w^{2k}.
\end{align}

We fix a large constant $\fK > 0$ and define the spectral domain
\begin{equation} \label{e:D}
  \mathbf D \deq \{z=E+\ri \eta\col -\fK \leq E \leq \fK, 0< \eta \leq \fK, N\eta\sqrt{\min\{|E-2|, |E+2|\}+\eta}\geq N^{1/\fK}\}.
\end{equation}
Here, and throughout the following, we use the notation
$
z = E + \ri \eta
$
for the real and imaginary parts of $z$.
The local semicircle law for random regular graphs \cite{MR3688032}
shows that $m(z)$ is approximated by $\md(z)$ to order $1/\sqrt{d}$ (up to logarithmic corrections),
at least away from the edges $\pm 2$ of the spectral measure.  The next result follows from \cite[Theorem 1.1]{MR3688032}. We remark that \cite[Theorem 1.1]{MR3688032} gives a better estimate for $\Lambdad$ for $z$ away from the edge $\pm 2$, but the following weaker form is enough for our purpose.
\begin{theorem}[{\cite[Theorem 1.1]{MR3688032}}]  \label{thm:rigidity}
Let $z=E+\ri \eta\in \bC_+$, define the deterministic control parameters
\begin{equation} \label{Lambda_weak}
\Lambdao(z) \deq \frac{1}{\sqrt{N\eta}} + \frac{1}{\sqrt{d}} + \frac{d^{3/2}}{N},\quad \Lambdad(z) \deq \left(\frac{1}{\sqrt{N\eta}} + \frac{1}{\sqrt{d}} + \frac{d^{3/2}}{N}\right)^{1/2}.
\end{equation}
Fix small $\fc>0$, we have, for $N^\fc\leq d\leq N^{2/3-\fc}$ and all $z\in \mathbf D$ from \eqref{e:D}
\begin{align} \label{G_estimates}
\max_{i\neq j}|G_{ij}(z)|\prec \Lambdao(z), \quad \max_{i}|G_{ii}(z)-\msc(z)|\prec \Lambdad(z).
\end{align}
\end{theorem}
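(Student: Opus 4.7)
The plan is to establish Theorem \ref{thm:rigidity} by adapting the standard resolvent-based approach to local laws to the highly correlated setting of random regular graphs. The principal obstruction compared with Wigner matrices is that the entries of $A$ are strongly dependent through the constraints $\sum_j A_{ij}=d$, so Gaussian integration by parts and cumulant expansions are unavailable. In their place I would use the \emph{simple switching} identity: the uniform measure $\bP$ is invariant under replacing a pair of disjoint edges $\{a,b\},\{c,d\}$ by $\{a,c\},\{b,d\}$ whenever this produces another simple $d$-regular graph. Averaging a functional of $A$ over all valid switchings then yields a discrete integration-by-parts formula which plays the role of the cumulant identities used in the classical proofs.

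First I would derive an approximate self-consistent equation for the diagonal entries. Starting from $HG=zG+P_\perp$, the $(i,i)$ entry reads $\sum_k H_{ik}G_{ki} = zG_{ii} + 1 - 1/N$. The plan is to rewrite $\sum_k A_{ik}G_{ki}$ by conditioning on a uniformly chosen neighbor $k$ of $i$ and performing a switching that removes the edge $\{i,k\}$ and reinserts it at a uniformly chosen non-edge. Expanding $G$ under the switching via the resolvent perturbation formula and averaging, the leading replacement is $\sum_k A_{ik}G_{ki} \approx (d-1)\,m(z)\,G_{ii}(z)$, yielding
\begin{equation*}
1 + z\,G_{ii}(z) + \frac{d}{d-1}\,m(z)\,G_{ii}(z) \;=\; \O_\prec(\mathcal E(z)),
\end{equation*}
where $\mathcal E(z)$ collects the switching errors. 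Averaging over $i$ gives the same equation with $m(z)$ in place of $G_{ii}(z)$; this is precisely the leading-order part of $P_\infty(z,\md(z))=0$ in \eqref{e:md-sce}, the subleading terms of $P_\infty$ being of order $1/d$ and hence subsumed in $\Lambdad$.

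The second step is a stability analysis. Linearizing $P_\infty(z,\cdot)$ at $w=\md(z)$, the derivative $\partial_w P_\infty(z,\md(z))$ is of size $\sqrt{\kappa+\eta}$ with $\kappa \deq \min\{|E-2|,|E+2|\}$. Inverting this linearization converts an equation error of size $\mathcal E$ into the diagonal bound $|G_{ii}-\md(z)|\prec \sqrt{\mathcal E / \sqrt{\kappa+\eta}}$, whose square root is the source of the identity $\Lambdad=\Lambdao^{1/2}$. The off-diagonal bound $|G_{ij}|\prec\Lambdao$ is obtained by applying the same switching procedure to $\sum_k H_{ik}G_{kj}$ for $j\neq i$ and closing through the Ward identity $\sum_k|G_{ik}|^2=\Im G_{ii}/\eta$. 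A standard continuity-in-$\eta$ argument then propagates a trivial a~priori bound at $\eta\asymp\fK$ down to every $z\in\mathbf D$, using the Lipschitz estimate $|\partial_\eta G|\le\eta^{-2}$ and the stability of the self-consistent equation on the event where it holds.

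The hard part will be bookkeeping the error $\mathcal E(z)$ produced by the switching expansion. Three distinct sources arise and each saturates in a different regime of $(d,N,\eta)$, so none can be dropped: (i) switchings that are forbidden because they would create a multi-edge or a loop; their density is of order $d^2/N$ in a random $d$-regular graph and, after integration, contribute the $d^{3/2}/N$ term in $\Lambdao$; (ii) fluctuations of the local graph structure around a pure $d$-regular tree (paths of length two through $i$), contributing the $1/\sqrt d$ term; and (iii) the usual resolvent fluctuations of $m(z)$, contributing the $1/\sqrt{N\eta}$ term. Handling the first class, in particular, requires input on short-cycle counts in the configuration model, and tracking these errors simultaneously through the switching expansion is the technical heart of the argument.
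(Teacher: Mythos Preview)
The paper does not contain a proof of this statement: Theorem~\ref{thm:rigidity} is quoted verbatim from \cite[Theorem~1.1]{MR3688032} and used as a black-box input throughout (see the remark immediately following the theorem). There is therefore no ``paper's own proof'' to compare against.

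That said, your outline is in the right spirit for how \cite{MR3688032} actually proceeds: simple switchings furnish a discrete integration-by-parts that replaces cumulant expansions, one derives an approximate self-consistent equation for $m$ (and, entrywise, for $G_{ii}$), and a stability plus continuity-in-$\eta$ bootstrap closes the argument. Two caveats. First, the leading replacement you write, $\sum_k A_{ik}G_{ki}\approx (d-1)\,m\,G_{ii}$, is not how the argument in \cite{MR3688032} is organized; the proof there works instead with a local resampling around vertex $i$ that compares $G$ to the Green's function of the $d$-regular tree (cf.\ \eqref{e:treeG}), and the self-consistent relation emerges from the tree recursion for $m_{\mathrm{sc}}$ rather than directly as $1+zG_{ii}+\tfrac{d}{d-1}mG_{ii}\approx 0$. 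Second, your bookkeeping of the three error terms is heuristically reasonable but not quite accurate in detail: the $d^{3/2}/N$ contribution in \cite{MR3688032} does not come from a ``density $d^2/N$'' of forbidden switchings (the relevant probability of a bad switch is $O(d/N)$, as in Proposition~\ref{c:intbp} here), but from combining the $d/N$ switching error with factors of $\sqrt{d}$ arising from the normalization $H=A/\sqrt{d-1}$ in the resolvent expansion.
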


Averaging over the index $i$, the estimate \eqref{G_estimates} implies an
  estimate on the Stieltjes transform $m(z)$. Using additional cancelations from this average,
  in this paper we shall derive a more precise estimate
  from which we obtain our results about the extremal eigenvalues.

\begin{theorem}\label{t:rigidity}

For $N^\fc\leq d\leq N^{1/3-\fc}$, uniform for $z=E+\ri \eta\in \mathbf D$, the Stieltjes transform of random $d$-regular graphs satisfies
\begin{align}\label{e:mbond}
|m(z)-m_d(z)|\prec 
\left\{\begin{array}{cc}
 \frac{1}{N\eta}+\frac{d}{N\sqrt{\dist(z,\pm 2)}}, & -2\leq E\leq 2,\\
 \frac{1}{\sqrt{\dist(z,\pm 2)}}\left(\frac{1}{N\eta^{1/2}}+\frac{1}{(N\eta)^2}+\frac{d}{N}\right), & |E|\geq 2.
\end{array}
\right.
\end{align}
We denote the classical eigenvalue locations of the Kesten-McKay distribution $\rho_d$ as $\gamma_2>\gamma_3>\cdots>\gamma_N$,
\begin{align*}
    \frac{k-1/2}{N-1}=\int_{\gamma_k}^{2}  \rho_d(x)\rd x,\quad 2\leq k\leq N. 
\end{align*}
Then  we have the following rigidity estimates
 \begin{align}\label{e:largeeig}
 |\lambda_k-\gamma_k|\prec \frac{d}{N}+\frac{1}{N^{2/3}\min\{k-1, N+1-k\}^{1/3}},\quad 2\leq k\leq N.
 \end{align}
\end{theorem}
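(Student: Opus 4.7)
The plan is to break the theorem into two parts. First, I would establish the refined Stieltjes transform estimate \eqref{e:mbond}; the eigenvalue rigidity \eqref{e:largeeig} then follows by the now-standard Helffer--Sjöstrand argument that converts control of $m - m_d$ on the spectral domain $\mathbf D$ into control of the eigenvalue counting function, after which \eqref{e:largeeig} is obtained by inverting the Kesten--McKay distribution function near its edges.

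The core of the argument is to construct, for each fixed integer $\fa \geq 1$, a polynomial $P_\fa(z,w)$ of the form $P_\fa(z,w) = 1 + z w + \tfrac{d}{d-1} w^2 + \sum_{k=2}^{\fa} c_k(d)\, w^{2k}$ such that the random ``self-consistent error'' $P_\fa(z, m(z))$ concentrates around zero in the sense that
\begin{equation*}
\bE \bigl|P_\fa(z, m(z))\bigr|^{2p} \prec \bigl(\Psi_\fa(z)\bigr)^{2p}
\end{equation*}
for a control parameter $\Psi_\fa$ that improves (as $\fa \to \infty$) beyond the $d^{-3/2}$ threshold of \cite{bauerschmidt2020edge}. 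The key innovation, advertised in the introduction, is to carry out the construction of $P_\fa$ \emph{simultaneously} with the recursive moment bound rather than sequentially. I would begin from the basic Green's function identity $HG = zG + P_\perp$, multiply by $G_{ij}$, average, and use $\sum_j A_{ij} = d$ to rewrite the result in terms of $1 + zm + (d/(d-1)) m^2$ plus correction terms of the schematic form $\bE[A_\cT\, R\, U]$ appearing in \eqref{e:keyterm}.

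The engine for evaluating these expectations is the discrete integration-by-parts provided by simple edge switchings (Section~\ref{sec:switchings}). The trichotomy outlined around \eqref{e:keyterm} will drive an induction: (i) when $R$ carries an off-diagonal $G_{ij}$ straddling two components of $\cT$, the term is negligible using the off-diagonal bound $\Lambda_{\rm o}$ of Theorem~\ref{thm:rigidity}; (ii) when the off-diagonal entry is internal to one component of $\cT$, the switching generates terms of the same shape with an extra factor $1/\sqrt d$; (iii) when $R$ is purely diagonal, the leading contribution is the replacement $G_{ii} \to m$ producing a monomial in $m$, while the remainder again carries an extra $1/\sqrt d$. Iterating this procedure $\fa$ times collects the coefficients $c_k(d)$ of $P_\fa$ and leaves a remainder that is $(1/\sqrt d)^{\fa}$-smaller, modulo factors in $\Lambda_{\rm o}$, $\Lambda_{\rm d}$, and $1/(N\eta)$. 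The main technical obstacle is bookkeeping: controlling the combinatorial explosion of subtrees $\cT$, monomials $R$, and the iterated use of switchings while maintaining an a priori input from Theorem~\ref{thm:rigidity}; this is where the uniformity in $\fa$ becomes delicate.

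To identify $P_\fa$ with the truncated self-consistent equation \eqref{e:md-sce}, I would apply the exact same construction to the infinite $d$-regular tree, where $m_d$ plays the role of $m$ and no error terms arise (since the computation is deterministic and the tree is homogeneous). This forces $P_\fa(z, m_d(z)) = \OO(d^{-\fa/2})$. Combined with the concentration statement above and a standard stability analysis of $w \mapsto P_\fa(z,w)$ near $w = m_d(z)$ (using that $\partial_w P_\infty(z, m_d(z))$ is comparable to $\sqrt{\dist(z,\pm 2)}$, which yields the square-root factor in \eqref{e:mbond}), one obtains \eqref{e:mbond} after choosing $\fa$ sufficiently large depending on $\fc$. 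Choosing the spectral parameter $\eta \sim N^{-2/3+\delta}$ at the edge in the resulting estimate feeds into the Helffer--Sjöstrand argument and produces the optimal $N^{-2/3}$ rigidity in \eqref{e:largeeig}; the $d/N$ term there reflects the $d/N$ shift of the edge of $\rho_d$ relative to $\pm 2$ and tracks the $d/(N\sqrt{\dist(z,\pm 2)})$ in the Stieltjes bound.
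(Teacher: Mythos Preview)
Your proposal is correct and follows essentially the same architecture as the paper: build the higher-order self-consistent polynomial $P$ by iterating the switching-based integration by parts on expressions $\bE[A_\cT R\,U]$, identify $P$ with (a truncation of) $P_\infty$ by running the identical expansion with tree Green's functions, then feed the moment bound on $P(z,m(z))$ into a stability lemma for $w\mapsto P(z,w)$ near $m_d$ (using $|\partial_2 P(z,m_d)|\asymp\sqrt{\dist(z,\pm2)}$) to get \eqref{e:mbond}, and finally invoke the standard local-law-to-rigidity argument for \eqref{e:largeeig}.

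Two small inaccuracies worth fixing before you write details. First, the identification step in the paper does \emph{not} yield $P_\fa(z,m_d(z))=\OO(d^{-\fa/2})$ from a truncation of an exact tree computation; rather one substitutes the tree Green's functions $\wh G^\cT_{ij}$ (and $m\to m_d$) into the \emph{same finite-$N$} sums $\sum^*_\bfi A_\cT(\cdots)$, and the residual error is $\OO(d/N)$, coming from the distinct-index constraints in Proposition~\ref{p:sumA}. Second, the $d/N$ term in \eqref{e:largeeig} does not reflect any shift of the Kesten--McKay edges (which are exactly $\pm2$); it is inherited from the $d^{3/2}\Lambda_o/N$ piece of the control parameter $\Phi_q$, which after the stability step produces the $d/(N\sqrt{\dist(z,\pm2)})$ contribution in \eqref{e:mbond}.
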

\begin{remark}
We remark the rigidity estimates \eqref{e:largeeig} is not sharp for bulk eigenvalues because of the extra term $d/N$. However, since $d/N\ll 1/N^{2/3}$, the estimate is sharp for extreme eigenvalues. 
\end{remark}

\section{Simple Switchings} \label{sec:switchings}

Our analysis makes use of switchings for regular graphs and also makes some use of the invariance under the permutation of vertices.
We use ideas related to those introduced in \cite{MR3688032, MR3729611}, to which we also refer for references to other uses of switchings.

\subsection{Switchings}

As in \cite{MR3688032,MR3729611}, we define the signed adjacency matrices
\begin{equation}\label{e:defxi}
(\Delta_{ij})_{ab} \deq \delta_{ia} \delta_{jb} + \delta_{ib} \delta_{ja},\qquad
\xi_{ij}^{k\ell} \deq \Delta_{ij} + \Delta_{k\ell} - \Delta_{ik} - \Delta_{j\ell},
\end{equation}
corresponding to a switching of the edges $ij$ and $k\ell$; see Figure~\ref{fig:switch1}. For any indices $i,j,k,\ell$, we denote the indicator function that the edges $ij$ and $k\ell$ are \emph{switchable}
(i.e.\ the edges $ij$ and $k\ell$ are present and the switching again results in a simple regular graph) by
\begin{align}\label{e:defchi}
\chi_{ij}^{k\ell}(A)=A_{ij}A_{k\ell}(1-A_{ik})(1-A_{jl}).
\end{align} 
\begin{figure}[t]
\begin{center}
\includegraphics[scale=0.4]{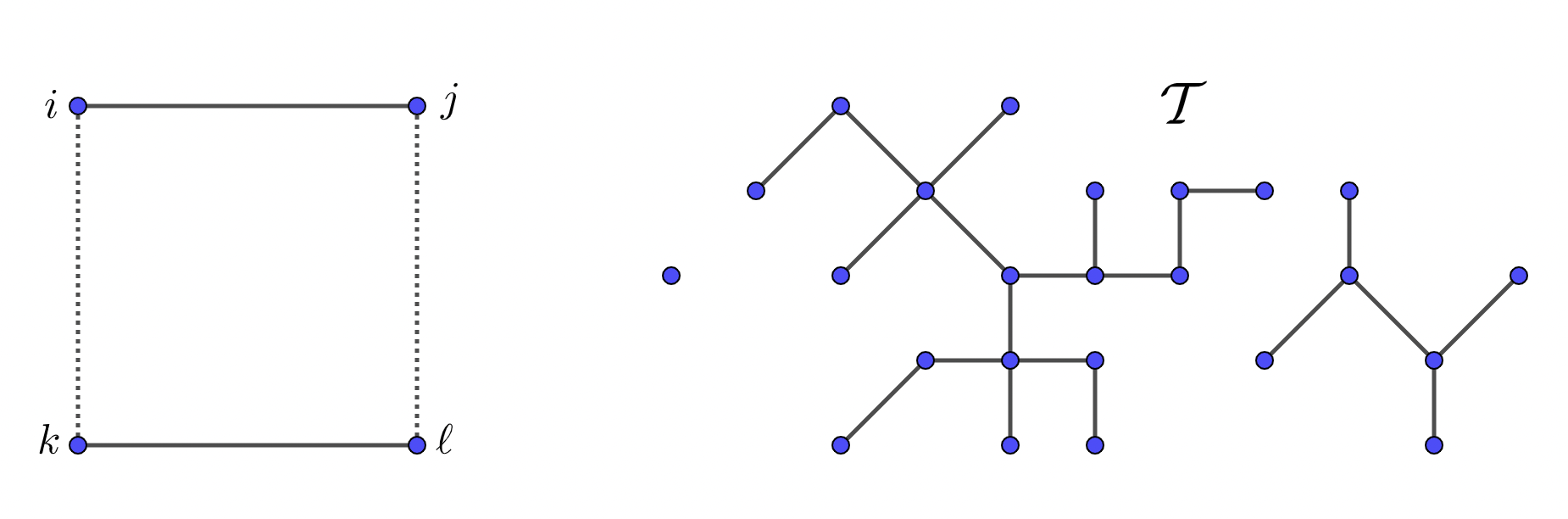}
\end{center}
\caption{Left Panel: A simple switching is given by replacing the solid edges by the dashed edges. Right Panel: An example of forest $\cT$, which consists of three connected components, i.e. $\theta(\cT)=3$.
\label{fig:switch1}}
\end{figure}

The following concept of \emph{forest} will be used throughout the paper.
\begin{definition}(Forest).\label{d:forest}
By a forest we mean a finite simple graph which is a union of trees: $\cT=({\bf i}(\cT), E(\cT))=({\bf i},E)$. Here ${\bf i}\in\qq{N}$ is a finite set of distinct vertices, $E$ is a finite set of edges, and each edge $e\in E$ connects $\{i_e, i'_e\}\in {\bf i}$. We denote the number of connected components of $\cT$ as $\theta(\cT)$. We remark that $\cT$ may contain singletons. See Figure \ref{fig:switch1} for an example.
\end{definition}

Given a forest $\cT=({\bf i},E)$, we denote 
\begin{align*}
A_{\cT}=\prod_{e\in E} A_{i_e i'_e},
\end{align*}
If $\cT$ consists of only singletons, i.e. $E=\emptyset$, then we simply define $A_{\cT}=1$. 
In the rest of this section, we use switchings to estimate terms in the following forms
\begin{align*}
\bE\left[A_{\cT}F(A)\right],\quad \bE\left[A_{\cT}A_{ik}F(A)\right],\quad \bE\left[A_{\cT}A_{jk}F(A)\right],\quad  i\in \bfi, j,k\not\in \bfi
\end{align*}
where $F$ is any function depending on the random graph $A$.
(Later, we shall take $F$ to be a polynomial of the Green's function entries $\{G_{ij}\}_{i,j\in\qq{N}}$ and the Stieltjes transform $m$.)

\begin{proposition}\label{p:bijection}
Given a forest $\cT=({\bf i},E)$, and a function $F$ which depends on the random graph $A$.
For each edge $e\in E$, we associate another edge $\{j_e,j'_e\}\in \qq{N}$, and denote ${\bf j}=\{j_e,j'_e\}_{e\in E}$.
If the indices $\bf i, \bf j$ are distinct, we have the identity
\begin{align} \label{e:bijection}
\bE\left[F(A) \prod_{e\in E}\chi_{i_ei'_e}^{j_e j'_e}(A)\right]
=\bE\left[F\left(A+\sum_{e\in E}\xi_{i_ei'_e}^{j_e j'_e}\right) \prod_{e\in E}\chi_{i_ej_e}^{i'_e j'_e}(A)\right].
\end{align}
If the index $i\in \bfi$ and  the indices ${\bf i}jkm\ell$ are distinct, we have the identities
\begin{align} \begin{split}\label{e:bijection2}
&\bE\left[A_{\cT}\chi_{ik}^{m\ell}(A)F(A)\right]
=\bE\left[A_{\cT}\chi_{im}^{k\ell}(A)F(A+\xi_{ik}^{m\ell})\right],\\
&
\bE\left[A_{\cT}\chi_{jk}^{m\ell}(A)F(A)\right]
=\bE\left[A_{\cT}\chi_{jm}^{k\ell}(A)F(A+\xi_{jk}^{m\ell})\right],
\end{split}\end{align}
where the indicator function $\chi$ is as defined in \eqref{e:defchi}.
\end{proposition}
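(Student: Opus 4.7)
The plan is to prove all three identities by the same switching-bijection argument, exploiting that $\bP$ is the uniform measure on the finite set $\cG_d$ of $d$-regular graphs on $\qq{N}$. The key point is that, for every admissible switching datum, the map $A\mapsto A+\xi$ is an involution on $\{0,1\}^{N\times N}$ that preserves every row sum, and it sets up a bijection between the sub-collections of $\cG_d$ singled out by the indicator functions $\chi$ on the two sides of each identity.

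First I would treat the simplest case of a single switching. If $\chi_{ij}^{k\ell}(A)=1$, meaning $A_{ij}=A_{k\ell}=1$ and $A_{ik}=A_{j\ell}=0$, then by construction $A-\xi_{ij}^{k\ell}$ flips exactly these four symmetric entries, producing a new adjacency matrix $A'$ with $\chi_{ik}^{j\ell}(A')=1$; degrees are preserved because each row of $\xi_{ij}^{k\ell}$ sums to $0$. Hence $A\mapsto A-\xi_{ij}^{k\ell}$ restricts to a bijection between the two corresponding subsets of $\cG_d$, with inverse $B\mapsto B+\xi_{ij}^{k\ell}$; summing $F$ against the uniform measure $\bP$ and changing variables yields the single-edge version of \eqref{e:bijection}.

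Next I would extend this to the product of $|E|$ switchings, one per edge $e\in E$. The four matrix entries flipped by the $e$-th switching are indexed by pairs contained in $\{i_e,i'_e,j_e,j'_e\}$. Because all indices in $\bfi\cup\bfj$ are distinct by hypothesis, these four pairs are disjoint from the corresponding four pairs for any $e'\neq e$: one of the four is $\{i_e,i'_e\}$, which as an edge of the simple forest $\cT$ differs from $\{i_{e'},i'_{e'}\}$, while each of the other three involves one of $j_e,j'_e$, and the vertices of $\bfj$ are pairwise distinct and disjoint from $\bfi$. Consequently the individual switchings commute and their composition is exactly the global flip by $\sum_e\xi_{i_ei'_e}^{j_ej'_e}$. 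Under this composite bijection, $\prod_e\chi_{i_ei'_e}^{j_ej'_e}$ is mapped to $\prod_e\chi_{i_ej_e}^{i'_ej'_e}$, and a single change of variables gives \eqref{e:bijection}.

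Finally, the two identities in \eqref{e:bijection2} reduce to the single-switching version together with the observation that $A_\cT$ is a passive weight. Every entry flipped by $A\mapsto A+\xi_{ik}^{m\ell}$ (respectively by $A+\xi_{jk}^{m\ell}$) is indexed by a pair contained in $\{i,k,m,\ell\}$ (respectively $\{j,k,m,\ell\}$); since $k,m,\ell$, and also $j$ in the second case, are by hypothesis not in $\bfi$, no such pair lies entirely inside $\bfi$, so no factor $A_{i_ei'_e}$ appearing in $A_\cT=\prod_{e\in E}A_{i_ei'_e}$ is altered by the switching. Hence $A_\cT$ is invariant under the bijection and carries through to the right-hand side unchanged. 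The main place one really needs to be careful is the disjointness of flipped entries in the multi-switching step of \eqref{e:bijection}, which is exactly where the distinctness hypothesis on $\bfi\cup\bfj$ is essential; beyond this combinatorial bookkeeping there is no deeper difficulty.
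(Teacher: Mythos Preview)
Your proposal is correct and follows essentially the same switching-bijection argument as the paper: both define the two subsets of $d$-regular graphs singled out by the respective indicator products, exhibit the map $A\mapsto A+\sum_e\xi_{i_ei'_e}^{j_ej'_e}$ (or its inverse) as a bijection between them, and invoke uniformity of $\bP$; your write-up simply spells out more carefully than the paper why the flipped entry sets are disjoint across different $e$ and why $A_\cT$ is untouched in \eqref{e:bijection2}.
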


\begin{proof}
Define the sets of graphs
\begin{align*}\begin{split}
\cG_1=\left\{A:  \prod_{e\in E}\chi_{i_ei'_e}^{j_e j'_e}(A)=1\right\},\quad \cG_2=\left\{A:  \prod_{e\in E}\chi_{i_ej_e}^{i'_e j'_e}(A)=1\right\}.
\end{split}\end{align*}
By our assumption that the indices $\bfi, \bfj$ are distinct, there is a simple bijection between $\cG_1$ and $\cG_2$, namely
\begin{align}\label{e:G1G2bij}
A\in \cG_2\mapsto A+\sum_{e\in E}\xi_{i_ei'_e}^{j_e j'_e}\in \cG_1.
\end{align}
Since $\bP$ is the uniform probability measure on $d$-regular graphs, the claim follows from \eqref{e:G1G2bij}. 

The two identities in \eqref{e:bijection2} can be proven in the same way, we will only give the proof of the first one. We define the sets of graphs
$
\cG_1=\left\{A:  A_{\cT}\chi_{ik}^{m\ell}(A)=1\right\}$ and $\cG_2=\left\{A:  A_{\cT}\chi_{im}^{k\ell}(A)=1\right\}.
$
By our assumption that the indices $\bfi km \ell$ are distinct, there is a simple bijection between $\cG_1$ and $\cG_2$:
$
A\in \cG_2\mapsto A+\xi_{ik}^{m\ell}\in \cG_1,
$
and the claim follows.

\end{proof}
\subsection{Discrete Integration by Part}

We define the discrete derivative operators induced by the simple switchings, which will be used in the rest of this paper.
\begin{definition} \label{d:defD}
For any symmetric matrix $\xi= \sum_{a=1}^b \xi_{i_aj_a}^{k_a\ell_a}$ (recall from \eqref{e:defxi}) for some fixed $b\in \bN$, we denote the vertex set $\bm V(\xi)=\cup_{a=1}^b\{i_a, j_a, k_a,\ell_a\}$ and define the discrete and continuous derivative operator as
\begin{equation} \label{e:D-defxi}
D_{\xi}F(A) \deq F(A+\xi)-F(A),\quad \partial_{\xi} F(A) \deq \sqrt{d-1}\left.\partial_t  F(A + t \xi)\right|_{t = 0}.
\end{equation}
For a sequence of matrices $\xi_1,\xi_2,\cdots, \xi_c$, we denote $\bm\xi=(\xi_1, \xi_2,\cdots, \xi_c)$ and 
\begin{align}\label{e:defbmxi}
\del_{\bm\xi}=\del_{\xi_1}\del_{\xi_2}\cdots \del_{\xi_c},\quad \bm V(\bm\xi)=\cup_{a=1}^c \bm V(\xi_a),\quad |\bm\xi|=c.
\end{align}
\end{definition}
Note that   the operation $D_{\xi}$ switches all edges in $\xi$ at once. The continuous derivative  
 $\partial_{\xi}$ is the directional derivative in the direction $\xi$ of the rescaled variable $H=A/\sqrt{d-1}$. 
For the discrete derivative operator $D_{\xi}$, we have the discrete product rule and the commutative property
\begin{equation} \label{e:D-product}
  D_{\xi}(FG) = (D_{\xi}F)G + F(D_{\xi}G) +  (D_{\xi}F)(D_{\xi}G),\quad D_{\xi_1}D_{\xi_2} F=D_{\xi_2}D_{\xi_1} F,
\end{equation}
If $F$ is continuous differentiable up to $\fb$-th order, the Taylor expansion with remainder gives
\begin{align} \label{e:D-expand}
 D_{\xi} F(A) = \sum_{n=1}^{\fb-1}  \frac{\del_{\xi}^nF(A)}{n!(d-1)^{n/2}} +  \frac{\del_{\xi}^{\fb}F(A+\theta\xi)}{\fb!(d-1)^{\fb/2}} ,
  \end{align}
for some $0\leq \theta\leq 1$.

In the switching in \eqref{e:bijection}, the indicator function $\prod_{e\in E}\chi_{i_ej_e}^{i'_e j'_e}(A)$
enforces that the matrix $A+\sum_{e\in E}\xi_{i_ei'_e}^{j_e j'_e}$ is again the adjacency matrix of a simple graph.
Note that without this indicator function, such as in the following corollaries and elsewhere throughout our proof,
$A+\sum_{e\in E}\xi_{i_ei'_e}^{j_e j'_e}$ is not necessarily the adjacency matrix of a simple graph, just a real symmetric matrix. This does however not affect our arguments, which should be viewed as operating with general symmetric matrices instead of adjacency matrices of simple graphs.

\begin{proposition}\label{c:intbp}
Given a forest $\cT=({\bf i},E)$, for each edge $e\in E$, we associate another edge $\{j_e,j'_e\}\in \qq{N}$, and denote ${\bf j}=\{j_e,j'_e\}_{e\in E}$.
For any function $F_{\bf i}(A)$ depending on the vertices $\bfi$ and the adjacency matrix $A$, we denote
\begin{equation} \label{def_CA}
\cal C(F,A)\deq \max_{\bfi\bfj}
\pa{
\left|F_{\bf i}(A)\right|
+\left|F_{\bf i}\left(A+\sum_{e\in E}\xi_{i_ei'_e}^{j_e j'_e}\right)\right|}.
\end{equation}
Then we have the discrete integration by parts formula (recall the summation $\sum^*$ from \eqref{e:starsum})
\begin{align}\begin{split} \label{e:intbp}
&\phantom{{}={}}\frac{1}{N^{\theta(\cT)}d^{|E|}}\sum^*_{\bfi}\bE\left[A_{\cT}F_{\bfi}(A)\right]=\frac{1}{N^{|\bfi|}}\sum^*_{\bfi}\bE\left[F_{\bfi}(A)\right]\\
&+
\frac{1}{N^{\theta(\cT^+)}d^{|E^+|}}\sum^*_{\bfi \bfj}\bE\left[A_{\cT^+} D_{\cT^+}F_{\bfi}(A)\right]+\OO\left(
\frac{d\bE[\cal C(F, A)]}{N}\right),
\end{split}\end{align}
where $\cT^+=(\bfi\bfj, E^+)$ with $E^+=\cup_{e\in E}\{i_e, j_e\}\cup\{i_e', j_e'\}$, $\xi=\sum_{e\in E}\xi_{i_ei'_e}^{j_e j'_e}$,
and 
$
D_{\cT^+}=D_{\xi} 
$
from \eqref{e:D-defxi}.
\end{proposition}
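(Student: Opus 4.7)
The plan is to insert, for each edge $e\in E$ of the forest $\cT$, an auxiliary ``switching partner'' $(j_e,j'_e)$, apply the bijection \eqref{e:bijection} to exchange the original edges with the inserted ones, and then read off the constant term and the subleading $D_{\cT^+}F$ term.

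First, using the exact identity $\sum^{*}_{j,j'} A_{jj'}=Nd$ (each vertex has exactly $d$ neighbors), I insert $\frac{1}{Nd}\sum^{*}_{j_e j'_e} A_{j_e j'_e}$ as a factor of $1$ for each $e\in E$, at the cost of distinctness restrictions that are $\OO(d/N)$ per edge. Next, for each $e$ I use
\begin{align*}
A_{i_ei'_e}A_{j_ej'_e} = \chi_{i_ei'_e}^{j_ej'_e}(A) + A_{i_ei'_e}A_{j_ej'_e}\bigl(A_{i_ej_e}+A_{i'_ej'_e}-A_{i_ej_e}A_{i'_ej'_e}\bigr)
\end{align*}
to pass to the switching indicator; each extra-edge error term constrains one of $j_e,j'_e$ to at most $d$ values rather than $N$, giving a further $\OO(d/N)$ relative error. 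Applying \eqref{e:bijection} then yields
\begin{align*}
\bE\Bigl[F_\bfi(A)\prod_e \chi_{i_ei'_e}^{j_ej'_e}(A)\Bigr]
= \bE\Bigl[F_\bfi(A+\xi)\prod_e \chi_{i_ej_e}^{i'_ej'_e}(A)\Bigr], \qquad \xi = \sum_{e}\xi_{i_ei'_e}^{j_ej'_e},
\end{align*}
and a Taylor split $F_\bfi(A+\xi)=F_\bfi(A)+D_\xi F_\bfi(A)$. Undoing the $\chi\leftrightarrow A_{\bullet\bullet}A_{\bullet\bullet}$ replacement on the switched configuration yields $\prod_e \chi_{i_ej_e}^{i'_ej'_e}(A) = A_{\cT^+}+\OO(d/N)$.

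Two main contributions emerge. The $D_\xi F_\bfi$ piece gives the second term of \eqref{e:intbp}: the bookkeeping $\theta(\cT^+)=\theta(\cT)+|E|$ (each original edge of $\cT$ splits into two pendant edges at new vertices, adding one component per edge) and $|E^+|=2|E|$ force $N^{\theta(\cT)}d^{|E|}(Nd)^{|E|} = N^{\theta(\cT^+)}d^{|E^+|}$, and $D_\xi=D_{\cT^+}$ by definition. The $F_\bfi(A)$ piece gives the constant term: since $\sum_{j_e}A_{i_ej_e}=d$ exactly, $\sum^{*}_\bfj A_{\cT^+}=d^{2|E|}(1+\OO(d/N))$, which cancels the auxiliary normalization $(Nd)^{|E|}$ and reduces the expression to $\frac{1}{N^{|\bfi|}}\sum^{*}_\bfi\bE[F_\bfi]$.

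The main obstacle is uniform control of the error terms, since distinctness constraints couple the auxiliary indices across different $e\in E$ and the two $\chi\leftrightarrow A_{\bullet\bullet}A_{\bullet\bullet}$ swaps produce extra-edge corrections evaluated at both $A$ and $A+\xi$. This is precisely why $\cC(F,A)$ in \eqref{def_CA} takes the maximum of $|F_\bfi(A)|$ and $|F_\bfi(A+\xi)|$; combined with the $\OO(d/N)$ prefactor extracted at each of the $\OO(1)$ steps, this delivers the stated $\OO(d\bE[\cC(F,A)]/N)$ error.
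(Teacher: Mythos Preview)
Your proposal is correct and follows essentially the same approach as the paper: insert auxiliary edges via $\sum_{j,j'}A_{jj'}=Nd$, pass to the switching indicators $\chi$ with $\OO(d/N)$ errors, invoke the bijection \eqref{e:bijection}, split $F_\bfi(A+\xi)=F_\bfi(A)+D_\xi F_\bfi(A)$, and convert back from $\chi$ to $A_{\cT^+}$. The only cosmetic difference is that the paper derives the constant term by writing a parallel identity for $\frac{1}{N^{|\bfi|}}\sum^*_\bfi\bE[F_\bfi]$ (its equation \eqref{e:switchindices}) and then subtracting, whereas you sum out the $\bfj$ variables directly after the split; the bookkeeping $\theta(\cT^+)=\theta(\cT)+|E|$, $|E^+|=2|E|$ is the same in both.
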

Notice that the right hand side of \eqref{e:intbp} can be interpreted as the expectation over the switched graph, since $A_{\cT^+}$ is the indicator function that the graph $A$ has edges  $E^+=\cup_{e\in E}\{i_e, j_e\}\cup\{i_e', j_e'\}$. 
The matrix $\xi=\sum_{e\in E}\xi_{i_ei'_e}^{j_e j'_e}$ is the indicator function for the difference of edges of the  unswitched graphs and the switched graph. 

\begin{proposition}\label{c:intbp2}
Given a forest $\cT=({\bfi},E)$, and fix an index $i\in \bfi$. 
For any function $F_{\bfi k}(A)$ depending on the vertices $\bfi k$ and the adjacency matrix $A$,  we have the integration by parts formula
\begin{align}\begin{split} \label{e:intbp2}
&\phantom{{}={}}\frac{1}{N^{\theta(\cT)}d^{|E|+1}}\sum^*_{\bfi k}\bE\left[A_{\cT}A_{ik}F_{\bfi k}(A)\right]=\frac{1}{N^{\theta(\cT)+1}d^{|E|}}\sum^*_{\bfi k}\bE\left[ A_{\cT} F_{\bfi k}(A)\right]\\
&+
\frac{1}{N^{\theta(\cT^+)}d^{|E^+|}}\sum^*_{\bfi k \ell m}\bE\left[A_{\cT^+} D_{\cT^+}F_{\bfi k}(A) \right]+\OO\left(
\frac{d\bE[\cal C(F, A)]}{N}\right),
\end{split}\end{align}
where $\cT^+=(\bfi k m\ell, E^+)$ with $E^+=E\cup\{i,m\}\cup \{k,\ell\}$, $\xi=\xi_{ik}^{m\ell}$, $\cal C(F,A)\deq \max_{\bfi k m\ell}
(
|F_{\bfi k}(A)|
+|F_{\bfi k}(A+\xi_{ik}^{m\ell})|)$, and 
$
D_{\cT^+}=D_{\xi} 
$
from \eqref{e:D-defxi}.

For any function $F_{\bfi jk}(A)$ depending on the vertices $\bfi jk$ and the adjacency matrix $A$,  we have the integration by parts formula
\begin{align}\begin{split} \label{e:intbp3}
&\phantom{{}={}}\frac{1}{N^{\theta(\cT)+1}d^{|E|+1}}\sum^*_{\bfi jk}\bE\left[A_{\cT}A_{jk}F_{\bfi jk}(A)\right]=\frac{1}{N^{\theta(\cT)+2}d^{|E|}}\sum^*_{\bfi jk}\bE\left[ A_{\cT} F_{\bfi jk}(A)\right]\\
&+
\frac{1}{N^{\theta(\cT^+)}d^{|E^+|}}\sum^*_{\bfi jk \ell m}\bE\left[A_{\cT^+} D_{\cT^+}F_{\bfi jk}(A) \right]+\OO\left(
\frac{d\bE[\cal C(F, A)]}{N}\right),
\end{split}\end{align}
where $\cT^+=(\bfi j k m\ell, E^+)$ with $E^+=E\cup\{j,m\}\cup \{k,\ell\}$, $\xi=\xi_{jk}^{m\ell}$, and $\cal C(F,A)\deq \max_{\bfi k m\ell}
(
|F_{\bfi jk}(A)|
+|F_{\bfi jk}(A+\xi_{jk}^{m\ell})|)$,
and 
$
D_{\cT^+}=D_{\xi} 
$
from \eqref{e:D-defxi}.
Here the vertices in $\bfi$ do not participate in the switching in any way.
\end{proposition}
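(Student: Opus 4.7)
The plan is to imitate the proof of Proposition \ref{c:intbp}, now invoking the two identities in \eqref{e:bijection2} to peel off the extra factor $A_{ik}$ (respectively $A_{jk}$ for \eqref{e:intbp3}). I describe the argument for \eqref{e:intbp2}; the proof of \eqref{e:intbp3} is the same, with $j$ playing the role of an extra free vertex outside $\bfi$, which explains the additional factor $1/N$ in the normalization of its first right-hand term.

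The first step is to rewrite $A_{ik}$ as a switching average. For fixed $\bfi$ and $k\notin\bfi$, a direct count gives
\begin{equation*}
\sum_{m\ell}^* \chi_{ik}^{m\ell}(A) = A_{ik}\pb{Nd + \OO((|\bfi|+d)d)},
\end{equation*}
since $\sum_{m,\ell}A_{m\ell}=Nd$, while the distinctness constraint removes $\OO(|\bfi|d)$ ordered pairs and the non-adjacency constraints $(1-A_{im})(1-A_{k\ell})$ remove at most $\OO(d^2)$ further pairs. Solving for $A_{ik}$, multiplying by $A_\cT F_{\bfi k}(A)$, taking expectation, summing over distinct $\bfi k$, and dividing by $N^{\theta(\cT)}d^{|E|+1}$ rewrites the left-hand side of \eqref{e:intbp2} as
\begin{equation*}
\frac{1}{N^{\theta(\cT)+1}d^{|E|+2}} \sum^*_{\bfi k m \ell} \bE[A_\cT \chi_{ik}^{m\ell}(A) F_{\bfi k}(A)] + \OO\pb{d\bE[\cal C(F,A)]/N}.
\end{equation*}

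Next I would apply the switching bijection. Because $\{k,m,\ell\}\cap\bfi=\emptyset$ and every edge of $\cT$ is supported on $\bfi$, the factor $A_\cT$ is untouched by the swap of edges $\{ik,m\ell\}\leftrightarrow\{im,k\ell\}$, so \eqref{e:bijection2} gives $\bE[A_\cT \chi_{ik}^{m\ell}(A) F_{\bfi k}(A)] = \bE[A_\cT \chi_{im}^{k\ell}(A) F_{\bfi k}(A+\xi_{ik}^{m\ell})]$. Writing $F_{\bfi k}(A+\xi)=F_{\bfi k}(A)+D_{\xi}F_{\bfi k}(A)$ splits this into a main term and a derivative term. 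For the main term, the parallel count
\begin{equation*}
\sum_{m\ell}^* \chi_{im}^{k\ell}(A) = (1-A_{ik})d^2 + \OO\pb{(|\bfi|+1)d + \text{\#\{length-three walks from $i$ to $k$\}}}
\end{equation*}
collapses the $m,\ell$ sum; the $A_{ik}$ piece carries an extra $1/N$ once the sum over $k$ is accounted for and is absorbed into the error, and what survives is precisely the first right-hand term of \eqref{e:intbp2}.

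In the derivative term I would replace $\chi_{im}^{k\ell}(A)$ by $A_{im}A_{k\ell} = A_{\cT^+}/A_\cT$; the discrepancy expands into pieces each carrying an additional factor $A_{ik}$ or $A_{m\ell}$, each of which supplies an extra $1/N$ after normalization so that, together with the uniform bound $|D_\xi F_{\bfi k}|\leq 2\cal C(F,A)$, they all fit inside the stated $\OO(d\bE[\cal C(F,A)]/N)$ error. The main obstacle is the combinatorial bookkeeping: every reduction (removing distinctness constraints, discarding $(1-A_{\cdot\cdot})$ factors, swapping the indicator $\chi$ for a product of plain edge indicators, and estimating short closed-walk counts) produces several subleading contributions, and each must individually be checked to fit within the same $d/N$ budget. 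Once this verification is done, \eqref{e:intbp2} follows, and \eqref{e:intbp3} is obtained by running the identical argument starting from $A_{jk}$ and invoking the second line of \eqref{e:bijection2}; the only bookkeeping difference is that $j$, unlike $i$, lies outside $\bfi$, so the switching average representation of $A_{jk}$ already carries an extra $1/N$.
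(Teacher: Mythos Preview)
Your proposal is correct and follows essentially the same route as the paper: the paper's proof simply states that \eqref{e:intbp2} and \eqref{e:intbp3} are proven in the same way as Proposition~\ref{c:intbp}, using the identities \eqref{e:bijection2} in place of \eqref{e:bijection}, and you have carried out precisely that adaptation. The only cosmetic difference is that you phrase the introduction of the auxiliary indices $m,\ell$ as a direct count of $\sum_{m\ell}^*\chi_{ik}^{m\ell}(A)$, whereas the paper (in the proof of Proposition~\ref{c:intbp}) first multiplies by $1=\frac{1}{Nd}\sum_{m\ell}A_{m\ell}$ and then replaces $A_{ik}A_{m\ell}$ by $\chi_{ik}^{m\ell}$; the resulting error terms and their $d/N$ bookkeeping are identical.
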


\begin{remark}\label{r:Tstructure}
The following properties of the graph $\cT^+$ constructed in Proposition \ref{c:intbp} will be useful in later part of this paper.
\begin{enumerate}
\item The graph $\cT^+$ consists of $|\bfi|$ connected components, i.e. $\theta(\cT^+)=|\bfi|$. Each vertex $i\in \bfi$ corresponds to one connected component in $\cT^+$. If the vertex $i$ has degree $\deg_{\cT}(i)$ in $\cT$, the corresponding component is a $2$-level tree with $i$ as the root vertex, and $\deg_{\cT}(i)$ children vertices.
\item For any two distinct vertices $i,i'\in \bfi$,  $i$ and $i'$ are in different connected components of $\cT^+$.
\end{enumerate}
\end{remark}

\begin{figure}[t]
\begin{center}
\includegraphics[scale=0.4]{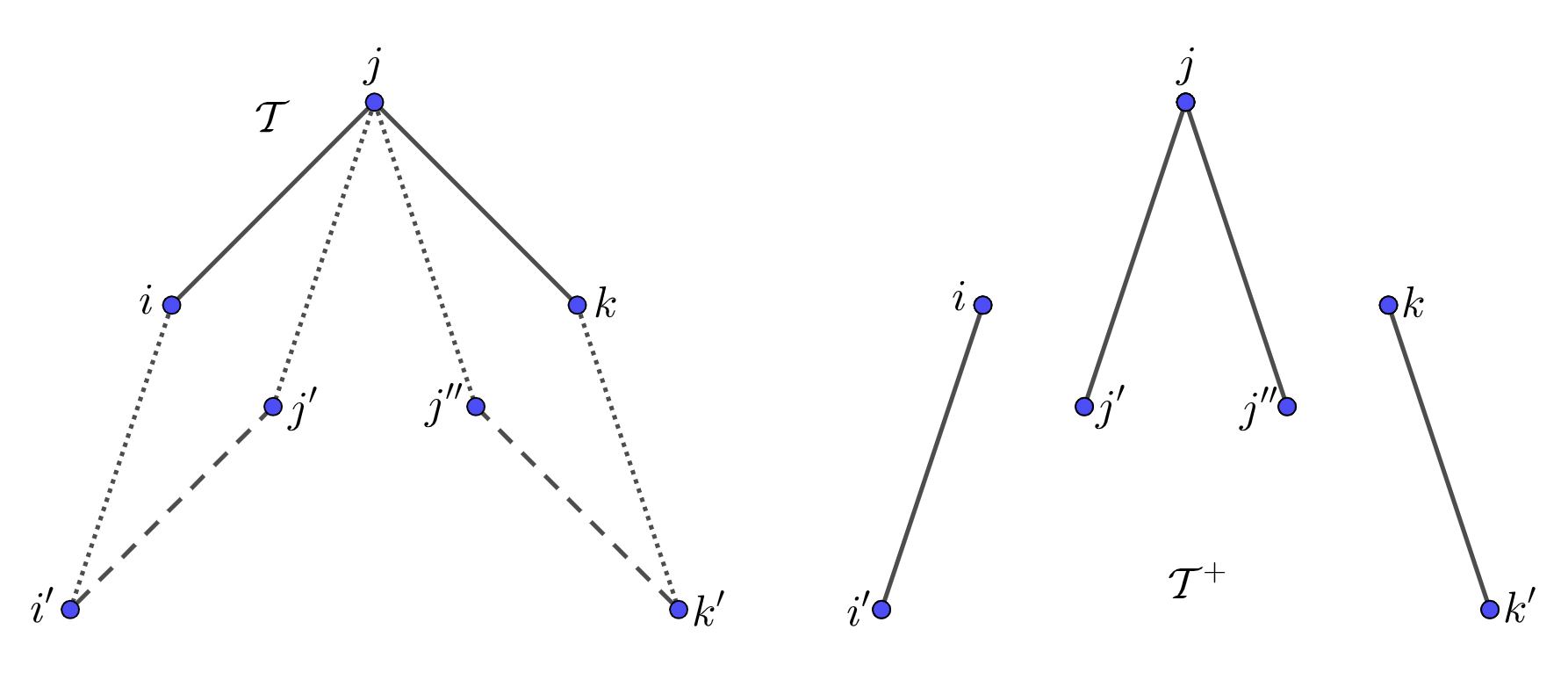}
\end{center}
\caption{Take $\cT=\{\bfi=\{i,j,k\}, E=\{(i,j), (j,k)\}\}$ to be a forest, then $\cT^+$ constructed in Proposition \ref{c:intbp} is given by $\cT^+=\{\bfi^+=(i,i', j,j',j'', k,k'), E^+=\{(i,i'), (j,j'), (j,j''),(k,k')\}\}$.
\label{fig:construct}}
\end{figure}

\begin{example}
Take $\cT=\{\bfi=\{i,j,k\}, E=\{(i,j), (j,k)\}\}$ to be a forest, which is a path of length two (See Figure \ref{fig:construct}). It has only one connected component, i.e. $\theta(\cT)=1$. 
$\cA_\cT=A_{ij}A_{jk}$. The summation 
\begin{align*}
\sum_{ijk}^* A_\cT=\sum_{ijk \text{distinct}}A_{ij}A_{jk},
\end{align*}
which counts the number of ways to embed $\cT$ to $\cG$, where vertices of $\cT$ are mapped to distinct vertices $\qq{N}$ of $\cG$. The forest $\cT^+$ constructed in Proposition \ref{c:intbp} is given by $\cT^+=\{\bfi^+=(i,i', j,j',j'', k,k'), E^+=\{(i,i'), (j,j'), (j,j''),(k,k')\}\}$, which has three connected components; and $i,j,k$ are in different connected components of $\cT^+$. 
\end{example}

\begin{proposition}\label{p:sumA}
For any graph $\cT=(\bfi, E)$, if it is a forest, we have 
\begin{align}\label{e:sumA}
\sum_{\bfi} A_{\cT}= N^{\theta(\cT)}d^{|E|},
\end{align}
and
\begin{align}\label{e:sumA2}\begin{split}
&\frac{1}{N^{\theta(\cT)} d^{|E|}}\sum^*_{\bfi} A_{\cT}=c_{\cT}+\OO_\prec\left(\frac{d}{N}\right),\\
& c_\cT=\frac{\prod_{i\in \bfi: \deg(i)\geq 1}(d-1)(d-2)\cdots (d-\deg(i)+1) }{d^{|E|-\theta(\cT)}}\asymp 1.
\end{split}\end{align}
By definition, the numerator of $c_\cT$ is equal to $1$  if the $\deg(i)=1$. 
If $\cT$ contains exactly one cycle, then we have
\begin{align}\label{e:sumAtriangle}
\sum^*_{\bfi} A_{\cT}\prec N^{\theta(\cT)-1} d^{|E|}.
\end{align}
\end{proposition}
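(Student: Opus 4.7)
The proof is by direct enumeration, exploiting the degree regularity $\sum_j A_{ij} = d$, with the three statements progressively more refined: \eqref{e:sumA} counts all indices, \eqref{e:sumA2} enforces distinct indices, and \eqref{e:sumAtriangle} quantifies the cost of adding one cycle-closing edge. For \eqref{e:sumA}, root each tree component of $\cT$ at an arbitrary vertex and evaluate $\sum_\bfi A_\cT$ in BFS order: each root can be placed in any of $N$ positions (total $N^\theta$), and each subsequent non-root vertex is summed against its already-fixed parent via $\sum_j A_{ij}=d$, giving one factor of $d$ per edge, hence $N^\theta d^{|E|}$.

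For \eqref{e:sumA2}, the same rooted enumeration is carried out with distinct indices. For a root $r$ of tree-degree $\deg_\cT(r)$, its $\deg_\cT(r)$ children in $\cT$ must be realized as distinct graph-neighbors of $r$, contributing $d(d-1)\cdots(d-\deg_\cT(r)+1)$ choices; for a non-root $v$ of tree-degree $\deg_\cT(v)$ with its parent already fixed, its remaining $\deg_\cT(v)-1$ children must be distinct neighbors of $v$ other than the parent, contributing $(d-1)(d-2)\cdots(d-\deg_\cT(v)+1)$ choices. Multiplying and extracting one factor of $d$ per component root yields the local count
\[
N^\theta d^\theta \prod_{v : \deg_\cT(v) \geq 1}(d-1)(d-2)\cdots(d-\deg_\cT(v)+1) = N^\theta d^{|E|} c_\cT .
\]
This enumeration enforces only parent-child distinctness; the global distinctness constraints (between cousins and across components) are handled by an inclusion-exclusion over identifications of tree vertices. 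Each identification produces a modified graph $\cT'$ containing a cycle, and by \eqref{e:sumAtriangle} each such correction contributes a factor $O_\prec(d/N)$ relative to the leading term, giving the stated error.

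For \eqref{e:sumAtriangle}, let $e=(a,b)$ be any edge of the unique cycle of $\cT$; removing $e$ produces a spanning forest $\cT_0$ with $\theta(\cT_0)=\theta(\cT)$ (since $a$ and $b$ remain in the same component of $\cT_0$) and $|E(\cT_0)|=|E|-1$. Since $A_\cT = A_{\cT_0} A_{ab}$, applying \eqref{e:sumA2} to $\cT_0$ yields $\sum^*_\bfi A_{\cT_0} = O_\prec(N^\theta d^{|E|-1})$. The extra factor $A_{ab}$ has marginal expectation $d/(N-1)$, and applying the discrete integration-by-parts \eqref{e:intbp} of Proposition~\ref{c:intbp} with $F_\bfi(A)=A_{ab}$ (or, equivalently, the direct switching of Proposition~\ref{p:bijection}) isolates the leading contribution of order $d/N$, while the switching-correction terms involve forests with strictly more edges and are therefore further suppressed by additional factors of $d/N$. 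Hence
\[
\sum^*_\bfi A_\cT \;\prec\; N^\theta d^{|E|-1}\cdot \frac{d}{N} \;=\; N^{\theta-1}d^{|E|}.
\]

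The principal subtlety lies in the error analysis of \eqref{e:sumA2}: showing that enforcing global distinctness beyond the parent-child level perturbs the naive count only by a multiplicative $1+O_\prec(d/N)$. The cleanest organization is inductive on $|E|$, with \eqref{e:sumAtriangle} playing the role of the base cycle-correction estimate, and the inclusion-exclusion recursively reduces every unwanted vertex identification to a count on an augmented graph carrying at least one extra cycle, for which \eqref{e:sumAtriangle} then delivers the required $d/N$ saving.
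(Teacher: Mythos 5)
Your proofs of \eqref{e:sumA} and (in outline) \eqref{e:sumA2} are in the right spirit: \eqref{e:sumA} by rooted/BFS enumeration is exactly what the paper does, and for \eqref{e:sumA2} both you and the paper reduce the error from distinctness violations to the one-cycle bound \eqref{e:sumAtriangle}, though the paper organizes this as an induction that peels off a vertex whose children are (almost) all leaves rather than as a global inclusion--exclusion over identifications.

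The genuine gap is in \eqref{e:sumAtriangle}, and it matters because both you and the paper lean on that estimate for the error term in \eqref{e:sumA2}. You remove one cycle edge $(a,b)$ to pass to a forest $\cT_0$ and then try to extract the extra $d/N$ from the factor $A_{ab}$ via the switching/integration-by-parts identities (Propositions \ref{p:bijection} and \ref{c:intbp}). Those identities are statements about expectations: they give you, at best, $\bE\bigl[\sum^*_{\bfi}A_{\cT}\bigr]\lesssim N^{\theta-1}d^{|E|}$. But \eqref{e:sumAtriangle} asserts stochastic domination $\prec$, i.e.\ a bound holding with overwhelming probability, and for a nonnegative random variable a first-moment bound plus Markov only yields a polynomial tail, not $1-N^{-D}$ for every $D$. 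To upgrade, one needs control of arbitrarily high moments. The paper gets this directly: it first sums out the non-cycle vertices deterministically using \eqref{e:sumA}, reducing to the cycle count $Z=\sum^*_{i_1\dots i_\ell}A_{i_1i_2}\cdots A_{i_\ell i_1}$, and then uses the subgraph-probability estimate of McKay (\cite[Theorem~3]{mckay2004short}, quoted as \eqref{e:ppbb}) to bound $\bE[Z^k]\leq C_k d^{k\ell}$ for every fixed $k$, from which Markov with large $k$ gives $Z\prec d^{\ell}$. Your argument never produces these higher moments, so the step "hence $\sum^*_{\bfi}A_{\cT}\prec N^{\theta-1}d^{|E|}$" is not justified.

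A secondary point: the paper's proof of \eqref{e:sumAtriangle} is self-contained (it uses only \eqref{e:sumA} and the McKay moment bound), whereas you invoke \eqref{e:sumA2} for the forest $\cT_0$. Your proposal to break the resulting mutual dependence by inducting on $|E|$ is structurally plausible (one-cycle graphs with $|E|$ edges would rely on forests with $|E|-1$ edges), but even with this bookkeeping in place the moment gap above remains, and the paper's decoupled organization is the cleaner route.
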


\begin{remark}\label{r:changec}
Given a forest $\cT=(\bfi, E)$. $\cT^+=(\bfi^+, E^+)$ is as constructed in Propositions \ref{c:intbp} or \ref{c:intbp2}. Since $\cT^+$ and $\cT$ have the same degree profile (except for vertices with degree one), we have $c_{\cT}=c_{\cT^+}$ as in \eqref{e:sumA2}. 
\end{remark}

\begin{proof}[Proof of Proposition \ref{c:intbp}]
Since the row and column sums of $A$ equal $d$, by introducing new indices $\{j_e, j_e'\}_{e\in E}$,
we rewrite the left-hand side of \eqref{e:intbp} as
\begin{align}\begin{split}\label{e:newindices}
&\phantom{{}={}}\frac{1}{N^{\theta(\cT)}d^{|E|}}\sum^*_{\bfi}\bE\left[A_{\cT}F_{\bfi}\right]=\frac{1}{N^{\theta(\cT)+|E|}d^{2|E|}}\sum^*_{\bfi}\sum_{\bfj}\bE\left[A_{\cT}\prod_{e\in E} A_{j_ej_e'}F_{\bfi}\right]\\
&=\frac{1}{N^{\theta(\cT)+|E|}d^{2|E|}}\left(\sum^*_{\bfi \bfj}\bE\left[\prod_{e\in E}\chi_{i_ei'_e}^{j_ej'_e}(A)F_{\bf i}\right]+\sum^*_{\bfi}
\sum_{\substack{\bfj: \bfi\bfj \\ \txt{not distinct}}}\bE\left[A_{\cT}\prod_{e\in E} A_{j_ej_e'}F_{\bf i}\right]\right.\\
&
\left.+\sum^*_{\bfi\bfj}\bE\left[\left(A_{\cT}\prod_{e\in E} A_{j_e j'_e}-\prod_{e\in E}\chi_{i_ei'_e}^{j_ej'_e}(A)\right)F_{\bf i}\right]\right).
\end{split}\end{align} 
The second term of the right-hand side in \eqref{e:newindices} can be estimated by
\begin{align}\begin{split} \label{e:avt1}
&\phantom{{}={}}\left|
\frac{1}{N^{\theta(\cT)+|E|}d^{2|E|}}
\sum^*_{\bfi}
\sum_{\substack{\bfj: \bfi\bfj \\ \txt{not distinct}}}A_{\cT}\prod_{e\in E} A_{j_ej_e'}F_{\bf i}(A)
\right|\\
&\leq \frac{1}{N^{\theta(\cT)+|E|}d^{2|E|}}
\sum^*_{\bfi}
\sum_{\substack{\bfj: \bfi\bfj \\ \txt{not distinct}}}A_{\cT}\prod_{e\in E} A_{j_ej_e'}\cal C(F,A)\leq  \frac{1}{N}  \cal C(F,A),
\end{split}\end{align}
where in the last line $\bfi\bfj$ are not distinct, either some $j_e=j_e'$ then $A_{j_ej_e'}=0$, or $\cT\cup_{e\in E}\{j_e,j_e'\}$ has less than $\theta(\cT)+|E|$ connected components. By \eqref{e:sumA}, in this case the sum over $\bfi\bfj$ is bounded by $\OO(N^{\theta(\cT)+|E|-1}d^{2|E|})$.
Similarly, using $ |A_{ij}A_{k\ell}- \chi_{ij}^{k\ell}(A)|=A_{ij}A_{k\ell}[A_{ik}+A_{jl}-A_{ik}A_{jl}]\leq A_{ij}A_{k\ell}[A_{ik}+A_{jl}]$, we also have
\begin{align}\label{e:avt2}
\left|\frac{1}{N^{\theta(\cT)+|E|}d^{2|E|}}\sum^*_{\bf ij}\left[\left(A_{\cT}\prod_{e\in E} A_{j_e j'_e}-\prod_{e\in E}\chi_{i_ei'_e}^{j_ej'_e}(A)\right)F_{\bf i}\right]
\right|\leq \frac{d}{N}\cal C(F,A).
\end{align}
By plugging the estimates \eqref{e:avt1} and \eqref{e:avt2} into \eqref{e:newindices}, we get
\begin{align}\begin{split}\label{e:newindices2}
&\phantom{{}={}}\frac{1}{N^{\theta(\cT)}d^{|E|}}\sum^*_{\bfi}\bE\left[A_{\cT}F_{\bfi}\right]
=\frac{1}{N^{\theta(\cT)+|E|}d^{2|E|}}\sum^*_{\bfi \bfj}\bE\left[\prod_{e\in E}\chi_{i_ei'_e}^{j_ej'_e}(A)F_{\bf i}(A)\right]
+\OO\left(\frac{d\bE[\cal C(F,A)]}{N}\right)\\
&=\frac{1}{N^{\theta(\cT)+|E|}d^{2|E|}}\sum^*_{\bf ij}\bE\left[F\left(A+\sum_{e\in E}\xi_{i_ei'_e}^{j_e j'_e}\right)\prod_{e\in E}\chi_{i_ej_e}^{i'_e j'_e}(A)\right]+\OO\left(\frac{d\bE[\cal C(F,A)]}{N}\right),
\end{split}\end{align}
where we used Proposition \ref{p:bijection} in the last line. 
By Proposition \ref{p:bijection} and estimates analogous to the ones above, we have
\begin{align}\begin{split}\label{e:switchindices}
&\phantom{{}={}}\frac{1}{N^{|\bfi|}}\sum^*_{\bfi}\bE\left[F_{\bfi}\right]
=
\frac{1}{N^{\theta(\cT)+|E|}d^{2|E|}}\sum^*_{\bfi}\sum_{\bfj}\bE\left[\prod_{e\in E} A_{i_e j_e}A_{i'_ej_e'}F_{\bfi}\right]\\
&=\frac{1}{N^{\theta(\cT)+|E|}d^{2|E|}}\sum^*_{\bfi \bfj}\bE\left[\prod_{e\in E}\chi_{i_ej_e}^{i'_ej'_e}(A)F_{\bf i}(A)\right]
+\OO\left(\frac{d\bE[\cal C(F,A)]}{N}\right),
\end{split}\end{align}
where in the first equality we used \eqref{e:sumA} that $\cT$ is a forest and $\theta(\cT)=|\bfi|-|E|$.
The new graph $\cT^+=(\bfi\bfj, E^+)$ with $E^+=\cup_{e\in E} \{i_e, j_e\}\cup\{i_e', j_e'\}$ has $\theta(\cT^+)=|\bfi|=|E|+\theta(\cT)$ connected components, and $2|E|$ edges. 
The claim now follows from combining \eqref{e:newindices2} and \eqref{e:switchindices}, and replacing $\prod_{e\in E}\chi_{i_ej_e}^{i'_ej'_e}(A)$ to $\prod_{e\in E} A_{i_e j_e}A_{i_e' j_e'}=A_{\cT^+}$.
\end{proof}

\begin{proof}[Proof of Proposition \ref{c:intbp2}]
The two estimates \eqref{e:intbp2} and \eqref{e:intbp3} can be proven in the same way as for Proposition \ref{c:intbp}, by using \eqref{e:bijection2} to do the discrete integration by part. We omit their proofs.  
\end{proof}

\begin{proof}[Proof of Proposition \ref{p:sumA}]
In the special case that $ \cT$ is a tree, for the sum $\sum_\bfi A_{ \cT}$, we can sequentially sum over leaf vertices. Each gives a factor $d$, and the sum over the root vertex gives a factor $N$. Thus
$\sum_{\bfi} A_{ \cT}= N d^{|\bfi|-1}$. In the general case, when $ \cT$ is a forest. We can sum over each of its connected components. There are $\theta(\cT)$ connected components, and $|E|=|\bfi|-\theta(\cT)$, so \eqref{e:sumA} follows.

We assume that $\cT$ contains exactly one cycle $i_1 i_2 \cdots i_\ell$ of length $\ell$, we can sequentially sum over other vertices as in \eqref{e:sumA}
\begin{align}\label{e:sumAtriangle2}
\sum^*_{\bfi} A_{\cT}\leq  N^{\theta(\cT)-1} d^{|E|-\ell}\sum^*_{i_1,i_2, \cdots, i_{\ell}}A_{i_1i_2}A_{i_2i_3}\cdots A_{i_\ell i_1}=N^{\theta(\cT)-1} d^{|E|-\ell}Z,
\end{align}
where $Z$ counts the number of cycles of length $\ell$. To upper bound the righthand side of \eqref{e:sumAtriangle2}, we recall from \cite[Theorem 3]{mckay2004short}: for any finite subgraph $\cJ$ with $|\cJ|$ edges, 
\begin{align}\label{e:ppbb}
\bP(\cJ\subset \cG)=(1+\oo(1))\left(\frac{d}{n}\right)^{|\cJ|}.
\end{align}
Using \eqref{e:ppbb}, for any $k\geq 1$, $\bE[Z^k]\leq C_k d^{k\ell}$. It follows by Markov's inequality
\begin{align*}
\bP(Z\geq d^\ell N^\varepsilon)\leq \frac{C_k}{N^{\varepsilon k}}.
\end{align*}
We conclude that $Z\prec d^\ell$, and the claim \eqref{e:sumAtriangle} follows from combining with \eqref{e:sumAtriangle2}. 

We prove \eqref{e:sumA2} by induction. If $\cT$ consists of singletons, i.e. $E=\emptyset$, then $\sum^*_{\bfi} A_{\cT}/N^{\theta(\cT)}=c_{\cT}=1$. Otherwise, there exists some vertex $i$, such that all of  its neighbors  are leaf vertices, except possibly one. We can decompose $\cT$ in the following way:  $\cT'=(\bfi', E')$, $i\in \bfi'$ with $\deg_{\cT'}(i)\leq 1$ and $\bfi=\bfi'\bfj, \bfj=\{j_1, j_2,\cdots, j_b\}$, $E=E'\cup\{i,j_1\}\cup \cdots \{i,j_b\}$. 
\begin{align}\label{e:sumAj}
\sum^*_{\bfi} A_{\cT}=\sum^*_{\bfi'} A_{\cT'}\sum^*_{\bfj: \bfi'\bfj \text{ distinct}}A_{ij_1}A_{ij_2}\cdots A_{ij_b}.
\end{align}

The two cases that $\deg_{\cT'}(i)=0$ and $\deg_{\cT'}(i)=1$ are similar. We will only study the case when $\deg_{\cT'}(i)=1$. Say $A_{ii'}=1$ with $i'\in \bfi'$, then $\deg_{\cT}(i)=b+1$, and from the definition of $c_\cT$ in \eqref{e:sumA2} we obtain
\begin{align}\label{e:cT}
c_{\cT}=c_{\cT'}\frac{(d-1)(d-2)\cdots (d-b)}{d^{b}}.
\end{align}
We enumerate the neighbors of vertex $i$ in $\cG$ as $i', k_1, k_2, \cdots, k_{d-1}$. If $\{k_1, k_2,\cdots, k_{d-1}\}\cap \bfi'=\emptyset$, then $j_1, j_2,\cdots, j_b$ can take any values in $k_1, k_2, \cdots, k_{d-1}$, and $\sum^*_{\bfj: \bfi'\bfj \text{ distinct}}A_{ij_1}A_{ij_2}\cdots A_{ij_b}=(d-1)(d-2)\cdots (d-b)$. This corresponds to the first term in the following decomposition.
\begin{align}\label{e:sumAj2}
\sum^*_{\bfi} A_{\cT}
=\sum^*_{\bfi} A_{\cT}\prod_{k\in \bfi'\setminus\{i'\}}(1-A_{ik})+\sum^*_{\bfi} A_{\cT}\left(1-\prod_{k\in \bfi'\setminus\{i'\}}(1-A_{ik})\right).
\end{align}
For the second term on the righthand side of \eqref{e:sumAj2}, we notice that $\left|1-\prod_{k\in \bfi'\setminus\{i'\}}(1-A_{ik})\right|\lesssim \sum_{k\in \bfi'\setminus \{i'\}}A_{ik}$. By adding the edge $\{i,k\}$ with $k\in \bfi'\setminus \{i'\}$ to $\cT$, it either forms a cycle or reduces the number of connected component by $1$. Thus from the discussing above, using \eqref{e:sumA} and \eqref{e:sumAtriangle}, we get
\begin{align}\label{e:bb}
\sum^*_{\bfi} A_{\cT}=(d-1)(d-2)\cdots (d-b)\sum^*_{\bfi'} A_{\cT'}+\OO_{\prec}(N^{\theta(\cT)-1}d^{|E|+1}).
\end{align}
The statement \eqref{e:sumA2} follows from combining \eqref{e:cT} and \eqref{e:bb}.
%


\end{proof}

\section{Green's Function Estimates}\label{sec:GFE}

In this section we collect some estimates on the Green's function $G$ and the Stieltjes transform of the spectral measure $m$. These will be used repeatedly in the rest of the paper. We also introduce monomials in the Green's function entries, and record some of their basic properties.

By our definition, the Green's function $G=P_\perp (H-z)^{-1}P_\perp $ is symmetric and satisfies \eqref{sumG0}.
The \emph{Ward identity} states that the Green's function $G$ satisfies
\begin{equation} \label{e:WdI}
  \frac{1}{N} \sum_{j} |G_{ij}|^2 = \frac{\Im[G_{ii}]}{N\eta},
  \qquad
  \frac{1}{N} \sum_{ij} |G_{ij}|^2 = \frac{\Im[m]}{\eta};
\end{equation}

We record the following basic result, which follows from Theorem \ref{thm:rigidity} and we will use them repeatedly throughout the rest of the paper.
\begin{lemma} \label{l:basicestimates}
Fix small $\fc>0$, suppose that $N^\fc\leq d\leq N^{2/3-\fc}$. With overwhelming probability, uniformly for $z \in \mathbf D$ from \eqref{e:D}, 
\begin{align}\label{e:Lambound}
\max_{x\neq y}|G_{xy}(z)|\leq \Lambda_o(z),\quad \max_{xx}|G_{xx}(z)|\leq 2,  \quad \max_{x}\Im[G_{xx}(z)]\prec \Im[m(z)],
\end{align}
where $\Lambda_o$ is from Theorem \ref{thm:rigidity}. 
\end{lemma}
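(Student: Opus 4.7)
My plan is to derive all three bounds directly from Theorem \ref{thm:rigidity}, with only the last one requiring a small additional argument. The first inequality $\max_{x\neq y}|G_{xy}(z)| \leq \Lambda_o(z)$ is essentially a rephrasing of the off-diagonal estimate in Theorem \ref{thm:rigidity}, with the factor $N^\varepsilon$ hidden in the definition of $\prec$ absorbed into the implicit constant in $\Lambda_o$; it then holds uniformly on $\mathbf D$ with overwhelming probability.

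For the diagonal bound $\max_x|G_{xx}(z)|\leq 2$, I would combine the diagonal estimate $|G_{xx} - \msc(z)| \prec \Lambda_d(z)$ from Theorem \ref{thm:rigidity} with the elementary fact that $|\msc(z)|\leq 1$ for all $z\in \bC_+$; the latter follows from the self-consistent equation \eqref{e:sc_sce}, whose two roots have product $1$, so the root with non-negative imaginary part (namely $\msc$) has modulus at most $1$. Since $\Lambda_d(z) = o(1)$ uniformly on $\mathbf D$ in the regime $d \geq N^\fc$, the triangle inequality yields $|G_{xx}|\leq 1 + o(1) \leq 2$ with overwhelming probability for $N$ large enough.

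For the third bound $\max_x\Im[G_{xx}(z)]\prec \Im[m(z)]$, I would use the spectral representations $\Im[G_{xx}(z)] = \eta\sum_{i\geq 2}|\bmu_i(x)|^2/((\lambda_i - E)^2 + \eta^2)$ and $\Im[m(z)] = \frac{1}{N}\sum_{i\geq 2}\eta/((\lambda_i - E)^2 + \eta^2)$, which reduce the inequality to the weak eigenvector delocalization bound $\max_{i,x}|\bmu_i(x)|^2 \prec 1/N$. The latter is a standard consequence of Theorem \ref{thm:rigidity}: applying the identity $|\bmu_i(x)|^2\leq \eta_*\Im[G_{xx}(\lambda_i + \ri\eta_*)]$ at a scale $\eta_* = N^{-1+\varepsilon}$ just above the typical eigenvalue spacing, and bounding the right-hand side by $\eta_*(\Im[\msc(\lambda_i+\ri\eta_*)] + \Lambda_d(\lambda_i+\ri\eta_*)) \prec 1/N$ via Theorem \ref{thm:rigidity}.

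The main subtlety I anticipate is the third bound for spectral parameters close to the edges $\pm 2$, where one must choose the auxiliary scale $\eta_*$ in the delocalization argument carefully so that $\lambda_i + \ri\eta_*$ lies inside $\mathbf D$ (so that Theorem \ref{thm:rigidity} applies) while still extracting the desired $1/N$ bound on $|\bmu_i(x)|^2$. This is manageable given the eigenvalue rigidity \eqref{e:largeeig} of Theorem \ref{t:rigidity} and the flexibility to take $\fK$ arbitrarily large in the definition of $\mathbf D$.
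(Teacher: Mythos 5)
Your handling of the first two inequalities is correct and matches the paper's (the paper simply cites Theorem~\ref{thm:rigidity}; your observation that $|\msc|\leq 1$ and $\Lambda_d\to 0$ on $\mathbf D$ fills in the small step the paper leaves implicit). Your reduction of the third inequality to eigenvector delocalization $\max_{\alpha,x}|\bmu_\alpha(x)|\prec N^{-1/2}$ via the spectral representations is also exactly what the paper does.

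The difference, and the gap, is in how delocalization is justified. The paper invokes it as a known fact; you attempt to re-derive it from Theorem~\ref{thm:rigidity} via $|\bmu_\alpha(x)|^2\leq\eta_*\Im[G_{xx}(\lambda_\alpha+\ri\eta_*)]$. You flag the edge case as a ``subtlety,'' but it is in fact fatal for this route as written: there is no admissible choice of $\eta_*$ that makes the argument close. Near $\pm 2$, the constraint $N\eta\sqrt{|E\mp 2|+\eta}\geq N^{1/\fK}$ defining $\mathbf D$ forces $\eta_*\gtrsim N^{-2/3+2/(3\fK)}$, far above your proposed $N^{-1+\varepsilon}$. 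At such a scale, Theorem~\ref{thm:rigidity} only gives $\Im[G_{xx}]\leq\Im[\msc]+\OO_\prec(\Lambda_d)$, and $\Lambda_d=\Lambda_o^{1/2}\geq d^{-1/4}\geq N^{-1/12}$ dominates $\Im[\msc]\asymp\sqrt{\eta_*}\asymp N^{-1/3}$ near the edge. So the best this yields is $|\bmu_\alpha(x)|^2\prec\eta_*\Lambda_d\gtrsim N^{-3/4}$, nowhere near $N^{-1}$. The weakened diagonal control $\Lambda_d=\Lambda_o^{1/2}$ quoted in Theorem~\ref{thm:rigidity} is simply too lossy at the edge; delocalization requires the sharper estimate $\Im[G_{xx}]\prec\Im[\msc]+(\text{small error})$ available from the full-strength local law in~\cite{MR3688032} (or a dedicated delocalization result such as~\cite{bauerschmidt2017local}). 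The correct fix is to cite eigenvector delocalization for random regular graphs directly, as the paper does, rather than trying to extract it from the weakened Theorem~\ref{thm:rigidity}.
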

\begin{proof}
The first two relations in \eqref{e:Lambound} follow directly from Theorem \ref{thm:rigidity}, for the last relation, we use the delocalization of eigenvectors 
$
  \max_\alpha\max_i |u_\alpha(i)| \prec 1/\sqrt{N},
$
then
\begin{align*}
\Im[G_{xx}]
=\sum_{\al=2}^N \frac{u_\al(x)^2 \eta}{|\la_\al-z|^2}\prec \frac{1}{N}\sum_{\al=2}^N \frac{ \eta}{|\la_\al-z|^2}=\Im[m].
\end{align*}
\end{proof}

\begin{remark} \label{r:basicestimates}
We recall $\xi_{ij}^{k\ell}$ from \eqref{e:defxi}.
As a consequence of Lemma \ref{l:basicestimates}, for any fixed $b \in \bN$ we find using a resolvent expansion that the following holds with overwhelming probability
\begin{equation*}
\max_{xy} \absbb{G_{xy} (A + \xi)} \leq |G_{xy}(A)| + \OO(d^{-1/2}),\quad \xi=\sum_{a=1}^b \xi_{i_aj_a}^{k_a\ell_a}.
\end{equation*}
\end{remark}

In the following proposition we collect some estimates on the Stieltjes transform:
\begin{proposition}\label{p:dermN}
Fix $z\in \mathbf D$ from \eqref{e:D}. 
For any derivative operators $\del_{\xi_1}, \del_{\xi_2},\cdots, \del_{\xi_b}$ as in Definition \ref{d:defD}, with $\cup_{a=1}^b\bm V(\xi_a)=\{i,j\}\cup \bfm$, let $\del_{\bm\xi}=\del_{\xi_1}\del_{\xi_2},\cdots\del_{\xi_b}$. Then for $b\geq 1$
the derivative $\del_{\bm \xi}m (z)$ is a sum of terms in the following form
\begin{align}\label{e:dermN}
    \frac{\Im[m ]}{N\eta}\sum_{\bfk}G^a_{ij}(z) X_{\bfk\bfm i}(z)Y_{\bfk\bfm j}(z),
\end{align}
where $a\geq 0$, the index set $\bfk=\emptyset$ and $|X_{ k \bfm i}(z)|^2, |Y_{k \bfm j}(z)|^2=\OO_\prec(1)$ or $\bfk=\{k\}$ and $\sum_{k}|X_{ k \bfm i}(z)|^2,\sum_{k} |Y_{k \bfm j}(z)|^2=\OO_\prec(1)$.  It follows that 
\begin{align}\label{e:impp}
     |\del_{\bm \xi}m (z)|\prec \frac{\Im[m (z)]}{N\eta}.
\end{align}
\end{proposition}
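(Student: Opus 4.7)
The plan is to compute $\del_{\bm\xi} m$ directly via the resolvent derivative identity and then extract the claimed $X,Y$ structure, using the Ward identity to bound their $\ell^2$ norms.

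First, each $\xi_a$ is a signed sum of four matrices of the form $\Delta_{uv}$ whose row and column sums vanish, so $\xi_a P_\perp = P_\perp \xi_a = \xi_a$. Hence on $\{\bm 1\}^\perp$ the projected resolvent satisfies the standard identity $\del_{\xi_a}G = -G\xi_a G$. Iterating this $b$ times gives
\begin{align*}
\del_{\bm\xi} m \;=\; \frac{(-1)^b}{N}\sum_{\sigma\in S_b} \Tr\bigl(G\,\xi_{\sigma(1)}\,G\,\xi_{\sigma(2)}\,\cdots\,\xi_{\sigma(b)}\,G\bigr).
\end{align*}
Expanding each $\xi_{\sigma(a)}$ as a signed combination of its four constituent $\Delta_{u_a v_a}$'s produces a finite sum (of cardinality depending only on $b$) of monomials of the form
\begin{align*}
\frac{1}{N}\sum_{k=1}^N G_{k u_1}\,G_{v_1 u_2}\,G_{v_2 u_3}\,\cdots\,G_{v_{b-1} u_b}\,G_{v_b k},
\end{align*}
where $u_1,v_1,\dots,u_b,v_b\in\bm V(\bm\xi)$ and $k$ is the unique summation variable, appearing in precisely two Green's function entries (the extremal ones).

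For each such monomial, set $i:=u_1$ and $j:=v_b$ (two distinguished labels in $\bm V(\bm\xi)$), take $a=0$, and define
\begin{align*}
X_{k\bfm i}(z)\;=\;\sqrt{\eta/\Im[m(z)]}\;G_{ki}(z),\qquad
Y_{k\bfm j}(z)\;=\;\sqrt{\eta/\Im[m(z)]}\;G_{jk}(z)\prod_{s=1}^{b-1}G_{v_s u_{s+1}}(z),
\end{align*}
so that the monomial equals $(\Im[m]/(N\eta))\sum_k G_{ij}^{\,0}\,X_{k\bfm i}\,Y_{k\bfm j}$, matching the form \eqref{e:dermN}. The Ward identity \eqref{e:WdI}, $\sum_k|G_{ki}|^2=\Im[G_{ii}]/\eta$, together with Lemma \ref{l:basicestimates} which gives $\Im[G_{ii}]\prec\Im[m]$ and $|G_{xy}|\prec 1$ uniformly, immediately yields $\sum_k|X_{k\bfm i}|^2\prec 1$ and $\sum_k|Y_{k\bfm j}|^2\prec 1$. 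This establishes the structural claim.

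The bound \eqref{e:impp} then follows by Cauchy--Schwarz applied to each term:
\begin{align*}
\absbb{\frac{\Im[m]}{N\eta}\sum_k G^a_{ij}\,X_{k\bfm i}\,Y_{k\bfm j}}\;\leq\;\frac{\Im[m]}{N\eta}\,|G_{ij}|^a\Bigl(\sum_k|X|^2\Bigr)^{\!1/2}\Bigl(\sum_k|Y|^2\Bigr)^{\!1/2}\;\prec\;\frac{\Im[m]}{N\eta}.
\end{align*}
The main obstacle is purely combinatorial book-keeping: when some $u_s,v_s$ coincide certain middle entries become diagonal $G_{u_s u_s}$, and to match the flexibility of the statement (any $a\geq 0$) one may absorb repeated $G_{ij}$ factors into an explicit $G_{ij}^a$; in all these cases the estimates go through uniformly because Lemma \ref{l:basicestimates} bounds both diagonal and off-diagonal Green's function entries by $\OO_\prec(1)$, and the Ward identity is symmetric in the summed vertex $k$.
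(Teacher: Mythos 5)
Your derivation of the iterated resolvent identity and the bound \eqref{e:impp} via the Ward identity is fine, but the structural claim \eqref{e:dermN} is not actually established, and this is the part the paper relies on downstream. The indices $i,j$ in the proposition are \emph{given} as a fixed splitting $\cup_a\bm V(\xi_a)=\{i,j\}\cup\bfm$ (in the applications they are the indices of a distinguished off-diagonal factor $G_{ij}$), and the subscripts $X_{\bfk\bfm i},Y_{\bfk\bfm j}$ encode that $X$ may only involve the vertices $\{i\}\cup\bfm\cup\bfk$ and $Y$ only $\{j\}\cup\bfm\cup\bfk$; this separation is exactly what allows the Cauchy--Schwarz step \eqref{e:L2est} in the proof of Proposition~\ref{p:one-off}, where $\sum_{ij}G_{ij}X_{\bfk\bfm i}Y_{\bfk\bfm j}$ is bounded as a bilinear form of $G$ acting on the vectors $(X_{\bfk\bfm i})_i$ and $(Y_{\bfk\bfm j})_j$. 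You instead redefine $i:=u_1$, $j:=v_b$ and dump the entire middle chain $\prod_{s=1}^{b-1}G_{v_su_{s+1}}$ into $Y$. This fails on both counts: $u_1,v_b$ are not the $i,j$ of the statement, and even after relabelling the middle factors can contain $u_1$ as well as $v_b$ (the sets $\bm V(\xi_a)$ may overlap), so your $Y$ depends on $i$ and the separability is lost.

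The paper's proof resolves this by sorting each factor $G_{xy}$ in the chain by its index content: those whose index set meets only $i$ (or neither $i$ nor $j$) go into $\tilde X_{k\bfm i}$, those meeting only $j$ go into $\tilde Y_{k\bfm j}$, and any $G_{ij}$ factors are collected into the power $G_{ij}^a$. It then splits into two cases according to whether the two $k$-bearing endpoint factors $G_{kv_1},G_{v_{2b}k}$ land in different halves (take $\bfk=\{k\}$, use the Ward identity on each side) or the same half (then the other half is $k$-independent, take $\bfk=\emptyset$, and bound $\big|\sum_k G_{kv_1}G_{v_{2b}k}\big|$ by $\Im[m]/\eta$ via AM--GM and Ward). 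Your argument also misses this $\bfk=\emptyset$ case entirely, and the final remark about ``absorbing repeated $G_{ij}$ factors'' is asserted rather than carried out — the $G_{ij}^a$ power must come out of a systematic binning, not an afterthought. You should redo the decomposition following the index-content binning, and treat the two placements of the $k$-endpoints separately.
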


\begin{proof} 
The derivative $\del_{\bm \xi}m $ is a sum of terms in the following form
\begin{align}\label{e:dermN2}
    \frac{1}{N}\sum_{k}G_{kv_1}G_{v_2 v_3}\cdots G_{v_{2b-2} v_{2b-1}} G_{v_{2b}k},
\end{align}
where $v_1,v_2,\cdots, v_{2b}\in ij\bfm$. For the Green's function entries in \eqref{e:dermN2} we can regroup them depending if they contain indices $i,j$
\begin{align*}
   \frac{1}{N} \sum_{k=1}^N G_{ij}^a \tilde X_{k\bfm i}  \tilde Y_{k\bfm j}, 
\end{align*}
where for each $G_{xy}$ in \eqref{e:dermN2}, if the index set $\{x,y\}$ only contains $i$ we put it in $\tilde X_{k\bfm i}$; if it only contains $j$ we put it in $\tilde Y_{k\bfm  j}$; if it does not contain $i,j$ we still put it in $\tilde X_{k\bfm j}$. 

There are two cases. In the first case, each of $\tilde X_{k\bfm i}, \tilde Y_{k\bfm j}$ contains  one of $G_{k v_1}, G_{v_{2b}k}$; in the second case, both $G_{k v_1}, G_{v_{2b}k}$ are in $\tilde X_{k\bfm i}$ or $\tilde Y_{k\bfm j}$.

In the first case, say $G_{k v_1}$ is in $\tilde X_{k\bm m i}$, using  $|G_{v_s v_{s+1}}|\prec 1$ from Lemma \ref{l:basicestimates} and Ward identity \eqref{e:WdI} we have
\begin{align*}
    \sum_{k=1}^N |\tilde X_{k\bfm i}|^2 \prec \sum_{k=1}^N |G_{k v_1}|^2\prec \frac{\Im[m ]}{\eta}, \quad 
    \sum_{k=1}^N |\tilde Y_{k\bfm j}|^2 \prec \sum_{k=1}^N |G_{k v_{2b}}|^2\prec \frac{\Im[m ]}{\eta}.
\end{align*}
The claim \eqref{e:dermN} follows by taking  $\bfk=\{k\}$, $\sqrt{\Im[m ]/\eta}X_{\bfk \bfm i}=\tilde X_{k\bfm i}, \sqrt{\Im[m ]/\eta}Y_{\bfk \bfm j}=\tilde Y_{k\bfm j}$.

In the second case, say both $G_{k v_1}, G_{v_{2b}k}$ are in $\tilde X_{k\bfm i}$. Then $\tilde Y_{k\bfm j}=\tilde Y_{\bfm j}$ does not depend on the index $k$. Moreover, using  $|G_{v_s v_{s+1}}|\prec 1$ from Lemma \ref{l:basicestimates} and Ward identity \eqref{e:WdI}
\begin{align*}
    \left|\sum_k \tilde X_{k\bfm i}\right|\prec \sum_{k=1}^N |G_{k v_1}G_{v_{2b}k}|\leq \frac{1}{2}\sum_{k=1}^N |G_{k v_1}|^2+|G_{v_{2b}k}|^2\prec \frac{\Im[m ]}{\eta}.
\end{align*}
The claim \eqref{e:dermN} follows by taking $\bfk=\emptyset$, $(\Im[m ]/\eta) X_{\bfk\bfm i}=\sum_k \tilde X_{k\bfm i}, Y_{\bfk\bfm j}=\tilde Y_{\bfm j}$.
This finishes the proof of \eqref{e:dermN}. The bound \eqref{e:impp} follows from the decomposition \eqref{e:dermN}. 


\end{proof}

A central object in our proof is the following notion of a polynomial in the Green's function entries. 
\begin{definition} \label{d:evaluation}
  Let $R = R(\{x_{st}\}_{s,t = 1}^r, y)$ be a monomial in the $r^2+1$ abstract variables $\{x_{st}\}_{s,t = 1}^r, y$. We denote its degree by $\deg(R)$. For ${\bf i} \in \qq{N}^r$, we define its \emph{evaluation} on the Green's function and the Stieltjes transform by
  \begin{equation*}
    R_{\bf i} = R(\{G_{i_s i_t}\}_{s,t = 1}^r, m),
  \end{equation*}
  and say that $R_{\bf i}$ is a \emph{monomial in the Green's function entries} $\{G_{i_s i_t}\}_{s,t = 1}^r$. 
  We denote the number of off-diagonal entries in $R_\bfi$ as  $\chi(R_\bfi)$.
\end{definition}

%

\section{Self-Consistent Equation}
\label{sec:P-construct}

In this section we construct the self-consistent equation for the Stieltjes transform $m$, and prove that the self-consistent equation holds with overwhelming probability.

\begin{proposition}\label{p:DSE}
Assume $N^\fc\leq d \leq N^{1/3-\fc}$.  There exists a finite degree polynomial, depending on $d$ but not $N$,
\begin{align} \label{e:P_split}
P(z,u)=1+zu+u^2+Q(u),
\end{align}
where
\begin{align} \label{e:Q_a}
Q(u)= a_2 u^2+a_3 u^3 +a_4 u^4+\cdots+a_n u^n,
\end{align}
is a polynomial with bounded coefficients $a_2,a_3,\dots, a_n=\OO(1/\sqrt d)$ such that, for any $z \in \mathbf D$ from \eqref{e:D}
\begin{align}\label{e:defP}
\bE[|P(z, m(z))|^{2q}]\prec \bE[\Phi_q(z)].
\end{align}
where the control parameter $\Phi_q(z)$ is defined as
\begin{align}\label{e:defPhir}
\begin{split}
\Phi_q(z):&= \frac{d^{3/2}\Lambda_o(z)}{N}|P(z,m(z))|^{2q-1}+\sum_{s= 1}^{2q}\left(\frac{\Im[m(z)]}{N\eta}\right)^s|P( z, m(z))|^{2q-s}\\
&+\sum_{s=1}^{2q-1}\left(\frac{1}{N\eta}+\frac{d^{3/2}\Lambda_o(z)}{N}\right)\left( \frac{\Im[m(z)]|\del_2 P(z,m(z))|}{N\eta}\right)^s |P( z, m(z))|^{2q-s-1}.
\end{split}\end{align}
\end{proposition}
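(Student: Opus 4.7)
The plan is to derive the self-consistent equation via a recursive switching procedure that produces both the polynomial $P$ and the bound \eqref{e:defP} simultaneously. The starting point is the Green's function identity $HG(z) = zG(z) + P_\perp$, which upon taking the normalized trace yields
$$1 + zm(z) = \frac{1}{N\sqrt{d-1}}\sum_{i\neq k} A_{ik}G_{ki}(z) + O(1/N).$$
To estimate the $2q$-th moment, set $U := \overline{P(z,m)}^{\,q} P(z,m)^{q-1}$, so that $|U|\prec |P(z,m)|^{2q-1}$, and write
$$\bE\bigl[|P(z,m)|^{2q}\bigr] = \bE\bigl[(1+zm)U\bigr] + \bE[m^2 U] + \bE[Q(m)\,U].$$
Using the identity above, the first term becomes a sum of $\frac{1}{N\sqrt{d-1}}\bE[A_{ik}G_{ki}U]$ over distinct $i\neq k$. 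The coefficients of $Q$ are constructed inductively so that all leading $1/\sqrt d$ corrections generated below cancel.

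The engine is the discrete integration by parts from Propositions \ref{c:intbp} and \ref{c:intbp2}. At each stage we are processing a sum
$$\frac{1}{N^{\theta(\cT)} d^{|E|}}\sum^*_\bfi \bE\bigl[A_\cT R_\bfi U\bigr],$$
where $\cT = (\bfi, E)$ is a forest and $R_\bfi$ is a monomial in Green's function entries indexed by $\bfi$. A switching produces the enlarged forest $\cT^+$ (with the two-level structure described in Remark \ref{r:Tstructure}) and a discrete derivative $D_{\cT^+}$ applied to $R_\bfi U$, which by the Taylor expansion \eqref{e:D-expand} becomes a sum of continuous derivatives $\partial_\xi$, each contributing a factor $(d-1)^{-1/2}$. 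We then classify the resulting terms by the structure of the new $R$: \emph{(i)} if it contains an off-diagonal entry $G_{ij}$ with $i,j$ in different components of $\cT^+$, the conditional decorrelation — combined with $|G_{ij}|\prec\Lambdao$ from Lemma \ref{l:basicestimates} — places the term into the $d^{3/2}\Lambdao/N$ portion of $\Phi_q$; \emph{(ii)} if the off-diagonal entry lies inside a single component, we apply $(H-z)G = P_\perp$ at an adjacent vertex to replace it by diagonal entries plus further switchable terms (each with an extra $1/\sqrt d$); \emph{(iii)} if $R_\bfi$ is purely diagonal, we write each $G_{i_s i_s} = m + (G_{i_s i_s}-m)$, extract the polynomial-in-$m$ leading piece (absorbed into $Q(m)$, with coefficient $O(d^{-1/2})$), and control the remainder through Lemma \ref{l:basicestimates}.

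Each iteration gains a factor $d^{-1/2}$, so after $n = n(\fc)$ rounds the residual carries a factor $d^{-n/2} \ll N^{-1}$ and is absorbed. This yields $Q(u) = a_2 u^2 + \cdots + a_n u^n$ with $a_k = O(d^{-1/2})$. The three components of $\Phi_q$ account for: the $d/N$ overhead in each switching step (Propositions \ref{c:intbp}, \ref{c:intbp2}); derivatives falling on $m$, estimated via $\partial_{\bm\xi}m \prec \Im[m]/(N\eta)$ from Proposition \ref{p:dermN}; and mixed terms in which derivatives fall on $U$ and generate factors of $\partial_2 P(z,m)$, weighted by the same $\Im[m]/(N\eta)$ through each $\partial_\xi m$ produced.

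The main obstacle will be the rigorous handling of Case (i): one must show that conditioning on $A_\cT = 1$ does not deteriorate the off-diagonal bound $\Lambdao$ for entries whose two indices lie in different components of $\cT^+$. The two-level-tree structure from Remark \ref{r:Tstructure} is essential here, since it ensures the conditional measure differs from the unconditional one only through a controlled local perturbation, under which the local law of Theorem \ref{thm:rigidity} is stable. Once this decoupling is in place, the recursion terminates cleanly after finitely many steps; the identification of $Q$ with the truncation of the Kesten--McKay equation \eqref{e:md-sce} is deferred to Section \ref{sec:P-identification}.
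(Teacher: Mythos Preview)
Your overall architecture matches the paper's: start from $1+zm = \frac{1}{N\sqrt{d-1}}\sum^*_{ij}A_{ij}G_{ij}+O(1/N)$, multiply by $P^{q-1}\bar P^q$, and iterate the switching expansion of Propositions~\ref{c:intbp}--\ref{c:intbp2} to generate a hierarchy of terms indexed by a forest $\cT$, an exponent $\fh$ of $d^{-1/2}$, and the number $\fo$ of off-diagonal factors. The classification into three cases is also the right one. But the mechanisms you propose for Cases~(i) and~(iii) are not strong enough to land in $\Phi_q$, and this is where the paper's argument diverges from yours.

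\textbf{Case (i).} Invoking $|G_{ij}|\prec\Lambdao$ together with ``conditional decorrelation'' only yields a bound of order $\Lambdao\,|P|^{2q-1}$, which is \emph{not} dominated by $\Phi_q$: the relevant piece of $\Phi_q$ is $\frac{d^{3/2}\Lambdao}{N}|P|^{2q-1}$, carrying an extra factor $d^{3/2}/N$ that your argument does not produce. Stability of the local law under conditioning on $A_\cT=1$ is a red herring---the paper never uses it. Instead, the paper proves (Proposition~\ref{c:one-off}) that such terms are $\prec\bE[\Phi_q]$ by two different devices depending on whether $U_\bfi$ carries derivatives. When $b\geq 1$, one expands $\partial_{\bm\xi}(P^{q-1}\bar P^q)$ via the structure of $\partial_{\bm\xi}m$ in Proposition~\ref{p:dermN}, and then uses the operator-norm bound $\|G\|\le 1/\eta$ on the bilinear form $\frac{1}{N^2}\sum_{ij}G_{ij}X_iY_j$ to extract a factor $1/(N\eta)$. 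When $b=0$ (so $U=P^{q-1}\bar P^q$), the pointwise bound on $G_{ij}$ is useless; instead one feeds $G_{ij}$ back into the self-replicating identity
\[
G_{ij}=\frac{1}{N\sqrt{d-1}}\sum_{k\notin\bfi}(AG)_{kk}G_{ij}-\frac{1}{\sqrt{d-1}}(AG)_{ij}\,m+O_\prec(1/N),
\]
obtained by subtracting two instances of $(HG)_{\cdot\cdot}=zG_{\cdot\cdot}+(P_\perp)_{\cdot\cdot}$, and shows the two main pieces cancel to leading order after one more switching (the $m^2R_\bfi$ terms match). This is the content of Propositions~\ref{p:one-off} and~\ref{p:one-off2}, and it is the genuinely new idea you are missing.

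\textbf{Case (iii).} Writing $G_{ii}=m+(G_{ii}-m)$ and bounding the remainder by $\Lambdad$ from Lemma~\ref{l:basicestimates} again fails to reach $\Phi_q$: $\Lambdad\asymp d^{-1/4}$ is far too large. The paper does not bound $G_{ii}-m$; it \emph{re-expresses} it via the analogous identity
\[
G_{ii}-m=\frac{1}{N\sqrt{d-1}}\sum_{j\notin\bfi}(AG)_{jj}G_{ii}-\frac{1}{\sqrt{d-1}}(AG)_{ii}\,m+O_\prec(1/N),
\]
and then switches on the new $A$-factors (Proposition~\ref{c:case2exp}). The leading contributions from the two halves again cancel, and what survives is either $\cE_{\tilde\fh,\tilde\fo}$ with $\tilde\fh+\tilde\fo\ge\fh+1$ or the desired $m\cdot R'_\bfi$ term. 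Finally, once all Green's function entries have been traded for powers of $m$, one must undo the accumulated $A_{\cT_t}$ and $D_{\cT^+}$ factors to get back to $\bE[m^\fd P^{q-1}\bar P^q]$; this is Proposition~\ref{p:rewrite} and relies on $c_\cT=c_{\cT^+}$ (Remark~\ref{r:changec}), a step your sketch omits.
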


We will show that the estimate \eqref{e:defP} results from the switching invariance of random regular graphs.
 It may be viewed as an approximate \emph{Schwinger--Dyson Equation} for the random regular graph ensemble; in statistical mechanics and field theory, such equations are typically derived by integration by parts. In the rest of the paper, we write $ \del_2 P (z,w) \deq \partial_w  P (z,w)$.

Starting from \eqref{e:GHexp} and noticing $A_{ii}=0$ we obtain
\begin{equation*}
1+zm=\sum^*_{ij}\frac{A_{ij}G_{ij}}{N(d-1)^{1/2}} + \frac{1}{N}.
\end{equation*}
By multiplying both sides by $P^{q-1}\bar P^q$, we get
\begin{align}\begin{split}\label{e:EPP}
\bE[(1+zm)P^{q-1}\bar P^q] 
&=  \frac{1}{N (d - 1)^{1/2}} \sum_{ij}^* \bE [A_{ij}(G_{ij}P^{q-1}\bar P^q)] +\OO_\prec(\bE[ \Phi_q])\\
&=  \frac{d}{(d-1)^{1/2}}\frac{1}{N^{\theta(\cT_0)} d^{|E_0|}} \sum_{ij}^* \bE [A_{\cT_0} R_{\bfi_0}P^{q-1}\bar P^q] +\OO_\prec(\bE[ \Phi_q]),
\end{split}\end{align}
where $\cT_0=(\bfi_0={ij},E_0=\{i,j\})$, $A_{\cT_0}=A_{ij}$ and $R_{\bfi_0}=G_{ij}$.

In Sections  \ref{s:prove} and \ref{s:pre}, we will show that we can use Propositions \ref{c:intbp} and \ref{c:intbp2} to further expand \eqref{e:EPP}. In this way, starting from the graph $\cT_0=(\bfi_0={ij},E_0=\{i,j\})$, we will obtain  a sequence of graphs 
\begin{align*}
\cT_0=(\bfi_0, E_0),\quad  \cT_1=(\bfi_1, E_1), \quad \cT_2=(\bfi_2, E_2), \quad \cdots,
\end{align*}
After we construct graph $\cT_r$, we need to further use Propositions \ref{c:intbp} and \ref{c:intbp2} to expand terms in the following form
\begin{align*}\begin{split} 
\frac{1}{d^{\fh/2}}\frac{1}{N^{\theta(\cT_r)}d^{|E_r|}}\sum^*_{\bfi_r}\bE\left[A_{\cT_r}R_{\bfi_r} U_{\bfi_r}\right], 
\end{split}\end{align*}
where $R_{\bfi_r}$ is a monomial of Green's function entries as in Definition \ref{d:evaluation}, and $U_{\bfi_r}$ is in the following form. 

\begin{definition} \label{d:Ui}
Given an index set $\bfi$, we will use $U_\bfi=U_\bfi(m,\bar m)$ to represent an expression in the following form: for fixed $b\in \bN$ and discrete derivative operators $D_{\xi_1}, D_{\xi_2},\cdots, D_{\xi_b}$ as in Definition \ref{d:defD} with $\bm V(\xi_1), \bm V(\xi_2),\cdots, \bm V(\xi_b)\subset \bfi$,
\begin{align}\label{e:defU}
\left(\prod_{a=1}^{b}(1+D_{\xi_a})^{e_a}(\sqrt{d-1}D_{\xi_a})^{1-e_a}\right)  P ^{q-1}\bar  P ^q, \quad e_1, e_2, \cdots, e_b\in\{0,1\}.
\end{align}
If $b=0$, then the above expression reduces to $P^{q-1}\bar P^q$.
\end{definition}
We remark that from the Taylor expansion \eqref{e:D-expand}, up to negligible error, $U_\bfi$ is a finite linear combination of terms in the form $\del_{\bm\xi}( P ^{q-1}\bar  P ^q)$.

\subsection{Proof of Proposition \ref{p:DSE}}\label{s:prove}

We recall the discussion from last section, that we need to estimate terms in the following form:
\begin{definition} \label{d:order_terms}
For $\fh,\fo\in \bN$, we use the symbol
$\cE_{\fh,\fo}$
to denote a finite linear combination of terms of the form for some $\fh\geq 0$,
\begin{align}\label{e:newterm}
 \frac{1}{d^{\fh/2}}\frac{1}{N^{\theta(\cT)}d^{|E|}}\sum^*_{\bf i}\bE\left[A_{\cT}R_{\bfi} U_\bfi\right],
\end{align}
where the forest $\cT=(\bfi, E)$ is as in Definition \ref{d:forest}, $R_\bfi$ is a monomial of Green's function entries as in Definition \ref{d:evaluation}, such that $\chi(R_\bfi)=\fo$ (recall that $\chi(R_\bfi)$ counts the number of off-diagonal Green's function entries), and $U_\bfi$ is as in \eqref{e:defU}. 
\end{definition}

In the next two propositions, we collect some useful estimates. Their proofs are given by Sections \ref{s:pre} and \ref{s:pre2}.
\begin{proposition}\label{p:someb}
We adopt assumptions in Proposition \ref{p:DSE}, and recall the polynomial $ P $ from \eqref{e:P_split}. For $\del_{\bm\xi}=\del_{\xi_1}\del_{\xi_2},\cdots\del_{\xi_b}$ as in Definition \eqref{d:defD}, then for $b\geq 1$
\begin{align}\label{e:dmN}
\del_{\bm\xi}  P (z,m)=\del_{\bm\xi} m  \del_2 P (z, m)+\O_\prec\left(\frac{\Im[m]^2}{(N\eta)^2} \right)
=\OO_\prec \left(\frac{\Im[m]|\del_2 P(z,m)|}{N\eta}+\frac{\Im[m]^2}{(N\eta)^2} \right),
\end{align}
and
\begin{align}\label{e:onet}
\left(\frac{d^{3/2}\Lambda_o}{N}+ \frac{1}{N\eta}\right)\del_{\bm\xi}( P ^{q-1}(z, m)\bar  P ^q(z,m))\prec \Phi_q.
 \end{align}
As a consequence, if $R_\bfi, U_\bfi$ are as in Definitions \ref{d:evaluation} and \ref{d:Ui}, then if $R_{\bfi}$ contains at least one off-diagonal Green's function entry
\begin{align}\label{e:Uibound}
\left(\frac{d^{3/2}\Lambda_o}{N}+\frac{\Im[m]}{N\eta}\right)\left|R_\bfi U_\bfi\left(A+\sum_{a=1}^b \xi_a\right)\right|\lesssim \Phi_q,
\end{align}
and for $\fo+\fh$ large enough such that $\Lambda_o^{\fo+\fh}\leq 1/N$ (recall $\Lambda_o$ from \eqref{Lambda_weak}), we have
\begin{align}\label{e:Ceod}
\cE_{\fh,\fo}\lesssim \bE[\Phi_q].
\end{align}
\end{proposition}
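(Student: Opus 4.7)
The plan is to prove the four estimates in sequence, with \eqref{e:dmN} as the base case and the others building from it via Leibniz's rule, the single-derivative bound $|\del_{\bm\xi} m| \prec \Im[m]/(N\eta)$ from Proposition \ref{p:dermN}, and the stability of Green's function entries under switchings from Remark \ref{r:basicestimates}. For \eqref{e:dmN}, I would view $P(z,\cdot)$ as a polynomial in its second argument and apply the generalized chain rule (Faa di Bruno),
\[
\del_{\bm\xi} P(z,m) \;=\; \sum_\pi \del_2^{|\pi|} P(z,m) \prod_{B\in\pi} \del_B m,
\]
where the sum runs over set-partitions $\pi$ of $\{\xi_1,\dots,\xi_b\}$ and $\del_B m$ denotes the mixed derivative indexed by the block $B$. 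The one-block partition produces the leading term $\del_{\bm\xi} m \cdot \del_2 P$; every other partition contains at least two blocks, each contributing $O_\prec(\Im[m]/(N\eta))$ by Proposition \ref{p:dermN}, which collectively yield the $O_\prec((\Im[m]/(N\eta))^2)$ error. Since $|m|\lesssim 1$ on $\mathbf D$ by Lemma \ref{l:basicestimates}, the prefactors $\del_2^k P$ are uniformly bounded, and the second bound in \eqref{e:dmN} follows by invoking Proposition \ref{p:dermN} once more on the leading term.

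For \eqref{e:onet}, I would expand $\del_{\bm\xi}(P^{q-1}\bar P^q)$ by the generalized Leibniz rule. A typical term carries $s$ derivative factors (each of the form $\del_B P$ or $\del_B \bar P$ for a block $B$ of a partition of $\{\xi_1,\dots,\xi_b\}$) multiplied by a polynomial $P^a \bar P^{2q-s-a}$. By \eqref{e:dmN}, each derivative factor is bounded by $\Im[m]|\del_2 P|/(N\eta) + (\Im[m]/(N\eta))^2$. Grouping by $s$ and multiplying by $(d^{3/2}\Lambda_o/N + 1/(N\eta))$, each resulting term falls into one of the three sums defining $\Phi_q$: the principal derivative contribution produces the third sum $(\Im[m]|\del_2 P|/(N\eta))^s |P|^{2q-s-1}$, while pairing two of the $(\Im[m]/(N\eta))$ error factors into one $(\Im[m]/(N\eta))^2$ produces the middle sum $(\Im[m]/(N\eta))^s |P|^{2q-s}$.

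For \eqref{e:Uibound}, I would first simplify $U_\bfi$: for each $e_a=1$, the operator $(1+D_{\xi_a})$ acts as the shift $F(A)\mapsto F(A+\xi_a)$, which by Remark \ref{r:basicestimates} perturbs Green's function entries by only $O(d^{-1/2})$; for each $e_a=0$, the Taylor expansion \eqref{e:D-expand} gives $\sqrt{d-1}\,D_{\xi_a} F = \del_{\xi_a} F + O(\del_{\xi_a}^2 F/\sqrt d)$. Thus $U_\bfi$ equals a shifted version of $\del_{\bm\xi'}(P^{q-1}\bar P^q)$, with $\bm\xi'=\{\xi_a : e_a=0\}$, plus corrections of order $1/\sqrt d$. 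Since $R_\bfi$ contains at least one off-diagonal entry, Lemma \ref{l:basicestimates} gives $|R_\bfi|\lesssim \Lambda_o$, and using $\Lambda_o\leq 1$ and $\Im[m]\lesssim 1$, the prefactor $(d^{3/2}\Lambda_o/N + \Im[m]/(N\eta))\Lambda_o$ is controlled by $(d^{3/2}\Lambda_o/N + 1/(N\eta))$, so \eqref{e:Uibound} reduces to \eqref{e:onet}. For \eqref{e:Ceod}, Proposition \ref{p:sumA} yields $N^{-\theta(\cT)} d^{-|E|}\sum^*_\bfi A_\cT = O_\prec(1)$; the $\fo$ off-diagonal entries of $R_\bfi$ contribute a factor $\Lambda_o^\fo$; and since $\Lambda_o\geq d^{-1/2}$, one has $d^{-\fh/2}\leq \Lambda_o^\fh$. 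Hence $|\cE_{\fh,\fo}|\lesssim \Lambda_o^{\fo+\fh}\bE[|U_\bfi|]\leq \bE[|U_\bfi|]/N$, and since $d^{3/2}\Lambda_o\geq 1$, each summand arising from the Leibniz expansion of $|U_\bfi|/N$ is dominated by a corresponding summand of $\Phi_q$.

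The main obstacle will be the careful bookkeeping in the Leibniz expansion of $\del_{\bm\xi}(P^{q-1}\bar P^q)$: matching each resulting term with a summand of $\Phi_q$ requires tracking the number of derivative factors $s$, the remaining polynomial degree, and keeping the two contributions in \eqref{e:dmN} separate so that both the middle sum $(\Im[m]/(N\eta))^s|P|^{2q-s}$ and the third sum $(\Im[m]|\del_2 P|/(N\eta))^s|P|^{2q-s-1}$ are correctly populated. A secondary point is to verify that the Taylor remainders from $\sqrt{d-1}\,D_\xi = \del_\xi + O(1/\sqrt d)$ and from the $(1+D_\xi)$ shifts are negligible; this follows from Remark \ref{r:basicestimates} together with $1/\sqrt d \leq \Lambda_o$.
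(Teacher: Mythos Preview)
Your outline matches the paper's proof closely: Fa\`a di Bruno for \eqref{e:dmN}, Leibniz plus \eqref{e:dmN} for \eqref{e:onet}, Taylor expansion of the operators in $U_\bfi$ for \eqref{e:Uibound}, and $\Lambda_o^{\fo+\fh}\leq 1/N$ combined with $|U_\bfi|\prec N\Phi_q$ for \eqref{e:Ceod}.

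There is one gap in your handling of \eqref{e:Uibound}. When all $e_a=1$ in Definition~\ref{d:Ui}, your set $\bm\xi'=\{\xi_a:e_a=0\}$ is empty, so $U_\bfi$ is a pure shift of $P^{q-1}\bar P^q$ with no derivative attached, and \eqref{e:onet} (which requires $b\geq 1$) does not apply. In this case your converted prefactor $\tfrac{1}{N\eta}$ multiplied by $|P|^{2q-1}$ is \emph{not} dominated by $\Phi_q$: the second sum in $\Phi_q$ carries $\Im[m]/(N\eta)$, not $1/(N\eta)$, and $\Im[m]$ can be small near the spectral edge. The paper sidesteps this by bounding $|R_\bfi(A+\xi)|\prec 1$ rather than $\Lambda_o$, keeping the original prefactor $\bigl(\tfrac{d^{3/2}\Lambda_o}{N}+\tfrac{\Im[m]}{N\eta}\bigr)$, and observing that this prefactor times $|P|^{2q-1}$ is exactly the first term plus the $s=1$ term of the second sum of $\Phi_q$. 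Your fix is immediate: treat the $\bm\xi'=\emptyset$ case separately with the undiluted prefactor, and only trade $\Im[m]\Lambda_o\leq 1$ when at least one $\del_{\xi_a}$ survives so that \eqref{e:onet} genuinely applies.
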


\begin{proposition}\label{c:one-off}
We assume assumptions in Proposition \ref{p:DSE}. Given a forest $\cT=(\bfi, E)$ as in Definition \ref{d:forest}, $R_\bfi$ is a monomial of Green's function entries as in Definition \ref{d:evaluation}. Let $\del_{\bm\xi}=\del_{\xi_1}\del_{\xi_2},\cdots\del_{\xi_b}$ as in Definition \ref{d:defD} with $\bm V(\bm\xi)\subset \bfi$. If $R_\bfi$ contains at least one off-diagonal Green's function entry, i.e. $R_\bfi=G_{ij} R'_{\bfi}$, and $i,j\in \bfi$ are in different connected components of $\cT$ then
\begin{align}\label{e:AGij}
\frac{1}{N^{\theta(\cT)}d^{|E|}}\sum^*_{\bfi}\bE\left[A_{\cT} R_{\bfi}\del_{\bm\xi}( P ^{q-1}\bar  P ^q)\right]\prec \bE[\Phi_q],
\end{align}
In particular, if we take $\cT=(\bfi, E=\emptyset)$ then $\cA_{\cT}=1$ and 
\begin{align}\label{e:Gij}
\frac{1}{N^{|\bfi|}}\sum^*_{\bfi}\bE\left[R_{\bfi}\del_{\bm\xi}( P ^{q-1}\bar  P ^q)\right]\prec \bE[\Phi_q].
\end{align}
\end{proposition}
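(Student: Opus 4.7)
The plan is to reduce the general statement \eqref{e:AGij} to the special case \eqref{e:Gij} with $\cT = (\bfi, \emptyset)$ via the switching identity of Proposition \ref{c:intbp}, then to close the estimate for the reduced case by exploiting the cancellation $\sum_{j=1}^N G_{ij} = 0$ from \eqref{sumG0} together with the smallness $|G_{ij}| \prec \Lambda_o$ of off-diagonal entries. The off-diagonal smallness alone only buys a factor $\Lambda_o$, whereas the target $\Phi_q$ contains $d^{3/2}\Lambda_o/N$; the missing $d^{3/2}/N$ has to come from either the sum identity \eqref{sumG0} or from discrete integration by parts.

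For the reduction, I apply Proposition \ref{c:intbp} with test function $F_\bfi = R_\bfi \del_{\bm\xi}(P^{q-1}\bar P^q)$. This replaces the factor $A_\cT$ by the unconstrained main term $\frac{1}{N^{|\bfi|}}\sum^*_\bfi \bE[F_\bfi]$ plus a switching correction involving the larger forest $\cT^+$, plus a remainder of order $d\bE[\mathcal{C}(F,A)]/N$ which is absorbed via \eqref{e:Uibound}. Crucially, Remark \ref{r:Tstructure}(2) guarantees that $i$ and $j$ remain in distinct connected components of $\cT^+$, so the structural hypothesis of the proposition persists after switching, and the correction can be expanded by the Taylor formula \eqref{e:D-expand}. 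Each expansion produces extra factors of $1/\sqrt{d-1}$ multiplying terms in the form of Definition \ref{d:order_terms} with $\fo \geq 1$ (because the distinguished factor $G_{ij}$ is still present). Iterating finitely many switching steps drives $\fo+\fh$ large enough that $\Lambda_o^{\fo+\fh}\leq 1/N$, and at that point Proposition \ref{p:someb} (bound \eqref{e:Ceod}) closes the correction.

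For the main case \eqref{e:Gij}, I write $R_\bfi = G_{ij}R'_\bfi$ and use \eqref{sumG0} to replace the constrained sum $\sum^*_j G_{ij}$ by $-G_{ii}-\sum_{j'\in\bfi\setminus\{j\}} G_{ij'}$, where every summand carries either an additional off-diagonal factor $|G_{ij'}|\prec\Lambda_o$ or a diagonal $|G_{ii}|=\OO(1)$ but loses a full $1/N$ from the restriction in the counting; combined with the fact that we gained one free index, this is precisely the factor $d^{3/2}\Lambda_o/N$ needed to match $\Phi_q$ through \eqref{e:Uibound}. When $R'_\bfi$ or $\del_{\bm\xi}(P^{q-1}\bar P^q)$ depends on $j$, I first expand the derivative using the chain rule and Proposition \ref{p:dermN}: each factor $\del m$ contributes an $\Im[m]/(N\eta)$ and represents the remaining $j$-dependence through Green's function factors of the form $G^a_{ij}X_{\bfk\bfm i}Y_{\bfk\bfm j}$, whose $j$-sums are controlled by the Ward identity \eqref{e:WdI}.

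The main obstacle is the combinatorial bookkeeping when $j$ simultaneously appears in $R'_\bfi$ and in several factors produced by $\del_{\bm\xi}$: the various contributions have to be regrouped carefully to identify which ones cancel via \eqref{sumG0}, which ones produce extra $\Im[m]/(N\eta)$ factors that can be matched against the second sum in the definition of $\Phi_q$, and which ones are absorbed directly via \eqref{e:Uibound}. The structural input from Remark \ref{r:Tstructure}(2) — that $i$ and $j$ remain separated through every switching step — is essential throughout, as it guarantees that the distinguished off-diagonal factor $G_{ij}$ is never annihilated by a collision of summation indices inherited from $\cT$.
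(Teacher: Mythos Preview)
Your reduction from a general forest $\cT$ to the base case $E=\emptyset$ via Proposition~\ref{c:intbp} is correct and matches the paper's strategy; the paper packages the persistence of the separated-components hypothesis through switching as Proposition~\ref{p:DFU}, which makes precise your claim that ``the distinguished off-diagonal factor is never annihilated'' (strictly speaking $G_{ij}$ itself may be differentiated away, but some off-diagonal factor with indices in different components of $\cT^+$ always survives).

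There is a genuine gap in your treatment of the base case $E=\emptyset$ with $b\geq 1$. After expanding $\del_{\bm\xi}(P^{q-1}\bar P^q)$ via Proposition~\ref{p:dermN}, the decisive term has the form $\frac{1}{N^2}\sum_{ij}G_{ij}X_{\bfk i}Y_{\bfk j}$ with $\sum_{\bfk i}|X_{\bfk i}|^2,\sum_{\bfk j}|Y_{\bfk j}|^2\prec N$. The row-sum identity \eqref{sumG0} is useless here because $Y_{\bfk j}$ genuinely depends on $j$, and the Ward identity together with Cauchy--Schwarz only yields a bound of order $\sqrt{\Im m/(N\eta)}$, which exceeds the $1/(N\eta)$ needed in the third sum of \eqref{e:defPhir} by a factor $\sqrt{N\eta\Im m}$ (hence by $\sqrt N$ for $\eta\asymp 1$). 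The paper's resolution (equation \eqref{e:L2est}) is to use the operator-norm bound $\|G\|\leq 1/\eta$ directly: $|\langle X,GY\rangle|\leq \eta^{-1}\|X\|\|Y\|$ gives precisely $\frac{1}{N^2}\cdot\eta^{-1}\cdot N=\frac{1}{N\eta}$, which is exactly what $\Phi_q$ accommodates.

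There is a second gap: you do not address the case $b=0$ (where $\del_{\bm\xi}(P^{q-1}\bar P^q)=P^{q-1}\bar P^q$), which is part of the statement and is used downstream. Here the row-sum argument breaks as soon as $R'_\bfi$ depends on $j$ through diagonal entries such as $G_{jj}$, and there is no $\del m$ factor to invoke Proposition~\ref{p:dermN}. The paper treats $b=0$ by an entirely different mechanism (Proposition~\ref{p:one-off2}): it uses the resolvent identity \eqref{e:GHexp} to express $G_{ij}$ through $(AG)_{ij}$ and $(AG)_{kk}$, thereby reintroducing an $A$-factor on which one can switch via Proposition~\ref{c:intbp2}; the leading contributions from the two pieces cancel, and the remainders gain either an extra off-diagonal factor or an extra $1/\sqrt d$, permitting an iteration in $\tilde\fh+\tilde\fo$.
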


%
%

The following two propositions state how to further expand the expression \eqref{e:newterm} using Propositions \ref{c:intbp} and \ref{c:intbp2}. There are two cases depending if $R_{\bfi}$ contains off-diagonal Green's function entry or not. Their proofs are given in Sections \ref{s:expand1} and \ref{s:expand2}.

\begin{proposition}[off-diagonal discrete integration by parts]\label{c:case1exp}
If $R_{\bfi}$ in \eqref{e:newterm} contains at least one off-diagonal Green's function entries, i.e. $\fo\geq 1$, then up to small error $\OO_\prec(d^{-\fh/2}\bE[\Phi_q])$, \eqref{e:newterm} is a finite sum of terms in the form
\begin{align}\label{e:nextod1}
\frac{1}{d^{\tilde \fh /2}}\frac{1}{N^{\theta(\cT^+)}d^{|E^+|}}\sum^*_{\bfi^+}\bE\left[A_{\cT^+ } R_{\bfi^+ } U_{\bfi^+}\right]\in \cE_{\tilde \fh,\tilde \fo},\quad U_{\bfi^+}=(1+D_{\cT^+})U_\bfi,
\end{align}
where $\cT^+=(\bfi^+, E^+)$ with $\bfi^+=\bfi \bfj$, $E^+=\cup_{e\in E}\{i_e, j_e\}\cup\{i_e', j_e'\}$, $\tilde \fh\geq \fh+\fo$.
\end{proposition}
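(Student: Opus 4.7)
The starting point is to apply Proposition~\ref{c:intbp} with $F_\bfi = R_\bfi U_\bfi$. After multiplying by the overall $d^{-\fh/2}$ prefactor, this decomposes \eqref{e:newterm} into three parts: a constraint-free sum $d^{-\fh/2}N^{-|\bfi|}\sum^*_\bfi \bE[R_\bfi U_\bfi]$, a ``switched'' sum $d^{-\fh/2}N^{-\theta(\cT^+)}d^{-|E^+|}\sum^*_{\bfi^+}\bE[A_{\cT^+}D_{\cT^+}(R_\bfi U_\bfi)]$, and the residual from \eqref{e:intbp}. Since $R_\bfi$ carries at least one off-diagonal factor, Taylor-expanding $U_\bfi$ via \eqref{e:D-expand} into a finite linear combination of $\partial_{\bm\xi'}(P^{q-1}\bar P^q)$ renders the constraint-free sum negligible by \eqref{e:Gij} of Proposition~\ref{c:one-off}. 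The residual $\OO((d/N)\bE[\cC(F,A)])$ is controlled by combining \eqref{e:Uibound} of Proposition~\ref{p:someb} with the universal lower bound $\Lambda_o\geq 1/\sqrt{d}$, giving $\OO_\prec(\bE[\Phi_q])$ before the prefactor and hence $\OO_\prec(d^{-\fh/2}\bE[\Phi_q])$ overall, matching the declared error.

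For the switched sum, apply the discrete Leibniz rule \eqref{e:D-product} in the asymmetric form
\begin{equation*}
D_{\cT^+}(R_\bfi U_\bfi) = R_\bfi\,(D_{\cT^+}U_\bfi) + (D_{\cT^+}R_\bfi)\,(1+D_{\cT^+})U_\bfi.
\end{equation*}
The first summand preserves $R_\bfi$; by Remark~\ref{r:Tstructure}, its off-diagonal indices in $\bfi$ now sit in distinct connected components of $\cT^+$. Taylor-expanding $D_{\cT^+}U_\bfi$ again produces a finite linear combination of $\partial_{\bm\xi''}(P^{q-1}\bar P^q)$-type pieces each carrying an explicit $1/\sqrt{d-1}$, so \eqref{e:AGij} of Proposition~\ref{c:one-off} with $\cT^+$ in place of $\cT$ absorbs this summand into the error. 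The second summand is already of the exact structural shape required by the conclusion, with $U_{\bfi^+}=(1+D_{\cT^+})U_\bfi$; the remaining work is to unpack $D_{\cT^+}R_\bfi$.

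Taylor-expand $D_{\cT^+}R_\bfi = \sum_{n=1}^{\fb-1}\partial_\xi^n R_\bfi/(n!(d-1)^{n/2}) + \cR_\fb$ for a large fixed $\fb$. Each $\partial_\xi^n R_\bfi$ is a finite sum of monomials in Green's function entries on indices in $\bfi^+$ (obtained by iteratively replacing some factor $G_{ab}$ by $-(G\xi G)_{ab}$), so it yields a $\cE_{\tilde\fh,\tilde\fo}$-type contribution with $\tilde\fh=\fh+n$. For $n\geq \fo$ the inequality $\tilde\fh\geq \fh+\fo$ holds immediately, so these terms constitute the finite sum promised in the conclusion. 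The decisive combinatorial point handles $n<\fo$: a single application of $\partial_\xi$ modifies at most one Green's function factor, so at least $\fo-n\geq 1$ of the original off-diagonal factors $G_{a_s b_s}$ (with $a_s,b_s\in \bfi$) must survive untouched inside each monomial contribution to $\partial_\xi^n R_\bfi$. Remark~\ref{r:Tstructure} places $a_s,b_s$ in distinct components of $\cT^+$, so Proposition~\ref{c:one-off} forces every such $n<\fo$ term into the error $\OO_\prec(d^{-\fh/2}\bE[\Phi_q])$. The remainder $\cR_\fb$ is disposed of by choosing $\fb$ large enough that $\Lambda_o^{\fb}\leq 1/N$: Remark~\ref{r:basicestimates} keeps $G$-entries at the shifted matrix of the same order as at $A$, and \eqref{e:Ceod} of Proposition~\ref{p:someb} then renders $\cR_\fb$ negligible. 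The principal difficulty lies precisely in the last paragraph — tracking how $\partial_\xi$ acts on a product of Green's function entries and verifying the ``at least $\fo-n$ untouched originals'' invariant across iterated derivatives — since this bookkeeping is the exact mechanism converting the $\fo$ off-diagonal factors in $R_\bfi$ into the claimed gain of $\fo$ extra factors of $1/\sqrt{d}$.
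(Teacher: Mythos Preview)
Your proof is correct and follows the same route as the paper's: apply Proposition~\ref{c:intbp}, kill the constraint-free sum and the $R_\bfi\,D_{\cT^+}U_\bfi$ piece via Proposition~\ref{c:one-off}, Taylor-expand $D_{\cT^+}R_\bfi$, and observe that for $n<\fo$ at least one original off-diagonal factor $G_{a_sb_s}$ with $a_s,b_s\in\bfi$ survives and hence lies in distinct components of $\cT^+$. One technical point you glossed over: $R_\bfi$ may contain factors of $m$ (Definition~\ref{d:evaluation} allows the variable $y=m$), and when $\partial_\xi$ hits such a factor the result $\partial_\xi m$ is not a Green's-function monomial on $\bfi^+$ and so does not land in $\cE_{\tilde\fh,\tilde\fo}$; the paper handles this separately in one line via $|\partial_\xi m|\prec \Im[m]/(N\eta)$ from Proposition~\ref{p:dermN}, absorbing that contribution directly into the $\OO_\prec(\bE[\Phi_q])$ error.
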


\begin{proposition}[diagonal discrete integration by parts]\label{c:case2exp}
If $R_{\bfi}$ in \eqref{e:newterm} is a monomial of diagonal Green's function entries and $m$ ($\fo=0$), and contains at least one diagonal Green's function entry, say $R_{\bfi}=G_{ii} R'_{\bfi}$, then up to small error $\OO_\prec(d^{-\fh/2}\bE[\Phi_q])$, \eqref{e:newterm} equals
\begin{align*}
\frac{1}{d^{\fh/2}}\frac{1}{N^{\theta(\cT)}d^{|E|}}\sum^*_{\bfi}\bE\left[A_{\cT} (m R'_{\bfi}) U_\bfi\right],
\end{align*}
plus a finite sum of terms in the form
\begin{align}\label{e:nextod2}
\frac{1}{d^{\tilde \fh/2}}\frac{1}{N^{\theta(\cT^+)}d^{|E^+|}}\sum^*_{\bfi^+ }\bE\left[A_{\cT^+ } R_{\bfi^+ } U_{\bfi^+}\right]\in \cE_{\tilde \fh,\tilde \fo},
\end{align}
where there are three cases
\begin{enumerate}
\item $\cT^+ =(jk\ell m\bfi, E\cup \{j,\ell\}\cup\{k,m\})$, $U_{\bfi^+}=(1+D_{jk}^{\ell m})U_{\bfi}$ or $U_{\bfi^+}=\sqrt{d-1}D_{jk}^{\ell m}U_{\bfi}$; then  either $\tilde \fh\geq \fh+1$ or $\tilde \fh=\fh$ and $\tilde \fo\geq 1$.
\item $\cT^+ =(k\ell m\bfi, E\cup \{i,\ell\}\cup\{k,m\})$, where $i\in \bfi$, $U_{\bfi^+}=(1+D_{ik}^{\ell m})U_{\bfi}$ or $U_{\bfi^+}=\sqrt{d-1}D_{ik}^{\ell m}U_{\bfi}$; 
then  either $\tilde \fh\geq \fh+1$ or $\tilde \fh=\fh$ and $\tilde \fo\geq 1$.
\item $\cT^+ =\cT$, $U_{\bfi^+}=U_{\bfi}$; then either   $\tilde \fh\geq \fh+1$.
\end{enumerate}
In all three cases $\tilde \fh+\tilde \fo\geq \fh+\fo+1$.
\end{proposition}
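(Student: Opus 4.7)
The plan is to prove Proposition \ref{c:case2exp} in close parallel with Proposition \ref{c:case1exp}, the main additional ingredient being two Schwinger--Dyson-type resolvent identities that convert the diagonal entry $G_{ii}$ and the Stieltjes transform $m$ into edge sums involving off-diagonal Green's function entries. From $GH=zG+P_\perp$ and $H_{ik}=A_{ik}/\sqrt{d-1}$, for each $i\in\bfi$ we have the local identity
\begin{align*}
1 + zG_{ii}(z) = \frac{1}{\sqrt{d-1}}\sum_{k=1}^N A_{ik}G_{ki}(z) + \frac{1}{N},
\end{align*}
and averaging in $i$ the global identity for $m$:
\begin{align*}
1 + zm(z) = \frac{1}{N\sqrt{d-1}}\sum^*_{jk} A_{jk}G_{kj}(z) + \frac{1}{N}.
\end{align*}

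Starting from the decomposition $G_{ii} = m + (G_{ii} - m)$, the $m$-contribution gives the claimed leading term $\frac{1}{d^{\fh/2}}\frac{1}{N^{\theta(\cT)}d^{|E|}}\sum^*_\bfi\bE[A_\cT (m R'_\bfi) U_\bfi]$. For the difference $G_{ii}-m$, I would subtract the two identities above to obtain $z(G_{ii}-m) = \frac{1}{\sqrt{d-1}}\sum_k A_{ik}G_{ki} - \frac{1}{N\sqrt{d-1}}\sum^*_{jk} A_{jk}G_{kj}$, and handle the $z$ factor algebraically via the self-consistent polynomial $P(z,m)=1+zm+m^2+Q(m)$ from \eqref{e:P_split} to isolate $G_{ii}-m$, with any residual $G_{ii} P(z,m)$ factor absorbed into $U_{\bfi^+}$ as in Definition \ref{d:Ui} and controlled via \eqref{e:Uibound}. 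The first edge sum $\sum_k A_{ik}G_{ki}$ involves $A_{ik}$ with $i\in\bfi$: for the $k\in\bfi$ part, only finitely many terms appear, those where $A_\cT\cdot A_{ik}$ forms a cycle are negligible by \eqref{e:sumAtriangle}, and the remaining forest configurations fall into case 3 with $\cT^+=\cT$ and $\tilde\fh\geq\fh+1$ thanks to the $1/\sqrt{d-1}$ prefactor; for the $k\notin\bfi$ part, applying \eqref{e:intbp2} with $F_{\bfi k}=G_{ki}R'_\bfi U_\bfi$ yields the averaged contribution (handled by Proposition \ref{c:one-off} since $G_{ki}$ is off-diagonal with $k$ in a singleton component) and the case 2 switching-derivative contribution with $\cT^+=(\bfi km\ell, E\cup\{i,\ell\}\cup\{k,m\})$ and $U_{\bfi^+}=(1+D_{ik}^{m\ell})U_\bfi$. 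The second (global) edge sum $\sum^*_{jk}A_{jk}G_{kj}$ runs freely with $j,k\notin\bfi$ (up to the small $j\in\bfi$ or $k\in\bfi$ parts, which again fall into case 3): applying \eqref{e:intbp3} with $F_{\bfi jk}=G_{kj}R'_\bfi U_\bfi$ produces the case 1 contribution with $\cT^+=(\bfi jk\ell m, E\cup\{j,\ell\}\cup\{k,m\})$ and $U_{\bfi^+}=(1+D_{jk}^{\ell m})U_\bfi$, together with a negligible averaged term (once more via Proposition \ref{c:one-off}) and negligible error.

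The principal technical challenge will be twofold. First, the meticulous bookkeeping of the many monomials generated by expanding the discrete derivatives $D_{ik}^{m\ell}$ and $D_{jk}^{\ell m}$ via the product rule \eqref{e:D-product} and the Taylor formula \eqref{e:D-expand}, and the verification that each resulting term lands in the appropriate case with the correct accounting $\tilde\fh+\tilde\fo\geq\fh+1$, the gain coming from the $1/\sqrt{d-1}$ prefactor of the Schwinger--Dyson identity combined with the $(d-1)^{-1/2}$ factors appearing in Taylor steps. Second, the algebraic step isolating $G_{ii}-m$ from $z(G_{ii}-m)$: since $1+m^2+Q(m)=-zm+P(z,m)$ need not be bounded below in the spectral domain $\mathbf D$, this manipulation must be executed as an identity inside the expectation rather than an inversion, with the $G_{ii} P(z,m)$ remainder consistently absorbed into $U_{\bfi^+}$ without disturbing the structure required by Definition \ref{d:Ui}.
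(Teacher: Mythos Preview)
Your approach has a genuine gap at the step where you try to eliminate the factor of $z$. After subtracting the two resolvent identities you correctly arrive at $z(G_{ii}-m)=\frac{1}{\sqrt{d-1}}\sum_k A_{ik}G_{ki}-\frac{1}{N\sqrt{d-1}}\sum^*_{jk}A_{jk}G_{kj}$. The problem is your plan to remove $z$ via $P(z,m)=1+zm+m^2+Q(m)$ and then ``absorb the $G_{ii}P(z,m)$ remainder into $U_{\bfi^+}$.'' This cannot be done: Definition~\ref{d:Ui} requires $U_\bfi$ to be a product of operators $(1+D_{\xi_a})$ and $\sqrt{d-1}\,D_{\xi_a}$ applied to $P^{q-1}\bar P^q$, and an extra multiplicative factor of $P$ destroys this structure (effectively turning $P^{q-1}\bar P^q$ into $P^{q}\bar P^q$), which breaks the iteration that Propositions~\ref{p:someb}--\ref{p:rewrite} all rely on. Worse, any such term places $|P|^{2q}$ on the right-hand side of an estimate for $\bE[|P|^{2q}]$ with no small prefactor, so the bound becomes circular.

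The paper sidesteps this entirely by a simple but decisive symmetrization: instead of subtracting the raw identities, multiply the identity for $G_{jj}$ by $G_{ii}$ and the identity for $G_{ii}$ by $G_{jj}$, \emph{then} subtract. Both $z$-terms are $-zG_{ii}G_{jj}$ and cancel exactly; averaging in $j$ yields directly
\[
G_{ii}-m=\frac{1}{N\sqrt{d-1}}\sum_{j\notin\bfi}(AG)_{jj}\,G_{ii}-\frac{1}{\sqrt{d-1}}(AG)_{ii}\,m+\OO_\prec(1/N),
\]
which is \eqref{e:Gii-m}. Note the right-hand side carries an extra factor $G_{ii}$ (resp.\ $m$) compared with your expression; this is not cosmetic. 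After applying the switching formulas \eqref{e:intbp2}--\eqref{e:intbp3} to the two pieces, their leading contributions \eqref{e:guanxi1} and \eqref{e:guanxi2} are nearly identical, and it is their \emph{difference} that produces a factor $\deg_\cT(i)/(d-1)$ --- this is where the case-3 term ($\cT^+=\cT$, $\tilde\fh\geq\fh+1$) actually originates, not from the $k\in\bfi$ boundary terms of $\sum_k A_{ik}G_{ki}$ as you suggest. Those boundary terms are handled separately and land either in $\cE_{1,1}$ (when $\{i,k\}\in E$) or are outright negligible via \eqref{e:sumA}--\eqref{e:sumAtriangle}.
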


Starting from the graph $\cT_0=(\bfi_0={ij},E_0=\{i,j\})$ as in \eqref{e:EPP},  using Proposition \ref{c:intbp} with $F_{\bfi}=G_{ij}P^{q-1}\bar P^{q-1}$,  we get $\cT_1=(\bfi_1=ijk\ell, E_1=\{i,k\}\cup\{j,\ell\})$. Then for $i\neq j$, $|F_\bfi|\prec \Lambda_o|P|^{2q-1}$,  $d^{3/2}\cC(F,A)/N\prec \Phi_q$, and \eqref{e:EPP} equals  
\begin{align}\begin{split}\label{e:T1t}
& \phantom{{}={}}\frac{d}{(d-1)^{1/2}}\frac{1}{N^{2}} \sum_{ij}^* \bE [G_{ij}P^{q-1}\bar P^q]
 +\frac{d}{(d-1)^{1/2}}\frac{1}{N^{\theta(\cT_1)} d^{|E_1|}} \sum_{ijk\ell}^* \bE [A_{\cT_1}D_{ij}^{k\ell}(G_{ij} P^{q-1}\bar P^q)],
\end{split}\end{align}
with an error bounded by $\OO_\prec(\bE[\Phi_q])$. For the first term in \eqref{e:T1t}, we have $\sum_{ij} G_{ij}=0$ and $\sum^*_{ij}G_{ij}\prec N$. Thus it is bounded by $\bE[\Phi_q]$. For the second term in \eqref{e:T1t}, by the discrete chain rule, we can rewrite it as
\begin{align}\begin{split}\label{e:T1t2}
& 
 \phantom{{}={}}\frac{d}{d-1}\frac{1}{N^{\theta(\cT_1)} d^{|E_1|}} \sum_{ijk\ell}^* \bE [A_{\cT_1}G_{ij} \sqrt{d-1}D_{ij}^{k\ell}(P^{q-1}\bar P^q)]\\
 &+\frac{d}{(d-1)^{1/2}}\frac{1}{N^{\theta(\cT_1)} d^{|E_1|}} \sum_{ijk\ell}^* \bE [A_{\cT_1} D_{ij}^{k\ell}(G_{ij})(1+D_{ij}^{k\ell})P^{q-1}\bar P^q)]
+\OO_\prec(\bE[\Phi_q]).
\end{split}\end{align}
For the first term in \eqref{e:T1t2}, we recall $\cT_1=(\{i,j,k,\ell\},\{\{i,k\}, \{j,\ell\}\})$. We notice that $i,j$ are in different connected components of $\cT_1$. We can Taylor expand $\sqrt{d-1}D_{ij}^{k\ell}(P^{q-1}\bar P^q)$ as in \eqref{e:D-expand}, then it is a linear combination of terms in the form $\del_{\bm\xi}(P^{q-1}\bar P^q)$.  We conclude from \eqref{e:AGij} that the first term  in \eqref{e:T1t2}  is bounded by $\bE[\Phi_q]$. For the second term in \eqref{e:T1t2}, we can similarly Taylor expand $D_{ij}^{k\ell}G_{ij}$,
\begin{align}\label{e:expandhi}
 D_{ij}^{k\ell}G_{ij} = \sum_{n=1}^{\fb-1}  \frac{\del_{\xi}^nG_{ij}}{n!(d-1)^{n/2}} +  \frac{\del_{\xi}^{\fb}G_{ij}(A+\theta\xi)}{\fb!(d-1)^{\fb/2}} ,\quad \xi=\xi_{ij}^{k\ell}.
\end{align}
We notice that $\del_{\xi}^nG_{ij}$ is a linear combination of monomials of Green's function entries, and is stochastically bounded, i.e. $\OO_\prec(1)$. We can take $\fb$ large enough such that the remainder term is bounded by $\OO_\prec(1/N)$. In this way, up to error $\bE[\Phi_q]$,  the second term in \eqref{e:T1t2} can be rewritten as a linear combination of 
\begin{align}\label{e:rec1a}
 \frac{1}{d^{\fh/2}}\frac{1}{N^{\theta(\cT_1)}d^{|E_1|}}\sum^*_{\bfi_1}\bE\left[A_{\cT_1}R_{\bfi_1}U_{\bfi_1}\right],\quad U_{{\bfi}_1}=P^{q-1}\bar P^q \text{ or } U_{\bfi_1}=\sqrt{d-1}D_{ij}^{k\ell}P^{q-1}\bar P^q,
\end{align}
with $\fh\geq 0$. The terms corresponding to $\fh=0$ are from taking $n=1$ in \eqref{e:expandhi}, and $U_{\bfi_1}=P^{q-1}\bar P^q$:
\begin{align}\begin{split}\label{e:exp}
&\phantom{{}={}}\frac{d}{d-1}\frac{1}{N^{\theta(\cT_1)} d^{|E_1|}} \sum_{ijk\ell}^* \bE [A_{ik}A_{j\ell} \del_{ij}^{k\ell}(G_{ij})P^{q-1}\bar P^q)]\\
&=-\frac{d}{d-1}\frac{1}{N^{\theta(\cT_1)} d^{|E_1|}} \sum_{ijk\ell}^* \bE [A_{ik}A_{j\ell} (G_{ii}G_{jj}+G_{ik}G_{j\ell}+\cI)P^{q-1}\bar P^q)]\\
&=-\frac{d}{d-1}\left(\bE[m^2P^{q-1}\bar P^q]+\frac{1}{N^{\theta(\cT_1)} d^{|E_1|}} \sum_{ijk\ell}^* \bE [A_{ik}A_{j\ell}G_{ik}G_{j\ell}P^{q-1}\bar P^q)]\right)+\OO_\prec(\bE[\Phi_q]),
\end{split}\end{align}
where
\begin{align}\label{e:cI}
\cI=G^2_{ij}+G_{il}G_{kj}+G_{ii}G_{kj}+G_{ik}G_{ij}+G_{ij}G_{lj}+G_{il}G_{jj}.
\end{align}
To get the last line in \eqref{e:exp}, we use the following property, i.e.,  since $\cI$ contains Green's function entries with indices in different connected components of $\cT_1$,  the corresponding terms are bounded by $\bE[\Phi_q]$ thanks to Proposition \ref{c:one-off}. 

For terms in \eqref{e:rec1a} with $\fh\geq 1$ and the second term on the righthand side of \eqref{e:exp}, we can further use Propositions \ref{c:case1exp} and \ref{c:case2exp} to expand them. In this way we will obtain  a sequence of forests 
\begin{align}\label{e:constructT}
\cT_0=(\bfi_0, E_0),\quad  \cT_1=(\bfi_1, E_1), \quad \cT_2=(\bfi_2, E_2), \quad \cT_3=(\bfi_3, E_3),\quad \cdots.
\end{align}
More precisely, if we have a term corresponding to the graph $\cT_t=(\bfi_t, E_t)$
\begin{align}\begin{split} \label{e:Trterm}
\frac{1}{d^{\fh/2}}\frac{1}{N^{\theta(\cT_t)}d^{|E_t|}}\sum^*_{\bfi_t}\bE\left[A_{\cT_t}R_{\bfi_t} U_{\bfi_t}\right]\in \cE_{\fh, \fo}.
\end{split}\end{align}
There are two cases:
if $\fo\geq 1$, we can use Proposition \ref{c:case1exp} to expand \eqref{e:Trterm} as a sum of terms in $\cE_{\tilde \fh, \tilde\fo}$ with $\tilde \fh\geq \fh+\fo$ corresponding to new graphs $\cT_{t+1}=(\bfi_{t+1}, E_{t+1})$;  if $\fo=0$, and $R_{\bfi_t}$ contains diagonal Green's function entries, we use  Proposition \ref{c:case2exp} to replace the diagonal Green's function entries in $R_{\bfi_t}$ to $m$,
\begin{align}\label{e:Trterm2}
\frac{1}{d^{\fh/2}}\frac{1}{N^{\theta(\cT_t)}d^{|E_t|}}\sum^*_{\bfi_t}\bE\left[A_{\cT_t}m^{\fd} U_{\bfi_t}\right]\in \cE_{\fh, 0},\quad \fd=\deg(R_{\bfi_t}),
\end{align}
and the remaining terms are in $\cE_{\tilde \fh, \tilde\fo}$ with $\tilde \fh\geq \fh$ and $\tilde \fh+\tilde \fo\geq \fh+1$ corresponding to new graphs $\cT_{t+1}=(\bfi_{t+1}, E_{t+1})$. It follows by induction that in \eqref{e:Trterm} $\fh+\fo\geq \floor{t/2}$. Thus for $t$ large enough, \eqref{e:Ceod} implies that \eqref{e:Trterm} is bounded by $\bE[\Phi_q]$.

The next Proposition states that  we can rewrite \eqref{e:Trterm2} back as an expectation of $m^\fd  P^q \bar P^{q-1}$. Its proof will be given in Section \ref{s:rewrite}.

\begin{proposition}\label{p:rewrite}
If $\cT_t$ is constructed as in \eqref{e:constructT} with $\fh\geq 1$, there exists some constant $C\asymp 1$ depending on the sequence of graphs $\cT_1,\cT_2,\cdots,\cT_t$, the following holds
\begin{align}\label{e:back}
\frac{1}{d^{\fh/2}}\frac{1}{N^{\theta(\cT_t )}d^{|E_t |}}\sum^*_{\bfi_t }\bE[A_{\cT_t } m^\fd U_{\bfi_t}]
=\frac{C}{d^{\fh/2}} \bE[ m^\fd  P^q \bar P^{q-1}]+\OO_\prec(\bE[\Phi_q]).
\end{align}
\end{proposition}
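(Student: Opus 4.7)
My plan is to combine two ingredients. First, since $\cT_t$ remains a forest throughout the construction \eqref{e:constructT}, Proposition~\ref{p:sumA} gives $\frac{1}{N^{\theta(\cT_t)}d^{|E_t|}}\sum^*_{\bfi_t}A_{\cT_t}=c_{\cT_t}+\OO_\prec(d/N)$ with $c_{\cT_t}\asymp 1$. Second, I will analyze $U_{\bfi_t}$ via the Taylor expansion \eqref{e:D-expand} of each $D_{\xi_a}$. Using commutativity \eqref{e:D-product}, I would first rewrite
\[U_{\bfi_t}=\prod_{a:\,e_a=0}(\sqrt{d-1}D_{\xi_a})\;\bigl[P^{q-1}\bar P^q\bigr]\!\bigl(A+\textstyle\sum_{a:\,e_a=1}\xi_a\bigr).\]
In the $1/\sqrt{d-1}$ expansion, the zeroth-order piece equals $F:=P(z,m)^{q-1}\bar P(z,m)^q$ precisely when every $e_a=1$; any surviving $\sqrt{d-1}D_{\xi_a}$ forces a factor $\partial_{\xi_a}F$. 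I therefore set $C:=c_{\cT_t}$ in the ``all $e_a=1$'' branch of \eqref{e:constructT} (in any other branch the proof gives $C=0$ with everything absorbed in the error).

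The main term is extracted from the $F$-piece. Since $F$ does not depend on $\bfi_t$, Proposition~\ref{p:sumA} yields
\[\frac{1}{d^{\fh/2}}\frac{1}{N^{\theta(\cT_t)}d^{|E_t|}}\sum^*_{\bfi_t}\bE[A_{\cT_t}m^\fd F]=\frac{c_{\cT_t}}{d^{\fh/2}}\bE[m^\fd P^{q-1}\bar P^q]+\OO_\prec\!\Bigl(\tfrac{d}{N\,d^{\fh/2}}\bE[|P|^{2q-1}]\Bigr).\]
The residue will absorb into the leading term $d^{3/2}\Lambda_o/N\cdot|P|^{2q-1}$ of $\Phi_q$ in \eqref{e:defPhir}, since $\Lambda_o\geq 1/\sqrt{d}$ implies $d/N\leq d^{3/2}\Lambda_o/N$.

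The derivative corrections are finite linear combinations of
\[\frac{1}{d^{\fh/2}(d-1)^{|S|/2}}\frac{1}{N^{\theta(\cT_t)}d^{|E_t|}}\sum^*_{\bfi_t}\bE\bigl[A_{\cT_t}\,m^\fd\,(\textstyle\prod_{a\in S}\partial_{\xi_a})F\bigr]\]
for nonempty $S$, together with higher-order Taylor remainders bounded as in Remark~\ref{r:basicestimates}. For each of these I will apply Proposition~\ref{p:someb}: estimates \eqref{e:dmN}--\eqref{e:onet} decompose $\prod_{a\in S}\partial_{\xi_a}F$ into pieces of size $|P|^{2q-2}(|\partial_2 P|\Im[m]/(N\eta)+(\Im[m]/(N\eta))^2)$, exactly the shape appearing in the second and third sums of $\Phi_q$.

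The hard part will be matching the prefactor $1/d^{(\fh+|S|)/2}$ against the coefficient $1/(N\eta)+d^{3/2}\Lambda_o/N$ from the third sum of $\Phi_q$: throughout $\mathbf D$, $1/\sqrt{d}$ is not a priori dominated by $1/(N\eta)+d^{3/2}\Lambda_o/N$, so a purely pointwise argument breaks down at small $\fh$. I expect to close the matching by exploiting the hypothesis $\fh\geq 1$ jointly with the extra powers of $1/\sqrt{d-1}$ accrued at each Taylor step, together with the observation that summing $\sum^*_{\bfi_t}$ over indices belonging to $\bm V(\bm\xi)$ effectively averages $\partial_{\xi_a}F$ and, via the Ward identity \eqref{e:WdI}, recovers the missing $1/(N\eta)$ weight. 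This quantitative matching is the technical crux of Section~\ref{s:rewrite}.
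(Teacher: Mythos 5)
Your decomposition of $U_{\bfi_t}$ is algebraically fine, and the identification of the ``all $e_a=1$'' branch as the one carrying the main term is correct. But the step you flag yourself as the ``technical crux'' is in fact a genuine gap that your argument does not close, and the paper closes it by a completely different mechanism.

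The issue is quantitative and unavoidable in your setup. After Taylor expansion, the surviving derivative corrections look like
\[
\frac{1}{d^{\fh/2}}\cdot\frac{1}{(d-1)^{|S|/2}}\,\bE\Bigl[\,m^{\fd}\,\textstyle\prod_{a\in S}\partial_{\xi_a}F\,\Bigr],
\]
and by \eqref{e:dmN} the pointwise size of $\prod_{a\in S}\partial_{\xi_a}F$ is at best $\bigl(\Im[m]|\partial_2 P|/(N\eta)\bigr)^{|S|}|P|^{2q-1-|S|}$. Compare this with the third sum in $\Phi_q$ from \eqref{e:defPhir}: that shape only appears there \emph{multiplied by} the coefficient $1/(N\eta)+d^{3/2}\Lambda_o/N$. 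So you would need $d^{-(\fh+|S|)/2}\lesssim 1/(N\eta)+d^{3/2}\Lambda_o/N$. With $\fh\geq 1$ and $|S|\geq 1$ the left side is at most $1/d$, while $d^{3/2}\Lambda_o/N\geq d/N$, so the matching requires $d^2\gtrsim N$ — false throughout the regime $N^\fc\leq d\leq N^{1/3-\fc}$. Your proposed fix (extra $1/\sqrt{d-1}$ powers plus $\fh\geq 1$) makes the deficit smaller but never closes it, and the Ward-identity-via-averaging idea is left as a one-sentence hope without even a sketch of which cancellation produces the missing factor of $1/(N\eta)$.

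The paper's proof of Proposition~\ref{p:rewrite} does not attempt any such pointwise matching. Instead it \emph{reverses} the discrete integration by parts steps that created $\cT_t$ in the first place: the key identity is \eqref{e:induction}, proved by running Propositions~\ref{c:intbp} and~\ref{c:intbp2} backwards (combined with \eqref{e:TEexp} to commute $m^{\fd}$ through the operators $e+D_{\cT^+}$, and \eqref{e:sumAT}). Each undoing step produces an error of size $\sqrt{d}\,\bE[\Phi_q]$ --- larger than $\bE[\Phi_q]$ precisely because of the $\sqrt{d-1}D_{\xi_a}$ branches --- but the overall $d^{-\fh/2}$ prefactor, with $\fh\geq 1$, brings the accumulated error back down to $\bE[\Phi_q]$. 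The structure of $U_{\bfi_t}$ in Definition~\ref{d:Ui} is not an accident: it is exactly the form that makes this reversal an (approximate) identity rather than an estimate to be fought term by term. This is the idea missing from your proposal. Without it, your Taylor-expansion approach has no way to avoid the $d^{-1/2}$-vs-$(1/N\eta+d^{3/2}\Lambda_o/N)$ mismatch.

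One smaller remark: your claim $C=c_{\cT_t}$ is at most an order-one identification; the constant in \eqref{e:back} depends on the full chain $\cT_1,\dots,\cT_t$ (it accumulates factors like $1-\deg_{\cT}(i)/d$ from \eqref{e:back4}), not on $\cT_t$ alone. Since the proposition only asserts $C\asymp 1$ this is not a hard error, but it signals that your argument is not tracking the correct invariant.
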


%
%
%

 All these terms as in \eqref{e:Trterm2} and the first term in \eqref{e:exp}  lead to a polynomial $\wt Q(m)$, such that 
\begin{align}\begin{split}\label{e:EPP2}
\bE[(1+zm)P^{q-1}\bar P^q] 
&=  -\bE [\wt Q(m)P^{q-1}\bar P^q] +\OO_\prec( \Phi_q).
\end{split}\end{align}
The first term on the righthand side of \eqref{e:exp} gives the term $du^2/(d-1)$ in $\wt Q(u)$. All other terms as in \eqref{e:Trterm2} have $\fh\geq 1$, they lead to terms in $\wt Q(u)$ with coefficients bounded by $\OO(1/\sqrt d)$. Therefore $\wt Q(u)=u^2+Q(u)$ where $Q(u)$ has coefficients bounded by $\OO(1/\sqrt d)$ as in \eqref{e:Q_a}.

%
%


\subsection{Proof of Propositions \ref{p:someb}}\label{s:pre}

\begin{proof}[Proof of Proposition \ref{p:someb}]
For any set $J\subset \qq{1,b}$, we denote $\del_{J}=\prod_{j\in J}\del_{\xi_j}$.
The derivative $\del_{\bm\xi}  P (z,m)$ is a linear combination of terms in the form
\begin{align}\label{e:decom}
\del_{{J_1}}m \del_{{J_2}}m\cdots \del_{J_r} m \del_2^rP,\quad J_1\cup J_2\cup\cdots \cup J_r=\qq{1,b}, \quad |J_1|,|J_2|,\cdots, |J_r|\geq 1.
\end{align}
From \eqref{e:impp}, we have $|\del_{J}m|\prec \Im[m]/N\eta$ for $|J|\geq 1$. Thus \eqref{e:decom} is bounded by $(\Im[m]/N\eta)^2$, except for $r=1$. In this case, \eqref{e:decom} simplifies to $\del_{\bm\xi}m \del_2 P\prec \Im[m]|\del_2 P|/N\eta$. This gives \eqref{e:dmN}. 

For \eqref{e:onet}, $\del_{\bm\xi} (P^{q-1}\bar P^q)$ can be bounded by a finite sum of terms in the form
\begin{align*}
|\del_{J_1}P| |\del_{{J_2}}P|\cdots |\del_{J_r}P| |P|^{2q-r-1}\prec \left(\frac{\Im[m]|\del_2 P(z,m)|}{N\eta}+\frac{\Im[m]^2}{(N\eta)^2} \right)^r |P|^{2q-r-1},
\end{align*} 
where $J_1\cup J_2\cup\cdots \cup J_r=\qq{1,b}$ and $|J_1|,|J_2|,\cdots, |J_r|\geq 1$. The claim follows from the definition of $\Phi_q$ in \eqref{e:defPhir}

For \eqref{e:Uibound}, let $\xi=\sum_{a=1}^b \xi_a$, from Remark \ref{r:basicestimates}, we have that $R_\bfi(A+\xi)\prec 1$. For $U_\bfi(A+\xi)$, we estimate it by the sum of $U_\bfi(A)$ and $D_{\xi} U_\bfi=U_\bfi(A+\xi)-U_\bfi(A)$. From the definition \eqref{e:defU}, and the Taylor expansion \eqref{e:D-defxi}, $U_\bfi$ and $D_{\xi} U_\bfi$ are both  finite sums of terms in the form
\begin{align}\label{e:dP}
\del_{\xi_{i_1}}\del_{\xi_{i_2}}\cdots \del_{\xi_{i_r}}(P^{q-1}\bar P^{q}),\quad 1\leq i_1,i_2,\cdots, i_r\leq b. 
\end{align}
If $r=0$, then \eqref{e:dP} is bounded by $|P|^{2q-1}$; if $r\geq 1$, we have the bound \eqref{e:onet}. The claim \eqref{e:Uibound} follows.

Finally for \eqref{e:Ceod}, we recall from \eqref{G_estimates} that for off-diagonal Green's function entry $|G_{ij}|\prec \Lambda_o$. For any term in $\cE_{\fh,\fo}$, we can bound $|R_\bfi|\prec \Lambda_o^\fo$ and use the trivial bound $|U_\bfi|\prec N\Phi_q$ from \eqref{e:onet}, then
\begin{align*}
 \frac{1}{d^{\fh/2}}\frac{1}{N^{\theta(\cT)}d^{|E|}}\sum^*_{\bf i}\bE\left[A_{\cT}R_{\bfi} U_\bfi \right]
 \prec  \frac{\Lambda_o^{\fo} }{d^{\fh/2}}\frac{1}{N^{\theta(\cT)}d^{|E|}}\sum^*_{\bf i}\bE\left[A_{\cT} N\Phi_q \right]\leq \frac{\Lambda_o^{\fo} }{d^{\fh/2}}N\bE[\Phi_q]\leq \bE[\Phi_q],
\end{align*}
provided that $\Lambda_o^{\fo}/d^{\fh/2}\leq \Lambda_o^{\fo+\fh}\leq 1/N$,  where we used \eqref{e:sumA} for the summation over $A_\cT$. 
\end{proof}

\begin{proposition}\label{p:DFU}
Let $\del_{\bm\xi}=\del_{\xi_1}\del_{\xi_2},\cdots\del_{\xi_b}$ as in Definition \ref{d:defD} with $\bm V(\bm\xi)\subset \bfi$.
Let $R_\bfi$ be as in Definition \ref{d:evaluation}, assume it contains an off-diagonal Green's function entry $G_{ii'}$ such that $i$ and $i'$ are in different connected components of $\cT$. 
$\cT^+$ is constructed from $\cT$ as in Proposition \ref{c:case1exp}. Then up to error $\Phi_q$, $D_{\cT^+} (R_{\bfi}\del_{\bm\xi}(P^{q-1}\bar P^q))$ is a finite linear combination of $d^{-\fh/2}R_{\bfi^+}\del_{\bm\xi'}(P^{q-1}\bar P^q)$ with $\fh\geq 1$, where $R_{\bfi^+}$
contains at least one off-diagonal Green's function entry $G_{jj'}$ such that $j,j'$ are in different connected components of $\cT^+$; $\bm V(\bm\xi')\subset \bfi^+$ and $|\bm\xi'|\geq b$.  
\end{proposition}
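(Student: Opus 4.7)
The plan is to expand $D_{\cT^+}(R_\bfi\del_{\bm\xi} V)$ with $V := P^{q-1}\bar P^q$ using the identity $D_{\cT^+}(FG)(A) = F(A+\xi)G(A+\xi) - F(A)G(A)$ together with the Taylor expansion \eqref{e:D-expand} in the direction $\xi := \sum_{e\in E}\xi_{i_e i_e'}^{j_e j_e'}$. This produces a finite sum of terms of the form
\[
\frac{1}{n_1!\,n_2!\,(d-1)^{(n_1+n_2)/2}}\,\bigl(\partial_\xi^{n_1} R_\bfi\bigr)\bigl(\partial_\xi^{n_2}\del_{\bm\xi} V\bigr),\qquad n_1+n_2\geq 1,
\]
plus a Taylor remainder which, by choosing the truncation order $\fb$ large enough (using $d \geq N^{\fc}$, Remark \ref{r:basicestimates}, and the estimates \eqref{e:onet}--\eqref{e:Uibound} in Proposition \ref{p:someb}), is $O_\prec(\Phi_q)$. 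Each such term carries $d^{-\fh/2}$ with $\fh = n_1+n_2\geq 1$, and $\partial_\xi^{n_2}\del_{\bm\xi} V = \del_{\bm\xi'} V$ with $\bm\xi'$ obtained by appending $n_2$ copies of $\xi$ to $\bm\xi$, so $|\bm\xi'| = b+n_2\geq b$ and $\bm V(\bm\xi')\subset\bfi^+$, matching the conclusions on $\fh$ and $\bm\xi'$.

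Next I would unfold $\partial_\xi^{n_1} R_\bfi$ by the continuous product rule, using that $\partial_\xi G_{ab} = -(G\xi G)_{ab} = -\sum_{x,y\in\bm V(\xi)} G_{ax}\xi_{xy} G_{yb}$ is a finite linear combination of products $G_{ax} G_{yb}$ with $x,y \in \bfi^+$. Iterating, $\partial_\xi^{n_1} R_\bfi$ becomes a finite sum of monomials $R_{\bfi^+}$ in Green's function entries supported on $\bfi^+$. If none of the $n_1$ derivatives falls on the distinguished factor $G_{ii'}$, then $G_{ii'}$ survives intact in $R_{\bfi^+}$; Remark \ref{r:Tstructure} guarantees that $i,i'\in\bfi$ remain in distinct components of $\cT^+$, so the required structural property is immediate.

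The core step is the analysis when some derivative acts on $G_{ii'}$: then $\partial_\xi^k G_{ii'} = (-1)^k k!\,(G(\xi G)^k)_{ii'}$ is a finite sum of chains $G_{ix_1}\xi_{x_1 y_1} G_{y_1 x_2} \cdots G_{y_k i'}$ with $x_\ell, y_\ell \in \bm V(\xi) \subset \bfi^+$. By Remark \ref{r:Tstructure}, each vertex of $\bfi$ roots its own star-component in $\cT^+$, whose children are exactly the $j_e$'s associated to the $\cT$-edges $e$ incident to that vertex; hence for a single edge $e$, the four vertices $i_e, i_e', j_e, j_e'$ lie in just the two star-components rooted at $i_e$ and $i_e'$. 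The main obstacle is then a case analysis, expanding each $\xi_{i_e i_e'}^{j_e j_e'} = \Delta_{i_e i_e'} + \Delta_{j_e j_e'} - \Delta_{i_e j_e} - \Delta_{i_e' j_e'}$, to verify that in every resulting chain at least one Green's function factor has its two indices in distinct components of $\cT^+$. Here one uses crucially the assumption that $i, i'$ lie in distinct components of $\cT$ (hence of $\cT^+$) to rule out pathological index coincidences (e.g.\ $i = i_e$ paired with $i' = i_e'$ for the same edge) that would otherwise force every factor of a chain into a single star-component. Combined with the Taylor remainder bound, this yields the claimed decomposition up to error $O_\prec(\Phi_q)$.
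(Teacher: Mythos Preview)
Your proposal is correct and takes essentially the same approach as the paper; the paper organizes the expansion via the discrete product rule $D_{\cT^+}(FG)=F\,D_{\cT^+}G+(D_{\cT^+}F)(1+D_{\cT^+})G$ before Taylor-expanding, which is equivalent to your joint Leibniz expansion. For the chain analysis you describe as a case analysis over the four $\Delta$-terms, the paper gives a cleaner connectivity argument: writing the chain as $G_{ix_1}G_{x_2x_3}\cdots G_{x_{2a}i'}$ with each pair $\{x_{2b-1},x_{2b}\}\subset\{i_{e_b},j_{e_b},i_{e_b}',j_{e_b}'\}$, one notes that since $i,i'$ lie in different $\cT$-components, the walk $i,e_1,\dots,e_a,i'$ must at some step cross between $\cT$-components, and at that step the corresponding Green's function factor necessarily has its two indices in $\cT^+$-stars rooted in distinct $\cT$-components, hence in distinct $\cT^+$-components.
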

\begin{proof}
Thanks to the discrete chain rule \eqref{e:D-defxi}, we can rewrite $D_{\cT^+} (R_{\bfi}\del_{\bm\xi}(P^{q-1}\bar P^q))$ in the form
\begin{align}\label{e:DFU2}
R_{\bfi}  D_{\cT^+}\del_{\bm\xi}(P^{q-1}\bar P^q)
+ (D_{\cT^+}R_{\bfi}) (1+D_{\cT^+})\del_{\bm\xi}(P^{q-1}\bar P^q).
\end{align}
We recall the construction of $\cT^+$ from $\cT$. Say $\cT=({\bfi},E)$, for each edge $e=\{i_e, i_e'\}\in E$, we associate another edge $\{j_e,j'_e\}\in \qq{N}$. Then $\cT^+=(\bfi\bfj,E^+)$ consists of $2|E|$ disjoint edges, with $E^+=\cup_{e\in E}\{i_e, j_e\}\cup\{i_e', j_e'\}$. For the first term in \eqref{e:DFU2}, using Remark \ref{r:Tstructure},  if $i,i'\in \bfi$ are distinct, they are in different connected component of $\cT^+$.

By the Taylor expansion \eqref{e:D-expand}, up to error $\bE[\Phi_q]$, $ D_{\cT^+}\del_{\bm\xi}(P^{q-1}\bar P^q)$ is a finite linear combination of $d^{-\fh/2}\del_{\bm\xi'}(P^{q-1}\bar P^q)$ with $\fh\geq 1$, $\bm V(\bm\xi')\subset \bfi^+$ and $|\bm\xi'|\geq b$. 

Next we study the second term in \eqref{e:DFU2}, and show $D_{\cT^+}R_{\bfi}$ contains at least one off-diagonal Green's function entry $G_{jj'}$ such that $j,j'$ are in different connected components of $\cT^+$. 
We need to compute 
\begin{align}\label{e:difG}
D_{\cT^{+}}(G_{ii'}R_{\bfi})
=\sum_{n=1}^{\fb-1} \frac{1}{n!(d-1)^{n/2}}\left(\sum_{e\in \cT}\del_{i_e j_e}^{i_e' j_e'}\right)^n (G_{ii'}R_{\bfi})+\OO_\prec \left(d^{-\fb/2}\right).
\end{align}
If the derivative does not hit $G_{ii'}$, then $R_{\bfi^+}$ contains $G_{ii'}$ where $i,i'$ are in different connected components of $\cT^+$. 
The derivative of $G_{ii'}$ in \eqref{e:difG} is a sum of terms in the form
\begin{align*}
G_{i x_1}G_{x_2 x_3} \cdots G_{x_{2a-2}x_{2a-1}}G_{x_{2a}i'},
\end{align*}
where for $1\leq b\leq a$, $\{x_{2b-1}, x_{2b}\}\in \{i_{e_b}, j_{e_b}, i_{e_b}', j_{e_b}'\}$ for some ${e_b}\in E$. 
For the sequence of vertices and edges, $i, e_1, \cdots, e_{a}, i'$, since $i,i'\in \bfi$ are in different connected components of $\cT$, there are two cases:
i) $i$ and $e_1$ are in different connected component of $\cT$. From the construction of $\cT^+$, $i$ and $\{i_{e_{1}}, j_{e_{1}}, i_{e_{1}}', j_{e_{1}}'\}$ are disconnected in $\cT^+$. Thus $i$ and $x_{1}$ are in different connected components of $\cT^+$. If $x_{2a}$ and $i'$ are in different connected components of $\cT$, by the same argument, $x_{2a}$ and $i'$ are in different connected components of $\cT^{+}$. 
ii) There is some $1\leq b\leq a-1$ such that $e_b$ and $e_{b+1}$ are in different connected components of $\cT$. From the construction of $\cT^+$, $\{i_{e_b}, j_{e_b}, i_{e_b}', j_{e_b}'\}$ and $\{i_{e_{b+1}}, j_{e_{b+1}}, i_{e_{b+1}}', j_{e_{b+1}}'\}$ are disconnected in $\cT^+$. Thus $x_{2b}$ and $x_{2b+1}$ are in different connected components of $\cT^+$.  This finishes the proof of Proposition \ref{p:DFU}. 
\end{proof}

\subsection{Proof of Proposition \ref{c:one-off}}\label{s:pre2}
\begin{proposition}\label{p:one-off}
Proposition \ref{c:one-off} holds if $b\geq 1$. 
\end{proposition}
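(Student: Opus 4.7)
My plan is to combine the discrete integration by parts of Proposition~\ref{c:intbp} with the cancellation identity $\sum_{j \in \qq{N}} G_{ij}(z) = 0$ from \eqref{sumG0} and the inductive decomposition provided by Proposition~\ref{p:DFU}. The crucial structural inputs are that $i$ and $j$ lie in different components of $\cT$, so $A_\cT$ imposes no relation between these two vertices, and that $b \geq 1$ means every derivative on $P^{q-1}\bar P^q$ contributes a smallness factor through \eqref{e:dmN}.

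First I would apply Proposition~\ref{c:intbp} to the left-hand side of \eqref{e:AGij} with $F_\bfi = G_{ij}R'_\bfi \del_{\bm\xi}(P^{q-1}\bar P^q)$. The boundary error $\OO(d\bE[\cC(F,A)]/N)$ is controlled by $\bE[\Phi_q]$ using \eqref{e:Uibound} in Proposition~\ref{p:someb}, since $R_\bfi$ contains the off-diagonal $G_{ij}$. This decomposition produces an unconstrained main term $\frac{1}{N^{|\bfi|}}\sum^*_\bfi \bE[G_{ij}R'_\bfi \del_{\bm\xi}(P^{q-1}\bar P^q)]$ together with an IBP correction on the switched forest $\cT^+$.

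For the unconstrained main term, I would isolate the $j$-sum: writing $\sum^*_j = \sum_{j \in \qq{N}} - \sum_{j = \text{other fixed indices}}$, the first contribution vanishes on the $j$-independent part of the integrand, while the second has only $\OO(1)$ summands, each of size $\OO_\prec(\Lambdao)$. When $R'_\bfi$ or $\del_{\bm\xi}(P^{q-1}\bar P^q)$ do depend on $j$, I would expand them around a reference value of $j$ via the Green's function resolvent identity and the chain rule, and bound the remainder using the Ward identities \eqref{e:WdI}. Combining the resulting $\OO(\Lambdao/N)$ factor with the smallness $|\del_{\bm\xi}(P^{q-1}\bar P^q)| \prec \sum_{r\geq 1}(\Im[m]|\del_2 P|/(N\eta))^r |P|^{2q-r-1}$ that follows from \eqref{e:dmN} (valid precisely because $b \geq 1$), and using $\Lambdao \lesssim d^{3/2}\Lambdao$, this matches the $(1/(N\eta) + d^{3/2}\Lambdao/N)(\Im[m]|\del_2 P|/(N\eta))^s|P|^{2q-s-1}$ component of $\Phi_q$.

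For the IBP correction on $\cT^+$, I would invoke Proposition~\ref{p:DFU}: the operator $D_{\cT^+}(G_{ij}R'_\bfi \del_{\bm\xi}(P^{q-1}\bar P^q))$ expands as a finite linear combination of terms $d^{-\fh/2}R_{\bfi^+}\del_{\bm\xi'}(P^{q-1}\bar P^q)$ with $\fh \geq 1$, where $R_{\bfi^+}$ still carries an off-diagonal entry whose endpoints lie in different components of $\cT^+$. Iterating produces accumulated factors of $d^{-1/2}$ per round while preserving the separation hypothesis, so after a bounded number of iterations the combined off-diagonal factors $\Lambdao^{\fo+\fh}$ satisfy $\Lambdao^{\fo+\fh} \leq 1/N$, at which point the trivial bound \eqref{e:Ceod} closes the estimate by $\bE[\Phi_q]$. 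The main obstacle will be tracking the $j$-dependence in the unconstrained main term: ensuring that the $\sum_j G_{ij} = 0$ cancellation interacts cleanly with derivatives on $R'_\bfi$ and $\del_{\bm\xi}(P^{q-1}\bar P^q)$ in the $j$ variable, without destroying the $1/N$ gain that is essential for matching $\Phi_q$.
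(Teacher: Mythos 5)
Your treatment of the $E\neq\emptyset$ part (switched-forest correction) is on the right track and mirrors the paper: iterate Proposition~\ref{c:intbp} together with Proposition~\ref{p:DFU}, gaining a factor $d^{-1/2}$ per round while preserving the property that $R_{\bfi^+}$ still carries an off-diagonal entry whose endpoints lie in distinct components of $\cT^+$, until the crude bound \eqref{e:Ceod} closes the estimate. However, your treatment of the unconstrained main term $\frac{1}{N^{|\bfi|}}\sum^*_\bfi \bE[G_{ij}R'_\bfi \del_{\bm\xi}(P^{q-1}\bar P^q)]$ has a genuine gap.

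You propose to exploit the row-sum identity $\sum_j G_{ij}=0$ from \eqref{sumG0} and to handle $j$-dependence by ``expanding around a reference value of $j$.'' This does not work as stated: the factors $R'_\bfi$ and $\del_{\bm\xi}(P^{q-1}\bar P^q)$ genuinely depend on $j$ (for instance $R'_\bfi$ may carry $G_{jj}$, and each $\del_{\bm\xi'}m$ appearing inside $\del_{\bm\xi}(P^{q-1}\bar P^q)$ is, by \eqref{e:dermN2}, a chain $\frac{1}{N}\sum_k G_{kv_1}\cdots G_{v_{2b}k}$ with $v_s\in\bfi\ni j$), so the cancellation $\sum_j G_{ij}(\cdot)=0$ does not apply, and ``expanding around a reference value'' is not a concrete mechanism that produces the required $1/(N\eta)$ gain. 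The paper's actual argument is structurally different: it invokes the factorization of Proposition~\ref{p:dermN}, which writes $\del_{\bm\xi'}m=\frac{\Im[m]}{N\eta}\sum_\bfk G_{ij}^{a}X_{\bfk\bfm i}Y_{\bfk\bfm j}$ with the $i$-dependence and $j$-dependence separated into $X$ and $Y$; then, after the case distinctions on $a_s$, the decisive estimate \eqref{e:L2est0}--\eqref{e:L2est2} is the operator-norm bound $\|G\|\leq 1/\eta$ applied via Cauchy--Schwarz to the bilinear form $\sum_{ij}G_{ij}X_{\bfk\bfm i}Y_{\bfk\bfm j}$, which is what produces the $1/(N\eta)$ factor matching $\Phi_q$. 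In addition, you do not address the case where $R_\bfi$ contains two or more off-diagonal entries; the paper disposes of this first via AM--GM plus the Ward identity in \eqref{e:two-off}, and only afterwards specializes to exactly one off-diagonal entry. Without the $X$--$Y$ factorization and the operator-norm/Cauchy--Schwarz step, your plan does not close.
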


\begin{proof}[Proof of Proposition \ref{p:one-off} with $E= \emptyset$]
If $R_\bfi$ contains two off-diagonal Green's function entries $G_{ij}, G_{i'j'}$, then by the AM-GM inequality, and $|G_{xy}|\prec 1$
\begin{align}\begin{split}\label{e:two-off}
 &\phantom{{}={}} \left| \frac{1}{N^{|\bfi|}}\sum_{\bfi}^*\bE\left[ R_\bfi \del_{\bm\xi}(P^{q-1}\bar P^{q}) \right]\right| \prec 
   \frac{1}{N^{|\bfi|}}\sum_{\bfi}^*\bE\left[ (|G_{ij}|^2+|G_{i'j'}|^2)\left|  \del_{\bm\xi}(P^{q-1}\bar P^{q})\right| \right]\\
  &\prec 
   \frac{1}{N^{|\bfi|}}\sum_{\bfi}^*\bE\left[ (|G_{ij}|^2+|G_{i'j'}|^2) (N\eta)\Phi_q \right]\prec \bE[\Phi_q],
\end{split}\end{align} 
where we used \eqref{e:onet} for the second inequality, and Ward identity \eqref{e:WdI} for the last inequality. 

In the rest, we assume that $R_\bfi$ contains exactly one off-diagonal term. We denote $\bfi=\{i,j, m_1, m_2,\cdots, m_r\}$, $|\bfi|=r+2$, and $R_\bfi=G_{ij}R'_{\bfi}$.
We recall the derivatives of $P$ from \eqref{e:dmN},
\begin{align}\begin{split}\label{e:DPP}
    \del_{\bm\xi'} P=\del_{\bm\xi'} m (z)\del_2 P+\O_\prec\left(\frac{\Im[m ]^2}{(N\eta)^2} \right).
\end{split} \end{align}
We can rewrite \eqref{e:Gij} as a sum of terms in the form
\begin{align}\label{e:PII}
    \frac{1}{N^{r+2}}\sum_{ij\bfm} \bE[G_{ij}R'_\bfi I_1 I_2\cdots I_t P^{r-1-t_1}\bar P^{r-t_2}],\quad t_1+t_2=t,
\end{align}
where $I_s$ for $1\leq s\leq t$ corresponds to terms in \eqref{e:DPP} defined in the following: 
\begin{enumerate}
   \item\label{e:1} From \eqref{e:dermN}, the first term in \eqref{e:DPP} $\del_{\bm\xi'}m (z) \del_2 P$ is a sum of terms in the form:
   \begin{align}\label{e:x11}
    \frac{\Im[m ]\del_2 P}{N\eta}\sum_{\bfk_s}G_{ij}^{a_s} X_{\bfk_s \bfm i} Y_{\bfk_s \bfm j}.
\end{align}
   where $a_s\geq 0$ and $\bfk_s$ is some index set, and $\sum_{\bfk_s}|X_{\bfk_s \bfm i}^2|,\sum_{\bfk_s} |Y_{\bfk_s\bfm j}^2|=\O_\prec(1)$.  We take $I_s$ to be \eqref{e:x11} or its complex conjugate. In this case $|I_s|\prec \Im[m]|\del_2 P|/N\eta$.
   \item \label{e:3} $I_s$ is bounded by $\O_\prec\left(\frac{\Im[m ]^2}{(N\eta)^2} \right)$ corresponding to the last term in \eqref{e:DPP}.
\end{enumerate}

From the construction, we have $|I_s|\prec \Im[m ]/N\eta$ for all $1\leq s\leq t$. 
If there is one $I_s$ corresponding to Item \ref{e:1} with $a_s\geq 1$, then
\begin{align*}
    |I_s|\prec |G_{ij}|\frac{\Im[m ] |\del_2 P|}{N\eta}.
\end{align*}
And noticing $|\del_2 P|\prec 1$, we have 
\begin{align*}
 \frac{1}{N^{r+2}}\sum^*_{ij\bfm} \bE[G_{ij}R'_\bfi I_1 I_2\cdots I_t P^{q-t_1-1}\bar P^{q-t_2}]
    &\prec \frac{1}{N^{2}}\sum^*_{ij} \bE\left[|G_{ij}|^{2}\left(\frac{\Im[m ]}{N\eta}\right)^{t}|P^{2q-t-1}|\right]\\
    &\leq  \bE\left[\left(\frac{\Im[m ]}{N\eta}\right)^{t+1}|P^{2q-t-1}|\right]\prec \bE[\Phi_q].
\end{align*}
where we used Ward identity \eqref{e:WdI}.

If there is one $I_s=\O_\prec\left(\frac{\Im[m ]^2}{(N\eta)^2} \right)$ as in Item \ref{e:3}, we have
\begin{align*}
   &\phantom{{}={}}\frac{1}{N^{r+2}}\sum^*_{ij\bfm} \bE[G_{ij}R'_\bfi I_1 I_2\cdots I_t P^{q-t_1-1}\bar P^{q-t_2}]\\
   &\prec \bE\left[\frac{\Im[m ]^2}{(N\eta)^2}\left(\frac{\Im[m ]}{N\eta}\right)^{t-1}|P^{2q-t-1}|\right]\prec \bE[\Phi_q].
\end{align*}

In the rest, we can assume that each $I_s$ corresponds to Item \ref{e:1} with $a_s=0$. Then we have
\begin{align}\begin{split}\label{e:GijFI}
    &\phantom{{}={}}\frac{1}{N^{r+2}}\sum^*_{ij\bfm} \bE[G_{ij}R'_\bfi I_1 I_2\cdots I_t P^{q-t_1-1}\bar P^{q-t_2}]\\
    &=    \frac{1}{N^{r+2}}\sum_{\bfm}^*\bE\left[\sum^*_{ij \not\in \bfm}G_{ij}\sum_{
    \bfk}X_{\bfk \bfm i}Y_{\bfk \bfm j}\left(\frac{\Im[m ]\del_2 P}{N\eta}\right)^{t}P^{q-t_1-1}\bar P^{q-t_2}\right].
\end{split}\end{align}
where $\sum^*_{ij \not\in \bfm}$ is a sum over indices $i\neq j\in \qq{N}\setminus \bfm$, $\bfk=\cup_s \bfk_s$ and
\begin{align*}
    X_{\bfk \bfm i}=\prod_{s} X_{\bfk_s \bfm i},\quad  Y_{\bfk \bfm j}=\prod_{s} Y_{\bfk_s \bfm j}.
\end{align*}
They are bounded 
\begin{align*}
    \sum_{\bfk}|X^2_{\bfk \bfm i}|=\sum_{\bfk}\prod_{s} |X^2_{\bfk_s \bfm j}|
    =\sum_s \sum_{\bfk_s} |X^2_{\bfk_s \bfm j}|\prec  1, \quad \sum_{\bfk}|Y^2_{\bfk\bfm j}|\prec 1.
\end{align*}
We can rewrite the sum $\sum^*_{ij \not\in \bfm}$ as the difference of the sum over $i,j\not\in \bfm$ and $i=j\not\in \bfm$.
\begin{align}\begin{split}\label{e:L2est0}
    \frac{1}{N^2}\sum^*_{ij \not\in \bfm}\sum_{
    \bfk}G_{i j}X_{\bfk \bfm i}Y_{\bfk \bfm j}
     =
     \frac{1}{N^2}\sum_{ij \not\in \bfm}\sum_{
    \bfk }G_{i j}X_{\bfk \bfm i}Y_{\bfk \bfm j}
    -
     \frac{1}{N^2}\sum_{i \not\in \bfm}\sum_{
    \bfk }G_{ii}X_{\bfk \bfm i}Y_{\bfk \bfm i}
   \end{split}\end{align}
For the first term on the righthand side of \eqref{e:L2est0}, we use the norm of $G$ is bounded by $1/\eta$, and Cauchy-Schwartz inequality
\begin{align}\begin{split}\label{e:L2est}
    \left|\frac{1}{N^2}\sum_{ij \not\in \bfm}\sum_{
    \bfk}G_{i j}X_{\bfk \bfm i}Y_{\bfk \bfm j}\right|
    &\leq  \frac{1}{N^2\eta}\sum_\bfk \sqrt{\sum_{i}|X_{\bfk \bfm i}|^2 \sum_{i}|Y_{\bfk \bfm i}|^2}\\
    &\leq \frac{1}{N^2\eta}\sqrt{\sum_\bfk \sum_{i}|X_{\bfk \bfm i}|^2 \sum_\bfk \sum_{i}|Y_{\bfk \bfm i}|^2}\prec \frac{1}{N\eta}.
\end{split}\end{align}
For the second term on the righthand side of \eqref{e:L2est0} we have 
\begin{align}\begin{split}\label{e:L2est2}
    \left|\frac{1}{N^2}\sum_{i \not\in \bfm}\sum_{
    \bfk}G_{i i}X_{\bfk \bfm i}Y_{\bfk \bfm j}\right|
    &\leq  \frac{1}{N^2}\sum_\bfk \sqrt{\sum_{i}|X_{\bfk \bfm i}|^2 \sum_{i}|Y_{\bfk \bfm i}|^2}\prec \frac{1}{N}.
\end{split}\end{align}

By plugging \eqref{e:L2est} and \eqref{e:L2est2} into \eqref{e:GijFI}, and recall $|\bfm|=r$, we get
\begin{align*}
     \frac{1}{N^{r+2}}\sum^*_{ij\bfm} \bE[G_{ij}R'_\bfi I_1 I_2\cdots I_t P^{q-t_1-1}\bar P^{q-t_2}]\prec \bE\left[\frac{1}{N\eta}\left(\frac{\Im[m ] \del_2 P}{N\eta}\right)^t |P|^{2q-t-1}\right]\prec \bE[\Phi_q].
\end{align*}
This finishes the proof of \eqref{e:Gij} for $b\geq 1$ and $E=\emptyset$. 
\end{proof}

\begin{proof}[Proof of Proposition \ref{p:one-off} with $E\neq \emptyset$]
To prove \eqref{e:AGij}, we need to introduce some new notation. 
For $\fh\in \bN$, we use the symbol
$\wt\cE_{\fh}$
to denote a finite linear combination of terms of the form 
\begin{align}\label{e:tcE}
 \frac{1}{d^{\fh/2}}\frac{1}{N^{\theta(\cT)}d^{|E|}}\sum^*_{\bf i}\bE\left[A_{\cT}R_{\bfi} \del_{\bm\xi}(P^{q-1}\bar P^q)\right].
\end{align}
where the forest $\cT=(\bfi, E)$ is as in Definition \ref{d:forest}, $R_\bfi$ is a monomial of Green's function entries as in Definition \ref{d:evaluation}, and $\del_{\bm\xi}=\del_{\xi_1}\del_{\xi_2}\cdots \del_{\xi_b}$ with $b\geq 1$ and $ {\bf V}(\bm \xi)\subset \bfi$. By our assumption, $R_\bfi$ contains an off-diagonal Green's function entry $G_{ij}$ such that $i,j$ are in different connected components of $\cT$.

Next we show that any term in $\wt\cE_{\fh}$ (as in \eqref{e:tcE}) with $E\neq \emptyset$ can be rewritten as terms in $\wt\cE_{\tilde \fh}$ with $\tilde \fh\geq \fh+1$, up to a negligible error $\OO_\prec(d^{-\fh/2}\bE[\Phi_q])$:
\begin{align}\label{e:AGTc2}
 \frac{1}{d^{\fh/2}}\frac{1}{N^{\theta(\cT)}d^{|E|}}\sum^*_{\bf i}\bE\left[A_{\cT}R_{\bfi} \del_{\bm\xi}(P^{q-1}\bar P^q)\right]\in \wt\cE_{\tilde \fh}+\OO_\prec(d^{-\fh/2}\bE[\Phi_q]), \quad \tilde \fh\geq \fh+1.
\end{align}
The same as in \eqref{e:Ceod}, if $\fh$ is large enough, $|\wt\cE_{\fh}|\leq \bE[\Phi_q]$. 
Proposition \ref{p:one-off} follows from repeatedly using of \eqref{e:AGTc2}.

We prove the statement \eqref{e:AGTc2} for the case $\fh=0$, the general case follows from 
multiplying $d^{-\fh/2}$ on both sides.
For any term in $\wt\cE_{\fh}$ in the form \eqref{e:tcE}, we can use Proposition \ref{c:intbp} to expand, 
\begin{align}\begin{split}\label{e:AGTc3}
\phantom{{}={}}&\frac{1}{N^{\theta(\cT)}d^{|E|}}\sum^*_{\bf i}\bE\left[A_{\cT} R_{\bfi} \del_{\bm\xi}(P^{q-1}\bar P^q)\right]
= \frac{1}{N^{|\bfi|}}\sum^*_{\bf i}\bE\left[R_{\bfi} \del_{\bm\xi}(P^{q-1}\bar P^q)\right]\\
+&
\frac{1}{N^{\theta(\cT^+)}d^{|E^+|}}\sum^*_{\bfi \bfj}\bE\left[A_{\cT^+} D_{\cT^+}(R_\bfi \del_{\bm\xi}(P^{q-1}\bar P^q)) \right]+\OO_\prec\left(
\bE[\Phi_q]\right),
\end{split}\end{align}
where $\cT^+=(\bfi^+=\bfi\bfj, E^+)$ with $E^+=\cup_{e\in E}\{i_e, j_e\}\cup\{i_e', j_e'\}$.

For the first term on the righthand side of \eqref{e:AGTc3},  it is bounded by $\OO_\prec(\bE[\Phi_q])$, thanks to Proposition \eqref{p:one-off} with $E=\emptyset$. For the second term on the righthand side of \eqref{e:AGTc3}, thanks to Proposition \ref{p:DFU}, $D_{\cT^+}(R_\bfi \del_{\bm\xi}(P^{q-1}\bar P^q))$ is a linear combination of terms in the form $d^{-\tilde \fh/2}R_{\bfi^+}\del_{\bm\xi'}(P^{q-1}\bar P^q)$ with $\tilde \fh\geq 1$, where $R_{\bfi^+}$ contains an off-diagonal term $G_{k\ell}$ such that $k,\ell$ are in different connected components of $\cT^+$. Moreover, $|\bm\xi'|\geq b\geq 1$. Thus we conclude that the second term on the righthand side of \eqref{e:AGTc3} is in $\wt\cE_{\tilde \fh}$ with $\tilde \fh\geq 1$. This finishes the proof of \eqref{e:AGTc2}.
\end{proof}

\begin{proposition}\label{p:one-off2}
Proposition \ref{c:one-off} holds if $b= 0$. 
\end{proposition}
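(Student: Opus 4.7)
The plan is to mirror the structure of the proof of Proposition \ref{p:one-off}, separating the base case $E = \emptyset$ from the inductive step $E \neq \emptyset$, but with $P^{q-1}\bar P^q$ replacing $\del_{\bm\xi}(P^{q-1}\bar P^q)$ throughout. Since $b = 0$ means no derivative structure is available, the Schwinger--Dyson-type cancellations that drove the $b \geq 1$ argument via the factors $I_s$ are absent; the needed smallness must instead come from the structure of $R_\bfi$ together with the exact identity $\sum_{k=1}^N G_{kj}(z) = 0$ from \eqref{sumG0}.

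For the base case $E = \emptyset$, split on the number of off-diagonal entries in $R_\bfi = G_{ij} R'_\bfi$. If $R'_\bfi$ contains a second off-diagonal factor $G_{i'j'}$, the AM--GM inequality $|G_{ij} G_{i'j'}| \leq \frac{1}{2}(|G_{ij}|^2 + |G_{i'j'}|^2)$ combined with the Ward identity \eqref{e:WdI} yields $\frac{1}{N^{|\bfi|}}\sum^*_\bfi \bE[|G_{ij}|^2 |P|^{2q-1}] \prec \bE[(\Im[m]/(N\eta)) |P|^{2q-1}] \prec \bE[\Phi_q]$, exactly as in \eqref{e:two-off}. If instead $R'_\bfi$ is purely diagonal, it factors as $R'_\bfi = G_{ii}^{a_1} G_{jj}^{a_2} W_\bfm$ with $W_\bfm$ depending only on $\bfm = \bfi \setminus \{i,j\}$. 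Decomposing $G_{xx}^a = m^a + (G_{xx}^a - m^a)$, where Theorem \ref{thm:rigidity} gives $|G_{xx} - m| \prec \Lambda_d$, the leading piece $m^{a_1 + a_2} W_\bfm \cdot \frac{1}{N^{|\bfi|}} \sum^*_\bfi G_{ij}$ is controlled by \eqref{sumG0}: a direct computation gives $\sum^*_\bfi G_{ij} = -N m \cdot N^{|\bfi| - 2}(1 + \oo(1))$, producing an $\OO_\prec(1/N) |P|^{2q-1} \prec (d^{3/2} \Lambda_o/N) |P|^{2q-1} \prec \Phi_q$ bound, since $\Lambda_o \geq 1/\sqrt{d}$. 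The cross terms involving the residual $(G_{xx}^a - m^a)$ factors are bounded via the same identity combined with Cauchy--Schwartz and Ward, and reconciled with the various summands of $\Phi_q$ using the spectral-domain constraint in \eqref{e:D}.

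For $E \neq \emptyset$, apply Proposition \ref{c:intbp} with $F_\bfi = R_\bfi P^{q-1} \bar P^q$; since $\cC(F,A) \prec |P|^{2q-1}$, the remainder $d \bE[\cC(F,A)]/N \prec (d/N)\bE[|P|^{2q-1}] \prec \bE[\Phi_q]$ follows from $d \leq d^{3/2} \Lambda_o$. The first resulting sum $\frac{1}{N^{|\bfi|}}\sum^*_\bfi \bE[R_\bfi P^{q-1}\bar P^q]$ is the $E = \emptyset$ case just handled (the endpoints $i,j$ remain in distinct components of the trivial forest). For the second sum, the discrete product rule \eqref{e:D-product} gives
\[
D_{\cT^+}(R_\bfi P^{q-1}\bar P^q) = R_\bfi \, D_{\cT^+}(P^{q-1}\bar P^q) + (D_{\cT^+} R_\bfi)(1 + D_{\cT^+})(P^{q-1}\bar P^q).
\]
After Taylor expansion \eqref{e:D-expand}, the first summand becomes a linear combination of $d^{-\tilde\fh/2} R_\bfi \del_{\bm\xi'}(P^{q-1}\bar P^q)$ with $|\bm\xi'| \geq 1$ and $\tilde\fh \geq 1$, which fall into the $b \geq 1$ class and are controlled by Proposition \ref{p:one-off}. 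For the second summand, Proposition \ref{p:DFU} guarantees that $D_{\cT^+} R_\bfi$ carries an extra $d^{-\tilde\fh/2}$ factor with $\tilde\fh \geq 1$ and still contains an off-diagonal entry with endpoints in distinct components of $\cT^+$; the factor $(1+D_{\cT^+})(P^{q-1}\bar P^q)$ splits into $P^{q-1}\bar P^q$ (producing a term of the same shape as the original with an additional $1/\sqrt d$-factor, to be iterated) and $D_{\cT^+}(P^{q-1}\bar P^q)$ (already handled). After finitely many iterations the cumulative $d^{-\fh/2}$ factor combines with the off-diagonal count so that $\Lambda_o^{\fo + \fh} \leq 1/N$, at which point the trivial bound \eqref{e:Ceod} of Proposition \ref{p:someb} closes the estimate.

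The main obstacle is the purely-diagonal subcase of the base case: with no $\del_{\bm\xi}$ derivatives to exploit, we must rely exclusively on $\sum_k G_{kj} = 0$ and delicate Cauchy--Schwartz/Ward bounds for terms such as $\sum^*_{ij}G_{ij}(G_{ii}^{a_1} - m^{a_1})(G_{jj}^{a_2} - m^{a_2})$, and reconcile the residual $\Lambda_o/(N\eta)$-type contributions with the spectral-domain-dependent definition of $\Phi_q$ in \eqref{e:defPhir}.
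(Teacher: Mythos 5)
Your handling of the two-off-diagonal subcase (AM--GM plus Ward) and the reduction from $E\neq\emptyset$ to $E=\emptyset$ via Proposition \ref{c:intbp} and Proposition \ref{p:DFU} match the paper. The problem is the remaining case $R_\bfi = G_{ij}R'_\bfi$ with $R'_\bfi$ purely diagonal, which you correctly identify as the main obstacle but then do not actually close.

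Your plan is to write $G_{xx}^a = m^a + (G_{xx}^a - m^a)$, kill the leading piece with $\sum_j G_{ij}=0$, and absorb the cross terms by Cauchy--Schwarz and Ward using $|G_{xx}-m|\prec\Lambda_d$. This does not fit inside $\Phi_q$. Take the worst cross term $\frac{1}{N^2}\sum^*_{ij}\bE[G_{ij}(G_{ii}-m)(G_{jj}-m)P^{q-1}\bar P^q]$: putting absolute values and using Ward gives a bound of order $\Lambda_d^2\sqrt{\Im[m]/(N\eta)}\,|P|^{2q-1}=\Lambda_o\sqrt{\Im[m]/(N\eta)}\,|P|^{2q-1}$. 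For this to be $\lesssim\Phi_q$ one would need $\Lambda_o\lesssim d^{3/2}\Lambda_o/N$ (since $\Phi_q$ contains no $\Lambda_o\cdot\sqrt{\Im m/(N\eta)}|P|^{2q-1}$ term), i.e.\ $N\lesssim d^{3/2}$, which is false in the whole regime $d\leq N^{1/3-\fc}$. The identity $\sum_j G_{ij}=0$ cannot help once a factor $(G_{jj}-m)$ is attached to $G_{ij}$: there is no exact cancellation in $\sum_j G_{ij}(G_{jj}-m)=\sum_j G_{ij}G_{jj}$, and the residuals are simply too large in the sparse regime.

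The paper avoids this entirely by \emph{not} bounding $(G_{xx}-m)$. Instead it uses the resolvent relation \eqref{e:GHexp} to derive the identity
\begin{align*}
G_{ij}=\frac{1}{N(d-1)^{1/2}}\sum_{k}(AG)_{kk}G_{ij}-\frac{1}{\sqrt{d-1}}(AG)_{ij}m +\OO_\prec\left(\frac{1}{N}\right),
\end{align*}
substitutes this into $\frac{1}{N^{|\bfi|}}\sum^*_\bfi\bE[G_{ij}R'_\bfi U]$, and then applies switching integration by parts (Proposition \ref{c:intbp2}) to each of the two resulting terms. The point is that each of the two terms produces, after a careful one-derivative expansion, the \emph{same} leading contribution $-\frac{d}{d-1}\frac{1}{N^{|\bfi|}}\sum^*_\bfi\bE[m^2 R_\bfi U]$, and these cancel in the difference, leaving only terms in $\wt\cE_{\tilde\fh,\tilde\fo}$ with $\tilde\fh+\tilde\fo\geq 2$ (i.e.\ with at least one extra $1/\sqrt d$ or one extra off-diagonal factor). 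This exact algebraic cancellation between \eqref{e:pp1} and \eqref{e:pp2} is the mechanism that makes the $b=0$ case work, and it has no analogue in the decomposition you propose. Without it, the one-off-diagonal, purely-diagonal-remainder case cannot be closed by Ward-type estimates alone.
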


\begin{proof}[Proof of Proposition \ref{p:one-off2}]
For $\fh,\fo \in \bN$, we use the symbol
$\wt\cE_{\fh,\fo}$
to denote a finite linear combination of terms of the form 
\begin{align}\label{e:tcE2}
  \frac{1}{d^{\fh/2}}\frac{1}{N^{\theta(\cT)}d^{|E|}}\sum^*_{\bf i}\bE\left[A_{\cT}R_{\bfi}U\right],\quad U=P^{q-1}\bar P^{q},
\end{align}
where the forest $\cT=(\bfi, E)$ is as in Definition \ref{d:forest}, $R_\bfi$ is a monomial of Green's function entries as in Definition \ref{d:evaluation}, and $U_\bfi$ is as in \eqref{e:defU}. $R_\bfi$ contains an off-diagonal Green's function entry $G_{ij}$, such that $i,j$ are in different connected components of $\cT$,  and $\chi(R_\bfi)= \fo\geq 1$ (recall that $\chi(R_\bfi)$ counts the number of off-diagonal Green's function entries). 

We remark that the only  difference between $\wt\cE_{\fh,\fo}$ and $\cE_{\fh,\fo}$ from \eqref{e:newterm} is that $U=P^{q-1}\bar P^{q}$ which is independent of the indices $\bfi$.

%
%
%
%
%
%
%
%
%
Next we show that any term in $\wt\cE_{\fh,\fo}$  can be rewritten as terms in $\wt\cE_{\tilde \fh, \tilde \fo}$ with $\tilde \fh\geq \fh$ and $\tilde \fh+\tilde \fo\geq \fh+\fo+1$ up to a negligible error $\OO_\prec(d^{-\fh/2}\bE[\Phi_q])$:
\begin{align}\label{e:AGTck}
 \frac{1}{d^{\fh/2}}\frac{1}{N^{\theta(\cT)}d^{|E|}}\sum^*_{\bf i}\bE\left[A_{\cT}R_{\bfi} \del_{\bm\xi}(P^{q-1}\bar P^q)\right]\in \wt\cE_{\tilde \fh,\tilde \fo}+\OO_\prec(d^{-\fh/2}\bE[\Phi_q]). 
\end{align}
Thanks to \eqref{e:Ceod}, if $\fh,\fo$ is large enough, $|\wt\cE_{\fh,\fo}|\leq \bE[\Phi_q]$. 
Proposition \ref{p:one-off2} follows from  \eqref{e:AGTck}. 
\end{proof}

\begin{proof}[Proof of \eqref{e:AGTck} with $E=\emptyset$]
Without loss of generality we can take $\fh=0$.
If $R_\bfi$ contains two off-diagonal Green's function entries, then by the same argument as in \eqref{e:two-off}, we can show it is bounded by $\OO_\prec(\bE[(\Im[m]/N\eta)|P|^{2q-1}]=\OO_\prec(\bE[\Phi_q])$. If $R_\bfi$ contains exact one off-diagonal Green's function entry, let $R_\bfi=G_{ij}R'_\bfi$. 
By the definition of the Green's function \eqref{e:GHexp}, we have
\begin{align}
\label{e:GHijterm}&-\frac{1}{N}=-zG_{ij}+\frac{(AG)_{ij}}{\sqrt{d-1}},\quad \left(1-\frac{1}{N}\right)=-zG_{kk}+\frac{(AG)_{kk}}{\sqrt{d-1}}.
\end{align}
Multiplying the first relation in \eqref{e:GHijterm} by $G_{kk}$ and the second relation by $G_{ij}$, averaging over the index $k$, and then taking the difference, we get 
\begin{align}\label{e:Gij-m}
G_{ij}=\frac{1}{N(d-1)^{1/2}}\sum_{k\in \qq{N}\setminus \bfi}(AG)_{kk}G_{ij}-\frac{1}{\sqrt{d-1}}(AG)_{ij}m +\OO_\prec\left(\frac{1}{N}\right).
\end{align}
By plugging \eqref{e:Gij-m} into \eqref{e:Gij},
\begin{align}\begin{split}\label{e:tdiff2}
\frac{1}{N^{|\bfi|}}\sum^*_{\bfi}\bE\left[G_{ij} R'_{\bfi}U\right]
&=\frac{1}{N^{|\bfi|+1}\sqrt{d-1}}
\sum^*_{\bfi k}\bE\left[ (AG)_{kk} R_{\bfi}U\right]\\ 
&-\frac{1}{N^{|\bfi|}\sqrt{d-1}}
\sum^*_{\bfi }\bE\left[(AG)_{ij}mR'_{\bfi}U\right] +\OO_\prec(\bE[\Phi_q]).
\end{split}\end{align}

The claim \eqref{e:AGTck} follows from the following two relations
\begin{align}
\label{e:pp1}\frac{1}{N^{|\bfi|+1}\sqrt{d-1}}
\sum^*_{\bfi k}\bE\left[ (AG)_{kk} R_{\bfi}U\right]=-\frac{d}{(d-1)}\frac{1}{N^{|\bfi|}}\sum^*_{\bfi}\bE\left[m^2R_{\bfi}U\right]
+\wt \cE_{\tilde \fh, \tilde \fo}+O_\prec[\Phi_q],\\
\label{e:pp2}\frac{1}{N^{|\bfi|}\sqrt{d-1}}
\sum^*_{\bfi }\bE\left[(AG)_{ij}mR'_{\bfi}U\right]=-\frac{d}{(d-1)}\frac{1}{N^{|\bfi|}}\sum^*_{\bfi}\bE\left[m^2R_{\bfi}U\right]
+\wt \cE_{\tilde \fh, \tilde \fo}+O_\prec[\Phi_q],
\end{align}
where $\tilde \fh+\tilde \fo\geq 2$.
\end{proof}

\begin{proof}[Proof of \eqref{e:pp1}]
For \eqref{e:pp1}, we have
\begin{align*}
\frac{1}{N^{|\bfi|+1}\sqrt{d-1}}
\sum^*_{\bfi k}\bE\left[ (AG)_{kk} R_{\bfi}U\right]=\frac{1}{N^{|\bfi|+1}\sqrt{d-1}}\sum^*_{k\ell \bfi}\bE\left[A_{k\ell}G_{k\ell}R_{\bfi} U\right]+\OO_\prec(\bE[\Phi_q]).
\end{align*}
We have used $|G_{k\ell}|\prec 1$ and bounded the terms corresponding to  $\ell\in \bfi\setminus \{k\}$ by $\sqrt{d}\bE[|U|
]/N\prec \bE[\Phi_q]$. 

By using Proposition \ref{c:intbp2} with $\cT^+=(k\ell uv\bfi, E^+=\{k,u\}\cup\{\ell,v\})$ and $F_{k\ell \bfi}=G_{k\ell}R_\bfi U$. Then $|F_{k\ell \bfi}|\prec \Lambda_o |P|^{2q-1}$ and it follows from the definition \ref{e:defPhir}, $d^{3/2}\cC(F,A)/N\prec \Phi_q$, and  we have
\begin{align}\begin{split} \label{e:case1off}
&\phantom{{}={}}\frac{1}{N^{|\bfi|+1}\sqrt{d-1}}\sum^*_{k\ell \bfi}\bE\left[A_{k\ell}G_{k\ell}R_{\bfi} U\right]=\frac{d}{(d-1)^{1/2}}\frac{1}{N^{|\bfi|+2}}\sum^*_{k\ell\bfi}\bE\left[G_{k\ell}R_{\bfi} U\right]\\
&+
\frac{d}{\sqrt{d-1}}\frac{1}{N^{\theta(\cT^+)}d^{|E^+|}}\sum^*_{k\ell u v\bfi}\bE\left[A_{\cT^+}D_{k\ell}^{uv}(G_{k\ell}R_{\bfi} U)\right]+\OO_\prec\left(\bE[\Phi_q]\right).
\end{split}\end{align}
Since $\sum_{k\ell}G_{k\ell}=0$, and $\sum^*_{k\ell}G_{k\ell}\prec N$, the first term on the righthand side is bounded by $\bE[\Phi_q]$. For the second term on the righthand side of \eqref{e:case1off}, we can rewrite it as a sum
\begin{align}\begin{split}\label{e:decompcase1off2}
&\phantom{{}={}}\frac{d}{d-1}\frac{1}{N^{\theta(\cT^+)}d^{|E^+|}}\sum^*_{k\ell u v\bfi}\bE\left[A_{\cT^+}G_{k\ell}R_{\bfi} \sqrt{d-1}D_{k\ell}^{uv}U\right]\\
&+\frac{d}{\sqrt{d-1}}\frac{1}{N^{\theta(\cT^+)}d^{|E^+|}}\sum^*_{k\ell u v\bfi}\bE\left[A_{\cT^+}D_{k\ell}^{uv}(G_{k\ell}R_{\bfi})(1+D_{k\ell}^{uv}) U\right].
\end{split}\end{align}
For the first term in \eqref{e:decompcase1off2}, since $k,\ell$ are in different connected components of $\cT^+$ and the Taylor expansion of $\sqrt{d-1}D_{k\ell}^{uv}U$ is a linear combination of terms in the form $\del_{\bm\xi}(P^{q-1}\bar P^q)$, it is bounded by $\bE[\Phi_q]$ thanks to Proposition \ref{p:one-off}. For the second term in \eqref{e:decompcase1off2}, we have
\begin{align}\label{e:difG2off2}
D^{uv}_{k\ell}(G_{k\ell}R_{\bfi})
=\sum_{n=1}^{\fb} \frac{(\del^{uv}_{k\ell})^n (G_{k\ell}R_{\bfi})}{n!(d-1)^{n/2}}+\OO_\prec \left(d^{-\fb/2}\right).
\end{align}

If $\del^{uv}_{k\ell}$ hits $m$ in $R_\bfi$, which can be bounded by $\Im[m]/N\eta$ thanks to Proposition \ref{p:dermN}, and the resulting term is bounded by $\bE[\Phi_q]$. 
The other terms in 
 $(\del^{uv}_{k\ell})^n  (G_{k\ell}R_{\bfi})$ form a polynomial in Green's function entries. By plugging \eqref{e:difG2off2} into the second term in \eqref{e:decompcase1off2}, the term in \eqref{e:difG2off2} corresponding to $n$ results in a linear combination of 
\begin{align}\label{e:DRoff2}
\frac{d}{(d-1)^{(n+1)/2}}\frac{1}{N^{\theta(\cT^+)}d^{|E^+|}}\sum^*_{\bfi^+}\bE\left[A_{\cT^+}R_{\bfi^+}U\right],\quad \bfi^+=k\ell uv \bfi.
\end{align}
Thanks to Proposition \ref{p:DFU}, $R_{\bfi^+}$ contains an off-diagonal Green's function entry with indices in different connected components of $\cT^+$. 
If $n\geq 2$, \eqref{e:DRoff2} is in $\wt\cE_{\tilde \fh,\tilde \fo}$ with $\tilde \fh,\tilde \fo\geq 1$. Next we study the terms in \eqref{e:difG2off2} with $n=1$, 
\begin{align*}
\frac{d}{(d-1)}\frac{1}{N^{\theta(\cT^+)}d^{|E^+|}}\sum^*_{\bfi^+}\bE\left[A_{ku}A_{\ell v}\del_{k\ell}^{uv} (G_{k\ell}R_{\bfi})U\right],\quad \bfi^+=k\ell uv \bfi.
\end{align*}
There are two cases: i) if the derivative hits $R_{\bfi}$, then $R_{\bfi^+}$ contains the Green's function entry $G_{k\ell}$, and $k,\ell$ are in different connected component of $\cT^+$. Moreover, $\del_{k\ell}^{uv}  R_{\bfi}$ contains at least one off-diagonal Green's function entry. The corresponding term is in $\wt\cE_{0,2}$.
 ii) if the derivative hits $G_{k\ell}$, 
we get $R_{\bfi^+}=-G_{kx}G_{y\ell}G_{ij}R'_{\bfi}$, where $\{x,y\}\in \{\{k,\ell\}, \{u,v\}, \{k,u\}, \{y,v\}\}$. $R_{i^+}$ contains $G_{ij}$, and $i,j$ are in different connected component of $\cT^+$. Unless $(x,y)=(k,\ell)$, for all other cases, $R_{i^+}$ contains at least two off-diagonal terms. The corresponding terms are in $\wt\cE_{0,2}$. For the case $(x,y)=(k,\ell)$, we get
\begin{align}\label{e:lead1}
-\frac{d}{(d-1)}\frac{1}{N^{\theta(\cT^+)}d^{|E^+|}}\sum^*_{\bfi k\ell u v}\bE\left[A_{ku}A_{\ell v}G_{kk}G_{\ell\ell}R_{\bfi}U\right]
=-\frac{d}{(d-1)}\frac{1}{N^{|\bfi|}}\sum^*_{\bfi}\bE\left[m^2R_{\bfi}U\right]
+O_\prec[\Phi_q].
\end{align}
This gives \eqref{e:pp1}
\end{proof}

\begin{proof}[Proof of \eqref{e:pp2}]
We can rewrite \eqref{e:pp2} as
\begin{align}\label{e:dd}
\frac{1}{N^{|\bfi|}\sqrt{d-1}}\left(\sum^*_{k \bfi}\bE\left[A_{ik}G_{jk}mR'_{\bfi} U\right]
+\sum^*_{\bfi}\sum_{k\in \bfi}\bE\left[A_{ik}G_{jk}mR'_{\bfi} U\right]\right),
\end{align}
For the second term on the righthand of \eqref{e:dd}, we can simply bound it using $|G_{jk}m R_\bfi'|\prec 1$
\begin{align*}
\frac{1}{N^{|\bfi|}\sqrt{d-1}}\sum^*_{\bfi}\sum_{k\in \bfi}\bE\left[A_{ik}G_{jk}mR'_{\bfi} U\right]
\prec \frac{1}{N^{|\bfi|}\sqrt{d-1}}\sum^*_{\bfi}\sum_{k\in \bfi}
\bE\left[A_{ik}|P|^{2q-1}\right]\prec \bE[\Phi_q].
\end{align*}

For the first term on the righthand of \eqref{e:dd}, by using Proposition \ref{c:intbp2} with $\cT^+=(kuv\bfi, E^+=\{i,u\}\cup\{k,v\})$ and $F_{k \bfi}=G_{jk}m R'_\bfi U$. Then $|F_{k\bfi}|\prec \Lambda_o|P|^{2q-1}$ and we have
\begin{align}\begin{split} \label{e:case1off2}
&\phantom{{}={}}\frac{1}{N^{|\bfi|}\sqrt{d-1}}\sum^*_{k\ell \bfi}\bE\left[A_{ik}G_{jk}mR'_\bfi U\right]=\frac{d}{(d-1)^{1/2}}\frac{1}{N^{|\bfi|+1}}\sum^*_{k\bfi}\bE\left[G_{jk}mR'_\bfi U\right]\\
&+
\frac{d}{\sqrt{d-1}}\frac{1}{N^{\theta(\cT^+)}d^{|E^+|}}\sum^*_{k u v\bfi}\bE\left[A_{\cT^+}D_{ik}^{uv}(G_{jk}mR'_\bfi U)\right]+\OO_\prec\left(\bE[\Phi_q]\right).
\end{split}\end{align}
Since $\sum_{k}G_{jk}=0$, and $\sum^*_{k\notin \bfi}G_{jk}\prec 1$, the first term on the righthand side is bounded by $\bE[\Phi_q]$. For the second term on the righthand side of \eqref{e:case1off2}, we can rewrite it as a sum
\begin{align}\label{e:decompcase1off}
\frac{d}{\sqrt{d-1}}\frac{1}{N^{\theta(\cT^+)}d^{|E^+|}}\left(\sum^*_{k u v\bfi}\bE\left[A_{\cT^+}G_{jk}mR'_\bfi D_{ik}^{uv}U\right]+\sum^*_{kuv\bfi}\bE\left[A_{\cT^+}D_{ik}^{uv}(G_{jk}mR'_\bfi) (1+D_{ik}^{uv})U\right]\right).
\end{align}
For the first term in \eqref{e:decompcase1off}, since $j,k$ are in different connected components of $\cT^+$, it is bounded by $\bE[\Phi_q]$ thanks to Proposition \ref{p:one-off}. For the second term in \eqref{e:decompcase1off}, we have
\begin{align}\label{e:difG2off}
D^{uv}_{ik}(G_{jk}mR'_\bfi )
=\sum_{n=1}^{\fb-1} \frac{(\del^{uv}_{ik})^n (G_{jk}mR'_\bfi )}{n!(d-1)^{n/2}}+\OO_\prec \left(d^{-\fb/2}\right).
\end{align}
By plugging \eqref{e:difG2off} into the second term in \eqref{e:decompcase1off}, up to error $\bE[\Phi_q]$, the term in \eqref{e:difG2off} corresponding to $n$ results in a linear combination of 
\begin{align}\label{e:DRoff}
\frac{d}{(d-1)^{(n+1)/2}}\frac{1}{N^{\theta(\cT^+)}d^{|E^+|}}\sum^*_{\bfi^+}\bE\left[A_{\cT^+}R_{\bfi^+}U\right],\quad \bfi^+=k\ell uv \bfi.
\end{align}
Thanks to Proposition \ref{p:DFU}, $R_{\bfi^+}$ contains an off-diagonal Green's function entry with indices in different connected components of $\cT^+$. 
If $n\geq 2$, \eqref{e:DRoff} is in $\wt\cE_{\tilde \fh,\tilde \fo}$ with $\tilde \fh,\tilde  \fo\geq 1$. Next we study the terms in \eqref{e:difG2off} corresponding to $n=1$, 
\begin{align*}
\frac{d}{(d-1)}\frac{1}{N^{\theta(\cT^+)}d^{|E^+|}}\sum^*_{\bfi^+}\bE\left[A_{iu}A_{k v}\del_{ik}^{uv} (G_{jk}mR'_\bfi )U\right],\quad \bfi^+=k uv \bfi.
\end{align*}
There are two cases: i) if the derivative hits $mR'_\bfi $, then $R_{\bfi^+}$ contains the Green's function entry $G_{jk}$, and $j,k$ are in different connected component of $\cT^+$. Moreover, $\del_{ik}^{uv}  (mR'_\bfi)$ contains at least one off-diagonal Green's function entry. The corresponding term is in $\wt\cE_{0,2}$.
 ii) if the derivative hits $G_{jk}$, 
we get $R_{\bfi^+}=-G_{jx}G_{yk} mR'_\bfi $, where $\{x,y\}\in \{\{i,k\}, \{u,v\}, \{i,u\}, \{k,v\}\}$. 
For all cases, $G_{jx}$ is an off-diagonal term, and $j,x$ are in different connected components of $\cT^+$. Except when $(x,y)=(i,k), (v,k)$, $R_{\bfi^+}$ contains two off-diagonal Green's function term. The corresponding terms are in $\wt\cE_{0,2}$. When $(x,y)=(i,k), (v,k)$, we get
\begin{align}\begin{split}\label{e:dTjk2}
&\phantom{{}={}}-\frac{d}{(d-1)}\frac{1}{N^{\theta(\cT^+)}d^{|E^+|}}\sum^*_{\bfi^+}\bE\left[A_{iu}A_{k v}(G_{ji}G_{kk}+G_{jv}G_{kk})mR'_\bfi U\right]\\
&=-\frac{d}{(d-1)}\frac{1}{N^{|\bfi|}}\sum^*_{k\bfi^+}\bE\left[m^2 R_\bfi U\right]
-\frac{d}{(d-1)}\frac{1}{N^{|\bfi|+1}d}\sum^*_{\bfi^+}\bE\left[A_{k v}G_{jv}G_{kk}mR'_\bfi U\right].
\end{split}\end{align}
The first term on the righthand side of \eqref{e:dTjk2} is the main term in  \eqref{e:lead1}. For the second term on the righthand side of \eqref{e:dTjk2}, using Proposition \ref{c:intbp} with $\cT=(kuv\bfi , \{k,v\})$, $\cT^+=(xykuv\bfi, \{k,x\}\cup\{v,y\})$ and $F_{kuv\bfi}=G_{jv}G_{kk}mR_\bfi'U$, 
\begin{align}\begin{split}\label{e:lead2}
&\phantom{{}={}}\frac{1}{N^{|\bfi|+1}d}\sum^*_{\bfi^+}\bE\left[A_{k v}G_{jv}G_{kk}mR'_\bfi U\right]
=\frac{1}{N^{|\bfi|+2}}\sum^*_{\bfi^+}\bE\left[G_{jv}G_{kk}mR'_\bfi U\right]\\
&+\frac{1}{N^{|\bfi|+2}d^2}\sum^*_{xy\bfi^+}\bE\left[A_{kx}A_{vy}D_{kv}^{xy}(G_{jv}G_{kk}mR'_\bfi U)\right]+\OO_\prec(\bE[\Phi_q]).
\end{split}\end{align}
For the first term on the righthand side of \eqref{e:lead2}, since $\sum_{v}G_{iv}=0$, we have $\sum_{v\notin k\bfi}G_{iv}\prec 1$. It is bounded by $\OO_\prec(\bE[\Phi_q])$.
The second term on the righthand side of \eqref{e:lead2} is a linear combination of terms in $\wt\cE_{\tilde \fh,\tilde \fo}$ with $\tilde \fh,\tilde  \fo\geq 1$.
\end{proof}

%
%
%
%
%

\begin{proof}[Proof of \eqref{e:AGTck} with $E\neq \emptyset$]
We can use Proposition \ref{c:intbp} to expand, 
\begin{align}\begin{split}\label{e:AGTc5}
\phantom{{}={}}&\frac{1}{N^{\theta(\cT)}d^{|E|}}\sum^*_{\bf i}\bE\left[A_{\cT} R_{\bfi}U\right]
= \frac{1}{N^{|\bfi|}}\sum^*_{\bf i}\bE\left[G_{ij} R_\bfi' U\right]\\
+&
\frac{1}{N^{\theta(\cT^+)}d^{|E^+|}}\sum^*_{\bfi \bfj}\bE\left[A_{\cT^+} D_{\cT^+}(R_\bfi U) \right]+\OO\left(
\bE[\Phi_q]\right),
\end{split}\end{align}
where $\cT^+=(\bfi\bfj, E^+)$ with $E^+=\cup_{e\in E}\{i_e, j_e\}\cup\{i_e', j_e'\}$.
For the first term on the righthand side of \eqref{e:AGTc5}, it is in $\wt\cE_{\fh,\fo}$ with $E=\emptyset$. From the discussion above, we know it can be rewritten as terms in $\wt\cE_{\tilde \fh,\tilde \fo}$ with $\tilde \fh+\tilde \fo\geq \fh+\fo+1$. For the second term in \eqref{e:AGTc5}, by the same argument as for the second term on the righthand side of \eqref{e:AGTc3}, up to an error $\OO_\prec(\bE[\Phi_q])$, it can be rewritten as terms in $\wt\cE_{\tilde \fh,\tilde \fo}$ with $\tilde \fh, \tilde \fo\geq 1$.
\end{proof}

\subsection{Proof of Proposition \ref{c:case1exp}}\label{s:expand1}

\begin{proof}[Proof of Proposition \ref{c:case1exp}]
We prove the statement for $\fh=0$, the general statement follows by multiplying both sides by $1/d^{\fh/2}$. By using Proposition \ref{c:intbp} we have
\begin{align}\begin{split} \label{e:rec2}
\frac{1}{N^{\theta(\cT)}d^{|E|}}\sum^*_{\bfi}\bE\left[A_{\cT}R_{\bfi} U\right]&=\frac{1}{N^{|\bfi|}}\sum^*_{\bfi}\bE\left[R_{\bfi} U_\bfi\right]\\
&+
\frac{1}{N^{\theta(\cT^+ )}d^{|E^+ |}}\sum^*_{\bfi^+}\bE\left[A_{\cT^+ }D_{\cT^+ }(R_{\bfi} U_\bfi)\right]+\OO_\prec\left(\bE[\Phi_q]\right),
\end{split}\end{align}
where $\cT^+ =(\bfi^+ , E^+ )$ with $\bfi^+=\bfi \bfj$ and $E^+ =\cup_{e\in E_r}\{i_e, j_e\}\cup\{i_e', j_e'\}$; Here we used Proposition \ref{p:someb} to bound the error term by $\bE[\Phi_q]$. 

Since $R_{\bfi}$ contains at least one off-diagonal Green's function entries, the first term on the righthand side is bounded by $\bE[\Phi_q]$ thanks to Proposition \ref{c:one-off}. For the second term on the righthand side of \eqref{e:rec2}, we can rewrite it as a sum
\begin{align}\label{e:decomp2}
\frac{1}{N^{\theta(\cT^+ )}d^{|E^+ |} }\left(\sum^*_{\bfi_1}\bE\left[A_{\cT^+ } R_{\bfi} D_{\cT^+ }U_\bfi\right]
+\sum^*_{\bfi}\bE\left[A_{\cT^+ } D_{\cT^+ }R_{\bfi} (1+D_{\cT^+ })U_\bfi\right]\right).
\end{align}
For the first term in \eqref{e:decomp2}, say $R_{\bfi}$ contains the off-diagonal Green's function entry $G_{ij}$. Then from our construction of $\cT^+ $, $i,j$ are in different connected components of $\cT^+ $, it is bounded by $\bE[\Phi_q]$ thanks to Proposition \ref{c:one-off}. 

For the second term in \eqref{e:decomp2}, by Taylor expansion \eqref{e:D-expand}
\begin{align}\label{e:difG2}
D_{\cT^+}R_{\bfi}
=\sum_{n=1}^{\fb-1} \frac{1}{n!(d-1)^{n/2}}\left(\sum_{e\in \cT}\del_{i_e j_e}^{i_e' j_e'}\right)^n R_{\bfi}+\OO_\prec \left(d^{-\fb/2}\right).
\end{align}
If the derivatives in \eqref{e:difG2} hit $m$, which can be bounded by $\Im[m]/N\eta$ thanks to Proposition \ref{p:dermN}, and the resulting term is bounded by $\bE[\Phi_q]$. 
Therefore up to error $\OO_\prec(\bE[\Phi_q])$ the second term in \eqref{e:decomp2} is a finite sum of terms in the form
\begin{align}\label{e:R++}
\frac{1}{d^{n/2}}\frac{1}{N^{\theta(\cT^+)}d^{|E^+|}}\sum^*_{\bfi^+}\bE\left[A_{\cT^+} R_{\bfi^+} (1+D_{\cT^+})U_\bfi\right],\quad n\geq 1.
\end{align}
If some off-diagonal Green's function entry $G_{ij}$ in $R_{\bfi}$ is not hit by a derivative in \eqref{e:difG2}, then $G_{ij}$ is in $R_{\bfi^+}$ and $i,j$ are in different connected components of $\cT^+ $. The corresponding term \eqref{e:R++} is bounded by $\bE[\Phi_q]$. The remaining terms correspond to the case that  in \eqref{e:difG2} each off-diagonal entry is hit by a derivative, and we have  $n\geq \fo$. 
The claim \eqref{e:nextod1} follows.
\end{proof}

\subsection{Proof of Propositions \ref{c:case2exp} }\label{s:expand2}
\begin{proof}[Proof of proposition \ref{c:case2exp}]
We prove the statement for $\fh=0$, the general statement follows by multiplying both sides by $1/d^{\fh/2}$. By the definition of the Green's function \eqref{e:GHexp}, we have
\begin{align}
\label{e:GHiterm}&\left(1-\frac{1}{N}\right)=-zG_{jj}+\frac{(AG)_{jj}}{\sqrt{d-1}},\quad \left(1-\frac{1}{N}\right)=-zG_{ii}+\frac{(AG)_{ii}}{\sqrt{d-1}}.
\end{align}
Multiplying the first relation in \eqref{e:GHiterm} by $G_{ii}$ and the second relation by $G_{jj}$, averaging over the indices, and then taking the difference, we get 
\begin{align}\label{e:Gii-m}
G_{ii}-m=\frac{1}{N(d-1)^{1/2}}\sum_{j:j\notin \bfi} (AG)_{jj}G_{ii}-\frac{1}{\sqrt{d-1}}(AG)_{ii}m +\OO_\prec\left(\frac{1}{N}\right).
\end{align}
By plugging \eqref{e:Gii-m} into \eqref{e:newterm}, we get
\begin{align}\begin{split}\label{e:replaceGii}
&\phantom{{}={}}\frac{1}{N^{\theta(\cT)}d^{|E |}}\sum^*_{\bfi }\bE\left[A_{\cT}R_{\bfi } U_\bfi\right]
=\frac{1}{N^{\theta(\cT)}d^{|E |}}\sum^*_{\bfi }\bE\left[A_{\cT} mR'_{\bfi } U_\bfi\right]\\
&+\frac{1}{N^{\theta(\cT)+1}d^{|E |}(d-1)^{1/2}}\sum^*_{j\bfi }\bE\left[A_{\cT} (AG)_{jj}G_{ii}R'_{\bfi } U_\bfi\right]\\
&-\frac{1}{N^{\theta(\cT)}d^{|E |}(d-1)^{1/2}}\sum^*_{\bfi }\bE\left[A_{\cT} (AG)_{ii}mR'_{\bfi } U_\bfi\right]+\OO_\prec(\bE[\Phi_q]).
\end{split}\end{align}
The claim of Proposition \ref{c:case2exp} follows from the next two estimates
\begin{align}
\label{e:guanxi1}\frac{\sum^*_{j\bfi }\bE\left[A_{\cT} (AG)_{jj}R_{\bfi } U_\bfi\right]}{N^{\theta(\cT)+1}d^{|E |}(d-1)^{1/2}}
&=-\frac{d}{d-1}\frac{\sum^*_{jk\ell m\bfi}\bE\left[A_{\cT}A_{j\ell}A_{km}G_{jj}G_{kk}R_{\bfi}U_\bfi\right]}{N^{\theta(\cT)+2}d^{|E|+2}}+\cE_{\tilde \fh, \tilde \fo}+\OO_\prec(\bE[\Phi_q]),\\
\label{e:guanxi2}
\frac{\sum^*_{\bfi }\bE\left[A_{\cT} (AG)_{ii}mR'_{\bfi } U_\bfi\right]}{N^{\theta(\cT)}d^{|E |}(d-1)^{1/2}}
&=-\frac{d}{d-1}\frac{\sum^*_{k\ell m\bfi}\bE\left[A_{\cT}A_{i\ell}A_{km}mG_{kk} R_{\bfi}U_\bfi\right]}{N^{\theta(\cT)+1}d^{|E|+2}}+\cE_{\tilde \fh, \tilde \fo}+\OO_\prec(\bE[\Phi_q]),
\end{align}
where $\tilde \fh\geq 0$ and $\tilde \fh+\tilde \fo\geq 1$, and the higher order terms $\cE_{\tilde \fh, \tilde \fo}$ are as in the first two cases listed in Proposition \ref{c:case2exp}.
Then the difference of \eqref{e:guanxi1} and \eqref{e:guanxi2} is given 
\begin{align*}
\frac{d}{d-1}\frac{1}{N^{\theta(\cT)+1}d^{|E|+2}}\left(\frac{1}{N}\sum^*_{jk\ell m\bfi}\bE\left[A_{\cT}A_{j\ell}A_{km}G_{jj}G_{kk}R_{\bfi}U_\bfi\right]
-\sum^*_{k\ell m\bfi}\bE\left[A_{\cT}A_{i\ell}A_{km}mG_{kk} R_{\bfi}U_\bfi\right]\right)\\
=\frac{\deg_\cT(i)}{d-1}\frac{1}{N^{\theta(\cT)}d^{|E|}}
\sum^*_{\bfi}\bE\left[A_{\cT}m^2 R_{\bfi}U_\bfi\right]+\OO_{\prec}(\bE[\Phi_q])
\in \cE_{2,0}+\OO_{\prec}(\bE[\Phi_q]),
\end{align*}
which is in the form of the last case listed in Proposition \ref{c:case2exp}.
\end{proof}

\begin{proof}[Proof of \eqref{e:guanxi1}]
We can rewrite \eqref{e:guanxi1} as
\begin{align}\begin{split}\label{e:ARU}
\frac{d}{(d-1)^{1/2}}\frac{1}{N^{\theta(\cT)+1}d^{|E |+1}}\sum^*_{jk\bfi }\bE\left[A_{\cT}A_{jk}G_{jk}R_{\bfi } U_\bfi\right]+\OO_\prec(\Phi_q),
\end{split}\end{align}
In \eqref{e:guanxi1} for the terms when $k\in \bfi$, we have bounded it by using \eqref{e:Uibound} and \eqref{e:sumA}:
\begin{align}
\frac{d}{(d-1)^{1/2}}\frac{1}{N^{\theta(\cT)+1}d^{|E |+1}}\sum_{j\bfi}^*\sum_{k\in \bfi }\bE\left[A_{\cT}A_{jk}|G_{jk}|\frac{N}{d}\Phi_q\right]\prec \bE[\Phi_q].
\end{align}

For \eqref{e:ARU}, by using Proposition \ref{c:intbp2} with $\cT^+=(\bfi^+, E^+)=(jk\ell m\bfi, E\cup\{j,\ell\}\cup\{k,m\})$, and $F_{jk\bfi}=G_{jk}R_\bfi U_\bfi$, we have
\begin{align}\begin{split} \label{e:case1exp}
&\phantom{{}={}}\frac{d}{(d-1)^{1/2}}\frac{1}{N^{\theta(\cT )+1}d^{|E |+1}}\sum^*_{jk\bfi }\bE\left[A_{\cT }A_{jk}G_{jk}R_{\bfi } U_\bfi\right]\\
&=\frac{d}{(d-1)^{1/2}}\frac{1}{N^{\theta(\cT )+2}d^{|E |}}\sum^*_{jk\bfi }\bE\left[A_{\cT }G_{jk}R_{\bfi } U_\bfi\right]\\
&+
\frac{d}{(d-1)^{1/2}}\frac{1}{N^{\theta(\cT^+)}d^{|E^+|}}\sum^*_{\bfi^+}\bE\left[A_{\cT}D_{jk}^{\ell m}(G_{jk}R_{\bfi } U_\bfi)\right]+\OO_\prec\left(\bE[\Phi_q]\right).
\end{split}\end{align}


Since $\sum_{jk}G_{jk}=0$, and $\sum^*_{jk}G_{jk}\prec N$, the first term on the righthand side is bounded by $\bE[\Phi_q]$. For the second term on the righthand side of \eqref{e:case1exp}, we can rewrite it as a sum
\begin{align}\begin{split}\label{e:decompcase1}
&\phantom{{}={}}\frac{d}{d-1}\frac{1}{N^{\theta(\cT^+)}d^{|E^+|}}\sum^*_{\bfi^+}\bE\left[A_{\cT^+}G_{jk}R_{\bfi} \sqrt{d-1}D_{jk}^{\ell m}(U_\bfi)\right]\\
&+\frac{d}{(d-1)^{1/2}}\frac{1}{N^{\theta(\cT^+)}d^{|E^+|}}\sum^*_{\bfi^+}\bE\left[A_{\cT^+}D_{jk}^{\ell m}(G_{jk}R_{\bfi})(1+D_{jk}^{\ell m}) U_\bfi\right].
\end{split}\end{align}
For the first term in \eqref{e:decompcase1}, since $j,k$ are in different connected components of $\cT^+$, it is bounded by $\bE[\Phi_q]$ thanks to Proposition \ref{c:one-off}. For the second term in \eqref{e:decompcase1}, we have
\begin{align}\label{e:difG21}
D_{jk}^{\ell m}(G_{jk}R_{\bfi})
=\sum_{n=1}^{\fb-1} \frac{(\del_{jk}^{\ell m})^n (G_{jk}R_{\bfi})}{n!(d-1)^{n/2}}+\OO_\prec \left(d^{-\fb/2}\right).
\end{align}
By plugging \eqref{e:difG21} into the second term in \eqref{e:decompcase1}, up to error $\bE[\Phi_q]$, the term in \eqref{e:difG21} corresponding to $n$ results in a linear combination of 
\begin{align}\label{e:DR}
\frac{d}{(d-1)^{(n+1)/2}}\frac{1}{N^{\theta(\cT^+)}d^{|E^+|}}\sum^*_{\bfi^+}\bE\left[A_{\cT^+}R_{\bfi^+}(1+D_{jk}^{\ell m}) U_\bfi\right].
\end{align}
If $n\geq 2$, \eqref{e:DR} is in $\cE_{\tilde \fh, \tilde \fo}$ with $\tilde \fh\geq 1$. For the term in \eqref{e:difG21} corresponding to $n=1$, \begin{align}\label{e:dTjk}
\del_{jk}^{\ell m} (G_{jk}R_{\bfi})
\end{align}
is a linear combination of monomials $R_{\bfi^+}$ of Green's function entries. There are two cases: i) if the derivative hits $R_{\bfi}$, then $R_{\bfi^+}$ contains the Green's function entry $G_{jk}$. Since $j,k$ are in different connected component of $\cT^+$, the corresponding term \eqref{e:DR} is bounded by $\bE[\Phi_q]$, thanks to Proposition  \ref{c:one-off}. ii) if the derivative hits $G_{jk}$, 
we get $R_{\bfi^+}=-G_{jx}G_{yk}R_{\bfi}$, where $\{x,y\}\in \{\{j, k\}, \{\ell,m\}, \{j,\ell\}, \{k,m\}\}$. 
One of the pairs $j,x$ and $y,k$ are in different connected components of $\cT^+$ unless $(x,y)=(j,k), (\ell, m)$. Therefore,  thanks to Proposition  \ref{c:one-off} the corresponding terms \eqref{e:DR} are bounded by $\bE[\Phi_q]$, unless $(x,y)=(j,k), (\ell, m)$. They correspond to these two terms
\begin{align}\label{e:lasttwo}\begin{split}
&-\frac{d}{d-1}\frac{1}{N^{\theta(\cT ^+)}d^{|E ^+|}}\sum^*_{\bfi ^+}\bE\left[A_{\cT ^+}G_{jj}G_{kk}R_{\bfi}(1+D_{jk}^{\ell m}) U_\bfi\right]\\
&-\frac{d}{d-1}\frac{1}{N^{\theta(\cT ^+)}d^{|E ^+|}}\sum^*_{\bfi ^+}\bE\left[A_{\cT ^+}G_{j\ell}G_{km}R_{\bfi}(1+D_{jk}^{\ell m}) U_\bfi\right],
\end{split}\end{align}
where the first term gives \eqref{e:guanxi1} and the second term in \eqref{e:lasttwo} is in $\cE_{0,2}$.
\end{proof}

\begin{proof}[Proof of \eqref{e:guanxi2}]
For \eqref{e:guanxi2}, we separate it into two terms corresponding to $k\in \qq{N}\setminus \bfi$ or $k\in \bfi\setminus \{i\}$:
\begin{align}\label{e:rewrite}\begin{split}
&\phantom{{}={}}\frac{1}{N^{\theta(\cT )}d^{|E |}(d-1)^{1/2}}\sum^*_{\bfi }\bE\left[A_{\cT } (AG)_{ii}mR'_{\bfi } U_\bfi\right]\\
&=\frac{1}{N^{\theta(\cT )}d^{|E |}(d-1)^{1/2}}\sum^*_{k\bfi }\bE\left[A_{\cT } A_{i k}G_{ik}mR'_{\bfi } U_\bfi\right]\\
&+\frac{1}{N^{\theta(\cT )}d^{|E |}(d-1)^{1/2}}\sum^*_{\bfi ,k\in \bfi \setminus \{i\}}\bE\left[A_{\cT } A_{ik}G_{ik}mR'_{\bfi } U_\bfi\right].
\end{split}\end{align}

For the first term on the righthand side of \eqref{e:rewrite}, 
By using Proposition \ref{c:intbp2} with $\cT ^+=(k\ell m \bfi , E \cup\{i,\ell\}\cup\{k, m\})$ and $F_{k\bfi}=G_{ik}m R_\bfi' U$, we have $d^{3/2}\cC(F,A)/N\prec \Phi_q$ and
\begin{align}\begin{split} \label{e:case1exp2}
&\phantom{{}={}}\frac{d}{(d-1)^{1/2}}\frac{1}{N^{\theta(\cT )}d^{|E |+1}}\sum^*_{k\bfi }\bE\left[A_{\cT } A_{i k}G_{ik}mR'_{\bfi } U\right]\\
&=\frac{d}{(d-1)^{1/2}}\frac{1}{N^{\theta(\cT )+1}d^{|E |}}\sum^*_{k\bfi }\bE\left[A_{\cT }G_{ik}mR'_{\bfi } U\right]\\
&+
\frac{d}{(d-1)^{1/2}}\frac{1}{N^{\theta(\cT ^+)}d^{|E ^+|}}\sum^*_{\bfi ^+}\bE\left[A_{\cT ^+}D_{ik}^{\ell m}(G_{ik}mR'_{\bfi } U)\right]+\OO_\prec\left(\bE[\Phi_q]\right).
\end{split}\end{align}
Since $\sum_{k}G_{ik}=0$, and $\sum_{k\in \qq{N}\setminus\bfi }G_{ik}\prec 1$, the first term on the righthand side is bounded by $\bE[\Phi_q]$. For the second term on the righthand side of \eqref{e:case1exp2}, we can rewrite it as a sum
\begin{align}\label{e:decompcase2}\begin{split}
&\frac{d}{d-1}\frac{1}{N^{\theta(\cT ^+)}d^{|E ^+|}}\sum^*_{\bfi ^+}\bE\left[A_{\cT ^+}G_{ik}mR'_{\bfi } \sqrt{d-1}D_{ik}^{\ell m}(U)\right]\\
&+\frac{d}{(d-1)^{1/2}}\frac{1}{N^{\theta(\cT ^+)}d^{|E ^+|}}\sum^*_{\bfi ^+}\bE\left[A_{\cT ^+}D_{ik}^{\ell m}(G_{ik}mR'_{\bfi })(1+D_{ik}^{\ell m}) U\right].
\end{split}\end{align}
For the first term on the righthand side of \eqref{e:decompcase2}, since $i,k$ are in different connected components of $\cT ^+$, it is bounded by $\bE[\Phi_q]$ thanks to Proposition \ref{c:one-off}. For the second term in \eqref{e:decompcase2}, we have
\begin{align}\begin{split}\label{e:difG3} 
D_{ik}^{\ell m}(G_{ik}mR'_{\bfi })
&=\sum_{n=1}^{\fb-1} \frac{1}{n!(d-1)^{n/2}}\left(\del_{ik}^{\ell m}\right)^n (G_{ik}mR'_{\bfi})+\OO_\prec \left(d^{-\fb/2}\right).
\end{split}\end{align}
If the derivative $\del_{ik}^{\ell m}$ hits $m$, which is bounded by $\Im[m]/(N\eta)$ thanks to \eqref{e:impp}, and the resulting terms are bounded by $\bE[\Phi_q]$. The remaining terms in $(\del_{ik}^{\ell m})^n (G_{ik} m R_{\bfi})$ form a polynomial in Green's function entries. By plugging \eqref{e:difG3} into the second term in \eqref{e:decompcase2}, the term in \eqref{e:difG3} corresponding to $n$ results in a linear combination of 
\begin{align}\label{e:DR2}
\frac{d}{(d-1)^{(n+1)/2}}\frac{1}{N^{\theta(\cT ^+)}d^{|E ^+|}}\sum^*_{\bfi ^+}\bE\left[A_{\cT ^+}R_{\bfi ^+}(1+D_{ik}^{\ell m}) U_\bfi\right].
\end{align}
If $n\geq 2$, \eqref{e:DR2} is in $\cE_{\tilde \fh, \tilde \fo}$ with $\tilde \fh\geq 1$. For the term in \eqref{e:difG3} corresponding to $n=1$, 
\begin{align*}
\del_{ik}^{\ell m}(G_{ik} mR'_{\bfi})
\end{align*}
is a linear combination of monomials $R_{\bfi ^+}$ of Green's function entries. By the same argument as for \eqref{e:dTjk}, either \eqref{e:DR2} is given by
\begin{align}\begin{split}\label{e:lasttwo2}
&-\frac{d}{d-1}\frac{1}{N^{\theta(\cT ^+)}d^{|E ^+|}}\sum^*_{\bfi ^+}\bE\left[A_{\cT ^+}mG_{ii}G_{kk}R'_{\bfi}(1+D_{ik}^{\ell m}) U_\bfi\right]\\
&-\frac{d}{d-1}\frac{1}{N^{\theta(\cT ^+)}d^{|E ^+|}}\sum^*_{\bfi ^+}\bE\left[A_{\cT ^+}m G_{i\ell}G_{km}R'_{\bfi}(1+D_{ik}^{\ell m}) U_\bfi\right],
\end{split}\end{align}
or bounded by $\bE[\Phi_q]$.The first term gives \eqref{e:guanxi1} and the second term is in $\cE_{0, 2}$.

%
%

For the second term on the righthand side of \eqref{e:rewrite}, there are two cases either the edge $\{i,k\}\in E$, or $k\in \bfi\setminus\{i\}$ and $\{i,k\}\not\in E$. For the case $\{i,k\}\in E $, then $A_{\cT }A_{ik}=A_{\cT }$, we can rewrite the last term in \eqref{e:rewrite} as
\begin{align*}
\frac{1}{N^{\theta(\cT )}d^{|E |}(d-1)^{1/2}}\sum^*_{\bfi }\bE\left[A_{\cT } G_{ik}mR'_{\bfi } U\right],\end{align*}
which is in $\cE_{1,1}$.

If $k\in \bfi \setminus\{i\}$ and $\{i,k\}\not\in E $, there are two cases: i) $\{i,k\}$ are in different connected component of $\cT $.  ii) $\{i,k\}$ are in the same connected components of $\cT $.
In case i), $\cT \cup \{i,k\}$ is still a forest with $\theta(\cT )-1$ connected components, and $|E |+1$ edges. Using \eqref{e:sumA},
\begin{align}\label{e:ssN}\begin{split}
&\frac{1}{N^{\theta(\cT )}d^{|E |}(d-1)^{1/2}}\sum^*_{\bfi }\bE\left[A_{\cT } A_{ik}G_{ik}mR'_{\bfi } U_\bfi\right]\\
&\prec \frac{\Lambdao}{N^{\theta(\cT )}d^{|E |}(d-1)^{1/2}}\sum^*_{\bfi }\bE\left[A_{\cT } A_{ik}| U_\bfi|\right]
\prec \frac{\sqrt{d}\Lambdao}{N}\bE[U_\bfi]\leq \bE[\Phi_q].
\end{split}\end{align}
In case ii), $i,k$ belong to the same connected component. Since $\{i,k\}\not\in \cT $, $\cT \cup \{i,k\}$ contains exactly one cycle. Proposition \ref{p:sumA} implies that
$
(1/N^{\theta(\cT )}d^{|E |})\sum^*_{\bfi }A_{\cT }A_{ik}\prec 1/N,
$
and the same estimate \eqref{e:ssN} holds in this case. 
\end{proof}

\subsection{Proof of Proposition \ref{p:rewrite}}\label{s:rewrite}

\begin{proof}[Proof of Proposition \ref{p:rewrite}]
We will prove the following relation for any forest $\cT=(\bfi, E)$, 
\begin{align}\label{e:induction}
\frac{1}{N^{\theta(\cT^+)}d^{|E^+|}}\sum^*_{\bfi^+}\bE[A_{\cT^+} m^\fd U_{\bfi^+}]
=\frac{C}{N^{\theta(\cT )}d^{|E |}}\sum^*_{\bfi }\bE[A_{\cT } m^\fd U_\bfi]+\OO_\prec(\sqrt{d}\bE[\Phi_q]),
\end{align}
where the graphs $ \cT^+$ is as constructed in Propositions \ref{c:case1exp} and \ref{c:case2exp}. Then by sequentially using the relation \eqref{e:induction} for $\cT=\cT_{t-1},\cdots, \cT_1$, we get
\begin{align}\label{e:finalexp}
\frac{1}{d^{\fh/2}}\frac{1}{N^{\theta(\cT_t)}d^{|E_t |}}\sum^*_{\bfi_t }\bE[A_{\cT_t } m^\fd U_{\bfi_t}]
=\frac{C}{d^{\fh/2}}\sum^*_{ijk\ell}\frac{1}{N^2d^2}\bE[A_{ik}A_{j\ell} m^\fd U_{\bfi_1}]+\OO_\prec(\bE[\Phi_q]),
\end{align}
where  $U_{\bfi_1}=P^q \bar P^{q-1}$ or $U_{\bfi_1}=\sqrt{d-1}D_{ij}^{k\ell}P^q \bar P^{q-1}$. We remark that $d^{-\fh/2}$ cancels the $\sqrt{d}$ factor in the error term of \eqref{e:induction}.

To prove \eqref{e:induction}, we first notice that 
\begin{align}\begin{split}\label{e:TEexp}
&\phantom{{}={}}\frac{1}{N^{\theta(\cT^+)}d^{|E^+|}}\sum^*_{\bfi^+}\bE[A_{\cT^+} m^\fd (e+D_{\cT^+}) U_{\bfi}]\\
&=\frac{1}{N^{\theta(\cT^+)}d^{|E^+|}}\sum^*_{\bfi^+}\bE[A_{\cT^+} (e+D_{\cT^+})(m^\fd  U_\bfi)]+\OO_\prec(\bE[\Phi_q]),\quad e\in\{1,0\}.
\end{split}\end{align}

If $\cT ^+$ is constructed from $\cT$ in Proposition \ref{c:case1exp}, then $U_{\bfi^+}=(1+D_{\cT^+})U_{\bfi}$. Using \eqref{e:TEexp}, we can first rewrite the lefthand side of \eqref{e:induction} as the sum of two terms
\begin{align}\begin{split}\label{e:twott}
&\phantom{{}={}}\frac{1}{N^{\theta(\cT^+)}d^{|E^+|}}\sum^*_{\bfi ^+}\bE[A_{\cT ^+} (1+D_{\cT ^+}) (m^\fd  U_\bfi)]\\
&=\frac{1}{N^{\theta(\cT^+)}d^{|E^+|}}\sum^*_{\bfi^+}\bE[A_{\cT^+}m^\fd  U_\bfi]+\frac{1}{N^{\theta(\cT^+)}d^{|E^+|}}\sum^*_{\bfi ^+}\bE[A_{\cT ^+} D_{\cT ^+} (m^\fd  U_\bfi)].
\end{split}\end{align}

We recall from Remark \ref{r:Tstructure} that $\theta(\cT^+)=|\bfi|$ and $|E^+|=2|E|$. We can rewrite the first term on the righthand  side of \eqref{e:twott} as
\begin{align}\begin{split}\label{e:sumAT}
&\phantom{{}={}}\frac{1}{N^{\theta(\cT^+)}d^{|E^+|}}\sum^*_{\bfi^+}\bE[A_{\cT^+}m^\fd  U_\bfi]
=\frac{1}{N^{|\bfi|}d^{2|E|}}\sum^*_{\bfi \cup_{e\in E}\{j_e, j_e'\}}\bE\left[\prod_{e\in E}A_{i_ej_e}A_{i_e'j_e'}m^\fd  U_\bfi\right]\\
&=\frac{1}{N^{|\bfi|}}\sum^*_{\bfi}\bE[m^\fd  U_\bfi]+\OO_\prec(\bE[\Phi_q]),
\end{split}\end{align}
where to get the second line, we sum over the indices $j_e, j_e'$, which gives a factor $d^{2|E|}$. 
Using  \eqref{e:TEexp} and \eqref{e:twott}, \eqref{e:induction} follows from Proposition \ref{c:intbp}, i.e., 
\begin{align*}
&\phantom{{}={}}\frac{1}{N^{\theta(\cT^+)}d^{|E^+|}}\sum^*_{\bfi ^+}\bE[A_{\cT ^+} (1+D_{\cT ^+}) (m^\fd  U_\bfi)]\\
&=\frac{1}{N^{|\bfi|}}\sum^*_{\bfi}\bE[m^\fd  U_\bfi]+\frac{1}{N^{\theta(\cT^+)}d^{|E^+|}}\sum^*_{\bfi ^+}\bE[A_{\cT ^+} D_{\cT ^+} (m^\fd  U_\bfi)]+\OO_\prec(\bE[\Phi_q])\\
&=\frac{1}{N^{\theta(\cT )}d^{|E |}}\sum^*_{\bfi }\bE[A_{\cT } m^\fd U_\bfi]+\OO_\prec(\bE[\Phi_q]).
\end{align*}

If $\cT ^+$ is constructed from $\cT  $ as in Proposition \ref{c:case2exp}, then there are three cases. 
In the first case $\cT ^+=(jk\ell m\bfi, E\cup \{j,\ell\}\cup\{k,m\})$, we have $\theta(\cT^+)=\theta(\cT)+2$ and $|E^+|=|E|+2$. If $U_{\bfi^+}=(1+D_{jk}^{\ell m})U_{\bfi}$, thanks to Proposition \ref{c:intbp2} (with the  identities used  in the reverse direction), we have 
\begin{align}\begin{split}\label{e:back1}
&\phantom{{}={}}\frac{1}{N^{\theta(\cT ^+)}d^{|E ^+|}}\sum^*_{\bfi ^+}\bE[A_{\cT}A_{j\ell}A_{km} (1+D_{jk}^{\ell m})( m^\fd  U_{\bfi})]\\
&=\frac{1}{N^{\theta(\cT  )+2}d^{|E  |}}\sum^*_{\bfi jk }\bE[A_{\cT  } m^\fd U_{\bfi}]+\frac{1}{N^{\theta(\cT ^+)}d^{|E ^+|}}\sum^*_{ \bfi jk\ell m}\bE[A_{\cT}A_{j\ell}A_{km} D_{jk}^{\ell m}( m^\fd  U_{\bfi})]+\OO_\prec(\bE[\Phi_q])\\
&=\frac{1}{N^{\theta(\cT  )+1}d^{|E  |+1}}\sum^*_{\bfi jk  }\bE[A_{\cT  }A_{jk} m^\fd U_{\bfi}]+\OO_\prec(\bE[\Phi_q])\\
&=\frac{1}{N^{\theta(\cT  )}d^{|E  |}}\sum^*_{\bfi  }\bE[A_{\cT  } m^\fd U_{\bfi}]+\OO_\prec(\bE[\Phi_q]).
\end{split}\end{align}
If $U_{\bfi^+}=\sqrt{d-1}D_{jk}^{\ell m}U_{\bfi}$, we notice that 
\begin{align}\label{e:back2}
\frac{1}{N^{\theta(\cT ^+)}d^{|E ^+|}}\sum^*_{\bfi ^+}\bE[A_{\cT}A_{j\ell}A_{km}  m^\fd  U_{\bfi}]
&=\frac{1}{N^{\theta(\cT  )}d^{|E  |}}\sum^*_{\bfi  }\bE[A_{\cT  } m^\fd U_{\bfi}]+\OO_\prec(\bE[\Phi_q]).
\end{align}
Then by taking difference of \eqref{e:back1} and \eqref{e:back2}, we get
\begin{align}\label{e:back3}
\frac{1}{N^{\theta(\cT ^+)}d^{|E ^+|}}\sum^*_{\bfi ^+}\bE[A_{\cT}A_{j\ell}A_{km} D_{jk}^{\ell m} m^\fd  U_{\bfi}]
=\OO_\prec(\bE[\Phi_q]),
\end{align}
and \eqref{e:induction} holds with $C=0$. 

In the second case $\cT^+=(k\ell m \bfi, E\cup\{i,\ell\}\cup\{k,m\})$, where $i\in \bfi$. 
We have that $\theta(\cT^+)=\theta(\cT)+1$ and $|E^+|=|E|+2$. 
If $U_{\bfi^+}=(1+D_{ik}^{\ell m})U_{\bfi}$, thanks to Proposition \ref{c:intbp2} (with the  identities used  in the reverse direction)
\begin{align}\label{e:back4}\begin{split}
&\phantom{{}={}}\frac{1}{N^{\theta(\cT^+ )}d^{|E^+ |}}\sum^*_{\bfi^+ }\bE[A_{\cT}A_{i\ell}A_{km} (1+D_{ik}^{\ell m}) m^\fd  U_{\bfi}]\\
&=\frac{1}{N^{\theta(\cT )+1}d^{|E |}}\sum^*_{\bfi k}\bE[A_{\cT } m^\fd U_{\bfi}]+\frac{1}{N^{\theta(\cT^+ )}d^{|E^+ |}}\sum^*_{k\ell m \bfi }\bE[A_{\cT}A_{i\ell}A_{km} D_{ik}^{\ell m} m^\fd  U_{\bfi}]+\OO_\prec(\bE[\Phi_q])\\
&=\frac{1}{N^{\theta(\cT )}d^{|E |+1}}\sum^*_{k\bfi }\bE[A_{\cT }A_{ik} m^\fd U_{\bfi}]+\OO_\prec(\bE[\Phi_q])\\
&=\left(1-\frac{\deg_\cT(i)}{d}\right)\frac{1}{N^{\theta(\cT )}d^{|E |}}\sum^*_{\bfi }\bE[A_{\cT } m^\fd U_{\bfi}]+\OO_\prec(\bE[\Phi_q]).
\end{split}\end{align}
If $U_{\bfi^+}=\sqrt{d-1}D_{ik}^{\ell m} U_{\bfi}$, we notice that 
\begin{align}\label{e:back5}\begin{split}
&\phantom{{}={}}\frac{1}{N^{\theta(\cT^+ )}d^{|E^+ |}}\sum^*_{\bfi^+ }\bE[A_{\cT}A_{i\ell}A_{km}  m^\fd  U_{\bfi}]\\
&=\left(1-\frac{\deg_\cT(i)}{d}\right)\frac{1}{N^{\theta(\cT )}d^{|E |}}\sum^*_{\bfi }\bE[A_{\cT } m^\fd U_{\bfi}]+\OO_\prec(\bE[\Phi_q]).
\end{split}\end{align}

Then by taking difference of \eqref{e:back4} and \eqref{e:back5}, we get
\begin{align}\label{e:back3}
\frac{1}{N^{\theta(\cT^+ )}d^{|E^+ |}}\sum^*_{\bfi^+ }\bE[A_{\cT}A_{i\ell}A_{km} D_{ik}^{\ell m} m^\fd  U_{\bfi}]
=\OO_\prec(\bE[\Phi_q]),
\end{align}
and \eqref{e:induction} holds with $C=0$. 

In the last case $\cT^+=\cT $, and $U_{\bfi^+}=U_{\bfi}$. There is nothing to prove. 

For the righthand side of \eqref{e:finalexp}, by the same argument as in \eqref{e:back1} and \eqref{e:back2}, we can rewrite it as $C\bE[m^\fd P^{q-1}\bar P^q]+\OO_{\prec}(\bE[\Phi_q])$ if $U_{\bfi_1}=P^q \bar P^{q-1}$; and $\OO_{\prec}(\bE[\Phi_q])$ if $U_{\bfi_1}=\sqrt{d-1}D_{ij}^{k\ell}P^q \bar P^{q-1}$. The claim \eqref{e:back} follows. 
\end{proof}

\section{Identification of the Self-consistent Equation}
\label{sec:P-identification}

The procedure that generates the polynomial $P$ from Proposition \ref{p:DSE} is explicit but quite complicated, so that explicitly tracking the resulting coefficients of $ P $ is challenging beyond the first few orders. In this section we show these coefficients (asymptotically)
are close to those of the power series $P_\infty(z,w)$ from \eqref{e:md-sce}, which characterizes the Stieltjes transform $\md$ of the Kesten--McKay law.

\begin{proposition}\label{p:idfP}
Uniformly in $z\in \bC_+$,
the polynomial $ P (z,u) = 1 + zu +u^2+ Q(u)$ constructed in Proposition~\ref{p:DSE} satisfies
\begin{equation*}
 P (z, \md(z))= \OO(d/N),
\end{equation*}
where $\md$ is the Stieltjes transform of the Kesten--McKay law, given by \eqref{e:md}.
\end{proposition}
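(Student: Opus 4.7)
The idea is to exploit that the polynomial $P$ and the Stieltjes transform $m_d$ both depend only on $d$ (not $N$), so $P(z,m_d(z))$ is deterministic and $N$-independent. Any probabilistic bound we derive on $P(z,m_d(z))$ via the random-graph construction therefore upgrades automatically to a deterministic one. The plan is to combine Proposition \ref{p:DSE} at a single convenient $z_0$ with the rigidity from Theorem \ref{t:rigidity} to obtain a pointwise bound, then use the polynomial structure of $P-P_\infty$ to extend uniformly in $z$.

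First I would fix $z_0=E_0+\ri$ with $E_0\in(-2,2)$ and bound $P(z_0,m(z_0))$. From Proposition \ref{p:DSE}, $\bE[|P(z_0,m(z_0))|^{2q}]\prec\bE[\Phi_q(z_0)]$. At this $z_0$ we have $\Lambda_o(z_0)\lesssim 1/\sqrt d$ in the regime $d\leq N^{1/3-\fc}$, so the dominant piece of $\Phi_q$ is $(d^{3/2}\Lambda_o(z_0)/N)|P|^{2q-1}\lesssim (d/N)|P|^{2q-1}$. Absorbing $|P|^{2q-1}$ against $|P|^{2q}$ via Young's inequality, and treating the other families of terms in $\Phi_q$ (those of the form $(\Im[m]/(N\eta))^s|P|^{2q-s}$ and $(\Im[m]|\del_2 P|/(N\eta))^s|P|^{2q-s-1}$, all of which are $\OO(1/N)\cdot |P|^{2q-s}$ at $\eta\asymp 1$) in the same way, one closes the inequality at scale $d/N$ and obtains $\bE[|P(z_0,m(z_0))|^{2q}]^{1/(2q)}\prec d/N$ for each fixed $q$. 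Hence $|P(z_0,m(z_0))|\prec d/N$.

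Second, Theorem \ref{t:rigidity} gives $|m(z_0)-m_d(z_0)|\prec d/N$, and since $P(z_0,\cdot)$ is a bounded-degree polynomial with $\OO(1)$ coefficients evaluated near $m(z_0),m_d(z_0)=\OO(1)$, its local Lipschitz constant is $\OO(1)$; hence $|P(z_0,m(z_0))-P(z_0,m_d(z_0))|\prec d/N$. Combining, $|P(z_0,m_d(z_0))|\prec d/N$, which, being a bound on a deterministic quantity, is actually a hard bound $|P(z_0,m_d(z_0))|=\OO(d/N)$.

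To promote this pointwise bound to uniformity in $z\in\bC_+$, I would observe that
\begin{equation*}
P(z,u)-P_\infty(z,u)=-\frac{u^2}{d-1}+Q(u)-\sum_{k\geq 2}\frac{(-2)^{k-1}(2k-3)!!}{k!}\frac{d}{(d-1)^k}u^{2k}
\end{equation*}
has no explicit $z$-dependence. Since $P_\infty(z,m_d(z))=0$ by \eqref{e:md-sce}, we have $P(z,m_d(z))=R(m_d(z))$ for a bounded-degree polynomial $R$ in $u$ alone. Running the procedure above for $z_0$ ranging over a small open set in $\mathbf D$ with $\eta_0\asymp 1$, the values $m_d(z_0)$ cover an open subset of $\bC$; the bound $|R(m_d(z_0))|=\OO(d/N)$ on this set implies, via Cauchy's formula on a disk inscribed in this image, that every coefficient of $R$ is $\OO(d/N)$. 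Since $m_d$ is uniformly bounded on $\bC_+$ (as the Stieltjes transform of a compactly supported probability measure), $|P(z,m_d(z))|=|R(m_d(z))|\lesssim d/N$ uniformly. The main obstacle is the moment iteration of Step~1: one must carefully bookkeep all three families of terms in $\Phi_q$ to close at the correct $d/N$ scale, and in particular control $|\del_2 P(z_0,m(z_0))|$, which requires combining the bounds on the coefficients $a_2,\ldots,a_n=\OO(1/\sqrt d)$ with $m(z_0)=\OO(1)$ at the chosen bulk parameter.
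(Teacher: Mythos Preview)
Your argument has a genuine circularity. In Step~2 you invoke Theorem~\ref{t:rigidity} to obtain $|m(z_0)-m_d(z_0)|\prec d/N$, but in the paper's logical structure Theorem~\ref{t:rigidity} is proved in Section~\ref{s:proofrigidity} \emph{using} Corollary~\ref{c:mdproperty}, which in turn is derived from Proposition~\ref{p:idfP} --- the very statement you are trying to establish. The only rigidity available \emph{before} Proposition~\ref{p:idfP} is the a~priori local law Theorem~\ref{thm:rigidity}, and at your bulk point with $\eta=1$ that yields only $|m(z_0)-m_d(z_0)|\prec \Lambda_d(z_0)\asymp d^{-1/4}$. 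Feeding this into your Lipschitz step gives $|P(z_0,m_d(z_0))|\prec d^{-1/4}$, nowhere near $d/N$.

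The circularity is not an artifact of citation: it reflects a real obstruction. From $|P(z_0,m(z_0))|\prec d/N$ you learn that $m(z_0)$ lies within $O(d/N)$ of \emph{some} root of $P(z_0,\cdot)$ in $\bC_+$. But since the coefficients of $Q$ are only known to be $O(1/\sqrt d)$, that root is a~priori only within $O(1/\sqrt d)$ of $m_d(z_0)$. Writing $P(z_0,m)=P(z_0,m_d)+(m-m_d)\,\del_2 P(z_0,m_d)+O(|m-m_d|^2)$ and using $\del_2 P(z_0,m_d)\asymp 1$ gives a coupled system in the two unknowns $|m-m_d|$ and $|P(z_0,m_d)|$ which does not close without further input. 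No amount of bootstrapping between these two quantities improves beyond the a~priori scale.

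The paper's proof is structurally different: rather than comparing $m$ to $m_d$, it re-runs the entire construction of $P$ from Section~\ref{sec:P-construct} with the random-graph Green's function replaced by the tree Green's function $\wh G^{\cT}_{ij}=m_d(-\msc/\sqrt{d-1})^{\dist_\cT(i,j)}$ and $m$ replaced by $m_d$. The key observation (Proposition~\ref{c:restrictR3}) is that the tree Green's function obeys the \emph{same} resolvent expansion under simple switchings, so the combinatorial machinery of Propositions~\ref{c:one-off}--\ref{p:rewrite} goes through verbatim with the simpler error $\wh\Phi_q=(d/N)|P(z,m_d)|^{2q-1}$. This yields $|P(z,m_d)|^{2q}=\OO_\prec(\wh\Phi_q)$ directly, hence $P(z,m_d)=\OO(d/N)$, without ever needing $|m-m_d|$ to be small.

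A minor side remark: your $R=P-P_\infty$ is a convergent power series, not a bounded-degree polynomial ($P_\infty$ from \eqref{e:md-sce} is an infinite series). Your Cauchy-estimate step would need an additional truncation argument, but this is easily repaired and is not the main issue.
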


The following corollary follows from Proposition \ref{p:idfP}. Their proofs are essentially the same as \cite[Corollary 6.2, Corollary 6.4]{bauerschmidt2020edge}, so we omit them. 
\begin{corollary}\label{c:Pproperty}
Let $ P $ be the polynomial constructed in Proposition \ref{p:DSE}. Then
$P(z,w)-P_{\infty}(z,w)  = Q(w)-Q_\infty(w)$
is a power series in $w$, which converges on the whole complex plane. Each of its coefficients is of order $\OO(d/N)$.
\end{corollary}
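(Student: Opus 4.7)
The plan is to reduce the proposition to a bound at a single convenient spectral parameter and then extend it by interpolation. First observe that from the explicit forms \eqref{e:P_split} and \eqref{e:md-sce}, the difference $R(w) \deq P(z,w) - P_\infty(z,w)$ has no $z$-dependence (the constant and linear terms cancel) and is a polynomial in $w$ of degree bounded by some $n = n(\fc)$, since the expansion constructing $P$ in Section~\ref{sec:P-construct} terminates once $\Lambda_o^{\fo+\fh} \leq 1/N$ and $\Lambda_o \gtrsim 1/\sqrt d \geq N^{-\fc/2}$. Because $P_\infty(z, m_d(z)) = 0$ by construction, the proposition is equivalent to $R(m_d(z)) = \OO(d/N)$.

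The first step is to upgrade Proposition~\ref{p:DSE} into a pointwise stochastic bound on $|P(z, m(z))|$. Every summand of $\Phi_q$ has the structure $(\text{small parameter})^s \cdot |P|^{2q-s}$ or $(\text{small parameter}) \cdot |P|^{2q-1}$, so applying Young's inequality $|P|^{2q-s}\cdot X^s \leq \eps |P|^{2q} + C_\eps X^{2q}$ to each cross term and absorbing $\bE|P|^{2q}$ into the left-hand side yields, after taking a $2q$-th root and applying Markov,
\begin{align*}
|P(z, m(z))| \prec \frac{d^{3/2}\Lambda_o(z)}{N} + \frac{\Im[m(z)]}{N\eta} + \pBB{\pBB{\frac{1}{N\eta}+\frac{d^{3/2}\Lambda_o}{N}}\,\frac{\Im[m]\,|\del_2 P(z,m)|}{N\eta}}^{1/2}.
\end{align*}
Here $|\del_2 P(z, m)|$ is uniformly $\OO(1)$ because the coefficients of $Q$ are $\OO(1/\sqrt d)$ and $|m(z)|$ is bounded with overwhelming probability by Lemma~\ref{l:basicestimates}.

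Next I specialize to $z_0 = E_0 + \mathrm{i}$ with $E_0$ in the bulk and bounded away from $\pm 2$. There $\Lambda_o(z_0) \asymp 1/\sqrt d$ and $\Im[m(z_0)] \asymp 1$, so all three terms above collapse to $\OO_\prec(d/N)$, giving $|P(z_0, m(z_0))| \prec d/N$. Theorem~\ref{t:rigidity} at this scale delivers $|m(z_0) - m_d(z_0)| \prec d/N$, and a first-order Taylor expansion in the second argument of $P$ (with $|\del_2 P(z_0, \cdot)| = \OO(1)$ on a neighborhood of $m_d(z_0)$) produces $|P(z_0, m_d(z_0))| \prec d/N$. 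Since $P(z_0, m_d(z_0)) = R(m_d(z_0))$ is a deterministic quantity, the stochastic bound collapses to the deterministic bound $|R(m_d(z_0))| = \OO(d/N)$.

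Finally, to globalize, pick $n+1$ reference points $z_1, \ldots, z_{n+1}$ in the bulk at $\eta = 1$ whose images $m_d(z_1), \ldots, m_d(z_{n+1})$ are distinct (easy since $\md$ is non-constant on any open set). Applying the previous step at each $z_k$ gives $|R(m_d(z_k))| = \OO(d/N)$. Since $R$ has degree at most $n = \OO(1)$, Lagrange interpolation with Vandermonde nodes bounds every coefficient of $R$ by $\OO(d/N)$, and since $m_d(z)$ lies in a bounded region of $\bC$, this propagates to $|R(m_d(z))| = \OO(d/N)$ uniformly in $z \in \bC_+$.

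The main obstacle is Step~1: correctly executing the Young-inequality absorption given the rather intricate structure of $\Phi_q$ in \eqref{e:defPhir}, and in particular tracking the extra prefactor $1/(N\eta) + d^{3/2}\Lambda_o/N$ attached to the $\gamma^s |P|^{2q-s-1}$ terms, which is precisely what produces the $d/N$ rate rather than some weaker $\sqrt{d/N}$-type bound. The remaining steps are essentially bookkeeping and standard complex-analytic interpolation.
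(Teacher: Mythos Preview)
There is a genuine error in your final step. You assert that $R(w) = P(z,w) - P_\infty(z,w)$ ``is a polynomial in $w$ of degree bounded by some $n = n(\fc)$'', and you then invoke Lagrange interpolation at $n+1$ nodes. But $P_\infty$ in \eqref{e:md-sce} is an \emph{infinite} power series in $w$, not a polynomial; only $P$ has finite degree. Hence $R$ is a genuine power series, and its values at $n+1$ points do not determine its coefficients. The fix is to split $R = R_{\leq n} + R_{>n}$, where $R_{>n}$ consists solely of the tail coefficients of $-P_\infty$. Those coefficients are of size $\asymp 4^k d/(d-1)^k$, and one must check separately that they are $\OO(d/N)$ once $2k>n$; this uses that $n$ is chosen large enough relative to $1/\fc$ so that $(d-1)^{n/2}\gtrsim N$. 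Only then does the interpolation argument apply to the polynomial $R_{\leq n}$.

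Separately, your Steps 1--5 take a detour that the paper avoids. You extract $|P(z_0,m(z_0))|\prec d/N$ from the moment estimate \eqref{e:defP} and then transfer to $m_d$ via Theorem~\ref{t:rigidity}. This works in principle, but (i) it introduces an unavoidable $N^\varepsilon$ loss coming from the $\prec$ relation, so you only get $|P(z_0,m_d(z_0))| = \OO(N^\varepsilon d/N)$, not the sharp $\OO(d/N)$; and (ii) it is redundant, because Proposition~\ref{p:idfP} already delivers $P(z,m_d(z)) = \OO(d/N)$ \emph{exactly and uniformly} in $z\in\bC_+$, via the tree Green's function comparison of Section~\ref{sec:P-identification}. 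The paper's route is therefore to start directly from Proposition~\ref{p:idfP}, which gives $R(m_d(z)) = \OO(d/N)$ for all $z$, and then extract coefficients (after splitting off the tail as above). Your approach trades the tree argument for the already-established rigidity, which is a legitimate alternative viewpoint, but it costs precision and does not bypass the need for Proposition~\ref{p:idfP}, since the corollary is stated with a sharp $\OO(d/N)$.
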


\begin{corollary}\label{c:mdproperty}
The polynomial $ P $ constructed in Proposition \ref{p:DSE} satisfies
\begin{align}\label{e:derPa}
  | \del_2P (z,\md)|\asymp \sqrt{|\kappa|+\eta}+ \OO(d/N),
  \quad
   \del_2^2P (z,\md)=2+ \OO ( d^{-1/2}),
\end{align}
where $z=2+\kappa+\ri \eta$ or $z=-2-\kappa+\ri\eta$ for $\eta\leq \fK, -3/2\leq \kappa\leq \fK$, where the constant $\fK$ is from \eqref{e:D}. 
\end{corollary}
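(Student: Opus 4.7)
\textbf{Proof plan for Corollary \ref{c:mdproperty}.} The plan is to reduce both estimates to known properties of the limiting polynomial $P_\infty$ and of the Stieltjes transform $\md$, and to handle the error term via Corollary \ref{c:Pproperty}, which states that $P - P_\infty = Q - Q_\infty$ has coefficients of size $\OO(d/N)$.

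First I would handle the second-derivative estimate, which is the easier of the two. From Proposition \ref{p:DSE} we know $P(z,w) = 1 + zw + w^2 + Q(w)$ with $Q$ a polynomial of bounded degree (independent of $N$) whose coefficients are $\OO(d^{-1/2})$. Direct differentiation gives $\del_2^2 P(z,w) = 2 + Q''(w)$. Since $\md(z)$ is uniformly bounded in the region under consideration (as follows from \eqref{e:md} and $|\msc| \lesssim 1$ on the same domain), $Q''(\md) = \OO(d^{-1/2})$, yielding $\del_2^2 P(z,\md) = 2 + \OO(d^{-1/2})$.

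For the first-derivative estimate, I would first use Corollary \ref{c:Pproperty} to write $\del_2 P(z,\md) = \del_2 P_\infty(z,\md) + \OO(d/N)$, since the difference $Q - Q_\infty$ is a convergent power series with $\OO(d/N)$ coefficients and $\md$ is bounded. To compute $\del_2 P_\infty(z,\md)$, I would differentiate the defining identity $P_\infty(z,\md(z)) = 0$ with respect to $z$: because only the term $zw$ depends on $z$, this yields $\md + \md'(z) \del_2 P_\infty(z,\md) = 0$, so $\del_2 P_\infty(z,\md) = -\md/\md'(z)$. The claimed estimate $|\del_2 P(z,\md)| \asymp \sqrt{|\kappa|+\eta} + \OO(d/N)$ then reduces to showing $|\md(z)/\md'(z)| \asymp \sqrt{|\kappa|+\eta}$ near the edges $\pm 2$.

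To establish this last estimate, I would use the explicit formula \eqref{e:md} and the well-known behavior of $\msc$. Near $z = 2+\kappa+\i\eta$ we have $\msc(z) = -1 + c\sqrt{\kappa+\i\eta} + \OO(|\kappa|+\eta)$ and $\msc'(z) \asymp 1/\sqrt{|\kappa|+\eta}$, which through \eqref{e:md} gives: $\md(z)$ stays bounded and bounded away from zero (since $z + \frac{d}{d-1}\msc \to \frac{d-2}{d-1} \neq 0$ at the edge), while $\md'(z) = (1 + \frac{d}{d-1}\msc'(z))\md(z)^2 \asymp 1/\sqrt{|\kappa|+\eta}$. Combining these gives $|\md/\md'| \asymp \sqrt{|\kappa|+\eta}$. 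The case $z = -2-\kappa+\i\eta$ follows by the symmetry $\md(-z) = -\md(z)$, which holds because $P_\infty(z,w)$ is invariant under $(z,w)\mapsto(-z,-w)$ (only even powers of $w$ appear apart from the $zw$ term).

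The main technical obstacle is producing the matching lower bound on $|\md'(z)|$ uniformly down to $\eta = 0$ and out to $|\kappa| = 3/2$; this requires ensuring that the factor $z + \frac{d}{d-1}\msc$ in the denominator of \eqref{e:md} does not get anomalously small away from a small neighborhood of the edges, and that the contribution $\frac{d}{d-1}\msc'$ genuinely dominates the $1$ in the numerator of $\md'$. Both issues are handled by standard square-root estimates on $\msc$ together with the $d \geq N^{\fc}$ lower bound, but the bookkeeping across the full parameter range $-3/2 \leq \kappa \leq \fK$, $0 < \eta \leq \fK$ is where the care is required.
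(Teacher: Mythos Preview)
Your proposal is correct and follows the natural approach. The paper itself omits the proof of this corollary, stating only that it is ``essentially the same as \cite[Corollary 6.2, Corollary 6.4]{bauerschmidt2020edge}'', so there is no detailed argument in the present paper to compare against. Your strategy --- reducing to $P_\infty$ via Corollary~\ref{c:Pproperty}, then implicitly differentiating the identity $P_\infty(z,\md(z))=0$ to obtain $\del_2 P_\infty(z,\md) = -\md/\md'$, and finally reading off the square-root behavior from the explicit formula \eqref{e:md} and the known asymptotics of $\msc$ --- is exactly the standard route and is presumably what the cited argument does as well.

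One small comment on the ``technical obstacle'' you flag: the non-vanishing and the two-sided bound on $1 + \tfrac{d}{d-1}\msc'(z)$ over the whole range $-3/2 \le \kappa \le \fK$, $0 < \eta \le \fK$ is indeed the only place requiring care. It is helpful to split into the near-edge regime $|\kappa|+\eta \ll 1$, where the $\tfrac{d}{d-1}\msc'$ term dominates and gives the $(\,|\kappa|+\eta\,)^{-1/2}$ blow-up, and the far regime $|\kappa|+\eta \gtrsim 1$, where both $\md'$ and $\sqrt{|\kappa|+\eta}$ are of order one. In the far regime you can avoid tracking $1+\tfrac{d}{d-1}\msc'$ altogether by noting that $\md' = \int \rho_d(x)|x-z|^{-2}\,\rd x > 0$ is bounded above and below on compact subsets of $\ol{\bC_+}$ at positive distance from the edges $\pm 2$, which is immediate from the integral representation. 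This cleanly sidesteps any concern about cancellation in $1+\tfrac{d}{d-1}\msc'$.
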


\subsection{Proof of Proposition~\ref{p:idfP}}
We recall that the Green's function of the infinite $d$-regular tree is given by
\begin{equation}\label{e:treeG}
G^{\text{tree}}_{ij}(z) = m_d(z) \pbb{- \frac{\msc(z)}{\sqrt{d-1}}}^{\dist(i,j)}.
\end{equation}
In particular if $\dist(i,j)=1$, we have $G^{\text{tree}}_{ij}=-m_d m_{sc}/\sqrt{d-1}$. We will also need the following identity between $m_{sc}$ and $m_d$,
which is a reorganization of \eqref{e:md}
\begin{align}\label{e:multiHG}
1=-z m_d+\sum_{j = 1}^N\frac{A_{ij}}{\sqrt{d-1}} \left(- \frac{\md \msc}{\sqrt{d - 1}}\right).
\end{align}

To prove Proposition \ref{p:idfP}, we will follow the same procedure as in Section \ref{sec:P-construct}. We will start by computing the quantity $(1+zm_d)P(z,m_d)^{q-1}\bar P(z,m_d)^q$.
\begin{proposition}\label{p:minfty}
The polynomial $ P (z,u) = 1 + zu + u^2+Q(u)$ constructed in Proposition~\ref{p:DSE} satisfies
\begin{align}\label{e:selfconsist}
(1+zm_d)P^{q-1}\bar P^q=-(m_d^2+Q(m_d))P^{q-1}\bar P^q +O_\prec(\wh\Phi_q),
\quad \wh\Phi_q=\frac{d}{N}|P|^{2q-1}.
\end{align} 
\end{proposition}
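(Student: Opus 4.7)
The plan is to mirror the recursive construction of Section \ref{sec:P-construct} at the deterministic tree level, substituting the random Green's function $G$ with the tree Green's function $G^{\text{tree}}$ from \eqref{e:treeG} and the random $m$ with the deterministic Kesten--McKay Stieltjes transform $m_d$. The central observation is that \eqref{e:multiHG} plays the role of \eqref{e:GHexp} but holds exactly (not just modulo $1/N$), and that $G^{\text{tree}}_{ii} = m_d$, so the diagonal substitution ``$G_{ii} \to m$'' used in Proposition \ref{c:case2exp} becomes an exact algebraic replacement at the tree level.

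I would begin by using \eqref{e:multiHG} at any vertex $i$ to write $1 + zm_d = (1/\sqrt{d-1})\sum_j A_{ij} G^{\text{tree}}_{ij}$, then averaging over $i$ and multiplying by $P^{q-1}\bar P^q$ to obtain
\begin{equation*}
(1 + zm_d)\, P^{q-1}\bar P^q = \frac{d}{(d-1)^{1/2}} \frac{1}{N^{\theta(\cT_0)}d^{|E_0|}} \sum_{ij}^* A_{\cT_0}\, G^{\text{tree}}_{ij}\, P^{q-1}\bar P^q,
\end{equation*}
with $\cT_0 = (\{i,j\}, \{ij\})$. This is the exact tree-level analog of \eqref{e:EPP} with $G_{ij}$ replaced by $G^{\text{tree}}_{ij}$, and with no residual $O(\bE[\Phi_q])$ term since $m_d$ is deterministic.

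Next, I would iteratively apply the switching identities (Propositions \ref{c:intbp}, \ref{c:intbp2}) in the same order as in Section \ref{sec:P-construct}. Each intermediate term has the form $\frac{1}{d^{\fh/2}}\frac{1}{N^{\theta(\cT)}d^{|E|}}\sum_\bfi^* A_\cT\, R^{\text{tree}}_\bfi\, P^{q-1}\bar P^q$, where $R^{\text{tree}}_\bfi$ is a monomial in $G^{\text{tree}}$-entries and $m_d$ depending on $\bfi$ only through distances in the forest $\cT$. Applying \eqref{e:sumA2} replaces this sum by $c_\cT\, R^{\text{tree}}_{\bfi_0}\, P^{q-1}\bar P^q + O_\prec(\wh\Phi_q)$, where $\bfi_0$ is any representative embedding. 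The switching errors in Propositions \ref{c:intbp}, \ref{c:intbp2} are themselves $O(d|P|^{2q-1}/N)$, which fit into $\wh\Phi_q$. The diagonal substitution of Proposition \ref{c:case2exp} is exact because $G^{\text{tree}}_{ii} = m_d$, and cross-component off-diagonal terms (handled by rigidity and Ward identities in Proposition \ref{c:one-off}) should be shown to vanish at the tree level by mirroring the algebraic chain in Section \ref{s:pre2}, but now with $m_d$ and $G^{\text{tree}}$ satisfying \eqref{e:multiHG} in place of \eqref{e:GHexp}.

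Because the construction branches and combines coefficients in the same way as in Section \ref{sec:P-construct}, the leading-order contributions sum to precisely $-(m_d^2 + Q(m_d))\, P^{q-1}\bar P^q$, while all errors accumulate to $O_\prec(\wh\Phi_q)$. The main obstacle will be the bookkeeping required to confirm term-by-term correspondence between the random-graph construction and its tree-level image, and in particular the cross-component cancellations: at the tree level one must show these vanish exactly up to $O(d/N)$ via the resolvent identity \eqref{e:multiHG} and the explicit formula for $G^{\text{tree}}$, rather than through the probabilistic Ward/rigidity estimates used in the random case. Once this parallel is established, the construction's coefficients for $Q$ match the tree-level coefficients by the very design of the recursion, yielding the claimed identity.
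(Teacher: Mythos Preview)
Your approach is essentially the same as the paper's: mirror the recursive construction of Section~\ref{sec:P-construct} with the tree Green's function in place of $G$ and $m_d$ in place of $m$. However, you misidentify where the work lies, and this leads to a real gap in your plan.

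You write that cross-component off-diagonal terms ``should be shown to vanish at the tree level by mirroring the algebraic chain in Section~\ref{s:pre2}.'' This is not how the paper proceeds, and attempting it would be both unnecessary and difficult. The paper's key move is to make the tree Green's function \emph{forest-dependent}: one embeds the forest $\cT$ into $\theta(\cT)$ \emph{disjoint} copies of the infinite $d$-regular tree, so that $\wh G^\cT_{ij}=m_d(-\msc/\sqrt{d-1})^{\dist_\cT(i,j)}$ with $\dist_\cT(i,j)=\infty$ whenever $i,j$ lie in different components. Then $\wh G^\cT_{ij}=0$ for cross-component pairs \emph{by definition}, and the tree analogue of Proposition~\ref{c:one-off} is the trivial identity $0=0$ (see \eqref{e:hatAGij}). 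There is no algebraic cancellation to verify; you must simply choose the right object.

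The part you do not flag is the one that actually requires an argument: showing that the discrete-derivative expansions in Propositions~\ref{c:case1exp} and~\ref{c:case2exp} carry over verbatim to the tree side with the \emph{same} coefficients. The paper handles this via Proposition~\ref{c:restrictR3}, which shows that the forest-dependent tree Green's functions obey the same resolvent expansion under switching as the random Green's function $G$, because the switching $\cT\to\cT^+$ can be realized as a genuine switching of edges in the ambient collection of infinite trees (Figure~\ref{fig:treeswitch}). This, together with $c_\cT=c_{\cT^+}$ and the fact that $D_\xi(P^{q-1}\bar P^q)(m_d,\bar m_d)=0$ (since $m_d$ is deterministic), is what forces the branching to produce exactly the same polynomial $Q$ on both sides. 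Your proposal invokes only \eqref{e:multiHG}, which is the starting identity, not the resolvent expansion that drives the induction; without an analogue of Proposition~\ref{c:restrictR3} you cannot justify that the Taylor coefficients of $D_{\cT^+}R_\bfi$ match term-by-term between the random and tree pictures.
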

We show that all the estimates in Section \ref{sec:P-construct} still hold if we do the following substitution
\begin{align}\label{e:replace}
\frac{1}{N^{\theta(\bfi)}d^{|E|}}\sum^*_{\bfi}\bE[A_\cT R_\bfi U_\bfi]
\Rightarrow
\frac{1}{N^{\theta(\bfi)}d^{|E|}}\sum^*_{\bfi}A_\cT \wh R^\cT_\bfi \wh U^\cT_\bfi,
\end{align}
where $\cT=(\bfi, E)$ is any forest (recall from \eqref{d:forest})
\begin{align}\label{e:replace2}
\wh G^\cT_{ij}= m_d \pbb{- \frac{\msc}{\sqrt{d-1}}}^{\dist_\cT(i,j)}, \quad 
\wh R^\cT_\bfi=R_\bfi(\{\wh G^\cT_{ij}\}_{ij\in\bfi}, m_d),
\quad \wh U^\cT_\bfi=U_\bfi(m_d, \bar m_d).
\end{align}

We remark that in \eqref{e:replace} and \eqref{e:replace2}, we have replaced the Green's function entry $G_{ij}$ by the Green's function $\wh G^\cT_{ij}$ of the $d$-regular tree \eqref{e:treeG}, and the Stieltjes transform $m$ to $m_d$. If $i,j$ are in different connected component of $\cT$, then $\dist_{\cT}(i,j)=\infty$, and $\wh G^\cT_{ij}=0$. The forest $\cT$ consists of $\theta(\cT)$ connected components, each component is a tree. We can embed $\cT$ into $\theta(\cT)$ copies of infinite $d$-regular trees, i.e. each connected component is embedded into an infinite $d$-regular tree. 
Then $\wh G^\cT_{ij}$ can be viewed as the Green's function of the collection of  infinite $d$-regular trees.  

The new expression on the righthand side of \eqref{e:replace} depends mainly on the forest $\cT$. In fact $\wh R^\cT_\bfi$ and $\wh U^\cT_\bfi$ depends only on the forest $\cT$ and is independent of the choice of the index set $\bfi$. We can rewrite \eqref{e:replace} as
\begin{align}
\frac{1}{N^{\theta(\bfi)}d^{|E|}}\sum^*_{\bfi}A_\cT \wh R^\cT_\bfi \wh U^\cT_\bfi
=\left(\frac{1}{N^{\theta(\bfi)}d^{|E|}}\sum^*_{\bfi}A_\cT\right) \wh R^\cT_\bfi \wh U^\cT_\bfi=c_\cT \wh R^\cT_\bfi \wh U^\cT_\bfi +\OO_{\prec}(\wh\Phi_q).
\end{align}
where the constant $c_\cT$ is from \eqref{e:sumA2}.

\begin{example}
Take $\cT=\{\bfi=\{i,j,k,\ell\}, E=\{(i,j), (j,k)\}\}$ to be a forest, which is a path of length two (See Figure \ref{fig:construct}) and a singleton. Then 
\begin{align*}
&\wh G^\cT_{xx}=m_d, \quad x\in \{i,j,k,\ell\},\quad \wh G^\cT_{x\ell}=0, \quad x\in \{i,j,k\},\\
&\wh G^\cT_{ij}=\wh G^\cT_{jk}= - \frac{\msc m_d  }{\sqrt{d-1}},\quad \wh G^\cT_{ik}=  \frac{\msc^2 m_d  }{d-1}.
\end{align*}
For example, if $R_\bfi=G_{ii}G_{ij}G_{ik}G_{\ell\ell}$, then $\wh R_\bfi=m_d^4(-\msc/\sqrt{d-1})^3$. The value of $\wh R_\bfi$ depends only on the forest $\cT$, but does not depend on the choice of the index set $\bfi\subset \qq{N}$.  If $R_\bfi=G_{ii}G_{ij}G_{ik}G_{j\ell}$, then $\wh R_\bfi=0$, because $j,\ell$ are in different connected components of $\cT$ and $\wh G^{\cT}_{j\ell}=0$. If $U_\bfi= (1+D_{ij}^{k\ell})(P^{q-1}\bar P^q)$, then $\wh U^{\cT}_\bfi= (1+D_{ij}^{k\ell})(P^{q-1}(m_d)\bar P^q(m_d))=P^{q-1}(m_d)\bar P^q(m_d)$. 
In the last equality, we used  that $D_{ij}^{k\ell}(P^{q-1}(m_d)\bar P^q(m_d))=0$.
Again, $\wh U^{\cT}_\bfi$ does not depend on the choice of the index set $\bfi\subset \qq{N}$.
\end{example}

We can view $\cT=(\bfi, E)$ as a subgraph of $\theta(\cT)$ copies of infinite $d$-regular trees. Then we can also do a simple switching as in Section \ref{sec:switchings}: for each edge $e=\{i_e,i'_e\}\in E$, we introduce a new infinite $d$-regular tree with an edge $\{j_e,j_e'\}$. At the end we will have $\theta(\cT)+|E|$ copies of infinite $d$-regular trees. We denote it as $\cD$ with adjacency matrix $D$. We can view this as an embedding of $\cT$ into $\theta(\cT)+|E|$ copies of infinite $d$-regular trees.
Since $\{i_e,i_e'\}$ and $\{j_e,j_e'\}$ belong to different infinite $d$-regular trees, after switching,  we still have two $d$-regular trees. If we switch all the pairs $\{i_e,i_e'\}$ and $\{j_e,j_e'\}$ for $e\in E$, we get the graph $\cT^+$ as constructed in Proposition \ref{c:intbp}. And it also gives us an embedding of $\cT^+$ into $\theta(\cT)+|E|$ copies of infinite $d$-regular trees. We denote this new graph as $\widetilde \cD$ with adjacency matrix $\widetilde D$, then $\widetilde D=D-\sum_{e\in E}\xi_{i_ei_e'}^{j_ej_e'}$.
See Figure \ref{fig:treeswitch}. 


\begin{figure}[t]
\begin{center}
\includegraphics[scale=0.38]{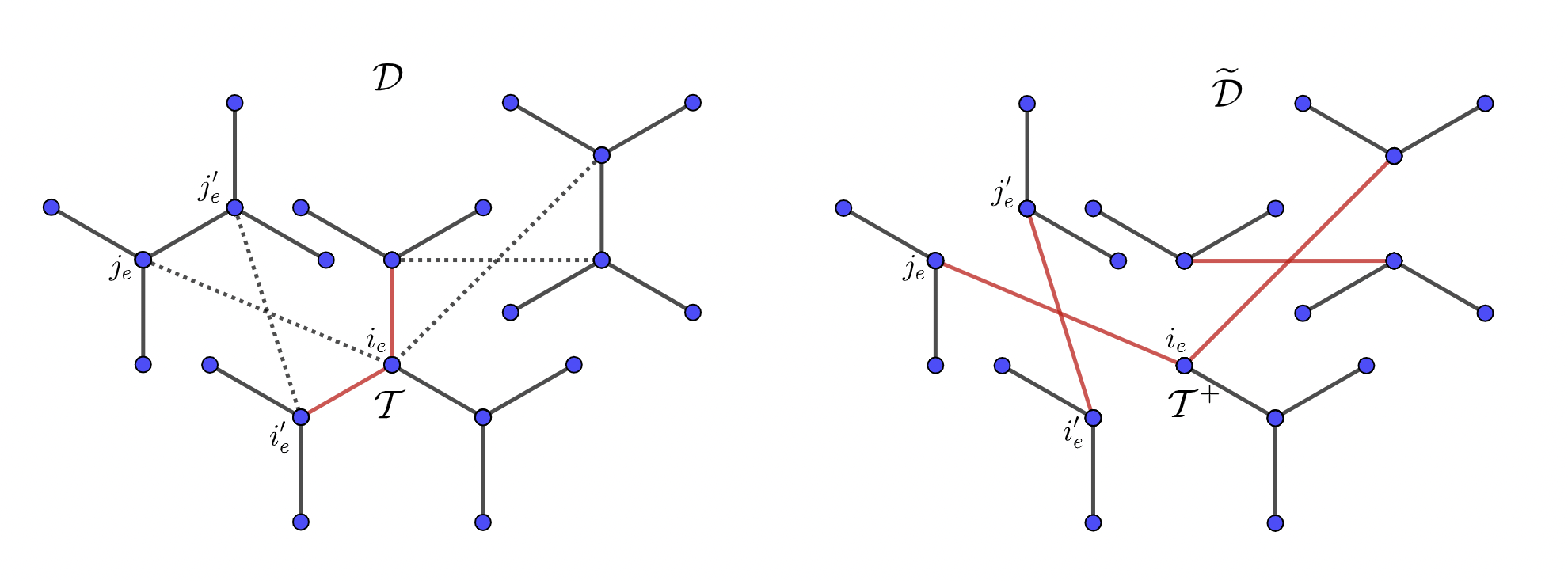}
\end{center}
\caption{We can view $\cT=(\bfi, E)$ as a subgraph of $\theta(\cT)+|E|$ copies of infinite $d$-regular trees $\cD$ on the left. After simple switching, we get an embedding of $\cT^+$ into $\theta(\cT)+|E|$ copies of infinite $d$-regular trees $\widetilde \cD$ on the right.
\label{fig:treeswitch}}
\end{figure}

Similarly to \eqref{e:EPP}, thanks to the relation \eqref{e:multiHG}, we have
\begin{align}\begin{split}\label{e:hatEPP}
(1+zm_d)P^{q-1}\bar P^q
&=  \frac{1}{N (d - 1)^{1/2}} \sum_{ij}^* A_{ij}\left(- \frac{\md \msc}{\sqrt{d - 1}}\right)P^{q-1}\bar P^q \\
&=  \frac{d}{(d-1)^{1/2}}\frac{1}{N^{\theta(\cT_0)} d^{|E_0|}} \sum_{ij}^* A_{\cT_0}\wh R^{\cT_0}_{\bfi_0}P^{q-1}\bar P^q,
\end{split}\end{align}
where $\cT_0=(\bfi_0={ij},E_0=\{i,j\})$ and $\wh R^{\cT_0}_{\bfi_0}=\wh G^{\cT_0}_{ij}=-m_d \msc/\sqrt{d-1}$.

\begin{proposition}\label{p:replaceR}
Propositions \ref{c:one-off}, \ref{c:case1exp}, \ref{c:case2exp}, and \ref{p:rewrite} hold if we replace $\Phi_q$ to $\wh\Phi_q$ and $R, U$ to $\wh R, \wh U$ as in \eqref{e:replace}. 
\end{proposition}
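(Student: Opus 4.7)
The strategy is to retrace the arguments of Sections \ref{s:prove}--\ref{s:rewrite} with $(m, G_{ij}, \Phi_q)$ replaced everywhere by $(m_d, \wh G^\cT_{ij}, \wh\Phi_q)$. Three structural simplifications on the tree side make the proofs essentially the same but cleaner. First, since $\wh R^\cT_\bfi$ and $\wh U^\cT_\bfi$ depend only on the combinatorial shape of $\cT$ and not on the particular labelling $\bfi$, Proposition \ref{p:sumA} together with Remark \ref{r:changec} supplies the deterministic discrete integration by parts
\begin{equation*}
\frac{1}{N^{\theta(\cT)}d^{|E|}}\sum^*_{\bfi} A_\cT \wh R^\cT_\bfi \wh U^\cT_\bfi
= c_\cT\, \wh R^\cT_\bfi \wh U^\cT_\bfi
+ \OO_\prec\!\left(\tfrac{d}{N}\,|\wh R^\cT_\bfi \wh U^\cT_\bfi|\right),
\end{equation*}
which replaces both Propositions \ref{c:intbp} and \ref{c:intbp2} at once, and from which the switching step $\cT \to \cT^+$ reduces to checking that $c_\cT = c_{\cT^+}$ (Remark \ref{r:changec}) and that $\wh R^{\cT^+}_{\bfi^+} \wh U^{\cT^+}_{\bfi^+} = \wh R^\cT_\bfi \wh U^\cT_\bfi$ up to an $\OO(d/N)$ error. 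Second, $\wh G^\cT_{ij} = 0$ whenever $i,j$ lie in different connected components of $\cT$, so the tree analog of Proposition \ref{c:one-off} is immediate: its hypothesis forces $\wh R^\cT_\bfi = 0$ exactly. Third, adopting the convention illustrated by the example preceding Proposition \ref{p:replaceR} (substitution $m \to m_d$ is performed \emph{before} applying the switching operators), we have $\wh U^\cT_\bfi = P^{q-1}(z,m_d)\bar P^q(z,m_d)$ when all $e_a = 1$ in \eqref{e:defU}, and $\wh U^\cT_\bfi = 0$ otherwise, because $\sqrt{d-1}\,D_{\xi_a}$ annihilates the $A$-independent quantity $P(z,m_d)$.

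With these three ingredients in hand, I would prove the tree analogs of Propositions \ref{c:case1exp} and \ref{c:case2exp} by mimicking the switching expansions of Sections \ref{s:expand1} and \ref{s:expand2}, using the deterministic integration by parts above in place of each invocation of its probabilistic counterpart and using \eqref{e:multiHG} in place of the resolvent identities \eqref{e:GHexp}, \eqref{e:GHiterm}. The Taylor expansion of $D_{\cT^+} \wh R^\cT_\bfi$ is computed explicitly from \eqref{e:treeG} using the tree distance, and the same combinatorial classification of leading and higher order terms survives verbatim; the vanishing of $\wh U^\cT_\bfi$ for $e_a = 0$ kills precisely those branches of the random argument that produce $\sqrt{d-1}D$-type updates in Proposition \ref{c:case2exp}, so the tree hierarchy $\cT_0 \to \cT_1 \to \cdots$ is strictly simpler. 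The analog of Proposition \ref{p:rewrite} then follows by the same induction as in Section \ref{s:rewrite}, since \eqref{e:back1}--\eqref{e:back5} reduce on the tree side to algebraic identities between tree Green's function entries plus $\OO(d/N)$ errors from Proposition \ref{p:sumA}.

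The main obstacle is confirming that every error that was bounded by $\bE[\Phi_q]$ in the random setting collapses to $\wh\Phi_q = (d/N)|P|^{2q-1}$ on the tree side. This works because the only source of error in the deterministic setting is the relative $\OO(d/N)$ slack in Proposition \ref{p:sumA}; the off-diagonal tree entries $\wh G^\cT_{ij} = m_d(-\msc/\sqrt{d-1})^{\dist_\cT(i,j)}$ already carry a $(1/\sqrt d)^{\dist}$ factor, which is at least as favorable as the random bound $\Lambda_o$ of Lemma \ref{l:basicestimates}; and the Ward-identity cancellations used in Section \ref{s:pre2} are not needed, since whenever the original argument would invoke them the cross-component hypothesis already forces $\wh R^\cT_\bfi = 0$. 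Combining these three points with \eqref{e:multiHG} and following the original derivation line by line produces Proposition \ref{p:minfty} and hence \eqref{e:selfconsist} in the form required by the next step of the proof of Proposition \ref{p:idfP}.
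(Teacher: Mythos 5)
Your proposal tracks the paper's argument closely and correctly identifies the three simplifications on the tree side: the deterministic integration-by-parts that replaces both Propositions \ref{c:intbp} and \ref{c:intbp2} via \eqref{e:sumA2} and Remark \ref{r:changec}; the immediate vanishing of $\wh R^\cT_\bfi$ when $R_\bfi$ contains a cross-component off-diagonal entry, which trivializes the tree analogue of Proposition \ref{c:one-off}; and the fact that $\wh U^\cT_\bfi$ is either $P^{q-1}(z,m_d)\bar P^q(z,m_d)$ or zero, since $D_{\xi}$ annihilates anything depending only on $m_d$.

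There is, however, one genuine gap. You write that ``the Taylor expansion of $D_{\cT^+}\wh R^\cT_\bfi$ is computed explicitly from \eqref{e:treeG} using the tree distance, and the same combinatorial classification of leading and higher order terms survives verbatim.'' This is the step that requires the real technical content, and ``explicit computation from \eqref{e:treeG}'' does not supply it. The operator $D_{\cT^+}$ is defined by adding $\xi$ to an adjacency matrix, so applying it to the deterministic object $\wh R^\cT_\bfi$ is not even literally meaningful; what one must show is that the difference $\wh R^\cT_\bfi - \wh R^{\cT^+}_\bfi$ reproduces, term by term and with identical $d^{-n/2}$ prefactors, the resolvent expansion of $D_{\cT^+}R_\bfi$ in the random argument (with $G$ replaced by $\wh G^{\cT^+}$). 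Matching up explicit formulas of the type $m_d(-m_{\mathrm{sc}}/\sqrt{d-1})^{\dist}$ before and after switching will not by itself exhibit that structure. The paper handles this by proving Proposition \ref{c:restrictR3}: viewing $\cT$ and $\cT^+$ as subgraphs of the same finite collection of infinite $d$-regular trees whose adjacency matrices $D$ and $\widetilde D$ differ by $\xi$, one has $\wh G^\cT|_{\bfi^+} = (D-z)^{-1}|_{\bfi^+}$ and $\wh G^{\cT^+}|_{\bfi^+} = (\widetilde D - z)^{-1}|_{\bfi^+}$, so the standard resolvent identity gives exactly the same expansion in powers of $\xi$ as in the random case. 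Without stating and proving this lemma, your claim that the combinatorics ``survives verbatim'' is an assertion, not an argument — and it is precisely the point where the deterministic tree construction has to be shown compatible with the switching machinery built for the random ensemble.

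The remaining pieces of your sketch — using \eqref{e:multiHG} in place of the resolvent starting identity, the deterministic integration by parts, the induction for the tree analogue of Proposition \ref{p:rewrite} via $c_{\cT}=c_{\cT^+}$ and $\wh U^{\cT^+}_{\bfi^+}=\wh U^\cT_\bfi$ — are all correct and match the paper. Filling in the missing lemma above would complete the argument.
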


 Fix a forest $\cT=(\bfi, E)$. Let $\cT^+=(\bfi^+, E^+),\xi$ be as constructed in Propositions \ref{c:intbp} or \ref{c:intbp2}. 
 From the discussion above Figure \ref{fig:treeswitch}, we have two embeddings: the embedding of $\cT$  into $\theta(\cT^+)$ copies infinite $d$-regular trees $\cD$, and the embedding of $\cT^+$  into $\theta(\cT^+)$ copies infinite $d$-regular trees $\widetilde \cD$. The simple switchings from $\cT$ to $\cT^+$ also make $\cD$ into $\widetilde \cD$. We denote the adjacency matrices of $\cD$ and $\widetilde \cD$ as $D$ and $\widetilde D$ respectively.  The Green's function $\widehat G^\cT$ is defined on the vertex set $\bfi$ as $\widehat G^\cT|_{\bfi}=(D-z)^{-1}|_{\bfi}$. We can extend it to the vertex set $\bfi^+$ by $\widehat G^\cT|_{\bfi^+}:=(D-z)^{-1}|_{\bfi^+}$. And the Green's function $\widehat G^{\cT^+}$ is given by $\widehat G^{\cT^+}|_{\bfi^+}:=(\widetilde D-z)^{-1}|_{\bfi^+}$. The key to the proof of Proposition \ref{p:replaceR} is the following proposition, which states that $\widehat G^{\cT}|_{\bfi^+}$ and $\widehat G^{\cT^+}|_{\bfi^+}$ satisfy the same resolvent expansion as the Green's function $G$ of $d$-regular graphs.

\begin{proposition}\label{c:restrictR3}
Fix a forest $\cT=(\bfi, E)$. Let $\cT^+=(\bfi^+, E^+),\xi$ be as constructed in Propositions \ref{c:intbp} or \ref{c:intbp2}, then
\begin{align}\label{e:restrictR3}
\left.\wh{G}^{\cT^+}\right|_{\bfi^+}-\left.\wh{G}^{\cT}\right|_{\bfi^+}  =\left.\sum_{n\geq 1}\frac{1}{(d-1)^{n/2}}\wh G^{\cT^+}\left(-\xi \wh G^{\cT^+}\right)^n\right|_{\bfi^+}.
\end{align}

\end{proposition}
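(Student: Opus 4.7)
My plan is to recognize \eqref{e:restrictR3} as the standard resolvent expansion for a finite-rank perturbation of a self-adjoint operator. The first step is to identify the tree Green's functions $\wh G^\cT$ and $\wh G^{\cT^+}$ (given explicitly on $\bfi^+$ by \eqref{e:replace2}) with the operator resolvents
\begin{align*}
\wh G^\cT = (D/\sqrt{d-1} - z)^{-1}, \qquad \wh G^{\cT^+} = (\widetilde D/\sqrt{d-1} - z)^{-1},
\end{align*}
viewed as bounded operators on $\ell^2$ of the vertex sets of $\cD$ and $\widetilde \cD$. This identification is justified by the classical computation of the Green's function of the infinite $d$-regular tree, which produces \eqref{e:treeG} in each component; since $D$ and $\widetilde D$ are bounded self-adjoint operators and $\Im z > 0$, both inverses are well-defined.

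Next, since $\widetilde D = D - \xi$ with $\xi$ supported on the finite vertex set $\bfi^+$, the basic resolvent identity gives
\begin{align*}
\wh G^{\cT^+} - \wh G^\cT = \wh G^{\cT^+} \cdot \frac{\xi}{\sqrt{d-1}} \cdot \wh G^\cT.
\end{align*}
Iteratively substituting the rearranged identity $\wh G^\cT = \wh G^{\cT^+} - \wh G^{\cT^+}(\xi/\sqrt{d-1})\wh G^\cT$ into its own right-hand side produces the geometric series in \eqref{e:restrictR3}, once one keeps careful track of the signs that come from the convention $\widetilde D = D - \xi$. When restricted to $\bfi^+$, every sandwich sum in $\wh G^{\cT^+}(\xi \wh G^{\cT^+})^n$ collapses to a sum over intermediate indices in $\bfi^+$, because $\xi$ vanishes outside $\bfi^+ \times \bfi^+$; all Green's function entries that actually appear in the restricted product then depend only on the $\bfi^+$-block described by \eqref{e:replace2}.

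Convergence of the infinite series is not a serious obstacle. Since $\xi$ has rank bounded by $O(|E|)$, one can alternatively phrase the identity as a Sherman--Morrison--Woodbury-type finite-rank correction, reducing it to a linear-algebraic identity on $\ell^2(\bfi^+)$ with no convergence issue, and for $d$ large the series also converges absolutely in operator norm since $\|\wh G^{\cT^+}\|\cdot\|\xi\|/\sqrt{d-1}$ is small. The only delicate bookkeeping step is verifying at each iteration of the resolvent expansion that restricting to $\bfi^+$ commutes with the matrix products, which is immediate from the support condition on $\xi$.
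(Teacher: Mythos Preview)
Your proposal is correct and follows essentially the same approach as the paper: both identify $\wh G^{\cT}$ and $\wh G^{\cT^+}$ as the restrictions to $\bfi^+$ of the operator resolvents of the (normalized) adjacency matrices $D$ and $\widetilde D$ on the disjoint union of infinite $d$-regular trees, and then apply the resolvent identity together with the Neumann expansion, using that $\xi$ is supported on $\bfi^+\times\bfi^+$ to justify the restriction. Your remarks on convergence (via the finite-rank Sherman--Morrison--Woodbury viewpoint or via operator-norm smallness for large $d$) are a useful addition that the paper leaves implicit.
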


\begin{proof}
We will only prove Proposition \ref{c:restrictR3} for the case that $\cT^+,\xi$ are as constructed in Propositions \ref{c:intbp}. In this case, $\cT^+=(\bfi\bfj, \cup_{e\in E}\{i_e, j_e\}\cup \{i'_e, j_e'\})$, and $\xi=\sum_{e\in E} \xi_{i_e i_e'}^{j_ej_e'}$. 
By viewing $\cT$ as a subgraph of $\theta(\cT)+|E|$ copies of infinite $d$-regular trees, from the discussion above Figure \ref{fig:treeswitch}, this also gives an embedding
of $\cT^+$ into $\theta(\cT)+|E|$ copies of infinite $d$-regular tree $\wt \cD$, and we denote the adjacency matrix of $\cD$ and $\wt  \cD$ as $D$ and $\wt D$ respectively. Then by our construction $D=\wt D+\xi$, 
\begin{align*}
\wh{G}^{\cT}|_{\bfi \cup\bfj}=\left.\left(D-z\right)^{-1}\right|_{\bfi\cup \bfj}=\left.\left(\wt D+\xi-z\right)^{-1}\right|_{\bfi\cup \bfj},\quad \wh{G}^{\cT^+}|_{\bfi \cup\bfj}=\left.(\wt D-z)^{-1}\right|_{\bfi\cup\bfj}.
\end{align*}  
By the resolvent identity, we have 
\begin{align*}\begin{split}
\left.\wh{G}^{\cT^+}\right|_{\bfi\cup\bfj}-\left.\wh{G}^{ \cT} \right|_{\bfi\cup\bfj}
& = \left.(\wt D-z)^{-1}-\left(\wt D+\xi-z\right)^{-1}\right|_{\bfi\cup\bfj}\\
&=\left.\sum_{n\geq 1}\frac{1}{(d-1)^{n/2}}(\wt D-z)^{-1}\left(-\xi(\wt D-z)^{-1}\right)^n\right|_{\bfi\cup\bfj}\\
&=\left.\sum_{n\geq 1}\frac{1}{(d-1)^{n/2}}\wh G^{\cT^+}\left(-\xi\wh G^{\cT^+}\right)^n\right|_{\bfi\cup\bfj}.
\end{split}\end{align*}
\end{proof}

\begin{proof}[Proof of Proposition \ref{p:replaceR}]

For Proposition \ref{c:one-off}, if $R_\bfi$ contains at least one off-diagonal Green's function entry, $G_{ij}$, and $i,j\in \bfi$ are in different connected components of $\cT$, then by our construction $\wh G^{\cT}_{ij}=0=\wh R^{\cT}_{\bfi}$, and
\begin{align}\label{e:hatAGij}
\frac{1}{N^{\theta(\cT)}d^{|E|}}\sum^*_{\bfi}A_{\cT}\wh R^{\cT}_{\bfi}\wh U_\bfi^{\cT}=0,
\end{align}
This gives that Proposition \ref{c:one-off} holds if we replace $\Phi_q$ to $\wh\Phi_q$ and $R, U$ to $\wh R, \wh U$ as in \eqref{e:replace}.

In Proposition \ref{c:case1exp}, given a forest $\cT=(\bfi, E)$, we construct $\cT^+=(\bfi\bfj, E^+)$ with $E^+=\cup_{e\in E}\{i_e, j_e\}\cup\{i_e', j_e'\}$, and prove 
\begin{align}\begin{split} \label{e:rec2c}
\frac{1}{N^{\theta(\cT)}d^{|E|}}\sum^*_{\bfi}\bE\left[A_{\cT}R_{\bfi} U_\bfi\right]=
\frac{1}{N^{\theta(\cT^+)}d^{|E^+|}}\sum^*_{\bfi^+}\bE\left[A_{\cT^+}D_{\cT^+} R_{\bfi} (1+D_{\cT^+})U_\bfi)\right]+\OO_\prec\left(\Phi_q\right).
\end{split}\end{align}
Let $\xi=\sum_{e\in E}{\xi_{i_e i_e'}^{j_e j_e'}}$. Then by the Taylor expansion \eqref{e:D-product}, 
\begin{align}\label{e:DcT+G}
D_{\cT^+} G=\sum_{n\geq 1}\frac{1}{(d-1)^{n/2}} G\left(-\xi  G\right)^n,
\end{align}
and we further expand  $D_{\cT^+} R_{\bfi}$ in \eqref{e:rec2c} as a linear combination of monomials of Green's function entries.

Proposition \ref{c:restrictR3} leads to 
\begin{align*}\begin{split}
\wh R^{\cT}_{\bfi } \wh U_\bfi^{\cT}
&=\wh R_{\bfi }^{\cT } \wh U_\bfi^{\cT}
=\left(\wh R_{\bfi }^{ \cT^+}+(\wh R_{\bfi }^{ \cT } -\wh R_{\bfi }^{ \cT^+})\right) \wh U_\bfi^{\cT}=(\wh R_{\bfi }^{ \cT } -\wh R_{\bfi }^{ \cT^+}) \wh U_\bfi^{\cT}
=(\wh R_{\bfi }^{ \cT } -\wh R_{\bfi }^{ \cT^+}) (1+D_{\cT^+})\wh U_\bfi^{\cT},
\end{split}\end{align*}
where in the third equality we used that $R_{\bfi }$ contains an off-diagonal term, say $G_{ij}$. Then $i,j$ are in different connected components of $\cT^+$, and $\wh R_{\bfi }^{ \cT^+}=0$.
In the last equality, we used that $D_{\cT^+}\wh U_\bfi^\cT=0$.
 The expressions \eqref{e:restrictR3} and \eqref{e:DcT+G} are the same if we replace $G$ by $\wh G^{\cT^+}$. Thus,  up to negligible error, $(\wh R_{\bfi }^{ \cT } -\wh R_{\bfi }^{ \cT^+})$ can be obtained from $D_{\cT^+} R_{\bfi}$ by replacing $G$ by $\wh G^{\cT^+}$. We conclude that $N^{-\theta(\cT)}d^{-|E|}\sum^*_{\bfi} A_{\cT}\wh R^{\cT}_{\bfi} \wh U^{\cT}_\bfi=c_\cT \wh R^{\cT}_{\bfi} \wh U^{\cT}_\bfi+\OO_\prec(\wh \Phi_q)$ is a linear combination of terms in the following form
\begin{align*}
c_\cT \wh R^{\cT^+}_{\bfi^+} \wh U^{\cT^+}_{\bfi^+}=c_{\cT^+} \wh R^{\cT^+}_{\bfi^+} \wh U^{\cT^+}_{\bfi^+}=\frac{1}{N^{\theta(\cT^+)}d^{|E^+|}}\sum^*_{\bfi^+}A_{\cT^+}\wh R^{\cT^+}_{\bfi} \wh U_{\bfi^+}^{\cT^+}+\OO_\prec\left(\wh\Phi_q\right),
\end{align*}
with the same coefficients as in \eqref{e:nextod1}. Here $\wh U_{\bfi^+}^{\cT^+}=(1+D_{\cT^+})\wh U^\cT_\bfi$, and we used $c_\cT =c_{\cT^+}$ from Remark \ref{r:changec}. This gives that Proposition \ref{c:case1exp} holds if we replace $\Phi_q$ to $\wh\Phi_q$ and $R, U$ to $\wh R, \wh U$ as in \eqref{e:replace}.

The proof that Proposition \ref{c:case2exp} holds if we replace $\Phi_q$ to $\wh\Phi_q$ and $R, U$ to $\wh R, \wh U$ as in \eqref{e:replace} is analogues to that of Proposition \ref{c:case1exp}. So we omit the proof.

For Proposition \ref{p:rewrite}, the key estimate is the relation \eqref{e:induction}. We show it holds if we replace $\Phi_q$ to $\wh\Phi_q$ and $R, U$ to $\wh R, \wh U$
\begin{align*}
\frac{1}{N^{\theta(\cT^+)}d^{|E^+|}}\sum^*_{\bfi^+}A_{\cT^+} m_d^\fd \wh U^{\cT^+}_{\bfi^+}
=\frac{C}{N^{\theta(\cT)}d^{|E |}}\sum^*_{\bfi }A_{\cT } m_d^\fd \wh U^{\cT}_\bfi+\OO_\prec(\wh\Phi_q),
\end{align*}
We will only discuss the case that $\cT^+$ is constructed from $\cT$ as in Proposition \ref{c:case1exp}. In this case, $C=1$, $\xi=\sum_{e\in E}\xi_{i_e i'_e}^{j_e j_e'}$ and $\wh U^{\cT^+}_{\bfi^+}=(1+D_{\xi})\wh U^{\cT}_\bfi=\wh U^{\cT}_\bfi$. We have
\begin{align*}
&\frac{1}{N^{\theta(\cT^+)}d^{|E^+|}}\sum^*_{\bfi^+}A_{\cT^+} m_d^\fd \wh U^{\cT^+}_{\bfi^+}=c_{\cT^+}m_d^\fd \wh U^{\cT}_{\bfi}+\OO_\prec(\wh\Phi_q)=c_{\cT}m_d^\fd \wh U^{\cT}_{\bfi}+\OO_\prec(\wh\Phi_q)\\
&=\frac{1}{N^{\theta(\cT)}d^{|E |}}\sum^*_{\bfi }A_{\cT } m_d^\fd \wh U^{\cT}_\bfi+\OO_\prec(\wh\Phi_q),
\end{align*}
where we used $c_\cT=c_{\cT^+}$ from Remark \ref{r:changec}. This gives that Proposition \ref{p:rewrite} holds if we replace $\Phi_q$ to $\wh\Phi_q$ and $R, U$ to $\wh R, \wh U$ as in \eqref{e:replace}. 
\end{proof}

\begin{proof}[Proof of Proposition \ref{p:idfP}]
Using Proposition \ref{p:replaceR} as input, by the same argument as for Proposition \ref{p:DSE}, we will have
\begin{align*}
|P(z,m_d)|^{2q}=\OO_{\prec}\left(\wh \Phi_q\right),\quad \wh\Phi_q= \frac{d}{N}|P(z,m_d)|^{2q-1},
\end{align*}
and Proposition \ref{p:idfP} follows. 
\end{proof}

\subsection{Proof of Theorem \ref{t:rigidity} }
\label{s:proofrigidity}

In this section we prove Theorem \ref{t:rigidity} by analyzing the high order moment estimates of $P(z, m (z))$ from Proposition \ref{p:DSE}. We prove Theorem \ref{t:rigidity} for $z$ close to the right edge $E=2$. The statement about the left edge can be proven in the same way. We construct the shifted spectral domain 
\begin{align}\label{e:recalldefD}
  \wt{\mathbf  D} \deq \{w=\kappa+\ri \eta:  -3/2\leq \kappa\leq \fK, 0<\eta\leq 1, N\eta\sqrt{|\kappa|+\eta}\geq N^\fa\}.
\end{align}
 The following stability Proposition is from \cite[Proposition 2.11]{huang2020transition}.

\begin{proposition}\label{p:stable}
Let $E=2$. Suppose that $\delta:\wt{\mathbf  D}\rightarrow \mathbb{R}$ is a function so that,
\begin{align*}
|P(E+w, m (E+w))|\leq \delta(w),\quad w\in  \wt{\mathbf  D} .
\end{align*}
Suppose that $N^{-2}\leq \delta(w)\ll1$ for $w\in  \wt{\mathbf  D} $, that $\delta$ is Lipschitz continuous with Lipschitz constant $N$ and moreover that for each fixed $\kappa$ the function $\eta\mapsto \delta(\kappa+\ri\eta)$ is nonincreasing for $\eta>0$.   Then,
\begin{align*}
|m (E+ w)-  m_d(E+ w)|=\O\left(\frac{\delta(w)}{\sqrt{|\kappa|+\eta+\delta(w)}}\right),
\end{align*}
where the implicit constant is independent of $N$.
\end{proposition}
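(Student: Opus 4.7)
The plan is to prove the stability estimate by Taylor expanding $P(E+w, u)$ around $u = m_d(E+w)$, reducing the problem to a quadratic inequality in $|m - m_d|$, and then using a continuity argument in $\eta$ to select the correct root. Throughout I write $z = E + w = 2 + \kappa + \mathrm i\eta$ and $\Delta(w) := m(z) - m_d(z)$.

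First I would Taylor expand to second order in the $u$-variable:
\begin{equation*}
P(z, m(z)) = P(z, m_d(z)) + \partial_2 P(z, m_d(z))\,\Delta(w) + \tfrac{1}{2}\partial_2^2 P(z, m_d(z))\,\Delta(w)^2 + O(|\Delta(w)|^3).
\end{equation*}
Proposition~\ref{p:idfP} gives $P(z, m_d(z)) = O(d/N)$, while Corollary~\ref{c:mdproperty} provides $|\partial_2 P(z, m_d)| \asymp \sqrt{|\kappa|+\eta} + O(d/N)$ and $\partial_2^2 P(z, m_d) = 2 + O(d^{-1/2})$. The higher-order coefficients are controlled by Corollary~\ref{c:Pproperty} (they differ from those of $P_\infty$ by $O(d/N)$, and the $P_\infty$ coefficients sum convergently since $|m_d| \lesssim 1$). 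Combined with the hypothesis $|P(z,m(z))| \leq \delta(w)$, this yields the scalar inequality
\begin{equation*}
c\,\sqrt{|\kappa|+\eta}\,|\Delta(w)| + |\Delta(w)|^2 \leq C\bigl(\delta(w) + d/N + |\Delta(w)|^3\bigr),
\end{equation*}
for universal constants $c, C > 0$. Since $\delta(w) \geq N^{-2} \gg d/N$ is false in general, I would either absorb $d/N$ into $\delta$ using the assumption $\delta \geq N^{-2}$ together with $d \leq N^{1/3-\fc}$, or treat it as a constant additive perturbation. The cubic term $|\Delta|^3$ is absorbed into the left side once $|\Delta| \ll 1$.

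Solving the quadratic inequality yields two branches: on the \emph{small} branch $|\Delta(w)| \lesssim \delta/\sqrt{|\kappa|+\eta+\delta}$, which is the desired bound, while on the \emph{large} branch $|\Delta(w)| \gtrsim \sqrt{|\kappa|+\eta}$. The main obstacle is ruling out the large branch. I would do this by a standard continuity bootstrap in $\eta$: at $\eta = 1$ the bound $|\Delta(w)| \prec \Lambda_{\rm d}(E+w) \ll 1 \ll \sqrt{|\kappa|+\eta}$ from Theorem~\ref{thm:rigidity} places us in the small branch. I then decrease $\eta$ along $\{\kappa + \mathrm i\eta : \eta \in [\eta_0, 1]\}$ for each fixed $\kappa$. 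Since $m(z)$ is holomorphic in the upper half plane with $|\partial_\eta m| \leq 1/\eta^2$, and $\delta$ is $N$-Lipschitz, the map $w \mapsto \Delta(w)$ and the gap between the two branches both move continuously. The gap between the branches is of order $\sqrt{|\kappa|+\eta+\delta(w)} \geq N^{-1}$ (using $\delta \geq N^{-2}$), which is much larger than the $O(N^{-2})$ scale on which $\Delta$ and $\delta$ can vary over a grid of spacing $N^{-3}$. A standard argument (discretize $\eta$ at scale $N^{-10}$, use continuity between grid points) then forces $\Delta$ to remain on the small branch throughout $\wt{\mathbf D}$, completing the proof. The monotonicity of $\eta \mapsto \delta(\kappa + \mathrm i\eta)$ is used to ensure that the control we propagate from larger $\eta$ remains valid as $\eta$ decreases.
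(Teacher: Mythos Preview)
The paper does not prove this proposition; it merely cites \cite[Proposition~2.11]{huang2020transition}. Your approach---Taylor expand $P(z,\cdot)$ around $m_d(z)$, reduce to a quadratic inequality in $|\Delta|$, then run a continuity argument in $\eta$ from large $\eta$ (where Theorem~\ref{thm:rigidity} gives the initial smallness) down through $\wt{\mathbf D}$ to stay on the small branch---is exactly the standard stability argument used in the cited reference and throughout the local-law literature, so you have reproduced the intended proof.

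One loose end deserves a cleaner treatment: the $O(d/N)$ term from $P(z,m_d)$ cannot be absorbed into $\delta$ using only $\delta\geq N^{-2}$, since $d/N\geq N^{\fc-1}\gg N^{-2}$. The clean fix is not to absorb it into $\delta$ but into the linear coefficient: on $\wt{\mathbf D}$ the paper observes (just after \eqref{e:recalldefD} in the proof of Theorem~\ref{t:rigidity}) that $d/N\leq|\kappa|+\eta$, so $|\partial_2 P(z,m_d)|\asymp\sqrt{|\kappa|+\eta}$ holds without the $O(d/N)$ correction, and likewise $|P(z,m_d)|=O(d/N)\leq O(|\kappa|+\eta)$. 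The resulting quadratic inequality then reads $|\Delta|^2+\sqrt{|\kappa|+\eta}\,|\Delta|\lesssim \delta+(|\kappa|+\eta)$; but the $(|\kappa|+\eta)$ on the right is harmless because the large-branch threshold is $\sqrt{|\kappa|+\eta}$ itself, and on the small branch one gets $|\Delta|\lesssim(\delta+d/N)/\sqrt{|\kappa|+\eta+\delta}$. In every application in the paper $\delta$ already contains a $d/N$ term, so the stated conclusion follows.
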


\begin{proof}[Proof of Theorem \ref{t:rigidity}]
  Let  $E=2$ and $z=E+w$ with $w\in \wt{\textbf D}$. Since the Kesten-Mckay distribution has square root behavior, 
\begin{align*}
\Im[ m_d(  z)]\asymp \Phi(w)\deq \left\{
\begin{array}{cc}
\sqrt{|\kappa|+\eta}, & \kappa\leq 0,\\
\eta/\sqrt{|\kappa|+\eta}, & \kappa\geq 0.
\end{array}
\right.
\end{align*}
and we notice that $w\in \wt{\mathbf  D}$, we have that 
\begin{align*}
\frac{d}{N}\leq |\kappa|+\eta , \quad \frac{d^{3/2}}{N}\frac{1}{\sqrt{N\eta}}\leq |\kappa|+\eta,\quad \frac{d^{3/2}\Lambda_o}{N}\leq |\kappa|+\eta.
\end{align*}
So \eqref{e:derPa} gives
\begin{align*}
|\del_2 P(  z,    m_d(  z))|\asymp \sqrt{|\kappa|+\eta}.
\end{align*}
Then we have
\begin{align*}
\Im[m(  z)]\lesssim \Phi(w)+|m(z)- m_d(z)|,
\end{align*}
and 
\begin{align*}
\del_2 P(  z, m(  z))
=\del_2 P(  z, m_d(  z))+\O(|m(  z)-m_d(  z)|)
=\O(\sqrt{|\kappa|+\eta}+|m(  z)-m_d(  z)|).
\end{align*}
The same argument as in \cite[Proposition 8.6]{bauerschmidt2020edge}, we can first show that $|m(z)- m_d( z)|\prec \sqrt{|\kappa|+\eta}$. We assume that there exists some deterministic control parameter $\Lambda(w)$ such that the prior estimate holds
\begin{align*}
|m(z)-   m_d(z)|\prec \Lambda(w)\lesssim \sqrt{|\kappa|+\eta}.
\end{align*}
%
Since $\Phi(w)\gtrsim\sqrt{|\kappa|+\eta}$ and $\Lambda(w)\prec \sqrt{|\kappa|+\eta}$, \eqref{e:defP}  combining with the Markov's inequality leads to
\begin{align}\begin{split}\label{e:estimateP}
|P(  z,  m(  z))|
&\prec \frac{d^{3/2}\Lambda_o}{N}+\frac{1}{N\eta}\left((\Lambda(w)+\Phi(w))\sqrt{|\kappa|+\eta}\right)^{1/2}\\
&\prec \frac{d}{N}+\frac{1}{N\sqrt{\eta}}+\frac{1}{N\eta}\left((\Lambda(w)+\Phi(w))\sqrt{|\kappa|+\eta}\right)^{1/2},
\end{split}\end{align}
where in the last line we used $\Lambda_o\prec 1/\sqrt{d}+1/\sqrt{N\eta}$ and $d\ll N^{1/3}$.

If $\kappa\geq 0$, then $\Phi(w)=\eta/ \sqrt{|\kappa|+\eta}$, and \eqref{e:estimateP} simplifies to
\begin{align}\label{e:boundoutside}
|P(  z,  m(  z))|
\prec \frac{d}{N}+\frac{1}{N\eta^{1/2}}+\frac{(|\kappa|+\eta)^{1/4}\Lambda(w)^{1/2}}{N\eta}.
\end{align}
Thanks to Proposition \ref{p:stable}, by taking $\delta(w)$ the righthand side of \eqref{e:boundoutside} times $N^{\varepsilon}$ with arbitrarily small $\varepsilon$, we have
\begin{align}\begin{split}\label{e:outS0}
| m(  z)-m_d(  z)|\prec \frac{1}{\sqrt{|\kappa|+\eta}}\left(\frac{d}{N}+\frac{1}{N\eta^{1/2}}+\frac{(|\kappa|+\eta)^{1/4}\Lambda(w)^{1/2}}{N\eta}\right).
\end{split}\end{align}
By iterating \eqref{e:outS0}, we get
\begin{align}\label{e:outS}
| m(  z)-m_d(  z)|\prec \frac{1}{\sqrt{|\kappa|+\eta}}\left(\frac{d}{N}+\frac{1}{N\eta^{1/2}}+\frac{1}{(N\eta)^2}\right).
\end{align}

If $\kappa\leq 0$, then $\Phi(w)=\sqrt{|\kappa|+\eta}$ and $\Lambda(w)\prec \sqrt{|\kappa|+\eta}$, \eqref{e:estimateP} simplifies to
\begin{align}\label{e:boundinside}
|P(  z, m(  z)|
\prec \frac{d}{N}+\frac{(|\kappa|+\eta)^{1/2}}{N\eta}.
\end{align}
It follows from Proposition \ref{p:stable}, by taking $\delta(z)$ the righthand side of \eqref{e:boundinside} times $N^{\varepsilon}$ with arbitrarily small $\varepsilon$, we have
\begin{align}\label{e:inS}
| m(  z)-m_d(  z)|\prec \frac{1}{N\eta}+\frac{d}{N\sqrt{|\kappa|+\eta}}.
\end{align}
The claim \eqref{e:largeeig} follows from the estimates of the Stieltjes transform \eqref{e:outS} and \eqref{e:inS}, see \cite[Section 11]{erdHos2017dynamical}.

\end{proof}

\section{Edge Universality} \label{sec:universality}

In this section we prove the edge universality of random $d$-regular graphs in the regime $N^\fc\leq d\leq N^{1/3-\fc}$ where Theorem \ref{t:rigidity} provides optimal bounds on the extremal eigenvalues. 

Our strategy is based on the now standard three-step approach of random matrix theory \cite{MR3699468}. Starting from the (rescaled) adjacency matrix $H = H(0)$ from \eqref{def_H}, we run a matrix-valued Brownian motion $H(t)$. Using the rigidity estimates from Theorem \ref{t:rigidity}  combined with \cite{landon2017edge}, we deduce that for $t \gg N^{-1/3}$ the matrix $H(t)$ has GOE edge statistics; see Proposition \ref{t:universalityHt} below. The main work in this section is a comparison argument to show that $H(0)$ and $H(t)$ with $t \gg N^{-1/3}$, have the same edge statistics; see Proposition \ref{thm:comp} below. 


We recall the constrained GOE $W$ as introduced in \cite[Section 2.1]{MR3729611}. It may be viewed as the usual Gaussian Orthogonal Ensemble restricted to matrices with vanishing row and column sums.  Formally, $W$  is the centered Gaussian process on the space $\cal M \deq \h{H \in \bR^{N \times N} \col H = H^*, H \bld 1 = 0}$ with covariance $\bE \scalar{W}{X} \scalar{W}{Y} = \scalar{X}{Y}$, where $\scalar{X}{Y} \deq \frac{N}{2} \Tr (XY)$ for $X,Y \in \cal M$. Explicitly, its covariance is given by
\begin{equation} \label{W_ibp}
\bE [W_{ij} W_{k\ell}] = \frac{1}{N} \pbb{\delta_{ik} - \frac{1}{N}} \pbb{\delta_{jl} - \frac{1}{N}}
+ \frac{1}{N} \pbb{\delta_{il} - \frac{1}{N}} \pbb{\delta_{jk} - \frac{1}{N}}.
\end{equation}
The following result is a straightforward consequence of \eqref{W_ibp}, and Gaussian integration by part.

\begin{lemma}\label{p:intbypart}
For the constrained GOE $W$ we have the integration by parts formula
\begin{align}\label{e:intbypart}
\bE[W_{ij}F(W)]=\frac{1}{N^3}\sum_{k\ell}\bE[\del_{ij}^{k\ell} F(W)].
\end{align}
\end{lemma}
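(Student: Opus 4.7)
The plan is to derive \eqref{e:intbypart} as a direct consequence of Gaussian integration by parts in its Hilbert-space form. Since $W$ is a centered Gaussian process on $(\cal M,\scalar{\cdot}{\cdot})$ with reproducing covariance $\bE\scalar{W}{X}\scalar{W}{Y}=\scalar{X}{Y}$, Stein's lemma---proved by expanding $W$ in any orthonormal basis of $\cal M$ and invoking the one-dimensional formula coordinate-wise---yields
\begin{equation*}
\bE\bigl[\scalar{W}{X} F(W)\bigr] = \bE\bigl[\partial_t F(W + tX)|_{t=0}\bigr], \qquad X\in \cal M.
\end{equation*}
The task therefore reduces to representing $W_{ij}=\scalar{W}{\tilde{E}_{ij}}$ for an appropriate $\tilde{E}_{ij}\in\cal M$, and then identifying $\tilde{E}_{ij}$ as a multiple of $\sum_{k,\ell}\xi_{ij}^{k\ell}$.

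For the Riesz representative I would start from $Y_{ij}=(1/N)\Tr\bigl((e_i e_j^T + e_j e_i^T)Y\bigr)$ and use the relation $P_\perp Y P_\perp = Y$ valid for $Y\in\cal M$, together with the inner product $\scalar{X}{Y}=(N/2)\Tr(XY)$, to obtain
\begin{equation*}
\tilde{E}_{ij} = \frac{1}{N} P_\perp \Delta_{ij} P_\perp \in \cal M, \qquad \scalar{\tilde{E}_{ij}}{Y} = Y_{ij} \text{ for every } Y\in\cal M.
\end{equation*}
Here $P_\perp = I - {\bf 1}{\bf 1}^T/N$ is the projection implicit in the defining constraint of $\cal M$.

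The only step with any computational content is the identity
\begin{equation*}
\tilde{E}_{ij} = \frac{1}{N^3}\sum_{k,\ell}\xi_{ij}^{k\ell}.
\end{equation*}
Each $\xi_{ij}^{k\ell}$ lies in $\cal M$ because $\xi_{ij}^{k\ell}{\bf 1}=0$, so the right-hand side is automatically in $\cal M$. I would verify the identity by evaluating the sum using $\sum_{k,\ell}\Delta_{ij}=N^2\Delta_{ij}$, $\sum_{k,\ell}\Delta_{k\ell}=2{\bf 1}{\bf 1}^T$, and $\sum_{k,\ell}\Delta_{ik}=N(e_i{\bf 1}^T+{\bf 1} e_i^T)$ (and the analogous identity for $\Delta_{j\ell}$), then matching the result against the expansion of $P_\perp \Delta_{ij} P_\perp$ via $P_\perp=I-{\bf 1}{\bf 1}^T/N$. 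Combining the identity with Stein's lemma and the linearity of the directional derivative yields \eqref{e:intbypart}, where $\partial_{ij}^{k\ell}F(W)$ is interpreted as the directional derivative of $F$ in the direction $\xi_{ij}^{k\ell}$.

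There is no substantial obstacle. The only points that require care are: (i) tracking the projection $P_\perp$ when representing entry functionals on the constrained space $\cal M$, and observing that the explicit sum $\sum_{k,\ell}\xi_{ij}^{k\ell}$ already reproduces this projection up to the factor $N^3$; and (ii) fixing the convention that in this lemma $\partial_{ij}^{k\ell}$ denotes the bare directional derivative rather than the $\sqrt{d-1}$-rescaled derivative of Definition \ref{d:defD}, which is the natural normalization when the underlying symmetric matrix is $W$ rather than the adjacency matrix $A$.
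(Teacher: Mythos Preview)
Your proof is correct and is essentially a fleshed-out version of what the paper does: the paper simply states that the lemma is ``a straightforward consequence of \eqref{W_ibp}, and Gaussian integration by part'' without writing out the details. Your Hilbert-space formulation of Stein's lemma on $\cal M$, together with the explicit identification $\frac{1}{N^3}\sum_{k,\ell}\xi_{ij}^{k\ell}=\frac{1}{N}P_\perp\Delta_{ij}P_\perp$, is precisely how one makes that sentence rigorous, and your remark on the normalization of $\partial_{ij}^{k\ell}$ matches the convention the paper adopts just below the lemma (the directional derivative in $H(t)$, without the $\sqrt{d-1}$ factor).
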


Next, we define the matrix-valued process
\begin{align}\label{e:Ht}
H(t) \deq e^{-t/2}H+\sqrt{1-e^{-t}} \, W,
\end{align}
where $H$ was defined in \eqref{def_H} in terms of the adjacency matrix of the $d$-regular graph. Thus, $H(0)=H$ and $H(\infty) = W$. The matrix $H(t)$ has a trivial eigenvalue $\la_1(t)=e^{-t/2}d/\sqrt{d-1}$ with eigenvector $\bf1$. We denote the remaining eigenvalues of $H(t)$ by $\la_2(t)\geq \la_{3}(t)\geq \cdots \la_{N-1}(t)\geq \la_N(t)$, and corresponding normalized eigenvectors $\bmu_2(t), \bmu_3(t),\cdots, \bmu_N(t)$.

We recall the projection matrix  $P_{\perp}=I-{\bf 1}{\bf 1^*}/N$ from Section \ref{sec:notation}. Then the matrix $H(t)$ and $P_\perp$ commute, i.e.\ $H(t)P_\perp=P_\perp H(t)$.
For $z \in \bC_+ =\{z\in \bC\col \Im[z]>0\}$, we define the \emph{time-dependent Green's function} by
\begin{equation*}
  G(z;t) \deq P_\perp (H(t)-z)^{-1}P_{\perp}=\sum_{i=2}^N \frac{\bmu_i(t)\bmu_i(t)^\top}{\la_i(t)-z},
\end{equation*}
so that $G(z;t)$ and $(H(t)-z)^{-1}$ agree on the image of $P_\perp$, i.e.\ the subspace of $\bR^N$ perpendicular to $\bm 1$ which carries the nontrivial spectrum of $H(t)$. We denote the Stieltjes transform of the empirical eigenvalue distribution of $P_\perp H(t) P_\perp$ by $m(z;t)$,
\begin{align} \label{def_mtz}
m(z;t)
=\frac{1}{N}\Tr G(z;t)=\frac{1}{N}\sum_{i=2}^N\frac{1}{\la_i(t)-z}.
\end{align}

We have the following corollary of Theorems \ref{t:rigidity}.
\begin{corollary}\label{c:rigidity3}
Fix a constant $\fc>0$ and suppose that $N^{\fc}\leq d\leq N^{1/3-\fc}$. Then we have with overwhelming probability
\begin{align*}
|\la_2(0)-2|, |\la_N(0)+2|\prec N^{-2/3},
\end{align*}
and uniformly for $w\in \wt{\mathbf D}$ (recall from \eqref{e:recalldefD}),  $z=2+w$ or $z=-2-\bar w$, 
\begin{align*}
|m(0;z)-\md(z)|\prec \left( \frac{1}{N\eta}+\frac{d}{N\sqrt{\eta}}\right).
\end{align*}
\end{corollary}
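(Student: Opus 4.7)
The plan is to show that Corollary \ref{c:rigidity3} is essentially a direct consequence of Theorem \ref{t:rigidity} applied at time $t=0$, combined with the well-known square-root edge behavior of the Kesten--McKay density $\rho_d$. Since $H(0) = H$, we have $\lambda_k(0) = \lambda_k$ and $m(0;z) = m(z)$, so the statements to verify are the optimal edge location $|\lambda_2 - 2|, |\lambda_N + 2| \prec N^{-2/3}$ and the Stieltjes transform bound $|m(z) - m_d(z)| \prec 1/(N\eta) + d/(N\sqrt{\eta})$ for $z = 2 + w$ or $z = -2 - \bar w$ with $w \in \wt{\mathbf D}$.

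For the eigenvalue bounds, I would apply \eqref{e:largeeig} at $k = 2$ (and symmetrically $k = N$) to get $|\lambda_2 - \gamma_2| \prec d/N + N^{-2/3}$. Because $d \leq N^{1/3-\fc}$, the term $d/N \leq N^{-2/3 - \fc}$ is negligible compared with $N^{-2/3}$. The classical location $\gamma_2$ satisfies $\int_{\gamma_2}^2 \rho_d(x)\,dx = 3/(2(N-1))$; since $\rho_d$ has a square-root vanishing at the edge $2$, this forces $2 - \gamma_2 = O(N^{-2/3})$, and hence $|\lambda_2 - 2| \prec N^{-2/3}$. The argument at the lower edge $-2$ is identical.

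For the Stieltjes transform bound, I would split into the two cases in \eqref{e:mbond}. When $z = 2 + w$ with $\kappa = \Re w \leq 0$ (so $E \in [-2,2]$), \eqref{e:mbond} directly gives $|m - m_d| \prec 1/(N\eta) + d/(N\sqrt{|\kappa|+\eta})$, which is dominated by $1/(N\eta) + d/(N\sqrt{\eta})$ since $\sqrt{|\kappa|+\eta} \geq \sqrt{\eta}$. When $\kappa \geq 0$ (so $E \geq 2$), \eqref{e:mbond} gives a bound involving three terms divided by $\sqrt{|\kappa|+\eta}$. The first term becomes $1/(N\sqrt{\eta(|\kappa|+\eta)}) \leq 1/(N\eta)$; the third term becomes $d/(N\sqrt{|\kappa|+\eta}) \leq d/(N\sqrt{\eta})$. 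The intermediate term $1/((N\eta)^2 \sqrt{|\kappa|+\eta})$ is absorbed using the defining constraint $N\eta\sqrt{|\kappa|+\eta} \geq N^\fa$ from \eqref{e:recalldefD}, which yields $\sqrt{|\kappa|+\eta} \geq N^\fa/(N\eta)$ and therefore $1/((N\eta)^2 \sqrt{|\kappa|+\eta}) \leq N^{-\fa}/(N\eta) \ll 1/(N\eta)$. The domain inclusion $z \in \mathbf D$ needs a brief check: the condition $N\eta\sqrt{\min(|E-2|,|E+2|)+\eta} \geq N^{1/\fK}$ reduces in our range to $N\eta\sqrt{|\kappa|+\eta} \geq N^{1/\fK}$, which is implied by $w \in \wt{\mathbf D}$ provided $\fa \geq 1/\fK$.

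There is no real obstacle here; the only minor point of care is verifying that the spectral domains match after the shift $z \mapsto 2+w$ and collecting the three sub-leading terms in the $\kappa \geq 0$ regime into the compact form $1/(N\eta) + d/(N\sqrt{\eta})$, which uses the $\wt{\mathbf D}$ constraint in an essential way.
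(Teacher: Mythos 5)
Your argument is correct and is precisely the intended derivation: the paper omits a written proof and simply states that the corollary follows from Theorem~\ref{t:rigidity}, and you fill in exactly those steps — applying \eqref{e:largeeig} at $k=2,N$ together with the square-root vanishing of $\rho_d$ to locate $\gamma_2, \gamma_N$, then specializing \eqref{e:mbond} to $z = 2+w$ and $z = -2-\bar w$, using $\sqrt{|\kappa|+\eta} \geq \sqrt{\eta}$ and the $\wt{\mathbf D}$ constraint $N\eta\sqrt{|\kappa|+\eta} \geq N^{\fa}$ to absorb the $1/(N\eta)^2$ term. The domain inclusion check (matching the $\mathbf D$ constraint after the shift, requiring $\fa \geq 1/\fK$) is the only point of care and you handle it correctly.
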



\subsection{Free convolution}\label{s:fc}
The asymptotic eigenvalue density of the matrix $H(t)$ is governed by the free additive convolution of the rescaled Kesten--McKay measure with the semicircle law at time $s = 1 - e^{-t}$. We recall some properties of measures obtained by the free convolution with a semicircle distribution from \cite{MR1488333}. The semicircle density $\rhosc(x)$ is given by \eqref{def_sc}, and the semicircle density at time $s$ is $s^{-1/2}\rhosc(s^{-1/2}x)$. Given a probability measure $\mu$ on $\bR$, we denote its free convolution with a semicircle distribution of variance $s$ by $\mu_s$. The Stieltjes transforms of $\mu$ and $\mu_s$ are given by $G_\mu(z)=\int \frac{\rd \mu(x)}{x-z}$ and $G_{\mu_s}(z)=\int \frac{\rd \mu_{s}(x)}{x-z}$ respectively. Then the following holds \cite{MR1488333}.
\begin{enumerate}
\item We denote the set $U_s = \{z\in\bC_+: \int \frac{\rd \mu(x)}{|z-x|^2}<\frac{1}{s}\}$. Then $z \mapsto z-s G_\mu(z)$ is a homeomorphism from $\bar{U}_s$ to $\bC_+\cup \bR$ and conformal from $U_s$ to $\bC_+$. We denote its inverse by $F_{\mu_s}: \bC_+\cup \bR\mapsto \bar U_s$.
\item The Stieltjes transform of $\mu_s$ is characterized by $G_\mu(z)=G_{\mu_s}(z-sG_\mu(z))$, for any $z\in U_s$.
\end{enumerate} 


The asymptotic eigenvalue density of $W$ is the semicircle density $\rhosc(x)$ and the asymptotic eigenvalue density of $\sqrt{1 - e^{-t}} \, W$ is the semicircle density at time $s = 1 - e^{-t}$.
The asymptotic eigenvalue density of $H(t)$ is the free convolution of rescaled Kesten--McKay law $\mu(\rd x) = e^{t/2} \rho_d(e^{t/2} x) \rd x$ at  time $e^{-t}$ and the semicircle density at time $1 - e^{-t}$. We denote its density by $\rho_d(x;t)$ and its Stieltjes transform by $\md(z;t) = \int \frac{\rho_d(x;t)}{x - z} \rd x$.
 Since $G_\mu(e^{-t/2} z) = e^{t/2} m_d(z)$,  we deduce from (i) and (ii) above that
\begin{align}\label{e:selffc}
\md(\xid(z;t);t)=e^{t/2}\md(z),
\end{align}
where
\begin{align} \label{e:defxid}
\xid(z;t)\deq e^{-t/2}z-e^{t/2}(1-e^{-t})\md(z)
=e^{-t/2}z-(1-e^{-t})\md(\xid(z;t);t)
\end{align}
is a homeomorphism from the set $\{z\in \bC_+: \int \frac{\rho_d(x)}{|x-z|^2}\rd x\leq\frac{1}{e^t-1}\}$ to $\bC_+\cup \bR$.  To find the support of the measure $\rho_d(t;x)$, we notice that there exists $z_d^{\pm}(t)\in \bR$ such that $\{z\in \bC_+: \int \frac{\rho_d(x)}{|x-z|^2}\rd x=\frac{1}{e^t-1}\}$ consists of the intervals $(-\infty, z_d^-(t)]\cup [z_d^+(t), \infty)$ and an arc $\cC$ from $z_d^-(t)$ to $z_d^+(t)$. Those two endpoints $z_{d}^\pm (t)\in\bR$ are the largest and smallest real solutions to
\begin{align}\label{e:edgeeqn1}
\int \frac{\rho_d(x)}{|x-z|^2}\rd x=\frac{1}{e^t-1}\quad \Leftrightarrow \quad  m_d'(z)=\frac{1}{e^t-1}.
\end{align}
As a consequence,  the right and left edges of the measure $\rho_d(x;t)$ are given by
\begin{align}\label{e:edgeeqn2}
E_{d}^\pm(t)=\xi_d(t;z_d^\pm(t))=e^{-t/2}z_{d}^\pm(t)-e^{t/2}(1-e^{-t})\md(z_{d}^\pm(t)).
\end{align}

 Recall from Proposition \ref{p:minfty}, $  m_d(z)$ satisfies the functional equation 
\begin{align}\label{e:m0deq} 
    1+z  m_d(z)+  m^2_d(z)+Q(  m_d(z))=\OO(d/N),
\end{align}

The next proposition states that $m_d(z;t)$ satisfies a similar equation, and we also characterize the change of the edges along time $\del_t E^\pm_d(t)$. 
\begin{proposition}\label{p:mtEQ}
Adapt the assumptions in Theorem \ref{t:rigidity}, and recall $m_d(z;t)$ from \eqref{e:selffc}.It satisfies the following equation
\begin{align}\label{e:mdteq}
    1+z  m_d(z;t)+  m^2_d(z;t)+Q(e^{-t/2}  m_d(z;t))=\OO(d/N).
\end{align}
And the spectral edge $E^{\pm}_d(t)$ satisfies
\begin{align}\label{e:deltE}
\del_t E^{\pm}_d(t)=\frac{e^{-t/2}}{2} Q'(e^{-t/2}  m_d(E_d^\pm;t))+\OO\left(\sqrt{\frac{d}{N}}\right).
\end{align}
\end{proposition}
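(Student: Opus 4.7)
Part \eqref{e:mdteq} reduces to substituting the free-convolution change of variables into the Kesten--McKay self-consistent equation \eqref{e:m0deq}. Setting $w = \xid(z;t)$, the relation \eqref{e:selffc} reads $\md(z) = e^{-t/2}\md(w;t)$, and inverting \eqref{e:defxid} gives $z = e^{t/2} w + (e^{t/2}-e^{-t/2})\md(w;t)$. Plugging these into $1 + z\md(z) + \md^2(z) + Q(\md(z)) = \O(d/N)$ and collecting, using $(1-e^{-t}) + e^{-t} = 1$, produces $1 + w\md(w;t) + \md^2(w;t) + Q(e^{-t/2}\md(w;t)) = \O(d/N)$. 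Relabeling $w$ as $z$ yields \eqref{e:mdteq}.

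For \eqref{e:deltE}, the plan is to compute $\partial_t E_d^\pm$ in two independent ways and match them. Differentiating \eqref{e:edgeeqn2} directly in $t$, the coefficient of $\partial_t z_d^\pm$ is $e^{-t/2} - e^{t/2}(1-e^{-t})\md'(z_d^\pm)$, which vanishes exactly by the edge condition $\md'(z_d^\pm) = 1/(e^t-1)$ of \eqref{e:edgeeqn1} (since $e^{t/2}(1-e^{-t})/(e^t-1) = e^{-t/2}$). Substituting $e^{-t/2} z_d^\pm = E_d^\pm + (e^{t/2}-e^{-t/2})\md(z_d^\pm)$ and $e^{t/2}\md(z_d^\pm) = \md(E_d^\pm;t)$ (the latter from \eqref{e:selffc}), the computation collapses to the \emph{exact} identity
\begin{equation*}
\partial_t E_d^\pm(t) = -\tfrac{1}{2} E_d^\pm(t) - \md(E_d^\pm(t);t).
\end{equation*}
On the other hand, applying the substitution from part \eqref{e:mdteq} to the exact relation $P_\infty(z,\md(z))=0$ (rather than to the $\O(d/N)$-approximate \eqref{e:m0deq}) gives the exact identity $1 + z\md(z;t) + \md^2(z;t) + Q_\infty(e^{-t/2}\md(z;t)) = 0$, where $Q_\infty(w) \deq P_\infty(z,w) - 1 - zw - w^2$ is the $z$-independent tail in \eqref{e:md-sce}. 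Because $\md(\cdot;t)$ has a square-root singularity at $E_d^\pm$, implicit differentiation of this identity in $z$ and sending $z\to E_d^\pm$ forces the coefficient of $\partial_z\md(z;t)$ to vanish, producing
\begin{equation*}
E_d^\pm + 2\md(E_d^\pm;t) + e^{-t/2} Q_\infty'(e^{-t/2}\md(E_d^\pm;t)) = 0.
\end{equation*}
Comparing the two displays then gives $\partial_t E_d^\pm = \tfrac{e^{-t/2}}{2} Q_\infty'(e^{-t/2}\md(E_d^\pm;t))$ exactly. Corollary~\ref{c:Pproperty} combined with Cauchy's integral formula on a fixed bounded neighborhood of $e^{-t/2}\md(E_d^\pm;t)$ then gives $Q' - Q_\infty' = \O(d/N)$ there, yielding \eqref{e:deltE} with error $\O(d/N) \le \O(\sqrt{d/N})$.

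The main subtlety is the square-root-singularity step used to extract the edge critical-point identity: one must verify that $\md(\cdot;t)$ is analytic off $\supp\rho_d(\cdot;t)$ with the standard $\md(z;t) - \md(E_d^\pm;t) \asymp \sqrt{E_d^\pm - z}$ asymptotics, so that the implicit-differentiation argument rigorously forces the $w$-derivative to vanish at the edge. For free convolutions with a semicircle distribution this is classical via the subordination machinery of \cite{MR1488333} already invoked in Section~\ref{s:fc}, but clean quantitative estimates around the edge still require some care.
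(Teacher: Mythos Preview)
Your proof of \eqref{e:mdteq} is the same as the paper's: both substitute the subordination relation \eqref{e:selffc}--\eqref{e:defxid} into the Kesten--McKay self-consistent equation \eqref{e:m0deq} and simplify.

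For \eqref{e:deltE} the two arguments share the first step --- differentiating \eqref{e:edgeeqn2} in $t$, using the criticality condition \eqref{e:edgeeqn1} to kill the $\partial_t z_d^\pm$ term, and arriving at the exact identity $\partial_t E_d^\pm = -\tfrac12\bigl(E_d^\pm + 2\md(E_d^\pm;t)\bigr)$ --- but then diverge. The paper works directly with the \emph{approximate} equation \eqref{e:mdteq}: it takes the difference of \eqref{e:mdteq} at $z=E_d^\pm+w$ and at $z=E_d^\pm$, Taylor-expands in $\md(z;t)-\md(E_d^\pm;t)\asymp\sqrt{|w|}$, and obtains
\[
E_d^\pm + 2\md(E_d^\pm;t) + e^{-t/2}Q'\bigl(e^{-t/2}\md(E_d^\pm;t)\bigr) = \OO\Bigl(\tfrac{d}{N\sqrt{|w|}}+\sqrt{|w|}\Bigr),
\]
then optimizes at $|w|\asymp d/N$ to land on $\OO(\sqrt{d/N})$. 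You instead apply the substitution of part one to the \emph{exact} relation $P_\infty(z,\md(z))=0$, obtain the exact edge equation $E_d^\pm + 2\md(E_d^\pm;t) + e^{-t/2}Q_\infty'=0$ by letting $\partial_z\md\to\infty$, and only at the very end trade $Q_\infty'$ for $Q'$ via Corollary~\ref{c:Pproperty}. This is cleaner and, as you note, actually yields the sharper error $\OO(d/N)$ rather than $\OO(\sqrt{d/N})$; the paper's optimization step becomes unnecessary. The one point that requires care --- which you correctly flag --- is the uniform bound $Q'-Q_\infty'=\OO(d/N)$ at $e^{-t/2}\md(E_d^\pm;t)=\md(z_d^\pm)$: Corollary~\ref{c:Pproperty} literally gives only a coefficient-by-coefficient bound, so to invoke Cauchy one should combine it with the observation that Proposition~\ref{p:idfP} gives $(Q-Q_\infty)(\md(z))=\OO(d/N)$ uniformly on $\bC_+$, and the range of $\md$ covers an open set whose closure contains the real boundary value $\md(z_d^\pm)$ (alternatively, use that the tail of $Q_\infty$ beyond the finite degree of $Q$ is itself uniformly $\OO(d/N)$ on bounded sets since its coefficients decay like $d^{-k}$ and $d\ge N^{\fc}$).
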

\begin{proof}
By taking $z$ to be $\xi_d(z;t)$ in \eqref{e:m0deq}, and using the relation \eqref{e:selffc}
\begin{align}\label{e:dppmt}
1+ze^{-t/2}   m_d(\xi_d(z;t);t) +e^{-t}  m^2_d(\xi_d(z;t);t)+Q(e^{-t/2}  m_d(\xi_d(z;t);t))=\OO(d/N).
\end{align}
From the definition \eqref{e:defxid} of $\xi_d(z;t)$, we have
\begin{align*}
   \xi_d(z;t) +(1-e^{-t})  m_d(\xi_d(z;t);t)=ze^{-t/2} ,
\end{align*}
and \eqref{e:dppmt} simplifies to 
\begin{align*}
    1+z   m_d(z;t) +  m^2_d(z;t)+Q(e^{-t/2}  m_d(z;t))=\OO(d/N),
\end{align*}
which is \eqref{e:mdteq}.

By taking the time derivative of both sides of \eqref{e:edgeeqn2} and use the relation \eqref{e:edgeeqn1}, we get
\begin{align}\label{e:edgeeqn3}
\del_t E_{d}^\pm(t)=-\frac{1}{2}\left(e^{-t/2}z_{d}^\pm(t)+(e^{t/2}+e^{-t/2})\md(z_{d}^\pm(t))\right)
=-\frac{1}{2}\left(E_{d}^\pm(t)+2m_d(E_{d}^\pm(t);t)\right),
\end{align}
where we used \eqref{e:edgeeqn2} and $e^{t/2}m_d(z_d^{\pm}(t))=m_d(E_d^{\pm}(t);t)$ for the last equality.
Since $m_d(z;t)$ has square root behavior at the edges $E_{d}^\pm(t)$. 
Let $z=E_d^\pm(t)+w$ with $w\in \wt{\mathbf D}$ (recall from \eqref{e:recalldefD}), then
 $|m_d(z;t)-m_d(E^\pm_{d}(t);t)|\asymp \sqrt{|w|}$.
\begin{align*}
&\OO(d/N)=\left.\left(1+u  m_d(u;t)+  m^2_d(u;t)+Q(e^{-t/2}  m_d(u;t))\right)\right|_{u=E^\pm_d(t)}^{u=z}\\
&=(m_d(z,t)-m_d(E^\pm_d(t),t))\left(E^\pm_d(t)+2m_d(E^\pm_d(t),t)+e^{-t/2} Q'(e^{-t/2}  m_d(E^\pm_{d}(t);t))\right)\\
&+
\OO(|w|+|m_d(z,t)-m_d(E^\pm_d(t),t)|^2).
\end{align*}
It follows by rearranging, 
\begin{align}\label{e:Etexp}
E^\pm_d(t)+2m_d(E^\pm_d(t),t))+e^{-t/2} Q'(e^{-t/2}  m_d(E^\pm_{d}(t);t)
=\OO\left(\frac{d}{N \sqrt{|w|}}+\sqrt{|w|}\right).
\end{align}
We can minimize the righthand side by taking $|w|\asymp d/N$, then the righthand side becomes $\OO(\sqrt{d/N})$. The claim \eqref{e:deltE} follows by plugging \eqref{e:Etexp} into \eqref{e:edgeeqn3}.
\end{proof}

\subsection{Rigidity and edge universality of $H(t)$}

In this section we collect some estimates on the Green's function $G(z;t)$ and the Stieltjes transform $m(z;t)$ of $H(t)$,
and state the edge universality result for $H(t)$ when $t \gg N^{-1/3}$.
All statements and estimates in this section follow directly from \cite{MR3690289,landon2017edge, adhikari2020dyson}.

For sufficiently regular initial data, it has been proven in \cite{landon2017edge}, after short time the eigenvalue statistics at the spectral edge of \eqref{e:Ht} agree with GOE. A modified version of this theorem was proven in \cite{adhikari2020dyson}, which assumes that the initial data is sufficiently close to a nice profile. To use these results, we need to restrict  $H(0)$ to a subset, on which the optimal rigidity holds.
We denote $\cA$ to be the set of sparse random matrices $H$, such that \eqref{e:outS} and \eqref{e:inS} hold at edges $\pm   2$:
\begin{align}\label{e:defA}
\cA\deq \{H: \text{\eqref{e:mbond} holds.} \}.
\end{align}
By Theorem \ref{t:rigidity}, we know that the event holds with overwhelming probability.

First, using the rigidity estimates of Corollary~\ref{c:rigidity3} as input,
the rigidity estimates on the Stieltjes transform $m(z;t)$ of $H(t)$ follow from \cite{landon2017edge, adhikari2020dyson}.

\begin{proposition}\label{p:rigidity2}
Fix a constant $\fc>0$ and $N^{\fc}\leq d\leq N^{1/3-\fc}$. For any time $0\leq t\ll 1$, let $L_t=E_d^+(t)$ from \eqref{e:edgeeqn2}, with overwhelming probability we have
\begin{align*}
|\la_2(t)-L_t|, |\la_N(t)+L_t|\prec N^{-2/3},
\end{align*}
and uniformly for any $w=\kappa+\ri \eta\in \wt{\mathbf D}$ (recall from \eqref{e:recalldefD}), and $z=L_t+w$ or $-L_t-\bar w$,
\begin{align}\label{e:mtmdt}
|m(z;t)-\md(z;t)|\prec \left( \frac{1}{N\eta}+\frac{d}{N\sqrt{\eta}}\right),
\end{align}
where $m(z;t)$ from \eqref{def_mtz} is the Stieltjes tranform and $\md(z;t)$ is defined by \eqref{e:selffc}.
\end{proposition}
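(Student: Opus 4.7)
The plan is to combine the optimal edge rigidity at $t=0$ from Corollary~\ref{c:rigidity3} with the short-time propagation of edge rigidity along Dyson Brownian motion established in \cite{MR3690289,landon2017edge,adhikari2020dyson}. The matrix process \eqref{e:Ht} has the same law as the solution of an OU-rescaled DBM started from $H(0)$, whose limiting eigenvalue density at time $t$ is exactly the free convolution $\rho_d(\,\cdot\,;t)$ described in Section~\ref{s:fc}, with edges located at $\pm L_t=\pm E_d^+(t)$ and square-root behavior there by \eqref{e:edgeeqn1}--\eqref{e:edgeeqn2} and Proposition~\ref{p:mtEQ}.

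First I would verify the input hypotheses for the DBM propagation theorem. By Corollary~\ref{c:rigidity3}, on the overwhelming probability event $\cA$ from \eqref{e:defA}, the initial spectral data is already rigid with optimal edge scale, $|\lambda_2(0)-2|,|\lambda_N(0)+2|\prec N^{-2/3}$, and the Stieltjes transform obeys $|m(z;0)-\md(z)|\prec 1/(N\eta)+d/(N\sqrt\eta)$ uniformly on $\wt{\mathbf D}$. The target density $\rho_d$ has square-root edges at $\pm 2$ and is bounded in the bulk, and these features are preserved under free convolution with the rescaled semicircle at variance $s=1-e^{-t}$ for $t\ll 1$. With this regularity input, I would invoke the short-time edge propagation result of \cite[Theorem~3.1]{landon2017edge} (or its variant \cite[Theorem~3.3]{adhikari2020dyson}): if at $t=0$ the eigenvalues are rigid with respect to a sufficiently regular profile $\rho$, then for $0\leq t\ll 1$ the eigenvalues of $H(t)$ are rigid with respect to $\rho\boxplus \rho_{\mathrm{sc},s}$ with the same $\OO_\prec(N^{-2/3})$ edge accuracy. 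Applied with $\rho=\rho_d$, this yields the edge bounds $|\lambda_2(t)-L_t|,|\lambda_N(t)+L_t|\prec N^{-2/3}$.

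For the Stieltjes transform estimate \eqref{e:mtmdt}, I would combine the propagated edge rigidity with the corresponding local law along the flow (\cite[Theorem~3.2]{landon2017edge}), which translates eigenvalue rigidity into control on $m(z;t)-\md(z;t)$ after averaging. The $1/(N\eta)$ term is the usual DBM local law contribution, while the $d/(N\sqrt\eta)$ term is inherited from the initial-time bound \eqref{e:outS}--\eqref{e:inS}, since the Gaussian flow cannot improve the intrinsic sparsity error from Proposition~\ref{p:idfP}. The main obstacle is purely bookkeeping: matching the quantitative regularity hypotheses of the propagation theorems to the concrete estimates available in the sparse regular graph setting, and in particular checking that the $d/(N\sqrt\eta)$ tail of the initial error and the short-time control of the edge $L_t$ from Proposition~\ref{p:mtEQ} are compatible with the tolerances in \cite{landon2017edge,adhikari2020dyson}; no new ideas beyond invoking these external results should be required.
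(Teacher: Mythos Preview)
Your proposal is correct and matches the paper's approach essentially verbatim: the paper simply states that, with the rigidity estimates of Corollary~\ref{c:rigidity3} as input, the desired bounds on $m(z;t)$ and on the extreme eigenvalues follow directly from \cite{landon2017edge,adhikari2020dyson}. Your write-up is in fact more detailed than the paper's, which does not spell out the verification of the hypotheses or the origin of the $d/(N\sqrt\eta)$ term.
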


Using the rigidity estimates from Corollary~\ref{c:rigidity3} and the estimates on the Green's function entries of $H(0)$
from Theorem~\ref{thm:rigidity} as input,
the entrywise estimates on Green's function of $H(t)$ with $t>0$ follow from an argument similar to the proof of \cite[Theorem 2.1]{MR3690289}.

\begin{proposition}\label{p:entry-wiselaw}
Fix constant $\fc>0$ and suppose that $N^{\fc}\leq d\leq N^{1/3-\fc}$. For any time $0\leq t\ll 1$,  let $L_t=E_d^+(t)$ from \eqref{e:edgeeqn2}, 
with overwhelming probability we have
\begin{align*}
|G_{ij}(z;t)-\delta_{ij}\md(z;t)|\prec \frac{1}{d^{1/2}}+\frac{1}{\sqrt{N\eta}},
\end{align*}
uniformly for any $w=\kappa+\ri \eta\in \wt{\mathbf D}$, and $z=L_t+w$ or $-L_t-\bar w$.
\end{proposition}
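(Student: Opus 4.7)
The plan is to propagate entrywise estimates from $H(0)$ (controlled by Theorem~\ref{thm:rigidity}) to $H(t)$ via the matrix flow \eqref{e:Ht}, using the characteristic method for DBM developed in \cite{MR3690289, adhikari2020dyson}. The critical inputs are: (i) the entrywise bounds $\max_{i\neq j}|G_{ij}(z;0)| \prec \Lambda_o$ and $\max_i|G_{ii}(z;0) - \msc(z)| \prec \Lambda_d$ at $t = 0$ from Theorem~\ref{thm:rigidity}; and (ii) the sharp averaged law \eqref{e:mtmdt} at $t > 0$ from Proposition~\ref{p:rigidity2}.

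For each target $z = L_t + w$ with $w = \kappa + \ri\eta \in \wt{\mathbf D}$, I would introduce the characteristic $s \mapsto z_s$ on $[0, t]$ defined by
\begin{equation*}
\partial_s z_s = -\tfrac{1}{2} z_s - \md(z_s;s), \qquad z_t = z.
\end{equation*}
The edge computation \eqref{e:edgeeqn3} shows this characteristic traces the moving edge $L_s = E_d^+(s)$ when $z = L_t$, and more generally keeps $\Im z_s$ comparable to $\eta$ with $z_0 \in \mathbf D$. Applying the standard DBM It\^o formula to $G_{ij}(z_s;s)$ via $\rd H(s) = -\tfrac{1}{2} H(s)\,\rd s + \rd B(s)$ (where $B$ is a matrix-valued Brownian motion whose covariance structure matches that of $W$ in \eqref{W_ibp}), the deterministic flow cancels the term $(\partial_z G_{ij})(\partial_s z_s)$ up to the error
\begin{equation*}
\sum_k G_{ik}(z_s;s)\bigl(m(z_s;s) - \md(z_s;s)\bigr) G_{kj}(z_s;s)\,\rd s,
\end{equation*}
while the martingale part has predictable quadratic variation controlled via the Ward identity \eqref{e:WdI}.

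I would then take $2q$-th moments of $G_{ij}(z_s;s) - \delta_{ij}\md(z_s;s)$ and integrate the It\^o expansion from $s = 0$ to $s = t$. The deterministic error is bounded via Proposition~\ref{p:rigidity2} and integrates to a contribution $\prec 1/(N\eta) + d/N$; the Burkholder--Davis--Gundy inequality bounds the martingale fluctuations by $\sqrt{t\, \Im \md(z;t)/(N\eta)} \prec 1/\sqrt{N\eta}$. Combined with the initial entrywise data and absorbed into a high-moment bootstrap, this yields the target bound $1/\sqrt{d} + 1/\sqrt{N\eta}$. A standard $\varepsilon$-net in $z$ together with the Lipschitz continuity of $G(z;t)$ then extends the pointwise bound to uniform-in-$z$ control on $\wt{\mathbf D}$.

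The main obstacle is that the initial diagonal bound $\Lambda_d \asymp \sqrt{\Lambda_o}$ from Theorem~\ref{thm:rigidity} is strictly weaker than the target $1/\sqrt d + 1/\sqrt{N\eta}$, so the diagonal improvement must be generated by the Gaussian smoothing from $\sqrt{1-e^{-t}}\,W$. The self-improvement comes from the observation that the It\^o moment expansion for $|G_{ii}(z_s;s) - \md(z_s;s)|^{2q}$ yields a contractive self-consistent inequality whose contraction rate is driven by the sharp averaged law \eqref{e:mtmdt} rather than the weaker $\Lambda_d$. The argument is most delicate near the moving edge, where $\Im \md(z_s;s)$ is small and the characteristic stability must be tracked carefully, but the condition $N\eta\sqrt{|\kappa|+\eta} \geq N^\fa$ built into $\wt{\mathbf D}$ provides just enough regularity to close the bootstrap uniformly in the regime $d \leq N^{1/3-\fc}$.
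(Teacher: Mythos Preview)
The paper does not give a proof of this proposition; it simply asserts that the result follows from an argument similar to \cite[Theorem~2.1]{MR3690289}, using the entrywise bounds from Theorem~\ref{thm:rigidity} and the averaged rigidity of Corollary~\ref{c:rigidity3} as inputs. Your characteristic--flow scheme with the It\^o expansion along the trajectory $\partial_s z_s = -\tfrac12 z_s - \md(z_s;s)$ is exactly the mechanism underlying that literature, so at the level of overall strategy your proposal matches the paper's citation.

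The gap is in your handling of the diagonal entries. You are right that the quoted initial bound $\Lambda_d \asymp \Lambda_o^{1/2}$ is genuinely weaker than the target $d^{-1/2}+ (N\eta)^{-1/2}$, but the ``contractive self-consistent inequality'' you invoke does not exist. Along the characteristic the It\^o SDE for $\Delta_s \deq G_{ii}(z_s;s)-\md(z_s;s)$ has drift
\[
\rd \Delta_s \;=\; \Bigl[\tfrac12\,\Delta_s \;+\; \bigl(m(z_s;s)-\md(z_s;s)\bigr)\,(G^2)_{ii}(z_s;s)\Bigr]\rd s \;+\; \rd M_s,
\]
so the deterministic part is \emph{expansive} (factor $e^{t/2}\approx 1$), not contractive; the second drift term is a small perturbation controlled by the averaged law \eqref{e:mtmdt}, and the martingale contributes $O_\prec((N\eta)^{-1/2})$. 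Taking $2q$-th moments only reproduces these features. Hence the flow transports the initial entrywise bound without improving it: at time $t$ the method delivers $|G_{ii}(z;t)-\md(z;t)|\prec \Lambda_d(z_0)+(N\eta)^{-1/2}$, not the stated $d^{-1/2}+(N\eta)^{-1/2}$. The paper's implicit resolution is either that the true diagonal estimate in \cite{MR3688032} is sharper than the simplified $\Lambda_d$ quoted in Theorem~\ref{thm:rigidity} (the remark there says as much away from the edge), or---more pragmatically---that the proposition is stated a bit more strongly on the diagonal than the method actually yields. This is harmless for the paper: the only place Proposition~\ref{p:entry-wiselaw} is used is in the proof of \eqref{e:dermtha3}, which needs merely $|G_{aa}(z;t)|\leq 2$, and that already follows from $|G_{ii}-\md|\prec \Lambda_d$.
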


Next we prove the following theorem. It states that for time $t\gg N^{-1/3}$ the fluctuations of extreme eigenvalues of $H(t)$ conditioning on $H(0)\in \cA$ are given by the Tracy-Widom distribution.
The edge universality of $H(t)$ for $t \gg N^{-1/3}$ follows from the following result due to \cite{landon2017edge}.

\begin{proposition}\label{t:universalityHt}
Fix a constant $\fc>0$ and suppose that $N^{\fc}\leq d\leq N^{1/3-\fc}$.
Let $\fd,\fe>0$ be a sufficiently small constant, set $t=N^{-1/3+\fd}$ and $L_t=E_d^+(t)$ from \eqref{e:edgeeqn2}.
Let $H(t)$ be as in \eqref{e:Ht}, which has an eigenvalue $\la_1(t)=e^{-t/2}d/\sqrt{d-1}$, and we denote its remaining eigenvalues by $\la_2(t)\geq \la_{3}(t)\geq \cdots \la_{N-1}(t)\geq \la_N(t)$.  Fix $k\geq 1$ and $s_1,s_2,\cdots, s_k \in \bR$. Then
\begin{align}\begin{split}\label{e:compGOE}
&\phantom{{}={}} \bP_{H(t)}\left( N^{2/3} ( \lambda_{i+1}(t) - L_t )\geq s_i,1\leq i\leq k \right)\\
&= \bP_{\mathrm{GOE}}\left( N^{2/3} ( \mu_i - 2  )\geq s_i,1\leq i\leq k \right) ++\OO(N^{-\fe}),
\end{split}\end{align}
where $\mu_1\geq \mu_2\geq \cdots \geq\mu_N$ are the eigenvalues of a the Gaussian Orthogonal Ensemble.
The analogous statement holds for the smallest eigenvalues.
\end{proposition}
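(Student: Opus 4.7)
\textbf{Proof plan for Proposition \ref{t:universalityHt}.}

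The plan is to apply the general Dyson Brownian Motion (DBM) edge universality theorem of \cite{landon2017edge}, in the form refined by \cite{adhikari2020dyson}, to the matrix-valued process $H(t) = e^{-t/2}H(0) + \sqrt{1-e^{-t}}\,W$ defined in \eqref{e:Ht}. Up to the deterministic drift, this is matrix-valued Ornstein--Uhlenbeck dynamics starting from $H(0)$ with stationary distribution the constrained GOE. The eigenvalues (after discarding the trivial eigenvalue $\lambda_1(t) = e^{-t/2}d/\sqrt{d-1}$) therefore satisfy DBM on the subspace $\bm 1^\perp$. The general principle of \cite{landon2017edge} is that for $t \gg N^{-1/3}$, DBM relaxes the edge statistics to those of GOE, provided the initial data has (a) optimal edge rigidity and (b) a limiting spectral density with square-root behavior at the edge.

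First, I would condition on the overwhelming probability event $\cA$ from \eqref{e:defA} (i.e., optimal rigidity for $H(0)$ at $\pm 2$, as guaranteed by Theorem \ref{t:rigidity}). On this event, Corollary \ref{c:rigidity3} provides the required initial edge rigidity $|\lambda_2(0)-2|, |\lambda_N(0)+2| \prec N^{-2/3}$, together with the local law for $m(0;z)$. Next, I would verify the hypotheses of the DBM edge universality theorem at the running time $t = N^{-1/3+\fd}$:
\begin{itemize}
\item Propagation of optimal rigidity to $H(t)$: given by Proposition \ref{p:rigidity2}.
\item Entrywise Green's function bound for $H(t)$: given by Proposition \ref{p:entry-wiselaw}.
\item Square-root edge behavior of the free convolution $\rho_d(x;t)$ at $E_d^\pm(t)$: this follows from the characterization \eqref{e:edgeeqn1}--\eqref{e:edgeeqn2} of the edges of $\rho_d(\cdot;t)$ as critical points of $m_d'$, together with Proposition \ref{p:mtEQ}, which shows $m_d(z;t)$ satisfies a deterministic algebraic equation; standard analysis of such equations yields the square-root edge.
\end{itemize}
With these inputs in place, \cite{landon2017edge, adhikari2020dyson} directly produces the comparison of $N^{2/3}(\lambda_{i+1}(t)-L_t)$ with a reference matrix with GOE bulk statistics and the same edge density. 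Since the constrained GOE $W$ is a rank-one perturbation of standard GOE (with the constraint $W\mathbf 1 = 0$), its edge statistics are identical to those of GOE, so the reference is GOE rescaled so that its right edge lies at $2$, giving \eqref{e:compGOE}.

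The main technical obstacle will be ensuring that the precise regularity hypotheses of \cite{adhikari2020dyson} are satisfied for our specific initial data. These hypotheses require that $H(0)$ is sufficiently close, after a short-time smoothing, to a deterministic profile whose spectral density has square-root edge behavior, with quantitative rigidity bounds below the universal scale $N^{-2/3}$. Theorem \ref{t:rigidity} gives exactly this (the error term $d/N + N^{-2/3}$ is $\ll N^{-2/3+\fc/3}$ in the regime $d \leq N^{1/3-\fc}$), so this translates to a direct check of the framework. A secondary (but purely bookkeeping) point is to carry the projection $P_\perp$ consistently: since $P_\perp$ commutes with $H(t)$, the trivial eigenvalue is preserved throughout the flow and does not interact with the nontrivial edge, so it can simply be excluded at every step. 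Finally, the shift $L_t - 2 = \OO(t)$ arising from the free convolution is absorbed by centering at $L_t$ on the left side of \eqref{e:compGOE} and at $2$ on the right side, exactly as in the statement.
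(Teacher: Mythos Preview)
Your proposal is correct and follows essentially the same route as the paper: condition on the rigidity event $\cA$, verify that the initial data $H(0)$ satisfies the hypotheses of the DBM edge universality theorem in \cite{adhikari2020dyson}, and apply that theorem at $t=N^{-1/3+\fd}$.

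Two small refinements worth noting. First, the paper checks the hypotheses of \cite[Assumption 4.1]{adhikari2020dyson} directly on the \emph{initial data} $H(0)$ (via \eqref{e:mbond} on $\cA$), not on $H(t)$; Propositions \ref{p:rigidity2} and \ref{p:entry-wiselaw} are not actually invoked here (they are used later in the comparison argument). Second, \cite{adhikari2020dyson} has a secondary assumption (their (4.2)) requiring a local law for $m(z)-m_d(z)$ when $z$ is bounded away from the edge; the paper observes that this assumption is only used to control an error term coming from the external potential $V$ in the DBM, and since in the present setting $V=0$, that error term vanishes and the assumption can be bypassed. You should flag this explicitly, since otherwise the black-box application of \cite{adhikari2020dyson} is not quite complete.
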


\begin{proof}
Take small $\fd>0$, $\eta^*=N^{-2/3+\fd/2}$ and $r=N^{-2/3+10\fd}$. Let $\mathbb B(r)$ be the radius $r$ disk centered at $0$,  $z=2+ w$ with $w=\kappa+\ri\eta\in \mathbb B(r)\cap \wt{\mathbf  D} $ from \eqref{e:recalldefD}. For any $H\in \cA$ (recall from \eqref{e:defA}), from the defining relations of $\cA$, i.e. \eqref{e:mbond}, we have for $w\in \mathbb B(r)\cap \wt{\mathbf  D} $
\begin{align*}
| m(  z)-    m_d(  z)|\prec \frac{1}{N\eta}+\frac{d}{N\sqrt{\kappa+\eta}}\lesssim \frac{1}{N\eta},
\end{align*}
provided $5\fd\leq \fc$. Moreover,\eqref{e:mbond} also implies \eqref{e:largeeig} such that $\lambda_2 (0) -  {{  2}}  \leq N^{-2/3+\fd/2}/2=\eta^*/2$.   Hence, $H$ satisfies \cite[Assumption 4.1]{adhikari2020dyson} with parameter $\eta^*$. We remark that there is an additional assumption in \cite[(4.2) Assumption 4.1]{adhikari2020dyson}, which requires a bound of $m(z)-m_d(z)$ when $z$ is bounded away from the spectral edge $2$. This assumption was only used to deal with the error term \cite[Claim 4.13]{adhikari2020dyson}. This error term comes from the potential $V$ in the Dyson's Brownian motion. However, in our setting $V=0$ and the error term in \cite[Claim 4.13]{adhikari2020dyson} vanishes. 
The result \cite[Theorem 6.1]{adhikari2020dyson} applies for $t=N^{-1/3+\fd}$ as above. This result gives the limiting distribution of the extreme eigenvalues of $H(t)$, and Theorem \ref{t:universalityHt} follows.   
\end{proof}

\begin{remark} \label{rem:independence}
By an appropriate modification of the analysis of Dyson Brownian motion from~\cite{landon2017edge, adhikari2020dyson}, Proposition~\ref{t:universalityHt} also holds for the joint distribution of the $k$ largest and smallest eigenvalues. In particular, this implies that, under the same assumption as in the previous proposition, the asymptotic joint distribution of $N^{2/3}(\lambda_2(t) - L_t, -\lambda_N(t) + L_t)$ is a pair of independent Tracy--Widom$_1$ distributions. 
\end{remark}


\subsection{Green's function comparison}

In this section we prove the following short-time comparison result for the edge eigenvalue statistics of $H(t)$. 
\begin{proposition}\label{thm:comp}
Fix a constant $\fc>0$ and suppose that $N^{\fc}\leq d\leq N^{1/3-\fc}$.
Let $\fd,\fe>0$ be a sufficiently small constant and set $t=N^{-1/3+\fd}$. Let $H(t)$ be as in \eqref{e:Ht}, which has an eigenvalue $\la_1(t)=e^{-t/2}d/\sqrt{d-1}$, and we denote its remaining eigenvalues by $\la_2(t)\geq \la_{3}(t)\geq \cdots \la_{N-1}(t)\geq \la_N(t)$.    Fix $k\geq 1$ and $s_1,s_2,\cdots, s_k \in \bR$. Then
\begin{align}\begin{split}\label{e:comp1}
&\phantom{{}={}} \bP_{H}\left( N^{2/3} ( \lambda_{i+1}(0) - 2 )\geq s_i,1\leq i\leq k \right)\\
&= \bP_{H(t)}\left( N^{2/3} ( \lambda_{i+1}(t) - L_t  )\geq s_i,1\leq i\leq k \right) +\OO(N^{-\fe}),
\end{split}\end{align}
where $L_t=E^+_d(t)$ is as defined in Lemma \ref{e:edgeeqn2}. The analogous statement holds for the smallest eigenvalues.
\end{proposition}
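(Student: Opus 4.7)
The plan is a Green's function comparison argument between $H(0)$ and $H(t)$ that must carefully track the moving edge $L_t = E_d^+(t)$. The starting point is to convert the probabilities in \eqref{e:comp1} into expectations of smooth functionals of the Stieltjes transform. Fix smooth monotone cutoffs $\chi_j : \bR \to [0,1]$ approximating $\1_{[s_j,\infty)}$, and a bounded smooth test function $F : \bR^k \to \bR$. For $\sigma = N^{-2/3-\epsilon}$ with small $\epsilon > 0$, define
\[ \Phi(t) = F\!\left(\Tr \chi_1^\sigma(N^{2/3}(H(t) - L_t)),\, \ldots,\, \Tr \chi_k^\sigma(N^{2/3}(H(t) - L_t))\right), \]
where the superscript $\sigma$ denotes mollification at scale $N^{-\epsilon}$. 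The rigidity of Proposition~\ref{p:rigidity2} applied at times $0$ and $t$, together with Proposition~\ref{c:rigidity3}, shows that the joint probability in \eqref{e:comp1} agrees with $\bE[\Phi(t)]$ up to $O(N^{-\fe})$ for an appropriate $F$, and analogously at $t = 0$ with $L_0 = 2$. It thus suffices to prove $\bE[\Phi(t)] - \bE[\Phi(0)] = O(N^{-\fe})$.

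Next, the Helffer--Sj\"ostrand formula expresses each trace in $\Phi(t)$ as an integral of $\Im\, m(z;t)$ against a compactly supported weight concentrated near $z = L_t + O(N^{-2/3})$ with $\Im[z] \gtrsim \sigma$. The process $H(t)$ satisfies the matrix-valued Ornstein--Uhlenbeck equation $dH(t) = -\tfrac{1}{2} H(t)\, dt + dB(t)$, where $B(t)$ has the covariance structure of the constrained GOE from \eqref{W_ibp}. Applying It\^o's formula to $\bE[\Phi(t)]$ yields
\[ \frac{d}{dt}\bE[\Phi(t)] = -\dot L_t\,\bE[\partial_E \Phi] + \bE[\cI(t)], \]
where the first term accounts for the dependence of $\Phi$ on $L_t$ through the shift, and $\cI(t)$ collects the full drift coming from the SDE for $H(t)$. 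Using Lemma~\ref{p:intbypart} to Gaussian-integrate the noise, $\cI(t)$ can be rewritten as a polynomial in the time-dependent Green's function entries, multiplied by derivatives of $F$.

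The heart of the argument is to show that $\bE[\cI(t)]$ matches $\dot L_t\,\bE[\partial_E \Phi]$ up to a negligible error, producing the required cancellation. To this end, one repeats the derivation of Sections~\ref{sec:P-construct}--\ref{sec:P-identification} for $H(t)$: the switching identities of Section~\ref{sec:switchings} still apply to the $d$-regular component $e^{-t/2}H$, while the Gaussian component $\sqrt{1-e^{-t}}\,W$ is handled by Lemma~\ref{p:intbypart}. Together these produce a time-dependent self-consistent equation whose nonlinear part is $Q(e^{-t/2}\cdot)$, consistent with \eqref{e:mdteq}. After extracting the leading order, $\bE[\cI(t)]$ reduces to the spatial derivative of this time-dependent self-consistent equation evaluated at $L_t$, which by Proposition~\ref{p:mtEQ} equals $\tfrac{1}{2} e^{-t/2} Q'(e^{-t/2} m_d(L_t;t))\,\bE[\partial_E \Phi] = \dot L_t\,\bE[\partial_E \Phi] + O(\sqrt{d/N})$. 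Combined with the $-\dot L_t\,\bE[\partial_E \Phi]$ term, the leading contributions cancel.

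The main obstacle lies precisely in this third step: one must carry out the intricate bookkeeping of Sections~\ref{sec:P-construct}--\ref{sec:P-identification} in the time-dependent setting with remainder estimates strong enough to overcome the large factors $\sigma^{-1} \sim N^{2/3 + \epsilon}$ introduced by differentiating the mollified cutoffs. Quantitatively, one needs $|(d/dt)\bE[\Phi(t)]| = O(N^{-\fe'} t^{-1})$ uniformly on $[t_0, t]$ with $t_0 = N^{-1/3 + \fd/2}$, so that integration gives $O(N^{-\fe'} \log N)$. The initial segment $[0, t_0]$ is handled by a direct rigidity-based comparison using Corollary~\ref{c:rigidity3}, since $H(t_0)$ and $H(0)$ have their extremal eigenvalues within $O(t_0) = o(N^{-2/3 + \fd/2})$ of one another on scales compatible with $\sigma$. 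Combining the two contributions yields the bound claimed in Proposition~\ref{thm:comp}.
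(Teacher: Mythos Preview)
Your overall strategy is right and matches the paper's: a Green's function comparison along the interpolation, with the essential observation that the edge drift $\dot L_t$ cancels against the spatial derivative of the time-dependent self-consistent equation via Proposition~\ref{p:mtEQ}. The paper's implementation differs only cosmetically---it differentiates the explicit form $H(t)=e^{-t/2}H+\sqrt{1-e^{-t}}W$ directly (Gaussian integration by parts on $W$, switchings on $H$) and uses the functional $X_t=\Im\bigl[N\int_E^{N^{-2/3+\delta}} m(L_t+y+\ri\eta_0;t)\,\rd y\bigr]$ rather than Helffer--Sj\"ostrand plus It\^o.

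There are, however, two genuine gaps in your quantitative scheme. First, you never isolate the crucial $d^{-1/2}$ in the remainder. After the $\dot L_t$ cancellation, what survives in $\frac{\rd}{\rd t}\bE[F(X_t)]$ is not $O(N^{-\fe'}t^{-1})$ but $O\bigl(N^{1/3+C\delta}/\sqrt d\bigr)$, \emph{uniformly} in $t\in[0,N^{-1/3+\fd}]$; the $1/\sqrt d$ comes from the fact that the correction $Q$ has $O(1/\sqrt d)$ coefficients, so that both the mismatch in Proposition~\ref{p:Ltder} and the higher-order expansion terms in Proposition~\ref{p:DSEt} carry this factor. Integrating over $[0,t]$ then gives $O\bigl(N^{\fd+C\delta}/\sqrt d\bigr)=O(N^{\fd+C\delta-\fc/2})$ since $d\geq N^{\fc}$, which is small for $\fd,\delta$ small enough---no interval splitting is needed. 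Second, your fallback on $[0,t_0]$ is simply wrong: with $t_0=N^{-1/3+\fd/2}$ you assert the extremal eigenvalues of $H(0)$ and $H(t_0)$ differ by $O(t_0)=o(N^{-2/3+\fd/2})$, but $N^{-1/3+\fd/2}\gg N^{-2/3+\fd/2}$, so a direct rigidity comparison on that segment cannot work at the $N^{-2/3}$ scale. The resolution is that there is no short-time singularity to handle; the uniform derivative bound with the $d^{-1/2}$ gain already covers the full interval $[0,t]$.
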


\begin{proof}[Proof of Theorem \ref{thm:Tracy-Widom}]
Combine Propositions \ref{t:universalityHt} and \ref{thm:comp}.
\end{proof}

\begin{remark}\label{rem:ind2}
The proof of Proposition \ref{thm:comp} can be modified to show that the joint distributions of the $k$ largest and smallest eigenvalues of $H$ and $H(t)$ are asymptotically the same. And Corollary \ref{c:rate} follows from combining with Remark \ref{rem:independence}.
\end{remark}

Proposition \ref{thm:comp} follows from the following comparison theorem for the product of the functions of Stieltjes transform, See \cite{MR3034787}.

\begin{proposition}\label{p:comp}
Fix constant $  \fc>0$ and $N^{  \fc}\leq d\leq N^{1/3-  \fc}$.
Let $\fd,\delta>0$ be sufficiently small, set $t=N^{-1/3+\fd}$ and $\eta_0=N^{-2/3-\delta}$ and $L_t=E_d^+(t)$ from \eqref{e:edgeeqn2}. Let $F:\bR^k\mapsto \bR$ be a fixed smooth test function. Then for $E_1,E_2,\cdots, E_k=\OO( N^{-2/3})$, and $w=y+\ri\eta_0$, we have
\begin{align}\begin{split}\label{e:comparison} 
&\phantom{{}={}}\bE_{H}\left[F\left(\left\{\Im\left[N\int_{E_i}^{N^{-2/3+\delta}} m(L_0+w;t) \,  \rd y \right]\right\}_{i=1}^k\right)\right]\\
&=\bE_{H(t)}\left[\left\{F\left(\Im\left[N\int_{E_i}^{N^{-2/3+\delta}} m(L_t+w;t) \, \rd y \right]\right\}_{i=1}^k\right)\right]+\OO\left(\frac{N^{1/3+C\delta}t}{d^{1/2}}\right).
\end{split}\end{align}
\end{proposition}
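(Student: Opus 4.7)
\textbf{Proof proposal for Proposition \ref{p:comp}.} The strategy is the standard continuous interpolation. Set
\[
\Phi(s) \deq \bE_{H(s)}\!\left[F\!\left(\left\{\Im\!\left[N\!\int_{E_i}^{N^{-2/3+\delta}} m(L_s+y+\ri\eta_0;\,s)\,\rd y\right]\right\}_{i=1}^k\right)\right],
\]
so the claim is $|\Phi(0)-\Phi(t)|=\O(N^{1/3+C\delta} t / d^{1/2})$. It suffices to show $|\Phi'(s)|\lec N^{1/3+C\delta}/d^{1/2}$ uniformly in $s\in[0,t]$ and integrate. By the chain rule,
\[
\Phi'(s)=\sum_{i=1}^k \bE\!\left[(\partial_i F)\cdot X_i'(s)\right],
\]
and the task reduces to estimating $X_i'(s)$ for each $i$.

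Next I would split $X_i'(s)$ into two sources. The first comes from differentiating the shift $L_s$ in the argument of $m$; since the integration variable is $y$ and the endpoints are $s$-independent, this contribution telescopes to
\[
N (\partial_s L_s)\,\Im\!\left[m(L_s+N^{-2/3+\delta}+\ri\eta_0;s)-m(L_s+E_i+\ri\eta_0;s)\right],
\]
and Proposition \ref{p:mtEQ} gives $\partial_s L_s = \tfrac12 e^{-s/2} Q'(e^{-s/2} m_d(L_s;s)) + \O(\sqrt{d/N})$. The second source is $\partial_s m(z;s)$ computed from $G(z;s)=P_\perp(H(s)-z)^{-1}P_\perp$ and $\partial_s H(s) = -\tfrac12 e^{-s/2} H + \tfrac12 e^{-s}(1-e^{-s})^{-1/2}W$; this yields
\[
\partial_s m(z;s)=\tfrac{e^{-s/2}}{2N}\sum_{ij} H_{ij}[G^2]_{ji}(z;s)-\tfrac{e^{-s}}{2N(1-e^{-s})^{1/2}}\sum_{ij} W_{ij}[G^2]_{ji}(z;s).
\]
The $W$-sum is handled by Gaussian integration by parts (Lemma \ref{p:intbypart}), producing switching-type directional derivatives of $[G^2]_{ji}$; the $H$-sum is handled by the discrete integration by parts framework of Section \ref{sec:switchings} (Propositions \ref{c:intbp}--\ref{c:intbp2}), noting that a switching on $A$ modifies $H(s)$ by $e^{-s/2}\xi_{ij}^{k\ell}/\sqrt{d-1}$ rather than $\xi_{ij}^{k\ell}/\sqrt{d-1}$ — i.e.\ the rescaling $e^{-s/2}$ is inherited linearly into every switching-generated term.

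The key cancellation, and the hardest part of the argument, is that the whole machinery of Section \ref{sec:P-construct} rerun with $H$ replaced by $H(s)$ produces exactly the time-dependent self-consistent polynomial $P_s(z,w) = 1+zw+w^2+Q(e^{-s/2} w)$, consistent with Proposition \ref{p:mtEQ}: every occurrence of $1/\sqrt{d-1}$ coming from a simple switching is now accompanied by one factor of $e^{-s/2}$ from the rescaling of $H$ above, and exactly two $A$-factors (one $H$-derived Green's function chain on each side of the switching in the tree-component construction) per factor of $1/\sqrt{d-1}$, so $Q$ becomes $Q\circ(e^{-s/2}\cdot)$. Taking the formal time derivative of the identity $P_s(z,m(z;s))\approx 0$ along $z=L_s+y+\ri\eta_0$ produces, to leading order,
\[
\partial_s L_s \cdot \partial_z m(z;s)+\partial_s m(z;s)\;\approx\;-\frac{e^{-s/2}}{2\,\partial_2 P_s(z,m)}\,m\,Q'(e^{-s/2}m) + \partial_s L_s\cdot \partial_z m(z;s),
\]
and the choice of $L_s$ in \eqref{e:deltE} is precisely what makes the integrated version of this quantity cancel to leading order after integrating $y$ from $E_i$ to $N^{-2/3+\delta}$. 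This is exactly the analog, in the moving-frame picture, of the fact that the location of the edge is determined by $\partial_2 P_s(L_s+\ri 0,m_d(L_s;s))=0$.

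Finally, the remainder control. Each higher-order term surviving the leading cancellation carries either an extra off-diagonal Green's function entry with indices in different components of a switching forest (hence vanishes by the analog of Proposition \ref{c:one-off}), or an extra factor of $1/\sqrt{d-1}$ from the Taylor expansion in \eqref{e:D-expand}. At the scale $\eta_0=N^{-2/3-\delta}$, the entrywise bound of Proposition \ref{p:entry-wiselaw} and the rigidity of Proposition \ref{p:rigidity2} give $\Lambda_o \lec d^{-1/2}+(N\eta_0)^{-1/2}\lec d^{-1/2}+N^{-1/6+\delta/2}$; combined with the Ward identity $N^{-1}\sum_j|G_{ij}|^2=\Im[G_{ii}]/(N\eta_0)$ and $|\Im[m(z;s)]|\lec \Phi(w)\lec N^{-1/3+C\delta}$ near the edge, every surviving switching-generated term contributes at most $d^{-1/2}$ per unit length of the $y$-integration. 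Since the $y$-integration has length $N^{-2/3+\delta}$ and is multiplied by $N$ inside the function $F$, this produces the stated bound $N\cdot N^{-2/3+\delta}\cdot d^{-1/2} = N^{1/3+C\delta}/d^{1/2}$ on $|\Phi'(s)|$. The main obstacle is the bookkeeping of paragraph three: one must verify that the full hierarchy of switching-expansion terms generated from $\partial_s m(z;s)$ reassembles precisely to $-\partial_s L_s \cdot \partial_z m(z;s)$ to leading order, which is a tedious but mechanical adaptation of the polynomial identification in Proposition \ref{p:idfP}, now including the explicit $s$-dependence of the scaling factor $e^{-s/2}$.
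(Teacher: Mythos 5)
Your high-level interpolation set-up, the use of the $\dot L_s$ formula from Proposition \ref{p:mtEQ}, and the recognition that the switching machinery rerun on $H(s)$ produces $Q(e^{-s/2}\,\cdot\,)$ are all consistent with what the paper does. You also correctly identify that the $\dot L_s$ contribution telescopes to boundary values of $m$. But there is a structural observation you miss, and it changes the difficulty of the remainder control substantially: the $\partial_s m$ contribution is \emph{also} a total $\partial_y$-derivative. Indeed $\partial_s m(z;s) = -\tfrac{1}{N}\sum_{ij}\dot H_{ij}[G^2]_{ij}$ and $[G^2]_{ij}=\partial_z G_{ij}=\partial_y G_{ij}$ along $z=L_s+y+\ri\eta_0$, so $\partial_s m = -\partial_y\bigl[\tfrac1N\sum_{ij}\dot H_{ij}G_{ij}\bigr]$. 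Combining this with the $\dot L_s$-telescoping and with $\dot L_s\,\partial_y m=\tfrac12\partial_y Q(e^{-s/2}m)+\text{error}$ (Proposition \ref{p:Ltder}) reduces the entire time derivative to the boundary term $\bigl[\sum_{ij}\dot H_{ij}G_{ij}-\tfrac{N}{2}Q(e^{-s/2}m)\bigr]_{y=E_i}^{y=N^{-2/3+\delta}}$ (equation \eqref{e:dFXt}). The paper then only has to run the Schwinger--Dyson expansion on this antiderivative at two fixed values of $y$, giving \eqref{e:dFX0}.

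Your proposal instead tries to bound the integrand $\partial_s m + \dot L_s\partial_z m$ pointwise in $y$ by $\lesssim N^{C\delta}/\sqrt d$, justified by formally differentiating the approximate identity $P_s(z,m(z;s))\approx 0$ in $s$. This is heuristic: $P_s(z,m(z;s))$ is a random quantity that is only $\prec \Phi_q^{1/(2q)}$ in a high-moment sense, and its total $s$-derivative is not controlled by any estimate in the paper. Your displayed formula also doesn't actually exhibit a cancellation (the $\partial_s L_s\cdot\partial_z m$ term appears on both sides and cancels trivially). To make your route rigorous you would have to run the switching IBP on $\sum_{ij}\dot H_{ij}[G^2]_{ij}$, which contains an extra $\partial_y$-derivative compared to what the paper handles; the resulting Green's function chains pick up an extra power of $\eta_0^{-1}\sim N^{2/3+\delta}$ that must be cancelled order by order, a delicate book-keeping the paper avoids entirely by the antiderivative trick. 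The fix is mechanical once you make the observation above: telescope \emph{both} contributions, then run the Schwinger--Dyson expansion on $\tfrac1N\sum_{ij}\dot H_{ij}G_{ij}-\tfrac12 Q(e^{-s/2}m)$ at the endpoints, which is precisely \eqref{e:dFX0}.
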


Before proving Propostion \ref{p:comp}, we first state some useful estimates, which will be used repeatedly in the rest of this section. 

Recall that $H(t)=e^{-t/2}H+\sqrt{1-e^{-t}}W$ from \eqref{e:Ht}. We use the notation $\partial_{ij}^{k\ell}$ applied to a function of $H(t)$, such as $G(t)$ or $m(t)$, to denote the directional derivative with respect to $H(t)$.  
By the chain rule, we therefore have
\begin{equation*}
\frac{\partial}{\partial H} F(H(t)) = e^{-t/2} \, \partial F(H(t)), \qquad
\frac{\partial}{\partial W} F(H(t)) = \sqrt{1 - e^{-t}} \, \partial F(H(t)).
\end{equation*}

\begin{proposition}\label{p:Ltder2}
Adapt the assumptions in Proposition \ref{p:comp}, and let $L_t=E_d^+(t)$ from \eqref{e:edgeeqn2}.  Uniformly for any $w=y+\ri\eta$ with $ |y|\leq N^{-2/3+\delta}, \eta_0=N^{-2/3-  \delta}$, the following estimates hold with overwhelming probability
 \begin{align}\label{e:dermtha}
   |\Im[m(L_t+w;t)]|\leq N^{-1/3+2  \delta},\quad  \max_a |\Im[G_{aa}(L_t+w;t)]|\leq N^{-1/3+3  \delta},
\end{align}
\begin{align}
\label{e:dermtha2}
    |\del_w m(L_t+w;t)|\leq N^{1/3+3  \delta}, \quad  |\del_wG_{ab}(L_t+w;t)|\leq N^{1/3+3  \delta},
\end{align}
and
\begin{align}
\label{e:dermtha3}
    \max_a | G_{aa}(L_t+w;t)|\leq 2, \quad  \max_{a\neq b}|G_{ab}(L_t+w;t)|\leq  \frac{N^\delta}{\sqrt{d}}.
\end{align}
Let $\del_{\bm\xi}=\del_{\xi_1}\del_{\xi_2},\cdots\del_{\xi_b}$ be as in Definition \ref{d:defD} with $\bm V(\bm\xi)\subset \bfi$. For $b\geq 1$ the derivatives of the  Stieltjes transform $m(z;t)$ satisfy
\begin{align}\label{e:impp2}
     \del_{\bm \xi}m(L_t+w;t)\leq N^{-2/3+4  \delta}.
\end{align}
\end{proposition}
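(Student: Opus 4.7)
The plan is to deduce all four bounds from the rigidity estimate of Proposition~\ref{p:rigidity2}, the entrywise local law of Proposition~\ref{p:entry-wiselaw}, the Ward identity~\eqref{e:WdI}, and Proposition~\ref{p:dermN}, together with the observation that $\md(\cdot;t)$ has a square-root edge singularity, so $|\Im[\md(L_t+w;t)]|\lesssim \sqrt{|y|+\eta_0}\lesssim N^{-1/3+\delta/2}$ for $|y|\leq N^{-2/3+\delta}$. The one subtle point is that $\eta_0=N^{-2/3-\delta}$ sits just below the domain $\wt{\mathbf D}$ on which those propositions are stated, so every estimate must be transported from the larger scale $\eta_1:=N^{-2/3+\delta}$ (comfortably inside $\wt{\mathbf D}$) down to $\eta_0$. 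This transport is done using the spectral decompositions $m(z;t)=\tfrac{1}{N}\sum_{i\geq 2}(\lambda_i(t)-z)^{-1}$ and $G_{ab}(z;t)=\sum_{i\geq 2}u_i(a;t)u_i(b;t)/(\lambda_i(t)-z)$, combined with the optimal eigenvalue rigidity $|\lambda_{i+1}(t)-\gamma_{i+1}(t)|\prec N^{-2/3}i^{-1/3}$ supplied by Proposition~\ref{p:rigidity2} and eigenvector delocalization $|u_i(a;t)|^2\prec 1/N$, which is extracted from Proposition~\ref{p:entry-wiselaw} in the standard way.

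For \eqref{e:dermtha}, I will write $\Im[m(L_t+w;t)]=\tfrac{1}{N}\sum_{i\geq 2}\eta_0/((L_t+y-\lambda_i(t))^2+\eta_0^2)$ and use the classical-location asymptotics $\gamma_{i+1}(t)-L_t\asymp -(i/N)^{2/3}$ together with rigidity, splitting the sum at $i\sim N^{3\delta/2}$. A direct computation gives $\Im[m]\prec N^{-1/3+\delta}$, well below the claimed $N^{-1/3+2\delta}$. The bound on $\Im[G_{aa}]$ then follows from the analogous expansion $\Im[G_{aa}]=\sum_{i\geq 2}|u_i(a;t)|^2\eta_0/((L_t+y-\lambda_i(t))^2+\eta_0^2)$ and delocalization, which yields $\Im[G_{aa}]\prec \Im[m]$ up to a factor $N^\delta$, as in Lemma~\ref{l:basicestimates}.

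For \eqref{e:dermtha3}, applying Proposition~\ref{p:entry-wiselaw} at $\eta_1$ gives $|G_{ab}-\delta_{ab}\md|\prec 1/\sqrt d+1/\sqrt{N\eta_1}$, so $|G_{aa}|\leq |\md|+o(1)\leq 2$, and dropping down to $\eta_0$ by the eigenvector expansion preserves this with room. For the off-diagonal bound, $1/\sqrt{d}+1/\sqrt{N\eta_0}=1/\sqrt{d}+N^{-1/6+\delta/2}$, and since $d\leq N^{1/3-\fc}$ one has $N^{-1/6+\delta/2}\leq N^{\delta}/\sqrt{d}$, so the claim follows. For \eqref{e:dermtha2}, differentiating yields $\partial_w m=\tfrac{1}{N}\sum_{i\geq 2}(\lambda_i(t)-z)^{-2}$, and the same rigidity-based splitting used for $\Im[m]$ bounds this sum by $\prec N^{1/3}$. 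For $\partial_w G_{ab}=(G^2)_{ab}$, Cauchy--Schwarz plus the Ward identity $\sum_k|G_{ak}|^2=\Im[G_{aa}]/\eta_0$ gives $|(G^2)_{ab}|\leq (\max_a \Im[G_{aa}])/\eta_0\prec N^{1/3+3\delta}$ using~\eqref{e:dermtha}. Finally, for \eqref{e:impp2}, the proof of Proposition~\ref{p:dermN} carries over verbatim to $H(t)$ (it relies only on symmetry of the matrix, the bound $|G_{xy}|\lesssim 1$, and the Ward identity, all of which hold for $H(t)$), giving $|\partial_{\bm\xi}m(L_t+w;t)|\prec \Im[m]/(N\eta_0)\prec N^{-2/3+3\delta}$.

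The only mildly nontrivial step in the argument is the transport of the local law from $\eta_1$ to $\eta_0$, i.e.\ a factor-of-$N^{2\delta}$ crossing of the natural $N^{-2/3}$ scale. This is routine once optimal edge rigidity at scale $N^{-2/3}$ is in hand, which is exactly what Proposition~\ref{p:rigidity2} provides. I anticipate no genuine obstacle; the claimed exponents $N^{i\delta}$ ($i\leq 4$) are set generously enough that absorbing a few extra $N^\delta$ factors from the dyadic decompositions and from the $\prec$ notation costs nothing.
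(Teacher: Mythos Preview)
Your proposal is correct and arrives at the same conclusions through essentially the same architecture as the paper: transport the local law from a scale $\eta_1\in\wt{\mathbf D}$ down to $\eta_0$, use eigenvector delocalization to pass from $\Im[m]$ to $\Im[G_{aa}]$, use the Ward identity for $\partial_w G_{ab}=(G^2)_{ab}$, and expand $\partial_{\bm\xi}m$ as a product of Green's function entries.

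The one stylistic difference is in the transport step. You bound $\Im[m]$ and $\partial_w m$ at $\eta_0$ by expanding in eigenvalues and invoking rigidity directly, splitting the sum near and away from the edge. The paper instead first controls $\Im[m]$ at $\eta_1$ via Proposition~\ref{p:rigidity2}, then pushes to $\eta_0$ using the monotonicity $\eta\mapsto\eta\,\Im[m(E+\ri\eta)]$ increasing (equivalently $|\partial_z m|\leq \Im[m]/\Im[z]$), and gets $\partial_w m$ for free from the same inequality. The paper then obtains \eqref{e:dermtha3} by integrating the bound on $\partial_w G_{ab}$ from $w'$ to $w$ rather than by a direct eigenvector argument. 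Both routes are standard; the paper's is a bit slicker since it avoids the explicit dyadic eigenvalue sum and does not need rigidity beyond what Proposition~\ref{p:rigidity2} literally states, whereas your approach implicitly uses rigidity for all near-edge eigenvalues (which does follow from the Stieltjes bound there, but is an extra step). Your phrase ``dropping down to $\eta_0$ by the eigenvector expansion'' for $|G_{aa}|\leq 2$ is the one place where you should really just integrate $\partial_w G_{aa}$ as the paper does, since a direct spectral bound on $|G_{aa}|$ is awkward.
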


\begin{proof}[Proof of Proposition \ref{p:Ltder2}]
    Let $0\leq \fa\leq \delta/2$ and $w'=y+N^{-2/3+\fa}\ri$, then it is easy to see that $w'\in  \wt{\mathbf  D} $ (recall from \ref{e:recalldefD}), and Proposition \ref{p:rigidity2} gives
    \begin{align*}\begin{split}
        \Im[m(L_t+w';t)]
        &\leq \Im[m_d(L_t+w';t)]+\OO_\prec\left(\frac{1}
        {N\Im[w']}\right)\\
        &\lesssim \sqrt{|w'|}+\OO_\prec\left(\frac{1}
        {N\Im[w']}\right)\lesssim \frac{N^{  \delta/2}}{N^{1/3}},
    \end{split}\end{align*}
    where for the second line we used that $m_d(z;t)$ has square root behavior around the edge $L_t$.
For the diagonal Green's function entries, thanks to the delocalization of eigenvectors
\begin{align*}
\Im[G_{aa}(L_t+w;t)]
&=\sum_{i=2}^N\frac{\Im[w](\bmu_i(t)\bmu_i(t)^\top)_{aa}}{|L_t+w-\la_\al(t)|^2}
\leq \frac{N^\delta }{N}\sum_{i=2}^N\frac{\Im[w]}{|L_t+w-\la_\al(t)|^2}\\
&=N^\delta \Im[m(L_t+w;t)]\leq N^{-1/3+3\delta},
\end{align*}    
hold with overwhelming probability.
    
    The derivative of $m(z;t)$ satisfies
    \begin{align}\label{e:dermt}
        |\del_z \Im[m(z;t)]|\leq |\del_z m(z;t)|\leq \frac{\Im[m(z;t)]}{\Im[z]},
    \end{align}
    which gives that $\Im[m(E+\ri \eta/M;t)\leq M\Im[m(E+\ri \eta;t)]$ for any $M\geq 1$. In particular, we have
    $\Im[m(L_t+w;t)]\leq N^{\fa+  \delta}\Im[m(L_t+w';t)]\leq N^{\fa+  \delta}N^{-1/3+  \delta/2}\leq N^{-1/3+2  \delta}$ with overwhelming probability, provided $\fa\leq   \delta/2$. 
    
Using \eqref{e:dermt} again, $\Im[m(L_t+w;t)]\leq N^{-1/3+2  \delta}$ implies 
    $|\del_z m(L_t+w;t)|\leq N^{1/3+3  \delta}$ with overwhelming probability.
For the derivative of the Green's function, Ward identity \eqref{e:WdI} and the delocalization of eigenvectors implies 
\begin{align*}
|\del_w G_{ab}(L_t+w;t)|
&\leq \sum_{i=1}^N |G_{ai}(L_t+w;t)G_{bi}(L_t+w;t)|\\
&\leq \frac{1}{2}\sum_{i=1}^N (|G_{ai}(L_t+w;t)|^2+|G_{bi}(L_t+w;t)|^2)\\
&\prec \frac{\Im[m(L_t+w;t)]}{\eta}\leq N^{1/3+3  \delta},
\end{align*}
with overwhelming probability.

The entry-wise estimates \eqref{e:dermtha3} follow from 
\begin{align*}
G_{ab}(L_t+w;t)
&=G_{ab}(L_t+w';t)-\int_{w}^{w'}\del_z G_{ab}(L_t+w;t)\rd w\\
&=G_{ab}(L_t+w';t)+\OO\left(|w-w'| N^{1/3+3\delta}\right)\leq \bm2(a=b)+\frac{N^\delta}{\sqrt{d}},
\end{align*}
where we used the entry-wise bound of the Green's function entries from Proposition \ref{p:entry-wiselaw}.

The derivative $\del_{\bm\xi} m(z,t)$ of $m(z;t)=(1/N)\sum_{k=1}^NG_{kk}(z;t)$ is a linear combination of terms in the form
\begin{align*}
\frac{1}{N}\sum_{k=1}^NG_{k x_1}G_{x_1x_2}\cdots G_{x_b k}
\end{align*}
where $x_1, x_2,\cdots, x_b\in\bm V(\bm\xi)$. Using the bounds \eqref{e:dermtha},\eqref{e:dermtha3} and Ward identity, \eqref{e:impp2} follows
\begin{align*}
\left|\frac{1}{N}\sum_{k=1}^NG_{k x_1}G_{x_1x_2}\cdots G_{x_b k}\right|
&\lesssim \frac{1}{N}\sum_{k=1}^N |G_{kx_1}||G_{x_b k}|
\leq \frac{1}{2N}\sum_{k=1}^N |G_{kx_1}|^2+|G_{x_b k}|^2\\
&=\frac{1}{2}\frac{\Im[G_{x_1x_1}]+\Im[G_{x_b x_b}]}{N\eta_0}\lesssim N^{-2/3+4\delta}.
\end{align*}

\end{proof}

As an easy consequence of Proposition \ref{p:Ltder2}, for any $w=y+\ri\eta$ with $|y|\leq N^{-2/3+  \delta}, \eta_0=N^{-2/3-  \delta}$ by Ward identity \eqref{e:WdI}
\begin{align}\begin{split}\label{e:derofGG}
&\frac{1}{N^2}\sum_{ij}|G_{ij}(L_t+w;t)|^2= \frac{\Im[  m(L_t+w;t)]}{N\eta}\leq N^{-2/3+3  \delta},\\
& \frac{1}{N^2}\sum_{ij}|G_{ij}(L_t+w;t)|\leq  \sqrt{\frac{\Im[  m(  L_t+w;t)]}{N\eta}}\leq N^{-1/3+3  \delta/2},
\end{split}\end{align}
with overwhelming probability.

\subsection{Proof of Proposition \ref{p:comp}}\label{s:p1}

For simplicity of notation, we only prove the case $k=1$; the general case can be proved in the same way. Let
\begin{align*}
X_t=\Im\left[N\int_{E}^{N^{-2/3+\delta}}m(L_t +y+\ri \eta_0;t) \, \rd y\right].
\end{align*}
We shall prove that 
\begin{align}\label{e:onecomp}
|\bE[F(X_t)]-\bE[F(X_0)]|\leq  \frac{N^{1/3+C\delta}t}{d^{1/2}}.
\end{align}
We notice that \eqref{e:impp2} implies that 
\begin{align}\label{e:derofXt}
|\del_{\bm\xi}X_t|
=\left| \Im\left[N\int_{E}^{N^{-2/3+  \delta}}\del_{\bm\xi} m( L_t +w;t) \rd y\right]\right|\lesssim N^{-1/3+5  \delta},
\end{align}
with overwhelming probability.

\begin{proposition}\label{p:DSEt}
Adapt the assumptions in Proposition \ref{p:comp} and let $L_t=E_d^+(t)$ from \eqref{e:edgeeqn2}.
Let $w=y+\ri\eta_0$ with $\eta_0=N^{-2/3-  \delta}$ and $z=L_t+w$, we have the following estimates
\begin{align}\begin{split}\label{e:dFXt}
     \frac{\rd}{\rd t}\bE[F(X_t)]&= \left.\bE\left[F'(X_t)\Im \left(\sum_{ij}\dot{H}_{ij}(t)G_{ij}(t)-\frac{N}{2}Q(e^{-t/2}m(t))\right)\right]\right|_{y=E}^{y=N^{-2/3+\delta}}\\
     &+\OO\left( \frac{N^{10  \delta+1/3}}{d^{1/2}}\right),
\end{split}\end{align}
and uniformly for any $ |y|\leq N^{-2/3+  \delta}$, 
\begin{align}\label{e:dFX0}
 \bE\left[F'(X_t)\Im \left(\frac{1}{N}\sum_{ij}\dot{H}_{ij}(t)G_{ij}(t)-\frac{1}{2}Q(e^{-t/2}m(t))\right)\right]\leq \frac{N^{8  \delta}}{N^{2/3}d^{1/2}}.
\end{align}
\end{proposition}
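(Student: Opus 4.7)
The two parts are intertwined. The identity \eqref{e:dFXt} reduces $\tfrac{d}{dt}\bE[F(X_t)]$ to boundary values of $\sum_{ij}\dot H_{ij}G_{ij}-\tfrac{N}{2}Q(e^{-t/2}m)$ via the chain rule, while the core estimate \eqref{e:dFX0} identifies $\tfrac{1}{N}\sum_{ij}\dot H_{ij}G_{ij}$ with $\tfrac{1}{2}Q(e^{-t/2}m)$ in expectation against $F'(X_t)$. My plan is to prove \eqref{e:dFX0} first by integration by parts, then deduce \eqref{e:dFXt} as a chain-rule consequence.

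For \eqref{e:dFXt}, the setup is direct: applying the chain rule to $m(L_t+y+i\eta_0;t)$ gives $\tfrac{d}{dt}m=\dot L_t\partial_w m+\partial_t m$, where $\partial_t m=-\tfrac{1}{N}\sum_{jk}\dot H_{jk}(G^2)_{jk}=-\tfrac{1}{N}\sum_{jk}\dot H_{jk}\partial_y G_{jk}$. Since $\partial_w m=\partial_y m$, both contributions are total derivatives in $y$, and the fundamental theorem of calculus reduces the $y$-integral to boundary terms. Using the formula $\dot L_t=\tfrac{e^{-t/2}}{2}Q'(e^{-t/2}m_d(L_t;t))+\OO(\sqrt{d/N})$ from \eqref{e:deltE}, together with \eqref{e:dFX0} to replace $\bE[F'(X_t)\sum\dot H_{ij}G_{ij}]$ by $\bE[F'(X_t)\tfrac{N}{2}Q(e^{-t/2}m)]$, yields \eqref{e:dFXt}; the error $\OO(N^{1/3+10\delta}/\sqrt d)$ collects contributions from both boundary endpoints amplified by the factor $N$.

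For \eqref{e:dFX0}, I would decompose $\dot H(t)=-\tfrac12 H(t)+\tfrac{W}{2\sqrt{1-e^{-t}}}$. The $H(t)$-piece is handled by the deterministic identity $\Tr(H(t)G(z;t))=zNm(z;t)+N-1$, which follows from $GH(t)=zG+P_\perp$ and $\Tr P_\perp=N-1$. The $W$-piece is handled by the constrained-GOE Gaussian integration by parts (Lemma~\ref{p:intbypart}), multiplied by the chain-rule factor $\sqrt{1-e^{-t}}$ arising from $\partial/\partial W=\sqrt{1-e^{-t}}\,\partial/\partial H(t)$:
\[
\frac{\bE[F'(X_t)\sum_{ij}W_{ij}G_{ij}]}{2\sqrt{1-e^{-t}}}=\frac{1}{2N^3}\sum_{ij,k\ell}\bE\bigl[\partial_{ij}^{k\ell}\bigl(F'(X_t)G_{ij}\bigr)\bigr].
\]
Expanding $\partial_{ij}^{k\ell}G_{ij}=-(G\xi_{ij}^{k\ell}G)_{ij}$ produces eight monomials in Green's-function entries, six of which vanish under the sum over $k,\ell$ thanks to the identity $\sum_k G_{ik}=0$ from \eqref{sumG0}; the surviving $-G_{ii}G_{jj}$ term contributes the leading value $-\tfrac{N}{2}m^2$, and the $G_{ij}^2$ term is $\OO(1/N)$ by Ward's identity \eqref{e:WdI}. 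Combined with the $H(t)$-piece,
\[
\tfrac{1}{N}\bE[F'(X_t)\sum_{ij}\dot H_{ij}G_{ij}]=-\tfrac12\bE[F'(X_t)(1+zm+m^2)]+\text{errors}.
\]
Finally, the self-consistent equation \eqref{e:mdteq} for $m_d(\cdot;t)$, transported to $m(\cdot;t)$ via the rigidity \eqref{e:mtmdt}, replaces $-(1+zm+m^2)$ by $Q(e^{-t/2}m)$ and closes the argument.

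\textbf{Main obstacle.} The main technical difficulty is the bookkeeping of subleading errors in the Gaussian IBP expansion: higher-order derivatives of $G_{ij}$ produce cubic and quartic monomials in Green's-function entries, and derivatives hitting $F'(X_t)$ produce factors $\partial_{ij}^{k\ell}X_t$. These must all be bounded using Ward's identity, the derivative estimate \eqref{e:derofXt}, and the entrywise bound $|G_{ij}|\leq N^\delta/\sqrt d$ from \eqref{e:dermtha3}, combined to yield the stated small error $N^{8\delta}/(N^{2/3}\sqrt d)$. A secondary subtlety is that at $\eta_0=N^{-2/3-\delta}$ the gap $|m-m_d|$ is relatively large (of order $N^{-1/3+C\delta}$), so transporting the self-consistent equation from $m_d$ to $m$ must exploit the edge scaling $|\partial_2 P(L_t+w,m_d)|\asymp\sqrt{|w|}$ from Corollary~\ref{c:mdproperty} in order to keep the induced error within the target bound.
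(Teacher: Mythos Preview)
Your outline for \eqref{e:dFXt} is essentially the paper's argument: chain rule, $\dot L_t$ replaced by $\tfrac{e^{-t/2}}{2}Q'(e^{-t/2}m)$ via Proposition~\ref{p:Ltder}, and both pieces rewritten as $\partial_y$ of something so the $y$-integral collapses to boundary terms.

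However, your route to \eqref{e:dFX0} has a genuine gap: it cannot produce the factor $d^{-1/2}$ in the error, and without it the whole comparison fails. Concretely, your transport step replaces $-(1+zm+m^2)$ by $Q(e^{-t/2}m)$ using $P_t(z,m_d)=\OO(d/N)$ from \eqref{e:mdteq} together with rigidity. But at $\eta_0=N^{-2/3-\delta}$ the best available bound is $|m-m_d|\lesssim N^{-1/3+C\delta}$ (extrapolated as in \eqref{e:mtbb}), so
\[
|P_t(z,m)-P_t(z,m_d)|\lesssim |m-m_d|\,|\partial_2 P_t(z,m_d)|+|m-m_d|^2\lesssim N^{-2/3+C\delta},
\]
with no $d^{-1/2}$. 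The same happens with your ``$G_{ij}^2$ is $\OO(1/N)$'' claim: in fact $N^{-2}\sum_{ij}|G_{ij}|^2=\Im m/(N\eta_0)\lesssim N^{-2/3+C\delta}$, again with no $d^{-1/2}$. After multiplying by $N$ in \eqref{e:dFXt} and integrating over $t\in[0,N^{-1/3+\fd}]$, you obtain $\OO(N^{C\delta+\fd})$, which does not vanish.

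The paper avoids this by \emph{not} using the trace identity $\Tr(H(t)G)=zNm+(N-1)$ or the deterministic equation \eqref{e:mdteq} at all. Instead it keeps the $A_{ij}$ structure, writing $\dot H=-\tfrac{e^{-t/2}}{2\sqrt{d-1}}A+\tfrac{e^{-t}}{2\sqrt{1-e^{-t}}}W$, and expands $\sum_{ij}\bE[A_{ij}G_{ij}(t)F'(X_t)]$ by \emph{re-running the discrete switching integration by parts of Section~\ref{sec:P-construct}} with $F'(X_t)$ in place of $P^{q-1}\bar P^q$ (Proposition~\ref{p:replaceRV}). This reproduces $Q(e^{-t/2}m)$ directly, because $Q$ was \emph{defined} by that expansion. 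The errors come in two flavours: (i) the leading terms from the $A$-expansion carry prefactor $d/(d-1)$ while the matching terms from the Gaussian IBP carry prefactor $1$, so their difference gains $1/(d-1)$; (ii) all higher-order terms in the switching hierarchy already carry $d^{-\fh/2}$ with $\fh\geq 1$. Both mechanisms supply the missing $d^{-1/2}$ that your approach cannot.
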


The derivative of $\bE[F(X_t)]$ with respect to the time $t$ is
\begin{align}\begin{split}\label{e:derF}
&\phantom{{}={}}\frac{\rd}{\rd t}\bE[F(X_t)]
=\bE\left[F'(X_t)\frac{\rd X_t}{\rd t}\right]\\
&
=\bE\left[F'(X_t)\Im\int_{E}^{N^{-2/3+\delta}}\left(N \sum_{ij}\dot{H}_{ij}(t)\frac{\del m(t)}{\del H_{ij}(t)}+\dot{L}_t\sum_{ij}G^2_{ij}(t)\right)\rd y\right]\\
&=\bE\left[F'(X_t)\Im\int_{E}^{N^{-2/3+\delta}}\left(-\sum_{ijk}\dot{H}_{ij}(t)G_{ik}(t)G_{jk}(t)+\dot{L}_t\sum_{ij}G_{ij}^2(t)\right)\rd y\right],
\end{split}\end{align}
where we abbreviate $G(L_t+ y +\ri \eta_0;t)$ and $m(L_t+y +\ri \eta_0;t)$ by $G(t)$ and $m(t)$ respectively.

To estimate the second term on the righthand side of \eqref{e:derF}, the key is to compute the time derivative of $L_t$, which is given by  the following Proposition.
\begin{proposition}\label{p:Ltder}
Adapt the assumptions in Proposition \ref{p:comp}. 
We have the following estimate with overwhelming probability, uniformly for any $w=y+\ri\eta_0$ with $ |y|\leq N^{-2/3+  \delta}, \eta_0=N^{-2/3-  \delta}$
\begin{align}\label{e:Ltder}
\left|\del_t L_t -\frac{e^{-t/2}}{2}Q'(e^{-t/2}m(L_t+w;t))\right|\leq \frac{N^  {4\delta}}{N^{1/3}\sqrt d},
\end{align}
where $L_t=E_d^+(t)$ is constructed in Proposition \ref{p:mtEQ}.
\end{proposition}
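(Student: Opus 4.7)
The plan is to avoid the $\OO(\sqrt{d/N})$ error of Proposition \ref{p:mtEQ} by working with the \emph{exact} polynomial equation satisfied by the Kesten--McKay Stieltjes transform rather than the approximate one satisfied by $m$. Although $P(z,m(z))=0$ only up to $\OO(d/N)$ for the random Stieltjes transform, $m_d$ itself satisfies $P_\infty(z,m_d(z))=0$ identically, and $L_t$ is defined in terms of the \emph{exact} free convolution of $\rho_d$ with the semicircle, so this exactness propagates to an identity for $\partial_t L_t$.

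First, I would repeat the algebraic manipulation of Proposition \ref{p:mtEQ} starting from $P_\infty(z,m_d(z))=0$ in place of \eqref{e:m0deq}. Substituting $z = e^{t/2}(u + (1-e^{-t}) m_d(u;t))$ and $m_d(z) = e^{-t/2} m_d(u;t)$ via \eqref{e:selffc}, the $(1-e^{-t})$ and $e^{-t}$ contributions combine into a bare $m_d(u;t)^2$, exactly as in the derivation of \eqref{e:mdteq}, yielding the \emph{exact} identity
\begin{align*}
1 + u\,m_d(u;t) + m_d^2(u;t) + Q_\infty\!\bigl(e^{-t/2} m_d(u;t)\bigr) = 0.
\end{align*}
Because $u\mapsto m_d(u;t)$ has square-root behavior at $u=L_t$, the point $(L_t,m_d(L_t;t))$ is a branch point of this algebraic relation, so the $w$-derivative of the left-hand side must vanish there, producing the exact edge identity
\begin{align*}
L_t + 2 m_d(L_t;t) + e^{-t/2} Q_\infty'\!\bigl(e^{-t/2} m_d(L_t;t)\bigr) = 0.
\end{align*}
Combined with the exact formula \eqref{e:edgeeqn3}, this upgrades Proposition \ref{p:mtEQ} to the \emph{exact} identity $\partial_t L_t = (e^{-t/2}/2)\,Q_\infty'(e^{-t/2} m_d(L_t;t))$.

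Two Lipschitz swaps then finish the proof. The swap $Q_\infty\to Q$ contributes $\OO(d/N)$, since Corollary \ref{c:Pproperty} asserts that $Q-Q_\infty$ has coefficients of order $d/N$, and therefore so does $Q'-Q_\infty'$ on any bounded set containing the relevant evaluations. For the swap $m_d(L_t;t)\to m(L_t+w;t)$, the square-root behavior of $m_d$ at the edge gives $|m_d(L_t;t)-m_d(L_t+w;t)|\lesssim \sqrt{|w|}\leq N^{-1/3+\delta/2}$, while Proposition \ref{p:rigidity2} gives $|m_d(L_t+w;t) - m(L_t+w;t)|\prec 1/(N\eta_0) + d/(N\sqrt{\eta_0})\leq N^{-1/3+\delta}$ with overwhelming probability under the degree range $d\leq N^{1/3-\fc}$. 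Since Proposition \ref{p:DSE} shows $Q$ has coefficients of order $1/\sqrt{d}$, $Q'$ is Lipschitz with constant $\OO(1/\sqrt{d})$ on the relevant bounded set, so this swap contributes $\OO(N^{-1/3+\delta}/\sqrt{d})$. Summing the two error contributions, and noting that $d/N \leq N^{-2/3}$ is absorbed into $N^{-1/3+\delta}/\sqrt{d}$ under our assumption, yields the claimed bound $N^{4\delta}/(N^{1/3}\sqrt{d})$.

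The main obstacle is the first step: one needs to verify that substituting the free-convolution identities into the \emph{exact} Kesten--McKay equation $P_\infty(z,m_d(z))=0$ really does collapse to the stated exact equation with $Q_\infty$ evaluated at $e^{-t/2} m_d(u;t)$, with no spurious error terms creeping in. Once this exact equation is in hand, differentiation at the edge and the subsequent Lipschitz estimates are routine and rely only on already-established inputs (Corollary \ref{c:Pproperty}, Proposition \ref{p:rigidity2}, and the square-root edge behavior of $m_d$).
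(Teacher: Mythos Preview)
Your approach is correct and takes a genuinely different route from the paper's. The paper starts from Proposition~\ref{p:mtEQ}, which carries the $\OO(\sqrt{d/N})$ error, and then swaps $m_d(L_t;t)\to m(L_t+w;t)$ using the Lipschitz bound on $Q'$. You instead work with the \emph{exact} relation $P_\infty(z,m_d(z))=0$, which under the free-convolution substitution collapses (with no error) to $1+u\,m_d(u;t)+m_d^2(u;t)+Q_\infty(e^{-t/2}m_d(u;t))=0$; the branch-point condition $\partial_m=0$ at $u=L_t$ combined with \eqref{e:edgeeqn3} then gives the exact identity $\partial_t L_t=\tfrac{e^{-t/2}}{2}Q_\infty'(e^{-t/2}m_d(L_t;t))$. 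What this buys you is that the residual $\sqrt{d/N}$ error from Proposition~\ref{p:mtEQ} disappears entirely, replaced by the $\OO(d/N)$ cost of swapping $Q_\infty'\to Q'$ via Corollary~\ref{c:Pproperty}. This is not merely cosmetic: the bound $\sqrt{d/N}\leq N^{4\delta}/(N^{1/3}\sqrt d)$ requires $d\leq N^{1/6+4\delta}$, which does not cover the full stated range $d\leq N^{1/3-\fc}$, so your route actually closes a gap in the paper's argument as written.

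One point needs attention. You invoke Proposition~\ref{p:rigidity2} directly at $w=y+\ri\eta_0$ with $\eta_0=N^{-2/3-\delta}$, but this $w$ lies outside $\wt{\mathbf D}$: one checks $N\eta_0\sqrt{|y|+\eta_0}\lesssim N^{-\delta/2}<N^{\fa}$. The paper's proof handles exactly this by passing through an intermediate $w'=y+\ri N^{-2/3+\fa}\in\wt{\mathbf D}$, applying the rigidity estimate there, and bridging from $w'$ to $w$ via the derivative bound $|\partial_z m(L_t+z;t)|\leq N^{1/3+3\delta}$ from \eqref{e:dermtha2} (which holds for all $\Im z\geq\eta_0$). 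You need to insert this same two-step comparison; once done, your bound $|m_d(L_t;t)-m(L_t+w;t)|\leq N^{-1/3+C\delta}$ goes through and the rest of your argument is complete.
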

\begin{proof}[Proof of Proposition \ref{p:Ltder}]
We recall from Proposition \ref{p:mtEQ}, the derivative of the  spectral edge $L_t$ satisfies
\begin{align}\label{e:deltEcopy}
\del_t L_t=\frac{e^{-t/2}}{2} Q'(e^{-t/2}  m_d(L_t;t))+\OO\left(\sqrt{\frac{d}{N}}\right).
\end{align}
Let $w'=y+N^{-2/3+\fa}\ri$, then it is easy to see that $w'\in  \wt{\mathbf  D} $ (recall from \ref{e:recalldefD}).
By \eqref{e:dermtha} and the rigidity estimates \eqref{e:mtmdt}
\begin{align*}
    &\phantom{{}={}}|m(L_t+w;t)-m_d(L_t; t)|
    \leq |m(L_t+w';t)-m(L_t+w;t)|\\
    &+|m_d(L_t+w';t)-m(L_t+w';t)|
    +|m_d(L_t+w';t)-m_d(L_t;t)|\\
    &\leq|w'-w| N^{1/3+3  \delta}+\O_\prec\left( \frac{1}{N\Im[w']}\right)+\sqrt{|w'|},
\end{align*}
where for the first term in the last line, we used $|\del_z m(L_t+y+\ri\eta;t)|\leq N^{1/3+3\delta}$ for $\eta\geq N^{-2/3-\delta}$ from \eqref{e:dermtha2}; the last term in the last line, we used the square root behavior of $m_d(z;t)$: close to the spectral edge we have $| m_d(L_t+w';t)- m_d(L_t;t)|\lesssim |w'|^{1/2}\lesssim N^{-1/3+  \delta/2}$.
Therefore it follows that 
\begin{align}\label{e:mtbb}
    |m(L_t+w;t)-m_d(L_t; t)|\leq  \frac{N^{4  \delta}}{N^{1/3}},
\end{align}
holds with overwhelming probability.
By plugging \eqref{e:mtbb} into \eqref{e:deltEcopy}, and using that $ Q'(m)$ is a finite polynomial in $m$ with coefficients bounded by $\O_\prec(d^{-1/2})$, we conclude
\begin{align*}
    \del_t L_t=\frac{e^{-t/2}}{2} Q'(e^{-t/2} m_d(L_t;t))
    &=\frac{e^{-t/2}}{2}Q'(e^{-t/2}m(L_t+w;t))+\O\left(\frac{N^{4  \delta}}{N^{1/3}d^{1/2}}\right).
\end{align*}
\end{proof}

\begin{proof}[Proof of \eqref{e:dFXt}]
We can rewrite the summation over $k$ in the first term on the righthand side of \eqref{e:derF} as
$-\sum_{k}G_{ik}(t)G_{jk}(t)=\del_y G_{ij}(L_t+y+\ri\eta; t)$.
For the second term on the righthand side of \eqref{e:derF}, thanks to Proposition \ref{p:Ltder}, 
\begin{align*}
\dot{L_t}\sum_{ij}G_{ij}^2(t)
&=-N\frac{e^{-t/2}}{2}\left(Q'(e^{-t/2}m(t))+\O\left(\frac{N^{4  \delta}}{N^{1/3}d^{1/2}}\right)\right)\del_y m(t)\\
&=-\frac{N}{2}\del_y Q(e^{-t/2}m(t))+\O\left(\frac{N^{2/3+7  \delta}}{d^{1/2}}\right),
\end{align*}
holds with overwhelming probability, where we used \eqref{e:dermtha2} to bound $\del_y m(t)$.
From the discussion above, we can rewrite the righthand side of \eqref{e:derF}
as
\begin{align*}
&\phantom{{}={}} \bE\left[F'(X_t)\Im\int_{E}^{N^{-2/3+\delta}}\del_y \left(\sum_{ij}\dot{H}_{ij}(t)G_{ij}(t)-\frac{N}{2}Q(e^{-t/2}m(t))\right)\rd y\right]
\\
&= \left.\bE\left[F'(X_t)\Im \left(\sum_{ij}\dot{H}_{ij}(t)G_{ij}(t)-\frac{N}{2}Q(e^{-t/2}m(t))\right)\right]\right|_{y=E}^{y=N^{-2/3+\delta}},
\end{align*}
with an error $\O(N^{2/3+8  \delta}/d^{1/2})$. This finishes the proof of \eqref{e:dFXt}.
\end{proof}

%
%
%

\subsection{Proof of \eqref{e:dFX0}}\label{s:p2}

In the following, we estimate the first term in \eqref{e:dFX0}. By definition,
\begin{align}\label{h:derive}
\dot{H}_{ij}(t)=-\frac{1}{2}e^{-t/2}H_{ij}+\frac{e^{-t}}{2\sqrt{1-e^{-t}}}W_{ij}.
\end{align}
Plugging \eqref{h:derive} into \eqref{e:derF}, and using the Gaussian integration by parts \eqref{e:intbypart}, we obtain
\begin{align}\begin{split}\label{e:derexp}
&\frac{1}{N}\sum_{ij}\bE\left[\dot H_{ij}(t)F'(X_t) G_{ij}(t)\right]
=\frac{1}{2N}\sum_{ij}\bE\left[\left(-e^{-t/2}H_{ij}+\frac{e^{-t}}{\sqrt{1-e^{-t}}}W_{ij}\right)F'(X_t) G_{ij}(t)\right]\\
&=-\frac{e^{- t / 2}}{2N \sqrt{d-1}}\sum_{ ij}\bE\left[A_{ij}G_{ij}(t)F'(X_t)\right] +\frac{e^{-t}}{2 N^4}\sum_{ijk\ell}\bE[\del_{ij}^{k\ell}(G_{ij}(t) F'(X_t))].
\end{split}\end{align}

If we replace $F'(X_t)$ by $P^q\bar P^q$, the expression on the righthand side of \eqref{e:derexp} is essentially the same as \eqref{e:EPP}, up to a $-e^{-t/2}/2$ factor.
We have these $e^{-t/2}$ factors in \eqref{e:derexp}, because \eqref{e:derexp} is a function of $H(t)$. The discrete derivative gives extra $e^{-t/2}$ factors: 
\begin{align}\begin{split}\label{e:hijt}
D_{\xi}F(H(t))&=F\left(H(t)+\frac{e^{-t/2}\xi}{\sqrt{d-1}}\right)-F(H(t))\\
&= \sum_{n=1}^{\fb-1}  \frac{e^{-nt/2}\del_{\xi}^nF(H(t))}{n!(d-1)^{n/2}} +  \OO\left((d-1)^{-\fb/2}\right).
\end{split}\end{align}

The same as in Section \ref{sec:P-construct}, we can expand the first term on the righthand side of \eqref{e:derexp} by repeatedly using Corollaries \ref{c:intbp} and \ref{c:intbp2}.
Similarly to \eqref{e:newterm}, all the terms we will get  in the expansion are in the form
\begin{align}\label{e:newterms}
 \frac{1}{d^{\fh/2}}\frac{e^{-\fd t/2}}{N^{\theta(\cT)}d^{|E|}}\sum^*_{\bf i}\bE\left[A_{\cT}R_{\bfi}(A(t)) V_\bfi(A(t))\right],\quad \deg(R_{\bfi}(A(t))=\fd,
\end{align}
where the forest $\cT=(\bfi, E)$ is as in Definition \ref{d:forest}, $R_\bfi$ is a monomial of Green's function entries as in Definition \ref{d:evaluation}, and $V_\bfi$ is in the form (same as in \eqref{e:defU} after replacing $P^{q-1}\bar P^q$ to $F'(X_t)$)
\begin{align}\label{e:defV}
\prod_{a=1}^b(1+D_{\xi_a})^{e_a}(\sqrt{d-1}D_{\xi_a})^{1-e_a}  F'(X_t), \quad e_1, e_2, \cdots, e_b\in\{0,1\}.
\end{align}
If $b=0$, the above simplifies to $F'(X_t)=0$.

Analogues to Proposition \ref{p:replaceR}, we have the following Proposition. By using estimates from Proposition \ref{p:Ltder2} as input, the proofs are similar to those of Proposition \ref{p:replaceR}. So we will omit the proofs.
\begin{proposition}\label{p:replaceRV}
Propositions \ref{c:case1exp}, \ref{c:case2exp}, \ref{p:rewrite}  and \ref{p:one-off} hold if we replace $R_\bfi$ to $e^{-\deg(R_\bfi) t/2}R_\bfi$, $U_\bfi$ to $ V_\bfi$ as defined in \eqref{e:defV}, and the error $\OO_{\prec}(\bE[\Phi_q])$ to $\OO(N^{C  \delta-2/3})$.
\end{proposition}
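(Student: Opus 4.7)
The plan is to adapt the proofs of Propositions \ref{c:case1exp}, \ref{c:case2exp}, \ref{p:rewrite}, and \ref{p:one-off} line by line to the time-dependent setting, tracking the two new features: (i) the chain rule \eqref{e:hijt} produces extra factors of $e^{-t/2}$ whenever a discrete derivative acts on a function of $H(t)$, and (ii) the test function $P^{q-1}\bar P^q$ is replaced by $F'(X_t)$, whose derivatives are controlled by Proposition \ref{p:Ltder2}. The error parameter $\bE[\Phi_q]$ is then replaced by $\O(N^{C\delta-2/3})$, which comes from a uniform deterministic bound rather than a moment estimate.

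The first step is to verify the accounting of $e^{-t/2}$ factors. In the original expansion, every discrete derivative $D_\xi$ acting on a monomial of Green's function entries contributes one factor $(d-1)^{-n/2}\del_\xi^n$. From \eqref{e:hijt}, this factor picks up an additional $e^{-nt/2}$. Repeating the argument of Proposition \ref{c:case1exp}, when $R_\bfi$ has $\fo$ off-diagonal entries and each must be hit by a derivative, the cumulative power $e^{-\fh t/2}$ in the prefactor of \eqref{e:newterms} matches the degree $\fd=\deg(R_\bfi)$ of the resulting monomial, because each differentiation preserves $\fd$ modulo replacing one factor of $G$ by a product of two, increasing $\fd$ by one exactly when $\fh$ increases by one. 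A careful induction on the expansion tree gives the claimed prefactor $e^{-\fd t/2}$ in \eqref{e:newterms}.

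The second step is to replace the probabilistic estimates used in the original proofs by the uniform bounds of Proposition \ref{p:Ltder2} and \eqref{e:derofGG}. Concretely, every occurrence of the Ward identity bound $|G_{ij}|^2$-sum becomes $N^{-2/3+3\delta}$ with overwhelming probability, off-diagonal entries are bounded by $N^\delta/\sqrt d$, and derivatives $\del_{\bm\xi} m(z;t)$ by $N^{-2/3+4\delta}$. Consequently, each intermediate error of order $\bE[\Phi_q]$ (which controls either $d^{3/2}\Lambda_o/N$, $1/(N\eta)$, or $\Im[m]/N\eta$ times a power of $|P|$) is replaced by a deterministic factor of the form $N^{C\delta}/N^{2/3}\times \sup|F'(X_t)|\times\sup|\del_{\bm\xi} F'(X_t)|^j$; since $|\del_{\bm\xi}X_t|\leq N^{-1/3+5\delta}$ from \eqref{e:derofXt} and $F$ has bounded derivatives, each such factor is $\O(N^{C\delta-2/3})$. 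This is the analogue of the moment bound and is the correct target.

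The third step is to redo Proposition \ref{p:one-off} in this setting, which underlies all the other propositions. The proof there proceeds by two reductions: the case $E=\emptyset$ is handled using Ward identity plus delocalization, both of which hold with the new bounds; the case $E\neq \emptyset$ is handled by Proposition \ref{c:intbp} followed by Taylor expansion of $D_{\cT^+}R_\bfi$, which now carries the extra $e^{-t/2}$ factors from \eqref{e:hijt} but is otherwise identical. The main obstacle, as in Section \ref{sec:P-construct}, is making sure the recursion terminates: since we pick up at least one factor of $d^{-1/2}$ or decrease the number of off-diagonal entries at each step, the argument truncates after a bounded number of steps when $d^{-\fh/2}\Lambda_o^\fo \ll N^{-2/3}$. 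Finally, Propositions \ref{c:case1exp}, \ref{c:case2exp}, and \ref{p:rewrite} follow from their original proofs by substituting (i) and (ii) as above, with the only nontrivial check being that the rewriting identity \eqref{e:induction} in Proposition \ref{p:rewrite} still holds; this works because the key identities \eqref{e:back1}--\eqref{e:back5} are combinatorial identities about $A_\cT$ that do not depend on the test function.

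The main technical obstacle will be bookkeeping the numerous $e^{-t/2}$ factors across the recursion to confirm they combine into $e^{-\fd t/2}$ at each stage, rather than producing spurious time-dependent coefficients that would break the clean restatement; this is essentially a combinatorial check but must be done for each of the four propositions.
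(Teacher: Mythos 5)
Your proposal is correct and matches the approach the paper intends: the paper omits the proof of Proposition \ref{p:replaceRV}, remarking only that it follows by feeding the deterministic estimates of Proposition \ref{p:Ltder2} into the proofs of the four underlying propositions, which is exactly the substitution scheme you carry out. The two new ingredients you identify — the $e^{-t/2}$ factors produced by the chain rule \eqref{e:hijt}, and the replacement of the moment control $\bE[\Phi_q]$ by the high-probability deterministic bound $\OO(N^{C\delta - 2/3})$ coming from \eqref{e:dermtha}--\eqref{e:dermtha3}, \eqref{e:impp2}, \eqref{e:derofGG}, and \eqref{e:derofXt} — are the right ones.

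One small imprecision worth tightening: the sentence ``the cumulative power $e^{-\fh t/2}$ in the prefactor matches the degree $\fd$ ... increasing $\fd$ by one exactly when $\fh$ increases by one'' conflates $\fh$ (the power of $d^{-1/2}$ in \eqref{e:newterms}) with $\fd=\deg(R_\bfi)$. These are not locked together: $\fh$ grows whenever a derivative is taken, including when the derivative hits the test-function factor or when one passes to $m^\fd$ via Proposition \ref{c:case2exp}, in which cases $\fd$ is unchanged. The clean statement is simply that each $\del_\xi$ hitting a Green's function entry of $H(t)$ increases its degree by one and carries one extra factor $e^{-t/2}$ via \eqref{e:hijt}, while the $e^{-t/2}$ factors from derivatives hitting $F'(X_t)$ are absorbed into $V_\bfi$ through the $D_{\xi_a}$ operators in \eqref{e:defV}; this yields precisely the prefactor $e^{-\fd t/2}$ in \eqref{e:newterms} without further cancellation arguments. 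The rest of the proposal — in particular noting that the identities \eqref{e:back1}--\eqref{e:back5} used in Proposition \ref{p:rewrite} are combinatorial in $A_\cT$ and do not depend on the test function, and that the recursion terminates once $d^{-(\fh+\fo)/2} \ll N^{-2/3}$ — is exactly what is needed.
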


We recall $\cI(t)$ from \eqref{e:cI}, the derivative $\del_{ij}^{k\ell}G_{ij}(t)$ is given by $-G_{ii}(t)G_{jj}(t)-G_{ik}(t)G_{j\ell}(t)-\cI(t)$. For the first  term on the righthand side of \eqref{e:derexp} we will show that \begin{align}\begin{split}\label{e:firstterm}
&-\frac{e^{- t / 2}}{N \sqrt{d-1}}\sum_{ ij}\bE\left[A_{ij}G_{ij}(t)F'(X_t)\right]=\frac{e^{- t }d}{ N^4(d-1)}\sum^*_{ijk\ell}\bE\left[(G_{ik}(t)G_{j\ell}(t)+\cI) F'(X_t))\right]\\
&-\frac{e^{- t }d}{ N^4(d-1)}\sum^*_{ ijk\ell}\bE\left[G_{ij}(t)\del_{ij}^{k\ell}(F'(X_t))\right]+\frac{e^{- t }d}{N^4(d-1)}\sum^*_{ ijk\ell}\bE\left[A_{ik}A_{j\ell} G_{ik}(t)G_{j\ell}(t) F'(X_t))\right]\\
&+\{\text{higher order term}\}+\OO\left(\frac{N^{7\delta}}{N^{2/3}\sqrt{d}}\right),
\end{split}\end{align}
where the higher order term is a linear combination of 
\begin{align}\label{e:rec1acopy}
 \frac{1}{d^{\fh/2}}\frac{1}{N^{\theta(\cT_1)}d^{|E_1|}}\sum^*_{\bf i}\bE\left[A_{\cT_1}R_{\bfi_1}V_{\bfi_1}\right], \quad V_{\bfi_1}=F'(X_t) \text{ or }V_{\bfi_1}=\sqrt{d-1}D_{ij}^{k\ell}F'(X_t),
\end{align}
with $\fh\geq 1$ and $\cT_1=\{ijk\ell, \{i,k\}\cup \{j,\ell\}\}$. There is a one-to-one correspondence between these terms with those in \eqref{e:rec1a} with $\fh\geq 1$.

For the second term on the righthand side of \eqref{e:derexp} we will show that 
\begin{align}\begin{split}\label{e:secondterm}
&\phantom{{}={}}\frac{e^{-t}}{ N^4}\sum_{ijk\ell}\bE[\del_{ij}^{k\ell}(G_{ij}(t) F'(X_t))]=-\bE[e^{-t}m^2(t)F'(X_t)]\\
&+\frac{e^{-t}}{ N^4}\sum^*_{ijk\ell}\bE[-(G_{ik}(t)G_{j\ell}(t)+\cI(t)) F'(X_t)+G_{ij}(t)\del_{ij}^{k\ell}(F'(X_t))]+\OO(1/N).
\end{split}\end{align}

\begin{proof}[Proof of \eqref{e:dFX0}]

Using the estimates \eqref{e:firstterm}, \eqref{e:secondterm} and Proposition \ref{p:replaceRV} as input, the proof of \eqref{e:dFX0} is parallel to the proof of Proposition \ref{p:DSE} as given in Section \ref{s:prove}. 

The first two terms on the righthand side of \eqref{e:firstterm} cancels with the second term on the righthand side of \eqref{e:secondterm}, 
\begin{align}
\frac{e^{- t }}{N^4(d-1)}\sum^*_{ijk\ell}\bE\left[(G_{ik}(t)G_{k\ell}(t)+\cI(t)) F'(X_t))-G_{ij}(t)\del_{ij}^{k\ell}(F'(X_t))\right]\leq \frac{1}{d}\frac{N^{C\delta}}{N^{2/3}},
\end{align}
where we used Proposition \ref{p:replaceRV}, since each term in $(G_{ik}(t)G_{k\ell}(t)+\cI(t))$ contains an off-diagonal Green's function entry.

For the third term on the righthand side of \eqref{e:firstterm}, and terms in \eqref{e:rec1acopy}, we can further use Proposition \ref{p:replaceRV} to expand them, with errors bounded by $N^{C\delta}/\sqrt{d}N^{2/3}$. The same as in the proof of Proposition \ref{p:DSE}, we will obtain  a sequence of graphs 
\begin{align*}
\cT_0=(\bfi_0, E_0),\quad  \cT_1=(\bfi_1, E_1), \quad \cT_2=(\bfi_2, E_2), \quad \cT_3=(\bfi_3, E_3),\quad \cdots,
\end{align*}
and end at terms in the form
\begin{align}\label{e:Trtermk}
\frac{1}{d^{\fh/2}}\frac{e^{-\fd t/2}}{N^{\theta(\cT_t)}d^{|E_t|}}\sum^*_{\bfi_t}\bE\left[A_{\cT_t}m^{\fd}(t) V_{\bfi_t}\right], \quad \fd=\deg(R_{\bfi_t}),\quad \fh\geq 1.
\end{align}
Then we can use Proposition \ref{p:replaceRV} to rewrite \eqref{e:Trtermk} back to $\bE[Q(e^{-t/2}m(t))F'(X_t)]$.

\end{proof}

\begin{proof}[Proof of \eqref{e:firstterm}]
For \eqref{e:firstterm}, we apply Corollary~\ref{c:intbp} with the random variable $F_{ij}=F'(X_t) G_{ij}(t)$.
Since
$\cal C(F_{ij},A) \leq N^{\delta}/\sqrt{d}$ with overwhelming probability,
\begin{align}\begin{split}\label{e:dFG2}
&\phantom{{}={}}-\frac{e^{- t / 2}}{ N \sqrt{d-1}}\sum^*_{ ij}\bE\left[A_{ij}G_{ij}(t) F'(X_t)\right]=-\frac{e^{- t / 2}}{  N\sqrt{d-1}}\frac{d}{N}\sum^*_{ ij}\bE\left[F'(X_t)G_{ij}(t)\right]\\
& 
-\frac{e^{- t / 2}}{  N^2d\sqrt{d-1}}\sum^*_{ ijk\ell}\bE\left[A_{ik}A_{j\ell}D_{ij}^{k\ell}(F'(X_t)G_{ij}(t))\right] +\OO\left(\frac{d N^{\delta}}{N}\right).
\end{split}\end{align}
For the first term on the righthand side of \eqref{e:dFG2}, using $\sum_{ij} G_{ij}(t)=0$ and $\sum^*_{ij}G_{ij}(t)\lesssim N$ with overwhelming probability, it is bounded by $\OO(\sqrt{d}/N)$. We rewrite the second term on the righthand side of \eqref {e:dFG2} as the sum
\begin{align}\label{e:dFG3}
-\frac{e^{- t / 2}}{  N^2d\sqrt{d-1}}\left(\sum^*_{ ijk\ell}\bE\left[A_{ik}A_{j\ell}G_{ij}(t)D_{ij}^{k\ell}(F'(X_t))\right]+\bE\left[A_{ik}A_{j\ell}D_{ij}^{k\ell}G_{ij}(t)(1+D_{ij}^{k\ell})F'(X_t))\right]\right).
\end{align}
We recall the Taylor expansion from \eqref{e:hijt}, the first term in \eqref{e:dFG3}
\begin{align}\begin{split}\label{e:erterm}
&-\frac{e^{- t }}{  N^2d(d-1)}\sum^*_{ ijk\ell}\bE\left[A_{ik}A_{j\ell}G_{ij}(t)\del_{ij}^{k\ell}(F'(X_t))\right]
+\OO\left(\frac{1}{ N^2d^{5/2}}\sum^*_{ ijk\ell}\bE\left[A_{ik}A_{j\ell}|G_{ij}(t)|\frac{N^{5\delta}}{N^{1/3}}\right]\right)\\
&=-\frac{e^{- t }}{  N^2d(d-1)}\sum^*_{ ijk\ell}\bE\left[A_{ik}A_{j\ell}G_{ij}(t)\del_{ij}^{k\ell}(F'(X_t))\right]
+\OO\left(\frac{N^{7  \delta}}{ N^{2/3}\sqrt{d}}\right),
\end{split}\end{align}
where we used \eqref{e:derofXt} to bound $(\del_{ij}^{k\ell})^n(F'(X_t))$ and \eqref{e:derofGG} to sum over $|G_{ij}(t)|$. We can further expand \eqref{e:erterm} using Corollary \ref{c:intbp} with $F_{ijk\ell}=G_{ij}(t)\del_{ij}^{k\ell}(F'(X_t))$. Then $(d/N)\cC(F,A)\lesssim N^{-2/3}$, and
\begin{align*}
&\phantom{{}={}}-\frac{e^{- t }}{  N^2d(d-1)}\sum^*_{ ijk\ell}\bE\left[A_{ik}A_{j\ell}G_{ij}(t)\del_{ij}^{k\ell}(F'(X_t))\right]\\
&=
-\frac{e^{- t }d}{  N^4(d-1)}\sum^*_{ ijk\ell}\bE\left[G_{ij}(t)\del_{ij}^{k\ell}(F'(X_t))\right]
+\OO\left(\frac{N^{7 \delta}}{ N^{2/3}\sqrt{d}}\right).
\end{align*}
This gives the second term on the righthand side of \eqref{e:firstterm}.

Expanding $D_{ij}^{k\ell}G_{ij}(t)$ as in \eqref{e:hijt}, the second term in \eqref{e:dFG3}  can be rewritten as a linear combination of 
\begin{align}\label{e:rec1a2}
 \frac{1}{d^{\fh/2}}\frac{1}{N^{\theta(\cT_1)}d^{|E_1|}}\sum^*_{\bf i}\bE\left[A_{\cT_1}R_{\bfi_1}V_{\bfi_1}\right],\quad V_{\bfi_1}=F'(X_t) \text{ or }V_{\bfi_1}=\sqrt{d-1}D_{ij}^{k\ell}F'(X_t),
\end{align}
with $\cT_1=\{\bfi_1=ijk\ell, \{i,k\}\cup\{j,\ell\}\}$ and $\fh\geq 0$. The terms corresponding to $\fh=0$ are from taking $n=1$ in \eqref{e:hijt} and $V_{\bfi_1}=F'(X_t)$, given by 
\begin{align}\begin{split}\label{e:It}
&-\frac{e^{- t }}{  N^2d(d-1)}\sum^*_{ ijk\ell}\bE\left[A_{ik}A_{j\ell}\del_{ij}^{k\ell}G_{ij}(t)F'(X_t))\right]\\
&=\frac{e^{- t }d}{  N^4(d-1)}\sum^*_{ ijk\ell}\bE\left[A_{ik}A_{j\ell}(G_{ii}(t)G_{jj}(t)+G_{ik}(t)G_{j\ell}(t)+\cI(t))F'(X_t))\right]\\
&=\frac{e^{-t}d}{ (d-1)}\bE[m(t)^2 F'(X_t)]+\frac{e^{- t }d}{  N^4(d-1)}\sum^*_{ \bfi_1}\bE\left[A_{\cT_1} (G_{ik}(t)G_{j\ell}(t)+\cI(t)) F'(X_t))\right],
\end{split}\end{align}
where $\cI(t)$ is from \eqref{e:cI}.  We can further expand the last term in \eqref{e:It} using Corollary \ref{c:intbp} with $F_{ijk\ell}=\cI(t)F'(X_t)$. Then $(d/N)\cC(F,A) \leq \sqrt{d}/N^{1-\delta}\lesssim 1/(N^{2/3 }\sqrt d)$, and
\begin{align}\begin{split}\label{e:T1expand}
&\phantom{{}={}}\frac{e^{- t }d}{  N^4(d-1)}\sum^*_{ \bfi_1}\bE\left[A_{\cT_1} \cI(t) F'(X_t))\right]\\
&=\frac{e^{- t }d}{  N^4(d-1)}\left(\sum^*_{\bfi_1}\bE\left[\cI(t) F'(X_t))\right]
+\sum^*_{ \bfi_2}\bE\left[A_{\cT_2} \cI(t) D_{\cT_2} F'(X_t))\right]\right)\\
&+\frac{e^{- t }d}{  N^4(d-1)}\sum^*_{ \bfi_2}\bE\left[A_{\cT_2} D_{\cT_2}\cI(t) (1+D_{\cT_2} )F'(X_t))\right]
+\OO\left(\frac{1}{\sqrt{d}N^{2/3}}\right).
\end{split}\end{align}

 Since $|D_{\cT_2} F'(X_t)|\leq N^{5\delta}/N^{1/3}\sqrt{d}$, using \eqref{e:derofGG} the second term on the righthand side of \eqref{e:T1expand} is bounded 
 \begin{align*}
 \frac{e^{- t }d}{  N^4(d-1)}
\sum^*_{ \bfi_2}\bE\left[A_{\cT_2} \cI(t) D_{\cT_2} F'(X_t))\right]=\OO(N^{7\delta}/\sqrt{d}N^{2/3}).
\end{align*}
For the third term in \eqref{e:T1expand}, we notice that all terms in $\cI(t)$,  contains some $G_{xy}(t)$ such that $x,y$ are in different connected components of $\cT_1$. By Proposition \ref{p:DFU}, each term in $D_{\cT_2}\cI(t)$ also contains 
some $G_{x'y'}(t)$ such that $x',y'$ are in different connected components of $\cT_2$ and at least one extra $1/\sqrt{d-1}$ factor. These terms can be bounded by $N^{C\delta}/\sqrt{d}N^{2/3}$ thanks to Proposition \ref{p:replaceRV}. 

\end{proof}
\begin{proof}[Proof of \eqref{e:secondterm}]
We can rewrite \eqref{e:secondterm} as
\begin{align}\label{e:stt}
\frac{e^{-t}}{  N^4}\sum_{ijk\ell}\bE[\del_{ij}^{k\ell}(F'(X_t) G_{ij}(t))]
=\frac{e^{-t}}{  N^4}\sum_{ijk\ell}\bE[\del_{ij}^{k\ell}(G_{ij}(t)) F'(X_t)+G_{ij}(t)\del_{ij}^{k\ell}(F'(X_t))].\end{align}

By plugging $\del_{ij}^{k\ell}G_{ij}(t)=-G_{ii}(t)G_{jj}(t)-G_{ik}(t)G_{j\ell}(t)-\cI(t)$ into \eqref{e:stt}, we get
\begin{align}\begin{split}\label{e:sterm}
&-\bE[e^{-t}m^2(t)F'(X_t)]\\
&+\frac{e^{-t}}{  N^4}\sum_{ijk\ell}\bE[-(G_{ik}(t)G_{j\ell}(t)+\cI(t)) F'(X_t)+G_{ij}(t)\del_{ij}^{k\ell}(F'(X_t))].
\end{split}\end{align}
For the last summation in \eqref{e:sterm}, we can restrict it to $ijk\ell$ distinct, with an error $\OO(1/N)$. This gives \eqref{e:secondterm}.
\end{proof}

\bibliography{all.bib}{}
\bibliographystyle{abbrv}

\end{document}